\newcounter{assum}
\newtheorem{assumption}[assum]{Assumption}
\newtheorem{theorem}{Theorem}[section]
\newtheorem{lemma}[theorem]{Lemma}
\newtheorem{corollary}[theorem]{Corollary}
\newtheorem{proposition}[theorem]{Proposition}
\theoremstyle{definition}
\newtheorem{remark}[theorem]{Remark}
\newtheorem{definition}[theorem]{Definition}
\numberwithin{equation}{section}
\begin{document}
\title{\bf\Large Maz'ya--Shaposhnikova Representation of Quasi-Norms of
Ball Quasi-Banach Function Spaces on Spaces of Homogeneous Type
with Weak Reverse Doubling Property
\footnotetext{\hspace{-0.35cm} 2020 {\it Mathematics Subject Classification}.
Primary 46E36; Secondary 42B35, 42B25, 26D10, 30L99.
\endgraf {\it Key words and phrases.} space of homogeneous type,
ball quasi-Banach function space,
Maz'ya--Shaposhnikova representation,
weak reverse doubling condition,
weak measure density condition.
\endgraf This project is partially supported by
the National Natural Science Foundation of China
(Grant Nos. 12371093, 12431006, and 12501129),
and the Fundamental Research Funds for the
Central Universities (Grant No. 2253200028).}}
\author{Eiichi Nakai, Menghao Tang,
Dachun Yang\footnote{Corresponding author,
E-mail: \texttt{dcyang@bnu.edu.cn}/{\color{red}\today}/Final version.},\ \
Wen Yuan and Chenfeng Zhu}
\date{}
\maketitle

\vspace{-0.8cm}

\begin{center}
\begin{minipage}{13cm}
{\small {\bf Abstract}\quad
Let $Y(\mathcal{X})$ be a ball quasi-Banach function space on
the space of homogeneous type $(\mathcal{X},\rho,\mu)$ satisfying
some mild additional assumptions,
$q\in(0,\infty)$, and $\dot{W}^{s,q}_Y(\mathcal{X})$ with $s\in(0,1)$
be the homogeneous
fractional Sobolev space associated with $Y(\mathcal{X})$.
In this article, we show that, for
any $f\in Y(\mathcal{X})\cap\bigcup_{s\in(0,1)}
\dot{W}^{s,q}_Y(\mathcal{X})$,
\begin{align*}
\|f\|_{Y(\mathcal{X})}
&\lesssim\varliminf_{s \to 0^+}
s^{\frac{1}{q}}\left\|
\left\{\int_{\mathcal{X}}
\frac{|f(\cdot)-f(y)|^q}{U(\cdot,y)[\rho(\cdot,y)]^{sq}}
\, d\mu(y) \right\}^{\frac{1}{q}}\right\|_{{Y(\mathcal{X})}}\\
&\leq \varlimsup_{s \to 0^+}
s^{\frac{1}{q}}\left\|\left\{\int_{\mathcal{X}}
\frac{|f(\cdot)-f(y)|^q}{U(\cdot,y)[\rho(\cdot,y)]^{sq}}
\, d\mu(y) \right\}^{\frac{1}{q}}\right\|_{{Y(\mathcal{X})}}
\lesssim\|f\|_{Y(\mathcal{X})},
\end{align*}
where $U(x,y):=\min\{\mu(B(x,\rho(x,y))),\,\mu(B(y,\rho(x,y)))\}$
for any $x,y\in\mathcal{X}$ and
the implicit positive constants are
independent of $f$,
which is applied to ten specific ball quasi-Banach function spaces
and hence is of wide generality.
In particular,
when $Y(\mathcal{X})=L^q(\mathbb{R}^n)$
with $q\in[1,\infty)$, the above formula is
closely related to the celebrated result of Maz'ya and Shaposhnikova
in 2002. We also establish the above
representation formula on domains of $\mathcal{X}$.
The main novelty lies in
proposing two new concepts,
namely the weak reverse doubling
condition (for $\mathcal{X}$)
and the weak measure density condition
(for domains of $\mathcal{X}$), which are
proved to be necessary in some sense.
In addition, we find an interesting fact that,
when the underlying space
under consideration is bounded,
the above
Maz'ya--Shaposhnikova-type limit always tends to zero.	
}
\end{minipage}
\end{center}

\vspace{0.2cm}

\tableofcontents

\vspace{0.2cm}

\section{Introduction}
The study of the \emph{homogeneous fractional
Sobolev spaces} $\dot{W}^{s,p}(\mathbb{R}^n)$
with $s\in(0,1)$ and $p\in[1,\infty)$,
which is defined to be the set of all measurable
functions $f$ on $\mathbb{R}^n$ such that
\begin{align}\label{wsp}
\|f\|_{\dot{W}^{s,p}(\mathbb{R}^n)}:
=\left[\int_{\mathbb{R}^n}\int_{\mathbb{R}^n}\frac{|f(x)-f(y)|^p}
{|x-y|^{n+sp}}\,dx\,dy\right]^{\frac{1}{p}}
=:\left\|\frac{f(x)-f(y)}{|x-y|^{\frac{n}{p}+s}}
\right\|_{L^p(\mathbb{R}^n\times\mathbb{R}^n)}
\end{align}
is finite,
has garnered significant attention and there has been a
growing interests on this topic
over the past few decades. Indeed,
these spaces become an indispensable tool and plays a pivotal role
in both pure mathematical analysis and applied mathematics;
in particular, they have important applications
to many problems related to harmonic analysis,
variational calculus, and
partial differential equations;
see \cite{bbm01,bbm02,b02,bm18,bm19,bn16,bvy21b,npv12,dm21a,ls11}.

A well-known \emph{defect} of the family
of fractional semi-norms
$\{\|\cdot\|_{\dot{W}^{s,p}(\mathbb{R}^n)}\}_{s\in(0,1)}$
is the deficiency of the continuity at the endpoint $s=0$
[corresponding to $\|\cdot\|_{L^p(\mathbb{R}^n)}$];
see, for example, \cite{bsvy22,gy21}.
In 2002, Maz'ya and Shaposhnikova \cite{ms02}
gave a clever way to repair the above defect by
proving that, for any $f\in L^p(\mathbb{R}^n)
\cap\bigcup_{s\in(0,1)}\dot{W}^{s,p}(\mathbb{R}^n)$,
\begin{align}\label{msformula}
\lim_{s\to 0^+}
s^\frac{1}{p}\left\|f\right\|_{\dot{W}^{s,p}(\mathbb{R}^n)}
=C_{(p,n)}
\left\|f\right\|_{L^p(\mathbb{R}^n)},
\end{align}
where
$C_{(p,n)}$ is a positive constant depending only on both $p$ and $n$.
Here, and thereafter, $s\to 0^+$ means $s\in(0,1)$ and $s\to0$.
The identity \eqref{msformula} is nowadays
referred to as the celebrated
Maz'ya--Shaposhnikova formula.
We refer to
\cite{acps20,cdknp2023,dltyy23,hpxz24,k23,msv22,pme17}
for more studies of the Maz'ya--Shaposhnikova formula \eqref{msformula}
as well as its various variations and
to \cite{bsy21,bsvy21,bvy21a,dlyyz21b,ddfp23,dm20,dssvsy23,f22,k22,p04}
for more related studies
of (fractional) Sobolev spaces.

Since its inception,
the Maz'ya--Shaposhnikova formula \eqref{msformula}
has undergone continuous development and
has been generalized to many different non-Euclidean settings;
see, for instance,
Carnot groups \cite{cmsv21}, anisotropic spaces \cite{gh22},
non-collapsed RCD metric measure spaces \cite{hpxz24},
and metric measure spaces with $N$-dimensional Hausdorff measure \cite{h24}.
It is quite natural
to consider the Maz'ya--Shaposhnikova formula \eqref{msformula}
on more general settings, for example,
spaces of homogeneous type in the sense of
Coifman and Weiss \cite{cw71,cw77},
namely quasi-metric spaces
equipped with a doubling measure;
see Definition~\ref{Dehts} for the precise
definition of spaces of homogeneous type.
These spaces serve  as
a natural framework for the
studies of the boundedness of
operators and the
real-variable theory of function spaces in harmonic analysis.
In particular, the study of Sobolev spaces on spaces
of homogeneous type has attracted considerable attention
and led to significant progress;
see, for example, \cite{gkz2013,h2024,hkst2015,kkn2025}.

On the other hand,
there have been systematic development and extensive applications of
various (fractional)
Sobolev-type spaces in the study of partial differential
equations and harmonic analysis
over the past few decades,
such as weighted (or variable exponent) Sobolev spaces,
Orlicz--Sobolev spaces,
and Morrey--Sobolev spaces.
Significantly, these fundamental function spaces
(weighted Lebesgue spaces, variable exponent Lebesgue spaces,
Orlicz spaces, and Morrey spaces), on which
the aforementioned Sobolev spaces were built,
have been comprehensively incorporated into the unifying framework
of ball quasi-Banach function spaces
introduced by Sawano et al. \cite{shyy17};
see Definition~\ref{Debqfs} for the precise
definition of ball quasi-Banach function spaces.
It turns out that this inclusive framework
systematizes the investigations of many important function spaces
in both harmonic analysis and
partial differential equations.
For more studies related to ball quasi-Banach function
spaces,
we refer to \cite{cwyz20,lyh22,s18,shyy17,zhyy21}
for Hardy spaces associated with them,
to \cite{dgpyyz21,dlyyz21a,lyyzz24,pyyz24,zyy24,zlyyz24,zyy23,zyy23a,zyy23b}
for Sobolev spaces associated with them,
and to \cite{h21,hcy21,tyyz21,wyy20,zwyy20}
for the boundedness of operators on them.

Motivated by the aforementioned works,
in this article we devote to generalizing the
Maz'ya--Shaposhnikova formula \eqref{msformula}
to the setting of ball quasi-Banach function spaces
in the sense of \cite{shyy17}, respectively,
on the space of homogeneous type $(\mathcal{X},\rho,\mu)$
and on the domain $\Omega\subseteqq\mathcal{X}$,
where $(\mathcal{X},\rho,\mu)$ satisfies the additional weak reverse
doubling condition and $\Omega$ is assumed to satisfy
a weak version of the measure density condition.
It is worth mentioning that an analogous Maz'ya--Shaposhnikova formula
on ball quasi-Banach function spaces on $\mathbb{R}^n$
has been established in \cite{pyyz24}.

The Maz'ya--Shaposhnikova formula yields strikingly different results
depending on whether the underlying space or the domain
under consideration is bounded or unbounded.
We will first present an unexpected result for the bounded case;
see Definitions~\ref{1635} and~\ref{2200}
for the precise definitions of
$Y(\Omega)$ and $\dot{W}^{s,q}_Y(\Omega)$,
respectively. Recall that $\Omega\subseteqq\mathcal{X}$
is said to be \emph{bounded} with respect to the quasi-metric $\rho$
if $\mathrm{diam\,}(\Omega):=\sup_{x,y\in\Omega}\rho(x,y)<\infty$.

\begin{theorem}\label{Thm1}
Let $q\in(0,\infty)$,
$(\mathcal{X},\rho,\mu)$
be a quasi-metric measure space,
$\Omega\subseteqq\mathcal{X}$
a bounded measure set,
$Y(\mathcal{X})$ a ball quasi-Banach function space,
and $Y(\Omega)$ the restriction of $Y(\mathcal{X})$
to $\Omega$.
Then, for any
$f\in\bigcup_{s\in(0,1)}\dot{W}^{s,q}_Y(\Omega)$,
\begin{align*}
\lim_{s\to0^+}s^\frac{1}{q}
\left\|\left\{\int_\Omega\frac{|f(\cdot)-f(y)|^q}
{U(\cdot,y)[\rho(\cdot,y)]^{sq}}\,d\mu(y)
\right\}^\frac{1}{q}\right\|_{Y(\Omega)}=0,
\end{align*}
where, for any $x,y\in\mathcal{X}$,
$U(x,y):=\min\{\mu(B(x,\rho(x,y))),\,\mu(B(y,\rho(x,y)))\}$.
\end{theorem}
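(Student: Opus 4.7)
The plan is to reduce the limit to zero by exploiting that $\rho$ is uniformly bounded on $\Omega$, so the integrand becomes essentially $s$-independent for small $s$ while the external factor $s^{1/q}$ vanishes. Concretely, fix any $s_0\in(0,1)$ such that $f\in\dot{W}^{s_0,q}_Y(\Omega)$, which exists by hypothesis, and set $D:=\mathrm{diam}\,(\Omega)\in[0,\infty)$. The pathological case $D=0$ reduces $\Omega$ to a single point and makes the quantity under consideration trivially zero, so I assume $D\in(0,\infty)$.

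First, for any $s\in(0,s_0)$ and any $x,y\in\Omega$ with $\rho(x,y)>0$, I would write
\begin{align*}
\frac{1}{[\rho(x,y)]^{sq}}
=\frac{[\rho(x,y)]^{(s_0-s)q}}{[\rho(x,y)]^{s_0q}}
\leq \frac{D^{(s_0-s)q}}{[\rho(x,y)]^{s_0q}},
\end{align*}
which uses only $s_0-s>0$ and $\rho(x,y)\leq D$. Integrating in $y\in\Omega$ against the density $|f(x)-f(y)|^q/U(x,y)$ and taking the $q$-th root gives the pointwise bound
\begin{align*}
\left\{\int_\Omega\frac{|f(x)-f(y)|^q}{U(x,y)[\rho(x,y)]^{sq}}\,d\mu(y)\right\}^{\frac{1}{q}}
\leq D^{s_0-s}\left\{\int_\Omega\frac{|f(x)-f(y)|^q}{U(x,y)[\rho(x,y)]^{s_0q}}\,d\mu(y)\right\}^{\frac{1}{q}}
\end{align*}
for $\mu$-almost every $x\in\Omega$.

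Next, I would apply the $Y(\Omega)$-quasi-norm to both sides. Since $Y(\Omega)$ is a ball quasi-Banach function space, it enjoys the lattice property (monotonicity under pointwise $\mu$-a.e.\ domination) and positive homogeneity, so the pointwise estimate lifts to
\begin{align*}
\left\|\left\{\int_\Omega\frac{|f(\cdot)-f(y)|^q}{U(\cdot,y)[\rho(\cdot,y)]^{sq}}\,d\mu(y)\right\}^{\frac{1}{q}}\right\|_{Y(\Omega)}
\leq D^{s_0-s}\,\|f\|_{\dot{W}^{s_0,q}_Y(\Omega)}.
\end{align*}
Multiplying by $s^{1/q}$ and letting $s\to 0^+$, the factor $D^{s_0-s}$ stays bounded (it converges to $D^{s_0}$), while $s^{1/q}\to 0$ and $\|f\|_{\dot{W}^{s_0,q}_Y(\Omega)}$ is a finite constant; the product therefore tends to zero, which is the desired conclusion.

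The argument is essentially soft, and the only substantive step is the lattice-type lifting of a pointwise inequality to the $Y(\Omega)$ quasi-norm, which I expect to be the sole place where the definition of a ball quasi-Banach function space is invoked. No assumption on $\mathcal{X}$ beyond being a quasi-metric measure space is needed, and in particular neither doubling nor any reverse doubling hypothesis on $\mu$ enters the argument; this matches the statement of the theorem and highlights that the vanishing of the Maz'ya--Shaposhnikova limit on bounded sets is a purely geometric obstruction caused by the finiteness of $\mathrm{diam}\,(\Omega)$.
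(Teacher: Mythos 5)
Your proposal is correct and follows essentially the same route as the paper: choose $s_0$ with $f\in\dot{W}^{s_0,q}_Y(\Omega)$, factor $[\rho(\cdot,y)]^{-sq}=[\rho(\cdot,y)]^{(s_0-s)q}[\rho(\cdot,y)]^{-s_0q}$, bound the first factor by $[\mathrm{diam\,}(\Omega)]^{(s_0-s)q}$, and let the prefactor $s^{1/q}$ force the limit to zero. The only differences are cosmetic (your explicit treatment of $\mathrm{diam\,}(\Omega)=0$ and the remark on which axioms of $Y(\Omega)$ are used).
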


\begin{remark}
\begin{enumerate}
\item[\rm(i)]
When $\mathrm{diam\,}(\mathcal{X})<\infty$,
Theorem~\ref{Thm1} holds for any measurable set $\Omega\subseteqq\mathcal{X}$,
in particular, for the case $\Omega=\mathcal{X}$.
\item[\rm(ii)]
For the case where the underlying space or the domain
is unbounded, see Theorems~\ref{Thm2},~\ref{Thm3},
\ref{MS2}, and~\ref{MS3} below,
where we assume that the underlying space $(\mathcal{X},\rho,\mu)$
is a space of homogeneous type. However, when the underlying space
is unbounded but is not a space of homogeneous type
(for example, non-doubling metric spaces),
it is still unknown whether the
Maz'ya--Shaposhnikova representation as in Theorem~\ref{Thm2} holds.
\end{enumerate}
\end{remark}

Apart from Theorem~\ref{Thm1},
in the remainder of this section,
we always assume that the underlying space
$(\mathcal{X},\rho,\mu)$ is a space of homogeneous type
with $\mathrm{diam\,}(\mathcal{X})=\infty$
which is equivalent to $\mu(\mathcal{X})=\infty$
(see \cite[Lemma 5.1]{ny1997}). To establish
the Maz'ya--Shaposhnikova representation
in this case,
we begin with introducing the following weak
reverse doubling condition for measures
on quasi-metric measure spaces.

\begin{definition}\label{951}
A quasi-metric measure space
$(\mathcal{X},\rho,\mu)$ is said to
satisfy the
\emph{weak reverse doubling} (for short, WRD) \emph{condition}
if there exist $x_0\in\mathcal{X}$ and
$\lambda,C_{(\mu)}\in(1,\infty)$
such that
\begin{align}\label{reverseD}
\varliminf_{r\to\infty}\frac{\mu(B(x_0,\lambda r))}{\mu(B(x_0,r))}\ge
C_{(\mu)}.
\end{align}
\end{definition}

Here, and thereafter, we
denote by $\varliminf$ and $\varlimsup$ respectively
the \emph{limit inferior}
and the \emph{limit superior}.

\begin{remark}
\begin{enumerate}
\item[\rm(i)]
By \eqref{reverseD} and $C_{(\mu)}>1$, we are easy to prove that,
if the quasi-metric measure space $(\mathcal{X},\rho,\mu)$
satisfies the WRD condition, then $\mathrm{diam\,}(\mathcal{X})=\infty$.
Moreover, if a space of homogeneous type $(\mathcal{X},\rho,\mu)$
satisfies the WRD condition, then $\mu(\mathcal{X})=\infty$
because, in this case, $\mathrm{diam\,}(\mathcal{X})=\infty$
if and only if $\mu(\mathcal{X})=\infty$ (see \cite[Lemma 5.1]{ny1997}).
\item[\rm(ii)]
Assume that	$(\mathcal{X},\rho,\mu)$
is a quasi-metric measure space
satisfying the WRD condition. That is,
\eqref{reverseD} holds for some $x_0\in\mathcal{X}$ and $\lambda\in(1,\infty)$.
Then, from the quasi-triangle inequality of $\rho$
[see Definition~\ref{Deqms}(iii)],
we are easy to show that there exists
$\widetilde{\lambda}\in(0,\infty)$ such that,
for any $x\in\mathcal{X}$,
\eqref{reverseD} also holds with $x_0$ and $\lambda$ replaced,
respectively, by $x$ and $\widetilde{\lambda}$;
see Proposition~\ref{1017}.
\item[\rm(iii)]
Assume that $\mathrm{diam\,}(\mathcal{X})=\infty$.
Recall that $(\mathcal{X},\rho,\mu)$ is called an
\emph{$\mathrm{RD}$-space}, introduced by Han et al. \cite{hmy08},
if $(\mathcal{X},\rho,\mu)$ is a space of homogeneous type
and satisfies the following
\emph{reverse-doubling condition}:
there exists $C\in(1,\infty)$
such that, for any $x\in\mathcal{X}$ and
$r\in(0,\infty)$,
\begin{align}\label{RD}
C\mu(B(x,r))
\le\mu(B(x,2r)).
\end{align}
If the measure $\mu$ satisfies
\eqref{RD}, then $\mu$ always satisfies the WRD condition,
which can be deduced from Proposition~\ref{12346781}.
But the converse is not necessary to be true;
see the example in \cite[Proposition 2.3]{wyy23}
which satisfies the WRD condition but does not satisfy \eqref{RD}.
\end{enumerate}
\end{remark}

We also need the following basic and mild assumption on ball
Banach function spaces, which is the first one of
two key assumptions of the whole article; see
Definition~\ref{defXpie} for the precise definition
of $[Y(\mathcal{X})]'$.

\begin{assumption}\label{ma}
$Y(\mathcal{X})$ is a ball Banach function
space and
the Hardy--Littlewood maximal operator
${\mathcal M}$ is bounded on $[Y(\mathcal{X})]'$.
\end{assumption}

We next present the second main result of this article
as follows; see Definitions~\ref{Detuhua},~\ref{855}, and~\ref{quotient}
for the precise definitions of $Y^{\frac{1}{p}}(\mathcal{X})$,
$C_{\mathrm{b}}^\beta(\mathcal{X})$, and $Y(\mathcal{X})/\mathbb{R}$,
respectively.

\begin{theorem}\label{Thm2}
Let $(\mathcal{X},\rho,\mu)$ be a space
of homogeneous type satisfying the $\rm{WRD}$ condition.
Let $0<q\leq p<\infty$
and $Y(\mathcal{X})$ be a ball
quasi-Banach function space
such that
$Y^{\frac{1}{p}}(\mathcal{X})$
satisfies Assumption~\ref{ma}.
Then there exist two positive
constants $C$ and $\widetilde{C}$
such that the following
assertions hold.
\begin{enumerate}
\item[\rm (i)]
Let $\beta\in(0,\infty)$.
For any $f\in C_{\mathrm{b}}^\beta(\mathcal{X})\cap
\bigcup_{s\in(0,1)}\dot{W}^{s,q}_Y({\mathcal{X}})$,
\begin{align}\label{eq1.5}
C\|f\|_{Y(\mathcal{X})}
&\notag\leq \varliminf_{s \to 0^+}
s^{\frac{1}{q}}\left\| \left\{\int_{\mathcal{X}}
\frac{|f(\cdot)-f(y)|^q}{U(\cdot,y)[\rho(\cdot,y)]^{sq}}
\, d\mu(y)  \right\}^{\frac{1}{q}}\right\|_{{Y(\mathcal{X})}}\\
&\leq \varlimsup_{s \to 0^+}
s^{\frac{1}{q}}\left\| \left\{\int_{\mathcal{X}}
\frac{|f(\cdot)-f(y)|^q}{U(\cdot,y)[\rho(\cdot,y)]^{sq}}
\, d\mu(y) \right\}^{\frac{1}{q}}\right\|_{{Y(\mathcal{X})}}
\leq\widetilde{C}\|f\|_{Y(\mathcal{X})}.
\end{align}
\item[\rm (ii)]
If $Y(\mathcal{X})$ is further assumed to have an
absolutely continuous quasi-norm,
then, for any
$f\in [Y(\mathcal{X})/\mathbb{R}]\cap\bigcup_{s\in(0,1)}
\dot{W}^{s,q}_Y({\mathcal{X}})$,
\begin{align}\label{eq1.6}
C\|f\|_{Y(\mathcal{X})/\mathbb{R}}
&\notag\leq \varliminf_{s \to 0^+}
s^{\frac{1}{q}}\left\|
\left\{\int_{\mathcal{X}}
\frac{|f(\cdot)-f(y)|^q}{U(\cdot,y)[\rho(\cdot,y)]^{sq}}
\, d\mu(y) \right\}^{\frac{1}{q}}\right\|_{{Y(\mathcal{X})}}\\
&\leq \varlimsup_{s \to 0^+}
s^{\frac{1}{q}}\left\|\left\{\int_{\mathcal{X}}
\frac{|f(\cdot)-f(y)|^q}{U(\cdot,y)[\rho(\cdot,y)]^{sq}}
\, d\mu(y) \right\}^{\frac{1}{q}}\right\|_{{Y(\mathcal{X})}}
\leq\widetilde{C}\|f\|_{Y(\mathcal{X})/\mathbb{R}}.
\end{align}
\end{enumerate}
\end{theorem}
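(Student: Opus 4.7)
The plan is to follow the Maz'ya--Shaposhnikova paradigm: for a scale parameter $R\in(0,\infty)$, decompose the inner integral in $y$ into a \emph{local} region $\{y:\rho(x,y)\le R\}$ and a \emph{global} region $\{y:\rho(x,y)>R\}$. The local part should become negligible as $s\to0^+$, while the global part (after the $s^{1/q}$ factor) recovers $\|f\|_{Y(\mathcal{X})}$ up to an $R^{-s}\to1$ factor.

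For the upper bound in (i), fix any $s_0\in(0,1)$ with $f\in\dot W^{s_0,q}_Y(\mathcal{X})$. On $\rho(x,y)\le R$ with $s\in(0,s_0)$, the crude bound $\rho(x,y)^{-sq}\le R^{(s_0-s)q}\rho(x,y)^{-s_0q}$ yields a local contribution of order $s^{1/q}R^{s_0-s}\|f\|_{\dot W^{s_0,q}_Y(\mathcal{X})}\to0$. On the global region apply $|f(x)-f(y)|^q\lesssim|f(x)|^q+|f(y)|^q$. A dyadic annular decomposition together with the doubling property gives the pointwise bounds
\begin{align*}
\int_{\rho(x,y)>R}\frac{d\mu(y)}{U(x,y)\rho(x,y)^{sq}}\lesssim\frac{1}{sR^{sq}}
\quad\text{and}\quad
\int_{\rho(x,y)>R}\frac{|f(y)|^q\,d\mu(y)}{U(x,y)\rho(x,y)^{sq}}\lesssim\frac{\mathcal{M}(|f|^q)(x)}{R^{sq}}.
\end{align*}
Assumption~\ref{ma} applied to $Y^{1/p}$, combined with $q\le p$ and the associate-space formula, yields the Fefferman--Stein-type bound $\|\mathcal{M}(|f|^q)^{1/q}\|_{Y(\mathcal{X})}\lesssim\|f\|_{Y(\mathcal{X})}$. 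Taking $Y$-quasi-norms, the global contribution is $\lesssim R^{-s}\|f\|_{Y(\mathcal{X})}$, which closes the upper inequality since $R^{-s}\to1$.

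The lower bound in (i) is where the WRD condition is essential. By Proposition~\ref{1017}, WRD extends to every center with uniform parameters $(\widetilde\lambda,C_{(\mu)})\in(1,\infty)^2$, and iterating \eqref{reverseD} gives $\mu(B(x,\widetilde\lambda^{k+1}R)\setminus B(x,\widetilde\lambda^{k}R))\gtrsim\mu(B(x,\widetilde\lambda^{k}R))$ for all sufficiently large $k$. Combined with $U(x,y)\lesssim\mu(B(x,\widetilde\lambda^{k+1}R))$ on each annulus, summing a geometric series produces the matching reverse estimate $\int_{\rho(x,y)>R}U(x,y)^{-1}\rho(x,y)^{-sq}\,d\mu(y)\gtrsim(sR^{sq})^{-1}$. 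Together with the elementary inequality $|f(x)-f(y)|^q\ge c_q|f(x)|^q-|f(y)|^q$ (where $c_q:=\min\{1,2^{1-q}\}$) and the upper bound on the $|f(y)|^q$-integral from the previous step, one obtains pointwise
\begin{align*}
s\int_{\rho(x,y)>R}\frac{|f(x)-f(y)|^q}{U(x,y)\rho(x,y)^{sq}}\,d\mu(y)
\ge\frac{c_q|f(x)|^q}{CR^{sq}}-\frac{sC'\mathcal{M}(|f|^q)(x)}{R^{sq}}.
\end{align*}
The key observation is that the error term carries an \emph{extra} factor of $s$, so setting $A:=c_q|f|^q/(CR^{sq})$ and $B:=sC'\mathcal{M}(|f|^q)/R^{sq}$ and using the pointwise quasi-subadditivity $A^{1/q}\lesssim(A-B)_+^{1/q}+B^{1/q}$ together with the quasi-triangle inequality and the Fefferman--Stein bound yields
\begin{align*}
R^{-s}\|f\|_{Y(\mathcal{X})}\lesssim s^{1/q}\left\|\Bigl(\int_{\mathcal{X}}\tfrac{|f(\cdot)-f(y)|^q}{U(\cdot,y)\rho(\cdot,y)^{sq}}d\mu(y)\Bigr)^{1/q}\right\|_{Y(\mathcal{X})}+s^{1/q}R^{-s}\|f\|_{Y(\mathcal{X})}.
\end{align*}
Taking $\liminf_{s\to0^+}$ with $R$ fixed gives $\|f\|_{Y(\mathcal{X})}\lesssim\liminf$.

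For part (ii), the double-integral expression is invariant under $f\mapsto f-c$ for any constant $c$, so the estimate \eqref{eq1.5} applied to a suitable representative transfers to \eqref{eq1.6}; the absolute continuity of the quasi-norm is what allows this transfer to run uniformly via density of $C_{\mathrm{b}}^\beta$ representatives modulo constants in $Y(\mathcal{X})/\mathbb{R}$. The main obstacle is the passage from the pointwise subtractive inequality to the $Y$-quasi-norm lower bound in the third paragraph: because $Y$ may be only quasi-normed when $q<1$, the usual reverse-triangle or layer-cake tricks are not directly available, and the precise placement of Assumption~\ref{ma} at the level $Y^{1/p}$ (rather than $Y^{1/q}$) is engineered to deliver exactly the Fefferman--Stein surrogate needed at the right scale to beat the main $R^{-s}$ decay with the vanishing $s^{1/q}$ coming from the extra $s$ in the error term.
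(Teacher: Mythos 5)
Your upper-bound argument is essentially the paper's (local/global split at a fixed radius $R$, crude bound on the local piece, dyadic annuli plus doubling on the global piece, and extrapolation through $A_1$ weights generated from $[Y^{1/p}(\mathcal{X})]'$), though the paper runs the maximal-function step entirely in dual form via Lemma~\ref{4.6} and Lemma~\ref{keythmofwss} rather than through a Fefferman--Stein surrogate $\|(\mathcal M(|f|^q))^{1/q}\|_{Y(\mathcal X)}\lesssim\|f\|_{Y(\mathcal X)}$; the latter is fine for $q<p$ but is not available from Assumption~\ref{ma} when $q=p$, which the theorem permits.

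The lower bound, however, has a genuine gap. Your displayed pointwise estimate
$\int_{\rho(x,y)>R}\frac{|f(y)|^q}{U(x,y)[\rho(x,y)]^{sq}}\,d\mu(y)\lesssim R^{-sq}\mathcal M(|f|^q)(x)$
is missing a factor of $\tfrac1s$: bounding each annulus $B(x,2^{j+1}R)\setminus B(x,2^jR)$ by the maximal function yields $(2^jR)^{-sq}\mathcal M(|f|^q)(x)$, and the geometric sum $\sum_{j\ge0}(2^jR)^{-sq}\sim\frac{1}{s}R^{-sq}$ as $s\to0^+$ (this is exactly the computation in \eqref{QW1} and Lemma~\ref{02201609}; a concrete counterexample to your version is $f=\mathbf 1_{B(\mathbf 0,e^{1/s})}$ on $\mathbb R^n$ with $x=\mathbf 0$). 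Consequently the ``key observation'' that the error term carries an extra factor of $s$ is false: after multiplying by $s$, the $|f(y)|^q$-term is $\sim R^{-sq}\mathcal M(|f|^q)(x)$, of exactly the same order as the main term $R^{-sq}|f(x)|^q$, and your final display becomes $R^{-s}\|f\|_{Y(\mathcal X)}\lesssim s^{1/q}\|\cdots\|_{Y(\mathcal X)}+CR^{-s}\|f\|_{Y(\mathcal X)}$ with a constant $C$ that cannot be absorbed. This is precisely why the paper does not use a fixed $R$ in the lower estimate: it excludes the ball $B(\cdot,K_0s^{-1/q})\cup B(x_0,s^{-1/q})$ whose radius \emph{grows} as $s\to0^+$. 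Then Lemma~\ref{2225} (via the WRD condition) still gives the lower bound $\varliminf_{s\to0^+}s\int_{[B(x,s^{-1/q})]^\complement}U(x,y)^{-1}[\rho(x,y)]^{-sq}\,d\mu(y)\ge C$, while the $|f(y)|^q$-term is now supported in $[B(x_0,s^{-1/q})]^\complement$ and vanishes: for $f\in C_{\mathrm b}^\beta(\mathcal X)$ it is identically zero once $s^{-1/q}$ exceeds the support radius (Lemma~\ref{02201535}(i)), and for part (ii) it tends to zero because $\|(f+a)\mathbf 1_{[B(x_0,s^{-1/q})]^\complement}\|_{Y(\mathcal X)}\to0$ by the absolutely continuous quasi-norm (Lemma~\ref{02201535}(ii)). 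You would need to replace your fixed-$R$ subtraction step by this growing-radius device (or an equivalent mechanism making the $|f(y)|^q$ contribution genuinely $o(1/s)$) for the lower estimate to close.
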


\begin{remark}\label{2250}
\begin{enumerate}
\item[\rm(i)]
There are many examples of $(\mathcal{X},\rho,\mu)$
such that Theorem~\ref{Thm2} holds,
namely spaces of homogeneous type satisfying the WRD condition,
such as Euclidean
spaces with $A_\infty$-weights (in particular, the standard Euclidean space
equipped with the Lebesgue measure),
Ahlfors-regular spaces, Lie groups of polynomial growth,
and Carnot--Carath\'eodory spaces with doubling measure,
finite dimensional Banach spaces,
and MCP spaces
(see \cite[Example~4.8]{h2024} for the definition
of MCP spaces).

\item[\rm(ii)]
We point out that the WRD condition in Theorem~\ref{Thm2}
is only used to obtain the lower estimate, namely
the first inequality
in both \eqref{eq1.5} and \eqref{eq1.6}.

\item[\rm(iii)]
If $\mathcal{X}=\mathbb{R}^n$,
$\rho$ is the standard Euclidean distance,
and $\mu$ is the $n$-dimensional Lebesgue measure,
then Theorem \ref{Thm2} reduces to \cite[Theorem 2.12]{pyyz24}.
Moreover, we point out that,
even in this special case,
the two constants $C$ and $\widetilde{C}$
in \eqref{eq1.5} [or in \eqref{eq1.6}]
are not equal;
see \cite[Example~2.20]{pyyz24}
for the case where
$Y(\mathcal{X})$ is the weighted Lebesgue space.

\item[\rm(iv)]
By \cite[Remark~2.13(iii)]{pyyz24},
we find that the requirement $q\leq p$
in \cite[Theorem~2.12]{pyyz24} is sharp,
which, combined with (i) of the present
remark, further implies that
the requirement $0<q\leq p<\infty$
in Theorem \ref{Thm2} is also \emph{sharp}
in some sense.

\item[\rm(v)]
The WRD condition in Theorem~\ref{Thm2}
is \emph{sharp} in the following sense: These exists an
example of spaces of homogeneous type
which does not satisfy the WRD condition,
and the conclusion of Theorem~\ref{Thm2} in this case fails;
see Remark~\ref{sharp1}.

\item[\rm(vi)]
Han \cite[Theorem~3.11]{h2024}
proved a generalized Maz'ya--Shaposhnikova formula
associated with general mollifiers
on non-compact metric measure spaces.
We point out that \cite[Theorem~3.11]{h2024}
and Theorem~\ref{Thm2} cannot completely cover each other.
Indeed, the volume growth condition on the measure
in \cite[Assumption~3.4]{h2024}
is stronger than the assumptions on $\mu$ in Theorem~\ref{Thm2}.
Moreover, the mollifiers $\{\rho_n(x,y)\}_{n\in\mathbb{N}}$
in \cite[Theorem~3.11]{h2024}
are assumed to be radial, that is,
for any $x,y\in X$,
$\rho_n(x,y)$ depends only on
the distance between $x$ and $y$.
In Theorem~\ref{Thm2}, we use the
mollifier $U(x,y)[\rho(x,y)]^{sq}$
which might not be radial with respect to $\rho(x,y)$.
Thus, Theorem~\ref{Thm2}
even in the special case $Y(\mathcal{X}):=L^p(\mathcal{X})$
cannot be covered by
\cite[Theorem~3.11]{h2024}.
On the other hand, it turns out that,
with some assumptions on the measure and the mollifiers,
Han \cite[Theorem~3.11]{h2024} obtained an equality
with the exact constant $2$ in \cite[(3.20)]{h2024}.
But, Theorem~\ref{Thm2} cannot give an equality
with an exact constant and hence Theorem~\ref{Thm2}
in the special case $Y(\mathcal{X}):=L^p(\mathcal{X})$
cannot cover \cite[Theorem~3.11]{h2024}.
\end{enumerate}
\end{remark}

When $p\in(0,\infty)$ and the Hardy--Littlewood maximal operator
${\mathcal M}$ is not known to be bounded
on the associate space of
$Y^{\frac{1}{p}}(\mathcal{X})$,
Theorem \ref{Thm2} seems to be inapplicable to this case;
for example, Morrey spaces.
To establish the corresponding conclusion of Theorem \ref{Thm2}
in the setting of the ball quasi-Banach function space $Y(\mathcal{X})$
on which $\mathcal{M}$ may not be
bounded on $[Y^{\frac{1}{p}}(\mathcal{X})]'$,
we need the following assumption,
which is the second (last) one of two key assumptions of the whole
article.

\begin{assumption}\label{ma2}
$Y(\mathcal{X})$ is a ball Banach function
space and
the Hardy--Littlewood maximal operator
${\mathcal M}$ is \emph{endpoint bounded} on $[Y(\mathcal{X})]'$,
that is, there exists a sequence
$\{\theta_m\}_{m\in{\mathbb N}}$ in $(0,1)$
such that $\lim_{m\to\infty}\theta_m=1$,
$Y^{\frac{1}{\theta_m}}(\mathcal{X})$ is a ball Banach function space and
the Hardy--Littlewood maximal operator ${\mathcal M}$
is bounded on its associate space $[Y^{\frac{1}{\theta_m}}(\mathcal{X})]'$
for any $m\in{\mathbb N}$, and
\begin{align*}
\lim_{m\to\infty}
\left\|{\mathcal M}\right\|_{[Y^{\frac{1}{\theta_m}}(\mathcal{X})]'\to
[Y^{\frac{1}{\theta_m}}(\mathcal{X})]'} <\infty.
\end{align*}
\end{assumption}

Based on Assumption~\ref{ma2} instead of Assumption~\ref{ma},
we present the third main result of this article.
Recall that a measure $\mu$ is said to be
\emph{Borel-semiregular}
if $\mu$ is a Borel measure on $\mathcal{X}$
and, for any measurable set $E\subseteqq\mathcal{X}$,
there exists a Borel set $F\subseteqq\mathcal{X}$
such that
$\mu(E\Delta F):=\mu(E\setminus F)+\mu(F\setminus E)=0$;
see, for instance, \cite[Definition~3.9]{AlMi2015}.

\begin{theorem}\label{Thm3}
Let $(\mathcal{X},\rho,\mu)$ be a space
of homogeneous type satisfying the $\rm{WRD}$ condition.
Let $0 < q \leq p < \infty$
and $Y(\mathcal{X})$ be a ball
quasi-Banach function space
such that
$Y^{\frac{1}{p}}(\mathcal{X})$
satisfies Assumption~\ref{ma2}.
\begin{enumerate}
\item[\rm (i)]
Let $\beta\in(0,\infty)$. Then,
for any $f\in C_{\mathrm{b}}^\beta(\mathcal{X})
\cap\bigcup_{s\in(0,1)} \dot{W}^{s,q}_Y({\mathcal{X}})$,
\eqref{eq1.5} holds.
\item[\rm (ii)]
If $Y(\mathcal{X})$ has an
absolutely continuous quasi-norm
and $\mu$ is Borel-semiregular,
then, for any
$f\in [Y(\mathcal{X})/\mathbb{R}] \cap\bigcup_{s\in(0,1)}
\dot{W}^{s,q}_Y({\mathcal{X}})$, \eqref{eq1.6} holds.
\end{enumerate}
\end{theorem}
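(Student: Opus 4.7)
The plan is to deduce Theorem~\ref{Thm3} from Theorem~\ref{Thm2} by an extrapolation argument driven by the sequence $\{\theta_m\}_{m\in\mathbb{N}}$ supplied by Assumption~\ref{ma2}. Since Remark~\ref{2250}(ii) explicitly notes that the lower bound in \eqref{eq1.5} and \eqref{eq1.6} of Theorem~\ref{Thm2} uses only the WRD condition and not Assumption~\ref{ma} on $Y^{1/p}(\mathcal{X})$, and since Theorem~\ref{Thm3} keeps the WRD condition, the first inequality in \eqref{eq1.5} and \eqref{eq1.6} can be established by simply repeating the lower-bound arguments from the proof of Theorem~\ref{Thm2}, verbatim in (i) and (ii), respectively. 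The remaining work is therefore entirely on the upper bounds, that is, the last inequalities in \eqref{eq1.5} and \eqref{eq1.6}.

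For the upper bound, Assumption~\ref{ma2} applied to $Y^{1/p}(\mathcal{X})$ furnishes $\{\theta_m\}_{m\in\mathbb{N}}\subset(0,1)$ with $\theta_m\to 1$ such that $[Y^{1/p}(\mathcal{X})]^{1/\theta_m}=Y^{1/(p\theta_m)}(\mathcal{X})$ is a ball Banach function space on which $\mathcal{M}$ is bounded in the sense of Assumption~\ref{ma} and such that
\begin{align*}
K:=\sup_{m\in\mathbb{N}}\left\|\mathcal{M}\right\|_{[Y^{1/(p\theta_m)}(\mathcal{X})]'\to[Y^{1/(p\theta_m)}(\mathcal{X})]'}<\infty.
\end{align*}
Whenever $q<p$, we have $p\theta_m\ge q$ for all sufficiently large $m$, so the hypotheses of Theorem~\ref{Thm2}(i) (respectively Theorem~\ref{Thm2}(ii) under the absolutely continuous quasi-norm) are fulfilled by the triple $(q,p\theta_m,Y(\mathcal{X}))$. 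Applying Theorem~\ref{Thm2} with $p$ replaced by $p\theta_m$ yields the upper bound with a constant $\widetilde{C}_m$; by inspecting the proof of Theorem~\ref{Thm2} one sees that $\widetilde{C}_m$ depends only on $q$, $p\theta_m$, the geometric constants of $(\mathcal{X},\rho,\mu)$, and on the operator norm of $\mathcal{M}$ on $[Y^{1/(p\theta_m)}(\mathcal{X})]'$. Combining the uniform bound $K$ with the fact that $p\theta_m\to p$ then allows us to set $\widetilde{C}:=\limsup_{m\to\infty}\widetilde{C}_m<\infty$, which is the desired constant for both \eqref{eq1.5} and \eqref{eq1.6}. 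The case $q=p$ cannot be reached by this direct substitution since $p\theta_m<p=q$; in this borderline situation I would instead replay the duality/maximal-function step in the proof of Theorem~\ref{Thm2}'s upper bound with $[Y^{1/(p\theta_m)}(\mathcal{X})]'$ in place of $[Y^{1/p}(\mathcal{X})]'$ and pass to the limit inside the argument, using $K$ to control the pairing and to transfer the estimate back to $Y^{1/p}(\mathcal{X})$.

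For part (ii), the additional hypothesis that $\mu$ is Borel-semiregular is expected to enter exactly at the limiting step of the extrapolation and in the passage to the quotient $Y(\mathcal{X})/\mathbb{R}$: it guarantees the existence of Borel representatives of measurable sets and functions, which in turn underwrites density of suitable test classes in $Y(\mathcal{X})/\mathbb{R}$ and legitimizes the interchange of limits (in $m$ and in $s\to 0^+$) that the extrapolation from $Y^{1/(p\theta_m)}(\mathcal{X})$ back to $Y^{1/p}(\mathcal{X})$ depends on. The absolute continuity of the quasi-norm supplies the required approximation by functions with the correct localization properties, as in the proof of Theorem~\ref{Thm2}(ii), and these approximations continue to behave well uniformly in $m$ under Borel-semiregularity.

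The principal obstacle is verifying that $\widetilde{C}_m$ stays uniformly bounded as $\theta_m\to 1^-$: the constant in Theorem~\ref{Thm2} is produced through several layers (Fefferman--Stein-type inequalities, duality in $[Y^{1/(p\theta_m)}(\mathcal{X})]'$, estimates depending on the doubling and WRD constants), and one must check that each ingredient admits a bound depending on $\theta_m$ only through $K$ and a continuous function of $p\theta_m$. The second delicate point is the endpoint $q=p$, where the black-box application of Theorem~\ref{Thm2} fails and one is forced to retrace the proof and perform the limit $m\to\infty$ inside it; and, in part (ii), exploiting the Borel-semiregularity precisely where measurable-versus-Borel distinctions threaten to break the quotient-space argument.
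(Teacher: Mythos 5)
Your reduction for the subcase $q<p$ is correct and is in fact a shortcut the paper does not take: since Assumption~\ref{ma2} for $Y^{\frac1p}(\mathcal{X})$ gives that $Y^{\frac{1}{p\theta_m}}(\mathcal{X})$ satisfies Assumption~\ref{ma} for every $m$, choosing one fixed $m$ with $q\le p\theta_m$ and invoking Theorem~\ref{Thm2} with $p$ replaced by $p\theta_m$ already yields \eqref{eq1.5} and \eqref{eq1.6} (the conclusion does not depend on $p$), so no $\limsup_m\widetilde C_m$ and no uniformity discussion are needed there. The real content of Theorem~\ref{Thm3} is the endpoint $q=p$, and there your proposal has a genuine gap. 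The paper's mechanism is the extrapolation Lemma~\ref{03281616}: from Lemma~\ref{12202120} applied to $Y^{\frac{1}{\theta_m}}(\mathcal{X})$ one gets $\|[g(f)]^{\frac{1}{\theta_m}}\|_{Y(\mathcal{X})}\lesssim\|\mathcal{M}\|^{2/\theta_m}\||f|^{\frac{1}{\theta_m}}\|_{Y(\mathcal{X})}$ for all $m$, and Lemma~\ref{03281616} converts this family into $\|g(f)\|_{Y(\mathcal{X})}\lesssim\|f\|_{Y(\mathcal{X})}$ --- but only for $f\in C_{\mathrm{b}}^\beta(\mathcal{X})$. "Passing to the limit inside the argument" does not work as stated: for a general $f\in Y(\mathcal{X})$ the quantities $\||f|^{1/\theta_m}\|_{Y(\mathcal{X})}$ need not converge to, or even be comparable with, $\|f\|_{Y(\mathcal{X})}$, and there is no way to let $m\to\infty$ in the dual pairing over $[Y^{\frac{1}{p\theta_m}}(\mathcal{X})]'$. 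This is precisely why part (i) is restricted to H\"older functions and why part (ii) then proceeds by density: Lemma~\ref{lemma2.30} (which is where Borel-semiregularity and absolute continuity enter, not an "interchange of limits in $m$ and $s$") gives $C_{\mathrm{b}}^\beta(\mathcal{X})$ dense in $Y(\mathcal{X})$, and one approximates $f+a$ by $f_k$, truncates with $\mathbf{1}_{E_N}$, splits into the three terms $I_1,I_2,I_3$ (resp.\ $J_1,J_2$), and controls the errors before letting $k\to\infty$ and $N\to\infty$. None of this approximation machinery appears in your proposal.

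A second, smaller error: you assert that the lower bounds can be "repeated verbatim" from Theorem~\ref{Thm2} in both (i) and (ii) because only the WRD condition is used. Remark~\ref{2250}(ii) says the WRD condition is used \emph{only} for the lower estimate, not that the lower estimate uses \emph{only} the WRD condition. For part (ii) the lower estimate requires the vanishing of the tail term \eqref{06102111}, and the proof of the corresponding statement under Assumption~\ref{ma} (Lemma~\ref{02201535}(ii)) uses Lemma~\ref{4.6} and the boundedness of $\mathcal{M}$ on $[Y^{\frac1p}(\mathcal{X})]'$, which is not available under Assumption~\ref{ma2}. In the paper this tail estimate at the endpoint is again obtained through the extrapolation-plus-density route (via \eqref{03281728}), so for $q=p$ even the lower bound in (ii) cannot be taken over unchanged.
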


\begin{remark}
If $\mathcal{X}=\mathbb{R}^n$,
$\rho$ is the standard Euclidean distance,
and $\mu$ is the $n$-dimensional Lebesgue measure,
then Theorem \ref{Thm3} reduces to \cite[Theorem 2.16]{pyyz24}.
\end{remark}

Now, we consider the case where $\mathcal{X}$ in \eqref{eq1.5}
and \eqref{eq1.6} is replaced by an unbounded measurable set
$\Omega\subseteqq\mathcal{X}$. We will also assume that
the underlying space under consideration
$(\mathcal{X},\rho,\mu)$ is a space of homogeneous type
satisfying the WRD condition.
To state the main results in this case,
we first introduce the concept of the weak
measure density condition.
Although motivated by the measure density condition in \cite[(1)]{hkt2008},
our definition differs slightly from its formulation.

\begin{definition}\label{1646}
Let $(\mathcal{X},\rho,\mu)$ be an unbounded space of homogeneous type.
A measurable set $\Omega\subseteqq\mathcal{X}$
is said to satisfy the \emph{weak measure density}
(for short, WMD) \emph{condition}
if there exist $x_0\in\mathcal{X}$ and $C_0\in(0,1]$ such that
\begin{align}\label{AssumptionOmega}
\varliminf_{r\to\infty}\frac{\mu(B(x_0,r)\cap\Omega)}{\mu(B(x_0,r))}\ge C_0.
\end{align}
\end{definition}

\begin{remark}\label{1933}
\begin{enumerate}
\item[\rm(i)]
If $\Omega\subseteqq\mathcal{X}$ satisfies the WMD condition,
then we are easy to show that
$\mathrm{diam\,}(\Omega)=\infty$ and hence $\Omega$ is unbounded.
In this case, we also find $\mu(\Omega)=\infty$.
Indeed, notice that $\mu(\mathcal{X})=\infty$ if and
only if $\mathrm{diam\,}(\mathcal{X})=\infty$ (see \cite[Lemma 5.1]{ny1997}).
If $\mu(\Omega)<\infty$, then
\begin{align*}
\varliminf_{r\to\infty}\frac{\mu(B(x_0,r)\cap\Omega)}{\mu(B(x_0,r))}
\leq\varliminf_{r\to\infty}\frac{\mu(\Omega)}{\mu(B(x_0,r))}=0,
\end{align*}
which contradicts \eqref{AssumptionOmega}.
\item[\rm(ii)]
If $(\mathcal{X},\rho,\mu)$ is bounded,
then $\Omega\subseteqq\mathcal{X}$ satisfies \eqref{AssumptionOmega}
if and only if there exists
a positive constant $C$ such that
\begin{align*}
\frac{\mu(\Omega)}{\mu(\mathcal{X})}\ge C,
\end{align*}
which is equivalent to $\mu(\Omega)\in(0,\infty)$
and hence the WMD condition in this case becomes a trivial condition.
\item[\rm(iii)]
It is easy to see that $\mathcal{X}$ itself satisfies the WMD condition
automatically.
\item[\rm(iv)]
In Definition~\ref{1646}, if we replace $x_0$ by
any fixed point in $\mathcal{X}$,
we then obtain an equivalent formulation of the WMD condition;
see \eqref{1549}.
\end{enumerate}
\end{remark}

Next, via assuming that the domain under consideration
satisfies the above WMD condition,
we give the last two main results of this article as follows.
For any $\beta\in(0,\infty)$,
denote by $C_{\mathrm{b}}^\beta(\Omega)$
the set of all functions on $\Omega$ satisfying that
there exists a function $g\in C_{\mathrm{b}}^\beta(\mathcal{X})$
such that $g=f$ on $\Omega$ and $g\equiv0$ on $\mathcal{X}\setminus\Omega$.

\begin{theorem}\label{MS2}
Let $0 < q \leq p < \infty$,
$(\mathcal{X},\rho,\mu)$ be a space of homogeneous type satisfying
the $\rm{WRD}$ condition,
$Y(\mathcal{X})$ be
a ball quasi-Banach function space,
and $Y(\Omega)$ the restriction of $Y(\mathcal{X})$
to $\Omega$, where $\Omega\subseteqq\mathcal{X}$
satisfies the $\rm{WMD}$ condition.
Assume that $Y^\frac{1}{p}(\mathcal{X})$ satisfies Assumption~\ref{ma}.
Then there exist two positive
constants $C$ and $\widetilde{C}$
such that the following
statements hold.
\begin{enumerate}
\item[\rm (i)]
Let $\beta\in(0,\infty)$.
For any $f\in C_{\mathrm{b}}^\beta(\Omega)\cap
\bigcup_{s\in(0,1)}\dot{W}^{s,q}_Y(\Omega)$,
\begin{align}\label{1750}
C\|f\|_{Y(\Omega)}&\leq
\varliminf_{s\to0^+}s^\frac{1}{q}
\left\|\left\{\int_\Omega\frac{|f(\cdot)-f(y)|^q}
{U(\cdot,y)[\rho(\cdot,y)]^{sq}}\,dy\right\}^\frac{1}{q}\right\|_{Y(\Omega)}
\nonumber\\
&\leq\varlimsup_{s\to0^+}s^\frac{1}{q}
\left\|\left\{\int_\Omega\frac{|f(\cdot)-f(y)|^q}
{U(\cdot,y)[\rho(\cdot,y)]^{sq}}\,dy\right\}^\frac{1}{q}\right\|_{Y(\Omega)}
\leq\widetilde{C}\|f\|_{Y(\Omega)}.
\end{align}
\item[\rm (ii)]
If $Y(\mathcal{X})$ is further assumed to have an
absolutely continuous quasi-norm,
then, for any
$f\in [Y(\Omega)/\mathbb{R}]\cap\bigcup_{s\in(0,1)}\dot{W}^{s,q}_Y(\Omega)$,
\begin{align}\label{1751}
C\|f\|_{Y(\Omega)/\mathbb{R}}&\leq
\varliminf_{s\to0^+}s^\frac{1}{q}
\left\|\left\{\int_\Omega\frac{|f(\cdot)-f(y)|^q}
{U(\cdot,y)[\rho(\cdot,y)]^{sq}}\,dy\right\}^\frac{1}{q}\right\|_{Y(\Omega)}
\nonumber\\
&\leq\varlimsup_{s\to0^+}s^\frac{1}{q}
\left\|\left\{\int_\Omega\frac{|f(\cdot)-f(y)|^q}
{U(\cdot,y)[\rho(\cdot,y)]^{sq}}\,dy\right\}^\frac{1}{q}\right\|_{Y(\Omega)}
\leq\widetilde{C}\|f\|_{Y(\Omega)/\mathbb{R}}.
\end{align}
\end{enumerate}
\end{theorem}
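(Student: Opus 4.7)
The plan is to reduce Theorem~\ref{MS2} to the whole-space result Theorem~\ref{Thm2} via zero-extension, using the WMD condition on $\Omega$ to control the boundary contributions coming from $\mathcal{X}\setminus\Omega$. For any $f\in C_{\mathrm b}^\beta(\Omega)$ we let $\widetilde f$ denote the zero extension of $f$ off $\Omega$; by the very definition of $C_{\mathrm b}^\beta(\Omega)$ one may take $\widetilde f\in C_{\mathrm b}^\beta(\mathcal{X})$, and by the definition of the restriction space we have $\|\widetilde f\|_{Y(\mathcal{X})}=\|f\|_{Y(\Omega)}$.

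The upper bound in \eqref{1750} will follow directly from Theorem~\ref{Thm2}(i) applied to $\widetilde f$: for $x\in\Omega$ one has the pointwise inequality
\[
\int_\Omega\frac{|f(x)-f(y)|^q}{U(x,y)[\rho(x,y)]^{sq}}\,d\mu(y)\le \int_\mathcal{X}\frac{|\widetilde f(x)-\widetilde f(y)|^q}{U(x,y)[\rho(x,y)]^{sq}}\,d\mu(y)
\]
(valid since $f=\widetilde f$ on $\Omega$ and the extra contribution from $y\in\mathcal{X}\setminus\Omega$ is nonnegative), so that the $Y(\Omega)$-quasi-norm of the left-hand side is dominated by the $Y(\mathcal{X})$-quasi-norm of the right-hand side. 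Multiplying by $s^{1/q}$, taking $\varlimsup_{s\to0^+}$, and invoking the upper bound of Theorem~\ref{Thm2}(i) yields the right inequality with the same constant $\widetilde C$.

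For the lower bound, apply Theorem~\ref{Thm2}(i) to $\widetilde f$ to obtain $\|f\|_{Y(\Omega)}=\|\widetilde f\|_{Y(\mathcal{X})}\lesssim \varliminf_{s\to 0^+}s^{1/q}\|\widetilde G_s\|_{Y(\mathcal{X})}$ with $\widetilde G_s(x)^q:=\int_\mathcal{X}|\widetilde f(x)-\widetilde f(y)|^q U(x,y)^{-1}[\rho(x,y)]^{-sq}\,d\mu(y)$. Splitting the $y$-integration according to whether $y\in\Omega$ or $y\in\mathcal{X}\setminus\Omega$ yields, for $x\in\Omega$,
\[
\widetilde G_s(x)^q=G_s^\Omega(x)^q+|f(x)|^q\,J_s^\Omega(x)^q,\qquad J_s^\Omega(x)^q:=\int_{\mathcal{X}\setminus\Omega}\frac{d\mu(y)}{U(x,y)[\rho(x,y)]^{sq}},
\]
while for $x\in\mathcal{X}\setminus\Omega$, $\widetilde G_s(x)^q$ reduces to a weighted integral of $|f(y)|^q$ over $y\in\Omega$ which, by a dyadic annular decomposition, is pointwise dominated by (a factor of order $1/s$ times) the Hardy--Littlewood maximal function of $|f|^q\mathbf{1}_\Omega$. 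Combining these estimates with the quasi-triangle inequality on $Y(\mathcal{X})$, implemented via a $p_0$-convexification using Assumption~\ref{ma} on $Y^{1/p}(\mathcal{X})$, reduces the desired lower bound to the absorption of $\varlimsup_{s\to 0^+}s^{1/q}\||f|J_s^\Omega\|_{Y(\Omega)}$ and the analogous $\mathcal{X}\setminus\Omega$ term into $\|f\|_{Y(\Omega)}$.

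The main obstacle, and the step where the WMD condition is indispensable, is the quantitative tail estimate
\[
\varlimsup_{s\to 0^+}s\,J_s^\Omega(x)^q\le (1-C_0)\,\kappa_q\quad\text{for every }x\in\mathcal{X},
\]
where $\kappa_q:=\lim_{s\to 0^+}s\int_\mathcal{X}U(x,y)^{-1}[\rho(x,y)]^{-sq}\,d\mu(y)$ is the positive constant produced by doubling together with WRD. This is proved by dyadically decomposing $\mathcal{X}$ into annuli centered at $x$, observing that as $s\to 0^+$ the mass of the kernel concentrates on the large-scale annuli, and using Remark~\ref{1933}(iv) to bound the density of $\mathcal{X}\setminus\Omega$ in those annuli by $1-C_0<1$. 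The strict gap $(1-C_0)^{1/q}<1$ is precisely what permits the absorption step and closes the argument. Part~(ii) will be handled by applying the same strategy to a representative of $f\in Y(\Omega)/\mathbb{R}$ that nearly attains the quotient quasi-norm, with the absolute continuity of the $Y(\mathcal{X})$-quasi-norm used to commute truncations and limits with the $Y$-quasi-norm.
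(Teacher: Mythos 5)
Your upper-bound argument is correct and is exactly the paper's: extend $f$ by zero to $\widetilde f\in C_{\mathrm b}^\beta(\mathcal{X})$, dominate the $\Omega$-integral by the $\mathcal{X}$-integral pointwise, and invoke the far-field estimate (Lemma~\ref{12202120}) together with the trivial near-field limit. The lower bound, however, is where your route diverges from the paper's and where it breaks down.

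Your absorption scheme rests on three claims that are not available. First, you posit a genuine limit $\kappa_q:=\lim_{s\to0^+}s\int_{\mathcal{X}}U(x,y)^{-1}[\rho(x,y)]^{-sq}\,d\mu(y)$, independent of $x$. The paper only ever establishes a $\varliminf$ lower bound (Lemma~\ref{2225}) and an upper bound of the same order with a \emph{different} constant (\eqref{QW1}); on a general space of homogeneous type with WRD there is no reason this limit exists or is $x$-independent. Second, the claimed sharp tail estimate $\varlimsup_{s\to0^+}s\,J_s^\Omega(x)^q\le(1-C_0)\kappa_q$ cannot be extracted from a dyadic annular decomposition: WMD controls the density of $\mathcal{X}\setminus\Omega$ in \emph{balls}, not in annuli, and the comparison $U(x,y)\sim\mu(B(x,2^jr))$ on each annulus costs a factor of the doubling constant $L_{(\mu)}$, so the best you get is $(1-C_0)$ times $\kappa_q$ \emph{up to multiplicative constants} — which is useless for absorption. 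Third, and most seriously, even granting both claims, absorption requires the lower-bound constant $C$ of Theorem~\ref{Thm2}(i) to strictly exceed $K(1-C_0)^{1/q}\kappa_q^{1/q}$, where $K$ is the quasi-triangle constant of $Y(\mathcal{X})$. But $C$ comes from the liminf estimate over the tail region $[B(x,s^{-1/q})]^\complement$ filtered through Fatou and extrapolation, while $\kappa_q$ is essentially the upper-bound (total-mass) constant; the paper explicitly notes these two constants are not equal even for weighted Lebesgue spaces. Since $C_0$ may be any number in $(0,1]$, your argument would at best prove the theorem under an additional quantitative restriction on $C_0$, not as stated.

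The paper avoids all of this by never subtracting: it proves directly (Lemma~\ref{2158}) that $\varliminf_{s\to0^+}s\int_{\Omega\cap[B(x,s^{-1/q})]^\complement}U(x,y)^{-1}[\rho(x,y)]^{-sq}\,d\mu(y)\ge C>0$ for every $x\in\Omega$. The mechanism is to upgrade WMD to the pointwise uniform ball estimate $\mu(B(x,r)\cap\Omega)\ge\widetilde C\mu(B(x,r))$ for large $r$, and then to \emph{iterate} the WRD condition until the dilation constant $\widetilde C_{(\mu)}$ satisfies $\widetilde C\,\widetilde C_{(\mu)}>1$, so that each annulus $\Omega\cap\bigl(B(x,\Lambda^jr)\setminus B(x,\Lambda^{j-1}r)\bigr)$ carries a definite positive fraction of $\mu(B(x,\Lambda^jr))$. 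This yields a strictly positive (but unquantified) constant with no need for sharp cancellation, after which the proof concludes exactly as in the whole-space case via the Fatou property and Lemma~\ref{02201535}. You should replace your absorption step by such a direct lower bound; the rest of your outline (including the treatment of part (ii) via a representative of the quotient class and absolute continuity of the quasi-norm) is then sound.
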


\begin{remark}
\begin{enumerate}
\item[\rm(i)]
Similar to Remark~\ref{2250}(iv),
the assumption $0<q\leq p<\infty$
in Theorem~\ref{MS2} is also \emph{sharp}
in some sense.
\item[\rm(ii)]
The WMD condition in Theorem~\ref{MS2}
is \emph{sharp} in the following sense: These exists an
example of open sets
which does not satisfy the WMD condition,
and the conclusion of Theorem~\ref{MS2} in this case fails;
see Remark~\ref{sharp2}.
\item[(iii)]
From Remark~\ref{1933}(iii),
we infer that Theorem~\ref{MS2} with $\Omega=\mathcal{X}$
holds, which in this case reduces to Theorem~\ref{Thm2}.
\end{enumerate}
\end{remark}

\begin{theorem}\label{MS3}
Let $0<q\leq p<\infty$,
$(\mathcal{X},\rho,\mu)$ be a space of homogeneous type satisfying
the $\rm{WRD}$ condition,
$Y(\mathcal{X})$ be
a ball quasi-Banach function space,
and $Y(\Omega)$ the restriction of $Y(\mathcal{X})$
to $\Omega$, where $\Omega\subseteqq\mathcal{X}$
satisfies the $\rm{WMD}$ condition.
Assume that $Y^\frac{1}{p}(\mathcal{X})$ satisfies Assumption~\ref{ma2}.
\begin{enumerate}
\item[\rm (i)]
Let $\beta\in(0,\infty)$. Then,
for any $f\in C_{\mathrm{b}}^\beta(\mathcal{X})
\cap\bigcup_{s\in(0,1)}\dot{W}^{s,q}_Y({\mathcal{X}})$,
\eqref{1750} holds.
\item[\rm (ii)]
If $Y(\mathcal{X})$ has an
absolutely continuous quasi-norm
and $\mu$ is Borel-semiregular,
then, for any
$f\in [Y(\mathcal{X})/\mathbb{R}] \cap\bigcup_{s\in(0,1)}
\dot{W}^{s,q}_Y({\mathcal{X}})$, \eqref{1751} holds.
\end{enumerate}
\end{theorem}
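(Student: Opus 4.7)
The plan is to follow the same extrapolation template by which Theorem~\ref{Thm3} is deduced from Theorem~\ref{Thm2} for the whole space, now transferred to the domain $\Omega$ by using Theorem~\ref{MS2} as the base result. Assumption~\ref{ma2} on $Y^{1/p}(\mathcal{X})$ is exactly the device that supplies, for each $m\in{\mathbb N}$, a ball Banach function space $(Y^{1/p})^{1/\theta_m}=Y^{1/(p\theta_m)}$ on which $\mathcal{M}$ is bounded on the associate space with operator norms uniformly bounded in $m$, so the stronger Assumption~\ref{ma} becomes available along a sequence of auxiliary spaces.

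For part (i), for each $m\in{\mathbb N}$ I would apply Theorem~\ref{MS2}(i) to the auxiliary triple consisting of $Y^{1/\theta_m}(\mathcal{X})$, the integrability exponent $q_m:=q\theta_m$, and the upper exponent $p$. Since $q_m\leq q\leq p$ and $(Y^{1/\theta_m})^{1/p}=Y^{1/(p\theta_m)}$ is precisely the space on which Assumption~\ref{ma} is now valid, Theorem~\ref{MS2}(i) furnishes, for admissible $f$, a two-sided Maz'ya--Shaposhnikova-type estimate of the shape of~\eqref{1750} with $Y(\Omega)$ replaced by $Y^{1/\theta_m}(\Omega)$, $q$ replaced by $q_m$, and constants controlled uniformly in $m$ by the endpoint clause of Assumption~\ref{ma2}. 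I would then send $m\to\infty$, using $\theta_m\to 1$ so that $q_m\to q$ and, via the definition of $Y^r$, $\|\cdot\|_{Y^{1/\theta_m}(\Omega)}\to\|\cdot\|_{Y(\Omega)}$ on the relevant class of functions. Combining this with a Fatou-type argument for the $\varliminf_{s\to 0^+}$ and $\varlimsup_{s\to 0^+}$ recovers~\eqref{1750} for the original $Y$ and $q$. The WMD hypothesis on $\Omega$ enters only through its role in Theorem~\ref{MS2}, so no new geometric input on $\Omega$ is needed at this stage.

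For part (ii), the additional absolute continuity of $\|\cdot\|_{Y(\mathcal{X})}$ together with the Borel-semiregularity of $\mu$ allows me to approximate a general $f\in[Y(\mathcal{X})/{\mathbb R}]$ by a sequence of functions for which part (i) is directly applicable, after possible modification of representatives on a $\mu$-null set to obtain Borel measurability; the quotient structure absorbs constant shifts that appear from the approximation, exactly as in the deduction of Theorem~\ref{Thm2}(ii) from Theorem~\ref{Thm2}(i). The estimates for the approximants, passed through the absolute-continuity limiting argument, then yield~\eqref{1751}.

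The principal obstacle is the simultaneous handling of two limits: the extrapolation limit $m\to\infty$ and the outer $\varliminf_{s\to 0^+}$ (respectively $\varlimsup_{s\to 0^+}$). Because these are genuine lower and upper limits rather than limits, their interchange with $\lim_{m\to\infty}$ has to be argued from monotonicity in $\theta_m$ and from the uniform boundedness of $\|{\mathcal M}\|_{[Y^{1/(p\theta_m)}]'\to[Y^{1/(p\theta_m)}]'}$ guaranteed by Assumption~\ref{ma2}; the case $q=p$, where one cannot lower $p$ while keeping $q\leq p$, is the most delicate and is precisely the situation that motivates separating $q_m=q\theta_m$ from $p$. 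A secondary difficulty in part (ii) is ensuring that the Borel-semiregular approximation is compatible with the Maz'ya--Shaposhnikova double integral, controlling simultaneously the pointwise modulus $|f(\cdot)-f(y)|^q$ and the quotient norm $\|\cdot\|_{Y(\Omega)/{\mathbb R}}$.
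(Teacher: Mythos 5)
Your overall strategy---reduce to the whole-space/extrapolation machinery behind Theorem~\ref{Thm3} and use Theorem~\ref{MS2} as the base case---is the right family of ideas, but the way you deploy Assumption~\ref{ma2} contains a genuine gap. The extrapolation encoded in Assumption~\ref{ma2} and Lemma~\ref{03281616} acts on the \emph{outer} norm: one keeps the Gagliardo-type functional
$g(f):=\{s\int_{[B(\cdot,R)]^\complement}|f(\cdot)-f(y)|^q U(\cdot,y)^{-1}[\rho(\cdot,y)]^{-sq}\,d\mu(y)\}^{1/q}$
with the \emph{same} exponent $q$, applies Lemma~\ref{12202120} to the space $Y^{1/\theta_m}(\mathcal{X})$ (legitimate because $(Y^{1/\theta_m})^{1/p}=Y^{1/(p\theta_m)}$ satisfies Assumption~\ref{ma} and $q\le p$ still holds), rewrites the resulting bound as $\|[g(f)]^{1/\theta_m}\|_{Y(\mathcal{X})}\lesssim C^{2/\theta_m}\||f|^{1/\theta_m}\|_{Y(\mathcal{X})}$, and then invokes Lemma~\ref{03281616} to conclude $\|g(f)\|_{Y(\mathcal{X})}\lesssim\|f\|_{Y(\mathcal{X})}$ directly, with no limit in $m$ interchanged with the limits in $s$. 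By instead replacing the integrability exponent $q$ by $q_m=q\theta_m$ you change the functional itself: the quantity $\{\int|f(\cdot)-f(y)|^{q_m}\cdots\}^{1/q_m}$ is not comparable to the $q$-version, there is no monotonicity in $\theta_m$ that would let you pass $m\to\infty$ inside $\varliminf_{s\to0^+}$ or $\varlimsup_{s\to0^+}$, the hypothesis $f\in\bigcup_{s}\dot{W}^{s,q_m}_{Y^{1/\theta_m}}(\Omega)$ is not implied by $f\in\bigcup_{s}\dot{W}^{s,q}_{Y}(\Omega)$, and $\|f\|_{Y^{1/\theta_m}(\Omega)}=\||f|^{1/\theta_m}\|_{Y(\Omega)}^{\theta_m}$ does not converge to $\|f\|_{Y(\Omega)}$ for general $f$. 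Your own closing remark that the interchange of the two limits is ``the principal obstacle'' is accurate: that obstacle is not surmountable along the route you describe, and the case $q=p$ does not require lowering $q$ at all, since Theorem~\ref{MS2} and Lemma~\ref{12202120} already allow $q=p$.

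The paper's actual proof avoids all of this. For (i) it extends $f$ by zero to $\widetilde f\in C_{\mathrm{b}}^\beta(\mathcal{X})$ and observes, from the proof of Theorem~\ref{MS2}(i), that only the two whole-space estimates \eqref{03281117} and \eqref{03281211} for $\widetilde f$ are needed; these were already established in the proof of Theorem~\ref{Thm3}(i) via the fixed-$q$ extrapolation just described, while the lower bound is proved directly for $Y(\Omega)$ using Lemma~\ref{2158} and the Fatou property, not by extrapolating lower bounds over $m$. For (ii) one sets $g:=\widetilde{f+a}$ and reduces to \eqref{06102109} and \eqref{06102111}, whose proofs use the density of $C_{\mathrm{b}}^\beta(\mathcal{X})$ in $Y(\mathcal{X})$ (Lemma~\ref{lemma2.30}, which is where Borel-semiregularity enters), a truncation to $E_N$, and a three-term decomposition, together with absolute continuity of the quasi-norm; your sketch of (ii) is in the right spirit but inherits the unresolved issues of your part (i), on which it relies.
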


As a direct corollary of Theorems~\ref{MS2} and~\ref{MS3}
on the standard Euclidean space $\mathbb{R}^n$, we have the following result.
Notice that, in this case, for any $x,y\in\mathbb{R}^n$,
\begin{align*}
\rho(x,y)=|x-y|
\ \ \text{and}\ \
U(x,y)=\left|B(\mathbf{0},1)\right||x-y|^n,
\end{align*}
where $|B(\mathbf{0},1)|$ is the Lebesgue measure of
the unit ball in $\mathbb{R}^n$.
Denote by $C_{\mathrm{c}}(\Omega)$
the set of all continuous functions on $\Omega$
with compact support in $\Omega$.

\begin{corollary}
Let $0<q\leq p<\infty$, $Y(\mathbb{R}^n)$
be a ball quasi-Banach function space,
and $Y(\Omega)$ the restriction of $Y(\mathbb{R}^n)$
to $\Omega$,
where $\Omega\subseteqq\mathbb{R}^n$ satisfies the $\rm{WMD}$ condition.
Assume that $Y^\frac{1}{p}(\mathbb{R}^n)$ satisfies Assumption~\ref{ma}
or Assumption~\ref{ma2}.
Then there exist two positive
constants $C$ and $\widetilde{C}$
such that the following
assertions hold.
\begin{enumerate}
\item[\rm (i)]
For any $f\in C_{\mathrm{c}}(\Omega)\cap
\bigcup_{s\in(0,1)}\dot{W}^{s,q}_Y(\Omega)$,
\begin{align*}
C\|f\|_{Y(\Omega)}&\leq
\varliminf_{s\to0^+}s^\frac{1}{q}
\left\|\left\{\int_\Omega\frac{|f(\cdot)-f(y)|^q}
{|\cdot-y|^{n+sq}}\,dy\right\}^\frac{1}{q}\right\|_{Y(\Omega)}\\
&\leq\varlimsup_{s\to0^+}s^\frac{1}{q}
\left\|\left\{\int_\Omega\frac{|f(\cdot)-f(y)|^q}
{|\cdot-y|^{n+sq}}\,dy\right\}^\frac{1}{q}\right\|_{Y(\Omega)}
\leq\widetilde{C}\|f\|_{Y(\Omega)}.\nonumber
\end{align*}
\item[\rm (ii)]
If $Y(\mathbb{R}^n)$ is further assumed to have an
absolutely continuous quasi-norm,
then, for any
$f\in [Y(\Omega)/\mathbb{R}]\cap\bigcup_{s\in(0,1)}\dot{W}^{s,q}_Y(\Omega)$,
\begin{align*}
C\|f\|_{Y(\Omega)/\mathbb{R}}&\leq
\varliminf_{s\to0^+}s^\frac{1}{q}
\left\|\left\{\int_\Omega\frac{|f(\cdot)-f(y)|^q}
{|\cdot-y|^{n+sq}}\,dy\right\}^\frac{1}{q}\right\|_{Y(\Omega)}\\
&\leq\varlimsup_{s\to0^+}s^\frac{1}{q}
\left\|\left\{\int_\Omega\frac{|f(\cdot)-f(y)|^q}
{|\cdot-y|^{n+sq}}\,dy\right\}^\frac{1}{q}\right\|_{Y(\Omega)}
\leq\widetilde{C}\|f\|_{Y(\Omega)/\mathbb{R}}.
\end{align*}
\end{enumerate}
\end{corollary}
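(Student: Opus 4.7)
The plan is to obtain the corollary as a direct specialization of Theorems~\ref{MS2} and~\ref{MS3}, after verifying that the Euclidean setting fits their hypotheses and after bridging the regularity class $C_{\mathrm{b}}^\beta$ used there with the class $C_{\mathrm{c}}$ appearing in the statement.

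First, I would check that $(\mathbb{R}^n,|\cdot-\cdot|,dx)$ is a space of homogeneous type satisfying the WRD condition. The Lebesgue measure is doubling with doubling dimension $n$; and for any fixed $x_0\in\mathbb{R}^n$ and any $\lambda>1$ one has $\mu(B(x_0,\lambda r))/\mu(B(x_0,r))=\lambda^n>1$ for every $r>0$, so \eqref{reverseD} holds with $C_{(\mu)}=\lambda^n$. Since $\mu(B(x,r))=|B(\mathbf{0},1)|r^n$ is independent of $x$, one also obtains $U(x,y)=|B(\mathbf{0},1)||x-y|^n$. Substituting this into the mollifier appearing in \eqref{1750} and \eqref{1751} reduces the integrand to $|f(\cdot)-f(y)|^q/|\cdot-y|^{n+sq}$, with the benign constant $|B(\mathbf{0},1)|^{-1/q}$ absorbed into $C$ and $\widetilde C$.

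Next, I would bridge from $C_{\mathrm{b}}^\beta(\Omega)$ to $C_{\mathrm{c}}(\Omega)$ by mollification. Given $f\in C_{\mathrm{c}}(\Omega)\cap\dot{W}^{s_0,q}_Y(\Omega)$ for some $s_0\in(0,1)$, choose a standard smooth mollifier $\eta_\epsilon$ and set $f_\epsilon:=f\ast\eta_\epsilon$. For $\epsilon$ smaller than the distance from $\mathrm{supp}\,f$ to the complement of $\Omega$, the mollification $f_\epsilon$ lies in $C_{\mathrm{c}}^\infty(\Omega)$, is Lipschitz with compact support, and hence its zero extension belongs to $C_{\mathrm{b}}^\beta(\mathbb{R}^n)$ for any $\beta\in(0,1)$. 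Thus $f_\epsilon\in C_{\mathrm{b}}^\beta(\Omega)$ in the sense of the paper, and Theorem~\ref{MS2}(i) or~\ref{MS3}(i) (depending on which of Assumptions~\ref{ma} or~\ref{ma2} is in force) applies to each $f_\epsilon$ and yields the chain of inequalities of \eqref{1750} with $f$ replaced by $f_\epsilon$; part (ii) proceeds identically with \eqref{1751}.

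Finally, I would pass to the limit $\epsilon\to 0^+$. Uniform convergence $f_\epsilon\to f$ with a common compact support gives $\|f_\epsilon\|_{Y(\Omega)}\to\|f\|_{Y(\Omega)}$ via dominated convergence together with the Fatou property of ball quasi-Banach function spaces; a parallel pointwise-plus-dominated-convergence argument controls the inner fractional integrand, and the outer $Y(\Omega)$-quasi-norm is then handled by the same semi-continuity. Part (ii) follows the same route in the quotient $Y(\Omega)/\mathbb{R}$, using the absolute continuity of the quasi-norm to justify convergence modulo constants. The main obstacle is the interchange of the $\epsilon\to 0^+$ and $s\to 0^+$ limits; in particular, transferring the lower bound through $\varliminf_{s\to 0^+}$ requires a one-sided Fatou-type semi-continuity in $\epsilon$, and may call for a diagonal argument or a uniform-in-$s$ equi-integrability estimate near $s=0$ rather than a naive interchange of limits.
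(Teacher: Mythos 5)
Your specialization step is the same one the paper relies on: $(\mathbb{R}^n,|\cdot-\cdot|,dx)$ satisfies the WRD condition because $|B(x_0,\lambda r)|/|B(x_0,r)|=\lambda^n$ for all $r$, and $U(x,y)=|B(\mathbf{0},1)|\,|x-y|^n$ reduces the kernel to $|x-y|^{n+sq}$ up to a harmless constant absorbed into $C$ and $\widetilde C$; the paper offers no further argument and simply calls the statement a direct corollary of Theorems~\ref{MS2} and~\ref{MS3}. You correctly spot the one point this glosses over, namely that $C_{\mathrm{c}}(\Omega)\not\subset C_{\mathrm{b}}^{\beta}(\Omega)$, so part (i) is not literally an instance of Theorem~\ref{MS2}(i) or~\ref{MS3}(i). (Part (ii) needs no bridge at all: its hypothesis is just $f\in[Y(\Omega)/\mathbb{R}]\cap\bigcup_{s}\dot W^{s,q}_Y(\Omega)$, identical to the theorems, so running the mollification there is superfluous.)

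The gap is in your last paragraph, and you flag it yourself: applying the theorems to $f_\epsilon$ and then ``passing to the limit'' does not transfer the lower bound through $\varliminf_{s\to0^+}$ by dominated convergence or Fatou in $\epsilon$, since the limits in $s$ and $\epsilon$ do not commute for free. The quantitative fix is already in the paper's toolbox: writing $G_s(h):=\{\int_\Omega|h(\cdot)-h(y)|^q|\cdot-y|^{-n-sq}\,dy\}^{1/q}$, the quasi-triangle inequality gives $s^{1/q}\|G_s(f_\epsilon)\|_{Y(\Omega)}\lesssim s^{1/q}\|G_s(f)\|_{Y(\Omega)}+s^{1/q}\|G_s(f-f_\epsilon)\|_{Y(\Omega)}$, and the second term is bounded by a constant multiple of $\|f-f_\epsilon\|_{Y(\Omega)}$ uniformly in $s\in(0,1)$ by the upper-estimate machinery (Lemmas~\ref{12202129} and~\ref{12202120}, exactly as in the $I_1,I_2,I_3$ decomposition in the proof of Theorem~\ref{Thm3}(ii)); since $\|f-f_\epsilon\|_{Y(\Omega)}\le\|f-f_\epsilon\|_{L^\infty}\|\mathbf{1}_K\|_{Y(\Omega)}\to0$ for a common compact set $K$, one obtains $C\|f_\epsilon\|_{Y(\Omega)}\lesssim\varliminf_{s\to0^+}s^{1/q}\|G_s(f)\|_{Y(\Omega)}+\|f-f_\epsilon\|_{Y(\Omega)}$ and then lets $\epsilon\to0^+$. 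You should carry this out rather than leaving it as ``may call for a diagonal argument.'' Alternatively --- and this is evidently what the authors mean by ``direct corollary'' --- no mollification is needed at all: the only properties of $C_{\mathrm{b}}^{\beta}$ actually used in the proofs of Theorems~\ref{MS2}(i) and~\ref{MS3}(i) are that $f$ is bounded and supported in a ball (see the proof of Lemma~\ref{02201535}(i), which uses only $\mathrm{supp\,}f\subset B(x_0,M)$), and every $f\in C_{\mathrm{c}}(\Omega)$ enjoys both, so those proofs apply verbatim.
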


We prove Theorem~\ref{Thm1} in Section~\ref{3212},
Theorem~\ref{Thm2} in Subsections~\ref{sec-2-1}
and~\ref{sec-2-2},
Theorem~\ref{Thm3} in Subsection~\ref{sec-2-3},
Theorem~\ref{MS2} in Subsection~\ref{3.1},
and Theorem~\ref{MS3} in Subsection~\ref{3.2}.
We show these
by using the method of extrapolation
to overcome the difficulties caused by that
the quasi-norm of $Y(\mathcal{X})$ has no explicit expression.
The primary novelty of this article lies in that, to obtain these results,
we propose two new concepts, namely the weak reverse doubling
condition (see Definition~\ref{951})
and the weak measure density condition
(see Definition~\ref{1646}), which are proved to be necessary in some sense.
These results are of wide generality and are
applied to ten specific ball quasi-Banach function spaces,
most of which are new.

It is also worth mentioning that
there exists another way to remedy the aforementioned
defect of the family $\{\|\cdot\|_{\dot{W}^{s,p}(\mathbb{R}^n)}\}_{s\in(0,1)}$	
at the endpoint $s=0$, that is, to let $s=0$ in \eqref{wsp}
and replace the product $L^p$-norm in \eqref{wsp}
by the product weak $L^p$-norm  simultaneously;
see \cite{bsvy22,gy21,pyyz24} for the Euclidean
space case and \cite{sb24a,gh22,kms2023,m23,tnyyz24}
for the metric measure space case.
We will not pursue more about this issue here.

The remainder of this article is organized as follows.

Section \ref{3212} is devoting to showing
Theorems~\ref{Thm1},~\ref{Thm2}, and~\ref{Thm3}.
We first recall some preliminary concepts on
spaces of homogeneous type
and ball quasi-Banach function spaces.
We then prove Theorem~\ref{Thm1} directly.
In Subsection~\ref{sec-2-1},
we first recall some preliminaries on
Muckenhoupt weights
and then give the proof of Theorem~\ref{Thm2}.
To this end,
we first use
some essential properties of
Muckenhoupt weights
to show a similar
upper estimate of \eqref{eq1.5}
in weighted Lebesgue spaces; see Lemma \ref{keythmofwss}.
Combining this
and a key extrapolation lemma (see Lemma \ref{4.6}),
the latter of which is a bridge connecting
ball quasi-Banach function spaces and weighted Lebesgue spaces,
we obtain the upper estimate of \eqref{eq1.5};
see Theorem \ref{04021646}.
In Subsection \ref{sec-2-2},
applying the WRD condition of $\mu$,
we first establish a key lower estimate
on spaces of homogeneous type;
see Lemma \ref{2225}.
Using this, the Fatou property of $Y(\mathcal{X})$,
the corresponding estimates on
weighted Lebesgue spaces,
and a method of extrapolation,
we establish the lower estimate of \eqref{eq1.5};
see Theorem~\ref{04021714}.
In Subsection \ref{sec-2-3},
we use the endpoint bounded assumption on the Hardy--Littlewood
maximal operator and a density argument to prove
Theorem~\ref{Thm3}.
In Subsection~\ref{example},
we provide an example to show that
the Maz'ya--Shaposhnikova representation of quasi-norms
of ball quasi-Banach function spaces fails
on certain spaces of homogeneous type,
which indicates the
weak reverse doubling assumption on $\mu$
is necessary in some sense (see Proposition~\ref{835}).

In Section \ref{32123},
we first give some preliminaries on
the restriction of ball quasi-Banach function spaces.
In Subsection~\ref{3.1},
we establish an exquisite lower estimate on domains
to prove Theorem~\ref{MS2}.
In Subsection~\ref{3.2},
we also consider the corresponding result
under the endpoint bounded assumption on the Hardy--Littlewood
maximal operator. That is, we give the proof of Theorem~\ref{MS3}.
In Subsection~\ref{example2}, we construct an example to show that
the Maz'ya--Shaposhnikova representation of quasi-norms
of ball quasi-Banach function spaces fails
on certain domains $\Omega\subseteqq\mathbb{R}^n$,
which indicates the
weak measure density assumption on $\Omega$
is necessary in some sense (see Proposition~\ref{1233}).

In Section \ref{sec4}, we apply
the main results of this article,
namely Theorems~\ref{Thm2},~\ref{Thm3},
\ref{MS2}, and~\ref{MS3}, to
ten specific ball quasi-Banach function
spaces, namely
weighted Lebesgue spaces,
variable Lebesgue spaces,
weighted Lorentz spaces,
Orlicz spaces,
generalized Morrey spaces,
generalized block spaces,
generalized Lorentz--Morrey spaces,
generalized
Lorentz-block spaces,
generalized Orlicz--Morrey spaces,
and
generalized
Orlicz-block spaces,
most of which are new.
To obtain the last two applications,
we first prove that the generalized Orlicz--Morrey space
and the generalized Orlicz-block space are mutually
the associate space (see Theorem \ref{thm:K dual}).
Moreover, we also establish the boundedness
of the Hardy--Littlewood maximal operator
on generalized Orlicz--Morrey spaces (see Theorem \ref{thm:Mf OM})
and generalized Orlicz-block spaces (see Theorem \ref{thm:Mf OB}).
Duo to the generality and the
flexibility of the main results of this
article,
more applications (for example,
to some newfound ball quasi-Banach function spaces) are
predictable.

At the end of this section, we make some conventions on symbols.
Let ${\mathbb N}:=\{1,2,\ldots\}$ and $\mathbb{Z}_+:=\mathbb{N}\cup\{0\}$.
Let $(\mathcal{X},\rho,\mu)$ be a space of homogeneous type.
For any $E\subseteqq\mathcal{X}$,
we denote by $E^{\complement}$
the set $\mathcal{X}\setminus E$
and by $\mathbf{1}_E$ its \emph{characteristic function}.
The ball $B(x,r)$ of $\mathcal{X}$,
with center $x\in\mathcal{X}$ and radius $r\in(0,\infty)$,
is defined by setting
$B(x,r):=\{y\in\mathcal{X}:\rho(x,y)<r\}$.
For any $\lambda\in(0,\infty)$ and any ball $B:=B(x_B,r_B)$
with $x_B \in\mathcal{X}$ and $r_B \in (0,\infty)$,
let $\lambda B:=B(x_B,\lambda r_B)$.
For any $x,y\in\mathcal{X}$, let
$$
U(x,y):=\min\left\{\mu(B(x,\rho(x,y))),\,\mu(B(y,\rho(x,y)))\right\}.
$$
The diameter $\mathrm{diam\,}(\mathcal{X})$ of $(\mathcal{X},\rho)$
is defined by setting
$\mathrm{diam\,}(\mathcal{X}):=\sup_{x,y\in\mathcal{X}}\rho(x,y)$.
Denote by $\mathscr{M}(\mathcal{X})$
the set of all $\mu$-measurable functions on $\mathcal{X}$.
For any $f\in\mathscr{M}(\mathcal{X})$,
the support $\mathrm{supp\,}f$ of $f$ is defined by setting
$\mathrm{supp\,}f:=\{x\in\mathcal{X}:f(x)\neq 0\}$.
For any $p\in(0,\infty]$, we denote by the \emph{symbol}
$L_{{\mathop\mathrm{\,loc\,}}}^p(\mathcal{X})$
the set of all locally $p$-integrable functions on $\mathcal{X}$.
The \emph{Hardy--Littlewood maximal operator}
${\mathcal M}$
is defined by setting, for any
$f\in L_{{\mathop\mathrm{\,loc\,}}}^1(\mathcal{X})$
and $x\in\mathcal{X}$,
\begin{equation*}
{\mathcal M}(f)(x):=\sup_{B\ni x}\frac1{\mu(B)}\int_B|f(y)|\,d\mu(y),
\end{equation*}
where the supremum is taken over
all balls $B\subset\mathcal{X}$ containing $x$.
We always use $C$ to denote a \emph{positive constant},
independent of the main parameters involved,
but perhaps varying from line to line.
We also use
$C_{(\alpha,\beta,\ldots)}$ to denote a positive
constant depending on the indicated parameters $\alpha,
\beta,\ldots.$ The \emph{symbol} $f\lesssim g$ means
that $f\leq Cg$. If $f\lesssim g$ and $g\lesssim f$,
we then write $f\sim g$. If $f\leq Cg$ and $g=h$ or
$g\leq h$, we then write $f\lesssim g=h$ or $f\lesssim g\leq h$.
For any $q\in[1,\infty]$, we denote by $q'$ its
\emph{conjugate exponent}, that is, $\frac{1}{q}+\frac{1}{q'}=1$.
We also use $\varliminf$ and $\varlimsup$ to denote the
\emph{limit inferior} and the \emph{limit superior}, respectively.
Finally, in all proofs we
consistently retain the symbols introduced
in the original theorem (or related statement).

\section{Maz'ya--Shaposhnikova Representation
on Spaces of \\Homogeneous Type Satisfying
the WRD Condition}\label{3212}

In this section, we aim to establish the
Maz'ya--Shaposhnikova representation
of quasi-norms of ball quasi-Banach function spaces
on spaces of homogeneous type satisfying the WRD condition.
In Subsection~\ref{sec-2-1}, we show the upper estimate
of Theorem~\ref{Thm2}.
In Subsection~\ref{sec-2-2}, we prove the lower estimate
of Theorem~\ref{Thm2}.
In Subsection~\ref{sec-2-3}, we show Theorem~\ref{Thm3}.
In Subsection~\ref{example}, we prove that the
weak reverse doubling assumption on $\mu$ is necessary in some sense.

We begin with recalling the following
definition of quasi-metric spaces.

\begin{definition}\label{Deqms}	
A \emph{quasi-metric space} $(\mathcal{X},\rho)$ is a non-empty
set ${\mathcal{X}}$ equipped with a \emph{quasi-metric} $\rho$,
that is, a nonnegative function defined on $\mathcal{X} \times \mathcal{X}$
satisfies that, for any $x,y,z\in \mathcal{X}$,
\begin{enumerate}
\item[\rm (i)]
${\mathcal{\rho}}(x,y)=0$ if and only if $x=y$,
\item[\rm (ii)]
${\mathcal{\rho}}(x,y)={\mathcal{\rho}}(y,x)$,
\item[\rm (iii)]
there exists a constant $K_0 \in [1,\infty)$,
independent of $x,y,$ and $z$, such that
\begin{align*}
{\mathcal{\rho}}(x,z)\leq K_0\left[{\mathcal{\rho}}(x,y)
+{\mathcal{\rho}}(y,z)\right].
\end{align*}
\end{enumerate}
\end{definition}

Now, we recall the concept of spaces of homogeneous
type in the sense of Coifman and Weiss \cite{cw71,cw77}.

\begin{definition}\label{Dehts}	
A triplet $(\mathcal{X},\rho,\mu)$ is called
a \emph{space of homogeneous type} if $(\mathcal{X},\rho)$
is a quasi-metric space and $\mu$ is a nonnegative measure,
defined on a $\sigma$-algebra of subsets of $\mathcal{X}$
which contains all $\rho$-balls, such that the following
\emph{doubling condition} holds, that is,
there exists a constant $L_{(\mu)}\in [1,\infty)$ such that,
for any $x\in\mathcal{X}$ and $r\in(0,\infty)$,
\begin{align}\label{dc}
0<\mu(B(x,2r)) \leq L_{(\mu)}\mu(B(x,r))<\infty.
\end{align}
\end{definition}

\begin{remark}
By \eqref{dc}, we conclude that,
for any $\lambda \in [1,\infty)$, $x\in\mathcal{X}$,
and $r\in(0,\infty)$,
\begin{align}\label{ud}
\mu(B(x,\lambda r))\leq L_{(\mu)} \lambda^d \mu(B(x,r)),
\end{align}
where $d:=\log_{2}L_{(\mu)}$ is called the
\emph{upper dimension} of $\mathcal{X}$.
Moreover,
if the cardinality of the set $\mathcal{X}$ is at least $2$,
then, from \eqref{dc},
we deduce that $L_{(\mu)}\in(1,\infty)$ and hence $d\in(0,\infty)$;
see \cite[p.\,72]{AlMi2015}.
\end{remark}

Next, we recall the definition of H\"older spaces
with bounded support.

\begin{definition}\label{855}
Let $\beta\in(0,\infty)$.
The \emph{H\"older space
$C_{\mathrm{b}}^\beta(\mathcal{X})$ with bounded support}
is defined to be the set of all
complex-valued functions $f$ on $\mathcal{X}$ with bounded support such that
\begin{align*}
\|f\|_{\dot{C}^\beta(\mathcal{X})}
:=\sup_{\genfrac{}{}{0pt}{}{x,y\in\mathcal{X}}{x\neq y}}
\frac{|f(x)-f(y)|}{[\rho(x,y)]^\beta}<\infty.
\end{align*}
\end{definition}

The following definition can be found in
\cite[Definition 2.4]{yhyy22},
which with $\mathcal{X}=\mathbb{R}^n$
was introduced by Sawano et al. \cite{shyy17}.
Let $\mathscr{M}(\mathcal{X})$
be the set of all
measurable functions on $\mathcal{X}$.

\begin{definition}\label{Debqfs}
A quasi-Banach space $Y(\mathcal{X})$, equipped with
a quasi-norm $\|\cdot\|_{Y(\mathcal{X})}$
which makes sense for all $f\in\mathscr{M}(\mathcal{X})$,
is called a \emph{ball quasi-Banach function space}		
if
\begin{enumerate}
\item[\rmfamily(i)]
$f\in\mathscr{M}(\mathcal{X})$ and $\|f\|_{Y(\mathcal{X})}=0$ imply that $f=0$
$\mu$-almost everywhere on $\mathcal{X}$;
\item[\rmfamily(ii)]
$f,g\in\mathscr{M}(\mathcal{X})$ and $|g|\leq|f|$
$\mu$-almost everywhere on $\mathcal{X}$ imply
$\|g\|_{Y(\mathcal{X})}\le\|f\|_{Y(\mathcal{X})}$;
\item[\rmfamily(iii)]
for any sequence
$\{f_{m}\}_{m\in\mathbb{N}}$ in $\mathscr{M}(\mathcal{X})$
and any function $f\in\mathscr{M}(\mathcal{X})$ satisfying that
$0\leq f_m\uparrow f$ $\mu$-almost
everywhere on $\mathcal{X}$, one has
$\|f_m\|_{Y(\mathcal{X})}\uparrow\|f\|_{Y(\mathcal{X})}$;
\item[\rmfamily(iv)]
$\mathbf{1}_B\in Y(\mathcal{X})$ for any ball $B\subset\mathcal{X}$.
\end{enumerate}
Moreover, a ball quasi-Banach function space $Y(\mathcal{X})$ is
called a \emph{ball Banach function space} if
\begin{enumerate}
\item[\rmfamily(v)]
for any $f,g\in Y(\mathcal{X})$,
$\|f+g\|_{Y(\mathcal{X})}\leq \|f\|_{Y(\mathcal{X})}+ \|g\|_{Y(\mathcal{X})}$;
\item[\rmfamily(vi)]
for any ball $B\subset\mathcal{X}$, there exists
$C_{(B)}\in(0,\infty)$ such that, for any $f\in Y(\mathcal{X})$,
\begin{align*}
\int_B|f(x)|\,d{\mu(x)}\leq C_{(B)}\|f\|_{Y(\mathcal{X})}.
\end{align*}
\end{enumerate}
\end{definition}

\begin{remark}
\begin{enumerate}
\item[\rm(i)]
By (i) and (ii) of Definition~\ref{Debqfs},
we find that, if $f\in \mathscr{M}(\mathcal{X})$,
then $\|f\|_{Y(\mathcal{X})}=0$ if and only if
$f=0$ $\mu$-almost everywhere on $\mathcal{X}$.

\item[\rm(ii)]
In Definition \ref{Debqfs},
if we replace any ball $B$ by
any bounded measurable set $E$,
we then obtain its another equivalent formulation.

\item[\rm(iii)]
As pointed in \cite[Theorem 2]{dfmn21},
both (ii) and (iii) of Definition \ref{Debqfs}
imply that any ball quasi-Banach function space is complete;
see also \cite{ln20231}.

\item[\rm(iv)]
From (ii) and (iv) of Definition \ref{Debqfs},
we infer that, for any $\beta\in(0,\infty)$,
$C_{\mathrm{b}}^\beta(\mathcal{X})\subset Y(\mathcal{X})$.

\item[\rm(v)]
In Definition~\ref{Debqfs},
if we replace (iv)
by the \emph{saturation property} that,
for any measurable set $E\subseteqq\mathcal{X}$
with $\mu(E)\in(0,\infty)$, there exists a measurable set $F\subseteqq E$
with $\mu(F)\in(0,\infty)$ satisfying that $\mathbf{1}_F\in Y(\mathcal{X})$,
we then obtain the definition of quasi-Banach function spaces
in \cite{ln20231}.
Moreover, by \cite[Proposition~2.5 and Remark~2.6]{zyy23a}
(see also \cite[Proposition~4.21]{n23}
and \cite[Remark~2.8]{ln20231}),
we conclude that, if $Y(\mathcal{X})$
satisfies the additional assumption that
the Hardy--Littlewood maximal operator $\mathcal{M}$ is weakly bounded on
one of its convexification,
then the definition of quasi-Banach function
spaces in \cite{ln20231} coincides with
the definition of ball quasi-Banach function spaces. Thus,
under this additional assumption,
working with quasi-Banach function spaces in \cite{ln20231}
or ball quasi-Banach function spaces
would yield exactly the same results.
\end{enumerate}
\end{remark}

Now, we recall the definition of the
$p$-convexification of ball quasi-Banach
function spaces; see
\cite[Definition 2.10]{yhyy21}.

\begin{definition}\label{Detuhua}
Let $Y(\mathcal{X})$ be a ball quasi-Banach function space.
For any given $p\in(0,\infty)$,
the \emph{$p$-convexification} $Y^p(\mathcal{X})$
of $Y(\mathcal{X})$ is defined by setting
$Y^p(\mathcal{X}):=\{f\in\mathscr{M}(\mathcal{X}):
|f|^p\in Y(\mathcal{X})\}$
equipped with the quasi-norm $\|f\|_{Y^p(\mathcal{X})}
:=\|\,|f|^p\|_{Y(\mathcal{X})}^{1/p}$.
\end{definition}

The following concept of
the absolutely continuous quasi-norm
of ball quasi-Banach function spaces
with $\mathcal{X}=\mathbb{R}^n$ is precisely
\cite[Definition 3.2]{wyy20}.	

\begin{definition}
A ball quasi-Banach function space $Y(\mathcal{X})$ is said to have an
\emph{absolutely continuous quasi-norm} on $\mathcal{X}$
if, for any $f\in Y(\mathcal{X})$
and any sequence $\{E_j\}_{j\in{\mathbb N}}$ of measurable
sets in $\mathcal{X}$
satisfying that $\mathbf{1}_{E_j}\to 0$ $\mu$-almost
everywhere as $j\to\infty $,
one has $\|f\mathbf{1}_{E_j}\|_{Y(\mathcal{X})}\to 0$ as $j\to\infty$.
\end{definition}

The following definition can be found in
\cite[(2.9)]{yhyy21}.

\begin{definition}\label{defXpie}
Let $Y(\mathcal{X})$ be a ball Banach function space.
The \emph{associate space} (also called the
\emph{K\"othe dual}) $[Y(\mathcal{X})]'$ of $Y(\mathcal{X})$
is defined by setting
\begin{align*}
[Y(\mathcal{X})]':=\left\{f\in\mathscr M({\mathcal{X}}):
\|f\|_{[Y(\mathcal{X})]'}
:=\sup_{\genfrac{}{}{0pt}{}{g\in Y(\mathcal{X})}{\|g\|_{Y(\mathcal{X})}\leq 1}}
\|fg\|_{L^1({\mathcal{X}})}<\infty\right\},
\end{align*}
where $\|\cdot\|_{[Y(\mathcal{X})]'}$ is called the
\emph{associate norm} of $\|\cdot\|_{Y(\mathcal{X})}$.
\end{definition}

The following concept of
quotient spaces in the case where
$\mathcal{X}$ is a space of homogeneous type
can be found in \cite[Definition 2.10]{tnyyz24}
and where $\mathcal{X}=\mathbb{R}^n$
can be found in \cite[Definition 2.9]{pyyz24}.

\begin{definition}\label{quotient}
Let $Y(\mathcal{X})$ be a ball quasi-Banach function space.
The \emph{quotient space $Y(\mathcal{X})/\mathbb{R}$} is defined
to be the set of all equivalent classes $[f]$ of
measurable functions on ${\mathcal{X}}$ such that
\begin{align*}
\|[f]\|_{Y(\mathcal{X})/\mathbb{R}}:=\inf_{a\in\mathbb{R}}
\|f+a\|_{Y(\mathcal{X})} <\infty.
\end{align*}
\end{definition}

Throughout this article, we simply use the symbols
$f\in Y(\mathcal{X})/\mathbb{R}$ and $\|f\|_{Y(\mathcal{X})/\mathbb{R}}$,
respectively, to replace $[f]\in Y(\mathcal{X})/\mathbb{R}$
and $\|[f]\|_{Y(\mathcal{X})/\mathbb{R}}$.
In addition, it is worth noticing that
$f\in Y(\mathcal{X})/\mathbb{R}$ if and only
if there exists $a\in\mathbb{R}$
such that $f+a\in Y(\mathcal{X})$. Moreover,
we point out that, under some assumptions on $Y(\mathcal{X})$,
if $f\in Y(\mathcal{X})/\mathbb{R}$, then
there exists exactly one constant $a$ such that
$\|f+a\|_{Y(\mathcal{X})}<\infty$.
To formalize this, we state the following proposition.

\begin{proposition}\label{2357}
Let $(\mathcal{X},\rho,\mu)$ be a space of homogeneous type
with $\mathrm{diam\,}(\mathcal{X})=\infty$ and let
$Y(\mathcal{X})$ be a ball quasi-Banach function space.
Assume that there exists $p\in(0,\infty)$ such that
$Y^{\frac{1}{p}}(\mathcal{X})$
satisfies Assumption~\ref{ma}.
Then $f\in Y(\mathcal{X})$ implies that,
for any $a\in\mathbb{R}\setminus\{0\}$,
$f+a\notin Y(\mathcal{X})$.
\end{proposition}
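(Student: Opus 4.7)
\emph{Proof plan.} The heart of the matter is to show that $\mathbf{1}_{\mathcal{X}}\notin Y(\mathcal{X})$; once this is done the proposition is immediate, because the quasi-triangle inequality for the ball quasi-Banach space $Y(\mathcal{X})$ gives
\begin{align*}
|a|\,\left\|\mathbf{1}_{\mathcal{X}}\right\|_{Y(\mathcal{X})}
\leq K\left(\|f+a\|_{Y(\mathcal{X})}+\|f\|_{Y(\mathcal{X})}\right),
\end{align*}
so $f,f+a\in Y(\mathcal{X})$ with $a\neq 0$ would force $\mathbf{1}_{\mathcal{X}}\in Y(\mathcal{X})$.

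Suppose, for contradiction, that $\mathbf{1}_{\mathcal{X}}\in Y(\mathcal{X})$. Since $|\mathbf{1}_{\mathcal{X}}|^{1/p}=\mathbf{1}_{\mathcal{X}}$, Definition~\ref{Detuhua} gives $\mathbf{1}_{\mathcal{X}}\in Y^{1/p}(\mathcal{X})$ as well. Because $Y^{1/p}(\mathcal{X})$ is a ball Banach function space (by Assumption~\ref{ma}), the H\"older-type inequality intrinsic to the definition of the associate space (see Definition~\ref{defXpie}) yields, for every $g\in[Y^{1/p}(\mathcal{X})]'$,
\begin{align*}
\int_{\mathcal{X}}|g|\,d\mu
\leq\left\|\mathbf{1}_{\mathcal{X}}\right\|_{Y^{1/p}(\mathcal{X})}
\left\|g\right\|_{[Y^{1/p}(\mathcal{X})]'},
\end{align*}
so $[Y^{1/p}(\mathcal{X})]'$ embeds continuously into $L^{1}(\mathcal{X})$. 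This is the pivotal consequence of the assumption that I aim to contradict.

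Next, I would use the boundedness of $\mathcal{M}$ on $[Y^{1/p}(\mathcal{X})]'$ to manufacture, via the Rubio de Francia iteration, a positive Muckenhoupt $A_{1}$ weight belonging to $[Y^{1/p}(\mathcal{X})]'$. Fix any ball $B\subset\mathcal{X}$; by property~(vi) of Definition~\ref{Debqfs} applied to $Y^{1/p}(\mathcal{X})$, one has $\mathbf{1}_{B}\in[Y^{1/p}(\mathcal{X})]'$. With $\kappa:=\|\mathcal{M}\|_{[Y^{1/p}(\mathcal{X})]'\to[Y^{1/p}(\mathcal{X})]'}\in(0,\infty)$, set
\begin{align*}
w:=\sum_{k=0}^{\infty}\frac{\mathcal{M}^{k}(\mathbf{1}_{B})}{(2\kappa)^{k}}.
\end{align*}
A geometric-series argument shows that this converges in $[Y^{1/p}(\mathcal{X})]'$ with $\|w\|_{[Y^{1/p}(\mathcal{X})]'}\leq 2\|\mathbf{1}_{B}\|_{[Y^{1/p}(\mathcal{X})]'}$. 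Since $\mathcal{M}(\mathbf{1}_{B})(x)>0$ for every $x\in\mathcal{X}$ (any $x$ sits in some ball that also meets $B$), one has $w>0$ pointwise on $\mathcal{X}$, and the sublinearity of $\mathcal{M}$ gives $\mathcal{M}w\leq 2\kappa\,w$, so $w$ is an $A_{1}$ weight on $(\mathcal{X},\rho,\mu)$.

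Finally, combining the $A_{1}$ condition with the doubling of $\mu$ produces $w(2B')\lesssim w(B')$ for every ball $B'\subset\mathcal{X}$, so $w\,d\mu$ is itself a Borel doubling measure on $(\mathcal{X},\rho)$. Because $\mathrm{diam\,}(\mathcal{X})=\infty$, applying \cite[Lemma~5.1]{ny1997} to the space of homogeneous type $(\mathcal{X},\rho,w\,d\mu)$ forces $\int_{\mathcal{X}}w\,d\mu=\infty$. On the other hand, the embedding $[Y^{1/p}(\mathcal{X})]'\hookrightarrow L^{1}(\mathcal{X})$ from the second paragraph gives $\int_{\mathcal{X}}w\,d\mu\leq\|\mathbf{1}_{\mathcal{X}}\|_{Y^{1/p}(\mathcal{X})}\|w\|_{[Y^{1/p}(\mathcal{X})]'}<\infty$, a contradiction. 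The main obstacle is verifying that the Rubio de Francia construction, classical in the Euclidean setting, still produces an $A_{1}$ weight inside $[Y^{1/p}(\mathcal{X})]'$ in this generality and that the resulting $w\,d\mu$ is indeed a doubling measure in the quasi-metric sense so that \cite[Lemma~5.1]{ny1997} can be invoked; the remaining steps are routine.
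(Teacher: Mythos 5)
Your proposal is correct. The first half coincides with the paper's argument: you reduce the claim to showing $\mathbf{1}_{\mathcal{X}}\notin Y(\mathcal{X})$ via the quasi-triangle inequality, and you extract from the contrary assumption the embedding $[Y^{1/p}(\mathcal{X})]'\hookrightarrow L^{1}(\mathcal{X})$ exactly as in the paper. Where you diverge is in how the contradiction with this embedding is produced. The paper simply observes that $\mathbf{1}_B\in[Y^{1/p}(\mathcal{X})]'$ (because the associate space is itself a ball Banach function space) and that $\mathcal{M}\mathbf{1}_B\notin L^{1}(\mathcal{X})$ when $\mu(\mathcal{X})=\infty$, citing \cite[Lemma~3.32]{tnyyz24}; boundedness of $\mathcal{M}$ on $[Y^{1/p}(\mathcal{X})]'$ then contradicts the embedding directly. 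You instead run the Rubio de Francia iteration on $\mathbf{1}_B$ to manufacture a strictly positive $A_1$ weight $w\in[Y^{1/p}(\mathcal{X})]'$, note that $w\,d\mu$ is then a doubling measure, and invoke \cite[Lemma~5.1]{ny1997} together with $\mathrm{diam\,}(\mathcal{X})=\infty$ to force $\int_{\mathcal{X}}w\,d\mu=\infty$, contradicting $w\in L^{1}(\mathcal{X})$. Your concern about the Rubio de Francia construction in this generality is unfounded: it is precisely Lemma~\ref{lemma3.7} of the paper, which gives $R_{[Y^{1/p}(\mathcal{X})]'}\mathbf{1}_B\in A_1(\mathcal{X})$ with controlled $A_1$ constant and norm, and Lemma~\ref{Lemma2.1}(iii) then yields the doubling of $w\,d\mu$ (positivity and finiteness of $w$ on balls being clear from $\mathcal{M}\mathbf{1}_B>0$ and property (vi) of the associate space). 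The trade-off is that your route is self-contained modulo tools already in the paper and \cite[Lemma~5.1]{ny1997}, at the cost of an extra layer of weight theory, whereas the paper's route is shorter but outsources the key divergence fact $\mathcal{M}\mathbf{1}_B\notin L^1(\mathcal{X})$ to an external lemma.
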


\begin{proof}
We first claim that, for any $a\in\mathbb{R}\setminus\{0\}$,
$a\notin Y(\mathcal{X})$. We show this claim by
contradiction. Assume that there exists $a\in\mathbb{R}\setminus\{0\}$
such that $a\in Y(\mathcal{X})$. Then we have $1\in Y(\mathcal{X})$
and hence $1\in Y^{\frac{1}{p}}(\mathcal{X})$.
From this and Definition~\ref{defXpie},
we deduce that, for any $f\in[Y^{\frac{1}{p}}(\mathcal{X})]'$,
\begin{align*}
\left\|f\right\|_{L^1(\mathcal{X})}
\leq\left\|1\right\|_{Y^{\frac{1}{p}}(\mathcal{X})}
\left\|f\right\|_{[Y^{\frac{1}{p}}(\mathcal{X})]'}<\infty
\end{align*}
and hence $f\in L^1(\mathcal{X})$.
Thus, $[Y^{\frac{1}{p}}(\mathcal{X})]'\subset L^1(\mathcal{X})$.
On the other hand, by the assumption that $Y^{\frac{1}{p}}(\mathcal{X})$
is a ball Banach function space and \cite[Proposition~2.3]{shyy17}
(whose proof remains true on spaces of homogeneous type),
we find that $[Y^{\frac{1}{p}}(\mathcal{X})]'$
is also a ball Banach function space
and hence, for any ball $B\subset\mathcal{X}$,
$\mathbf{1}_B\in [Y^{\frac{1}{p}}(\mathcal{X})]'$.
From \cite[Lemma 3.32]{tnyyz24}, we infer that, for any ball
$B\subset\mathcal{X}$, $\mathcal{M}\mathbf{1}_B\notin L^1(\mathcal{X})$.
This, together with the proven conclusion that
$[Y^{\frac{1}{p}}(\mathcal{X})]'\subset L^1(\mathcal{X})$,
implies that
$\mathcal{M}\mathbf{1}_B\notin [Y^{\frac{1}{p}}(\mathcal{X})]'$,
which contradicts the assumption that $\mathcal{M}$ is bounded on
$[Y^{\frac{1}{p}}(\mathcal{X})]'$. Thus,
for any $a\in\mathbb{R}\setminus\{0\}$,
$a\notin Y(\mathcal{X})$.
This finishes the proof of the above claim.

Let $f\in Y(\mathcal{X})$. We prove the present proposition by
contradiction. Assume that there exists $a\in\mathbb{R}\setminus\{0\}$
such that $f+a\in Y(\mathcal{X})$. Then, by the quasi-triangle
inequality of $\|\cdot\|_{Y(\mathcal{X})}$,
we conclude that $
\|a\|_{Y(\mathcal{X})}\lesssim\|f\|_{Y(\mathcal{X})}
+\|f+a\|_{Y(\mathcal{X})}<\infty$
and hence $a\in Y(\mathcal{X})$,
which contradicts the above claim. Thus, $f+a\notin Y(\mathcal{X})$
for any $a\in\mathbb{R}\setminus\{0\}$.
This finishes the proof of Proposition~\ref{2357}.
\end{proof}

Next, we introduce the concept of homogeneous fractional
ball quasi-Banach Sobolev spaces.

\begin{definition}\label{12311709}
Let $s\in(0,1)$, $q\in(0,\infty)$, and $Y(\mathcal{X})$
be a ball quasi-Banach function space.
The \emph{homogeneous fractional ball quasi-Banach Sobolev space
$\dot{W}^{s,q}_Y({\mathcal{X}})$} is defined
to be the set of all $f\in\mathscr{M}(\mathcal{X})$ such that
\begin{equation*}
\|f\|_{\dot{W}^{s,q}_Y({\mathcal{X}})}:=
\left\|\left\{ \int_{\mathcal{X}}
\frac{|f(\cdot)-f(y)|^q}{U(\cdot,y)[\rho(\cdot,y)]^{sq}}
\, d\mu(y) \right\}^{\frac{1}{q}}\right\|_{Y(\mathcal{X})}<\infty.
\end{equation*}
\end{definition}

\begin{remark}
If $Y(\mathcal{X}):=L^q(\mathbb{R}^n)$ with $q\in[1,\infty)$,
then ${\dot{W}^{s,q}_{{Y}}(\mathcal{X})}$
is precisely $\dot{W}^{s,q}({\mathbb{R}^n})$.
\end{remark}

Now, we show Theorem~\ref{Thm1}.

\begin{proof}[Proof of Theorem~\ref{Thm1}]
Let $f\in\bigcup_{s\in(0,1)}\dot{W}^{s,q}_Y(\Omega)$.
Then there exists $s_0\in(0,1)$ such that
$f\in\dot{W}^{s_0,q}_Y(\Omega)$.
Since that $\Omega$ is bounded, it follows that,
for any $x,y\in\Omega$,
$\rho(x,y)\leq\mathrm{diam\,}(\Omega)<\infty$.
By this and $f\in\dot{W}^{s_0,q}_Y(\Omega)$,
we find that, for any $s\in(0,s_0)$,
\begin{align*}
s^\frac{1}{q}
\left\|\left\{\int_\Omega\frac{|f(\cdot)-f(y)|^q}
{U(\cdot,y)[\rho(\cdot,y)]^{sq}}\,dy
\right\}^\frac{1}{q}\right\|_{Y(\Omega)}
&=s^\frac{1}{q}
\left\|\left\{\int_\Omega\frac{|f(\cdot)-f(y)|^q}
{U(\cdot,y)[\rho(\cdot,y)]^{s_0q}}[\rho(\cdot,y)]^{(s_0-s)q}\,dy
\right\}^\frac{1}{q}\right\|_{Y(\Omega)}\\
&\leq s^\frac{1}{q}\left[\mathrm{diam\,}(\Omega)\right]^{s_0-s}
\|f\|_{\dot{W}^{s_0,q}_Y(\Omega)}\to0
\end{align*}
as $s\to0^+$. This finishes the proof of Theorem~\ref{Thm1}.
\end{proof}

\subsection{Proof of Theorem \ref{Thm2}: Upper Estimate}\label{sec-2-1}

This subsection is devoted to establishing
the upper bound estimate
of Theorem \ref{Thm2}.
We first recall the concepts of both Muckenhoupt weights
and weighted Lebesgue spaces.

\begin{definition}
\begin{enumerate}
\item[\rm(i)]
A nonnegative locally integral function $\omega$ is called an
\emph{$A_1({\mathcal{X}})$-weight},
denoted by $\omega\in A_1({\mathcal{X}})$, if
\begin{equation*}
[\omega]_{A_1({\mathcal{X}})}:=
\sup_{B\subset{\mathcal{X}}}
\frac{1}{\mu(B)}\int_B\omega(x)\,d\mu(x)
\left[
\mathop{\mathrm{ess\,inf\,}}
_{y\in B}\omega(y)\right]^{-1}<\infty,
\end{equation*}
where the supremum is taken over all
balls $B\subset{\mathcal{X}}$.

\item[\rm(ii)]
Let $p\in(1,\infty)$.
A nonnegative locally integral function $\omega$ is called an
\emph{$A_p({\mathcal{X}})$-weight}, denoted
by $\omega\in A_p({\mathcal{X}})$, if
\begin{equation*}
[\omega]_{A_p({\mathcal{X}})}:=\sup_{B\subset{\mathcal{X}}}
\frac{1}{\mu(B)}\int_{B}\omega(x)\,d\mu(x)
\left\{\frac{1}{\mu(B)}\int_B\left[\omega(x)\right]^
{\frac{1}{1-p}}\,d\mu(y)\right\}^{p-1}<\infty,
\end{equation*} 	
where the supremum is taken over all
balls $B\subset{\mathcal{X}}$.
\end{enumerate}
\end{definition}

\begin{definition}\label{twl}
Let $p\in(0,\infty)$ and $\omega$ be a weight.
The \emph{weighted Lebesgue space} $L^p_\omega(\mathcal{X})$
is defined to be the set of all $f\in\mathscr{M}(\mathcal{X})$ such that
\begin{align*}
\|f\|_{L^p_\omega(\mathcal{X})}:=
\left[\int_{\mathcal{X}}\left|f(x)\right|^p
\omega(x)\,d\mu(x)\right]^{\frac{1}{p}}<\infty.
\end{align*}
\end{definition}

Next, we recall some basic properties
and related conclusions of Muckenhoupt weights;
see, for instance, \cite[Lemma 2.1]{bbd20},
\cite[Chapter 1]{st89},
and \cite[Subsection 3.4]{ins19}.

\begin{lemma}\label{Lemma2.1}
Let $p\in[1,\infty)$ and $\omega\in A_p({\mathcal{X}}).$
\begin{enumerate}
\item[{\rm(i)}]
If $p=1$,
then $[\omega]_{A_1(\mathcal{X})}\in[1,\infty)$
and, for $\mu$-almost every $x\in\mathcal{X}$,
$\mathcal{M}\omega(x)\leq[\omega]_{A_1(\mathcal{X})}\omega(x)$.

\item[{\rm(ii)}]
\begin{align*}
[\omega]_{A_p({\mathcal{X}})}
=\sup_{B\subset{\mathcal{X}},\,\|f\mathbf{1}_B\|_{L^p_\omega({\mathcal{X}})}\in(0,\infty)}
\frac{[\frac{1}{\mu(B)}\int_B|f(t)|\,d\mu(t)]^p}
{\frac{1}{\omega(B)}\int_B|f(t)|^p\omega(t)\,d\mu(t)},
\end{align*}
where the supremum is taken over all balls
$B\subset{\mathcal{X}}$ and all $\mu$-measurable functions $f$
such that $\|f\mathbf{1}_B\|_{L^p_\omega(\mathcal{X})}\in(0,\infty)$.

\item[{\rm(iii)}]
For any $\lambda\in(1,\infty)$
and any ball $B\subset{\mathcal{X}}$,
one has
\begin{align*}
\frac{\omega(\lambda B)}{\omega(B)}\leq [\omega]_{A_p({\mathcal{X}})}
\left[\frac{\mu(\lambda B)}{\mu(B)}\right]^p.
\end{align*}

\item[{\rm(iv)}]
For any $q\in[p,\infty),$
one has $\omega\in A_q({\mathcal{X}})$ and
$[\omega]_{A_q({\mathcal{X}})}\leq
[\omega]_{A_p({\mathcal{X}})}$.
\end{enumerate}
\end{lemma}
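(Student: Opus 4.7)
The four claims in Lemma~\ref{Lemma2.1} are classical facts about Muckenhoupt weights. On a space of homogeneous type $(\mathcal{X},\rho,\mu)$ the ingredients needed for their standard Euclidean proofs are all available: H\"older's inequality, the doubling property \eqref{dc}, and the Lebesgue differentiation theorem (which holds on $(\mathcal{X},\rho,\mu)$ via the weak $(1,1)$ boundedness of $\mathcal{M}$, together with a Vitali-type covering). My plan is to give the four items essentially in the order stated, handling (i) directly and deducing (iii) and (iv) from (ii).

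For (i), the inequality $[\omega]_{A_1(\mathcal{X})}\ge 1$ is immediate from the fact that any average of a nonnegative integrable function over a ball $B$ is at least $\mathop{\mathrm{ess\,inf}}_{y\in B}\omega(y)$. For the pointwise bound on $\mathcal{M}\omega$, fix a ball $B$ containing a given $x$; by the defining inequality of $A_1$,
\begin{equation*}
\frac{1}{\mu(B)}\int_B\omega(y)\,d\mu(y)\le [\omega]_{A_1(\mathcal{X})}\mathop{\mathrm{ess\,inf}}_{y\in B}\omega(y).
\end{equation*}
Taking the supremum over balls $B\ni x$ yields $\mathcal{M}\omega(x)\le[\omega]_{A_1(\mathcal{X})}\omega_*(x)$, where $\omega_*(x):=\inf_{B\ni x}\mathop{\mathrm{ess\,inf}}_{y\in B}\omega(y)$. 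The Lebesgue differentiation theorem on $(\mathcal{X},\rho,\mu)$ identifies $\omega_*(x)=\omega(x)$ for $\mu$-a.e.\ $x$, giving the claim.

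For (ii), fix a ball $B$ and a function $f$ with $\|f\mathbf{1}_B\|_{L^p_\omega(\mathcal{X})}\in(0,\infty)$. Writing $|f|=|f|\omega^{1/p}\omega^{-1/p}$ and applying H\"older's inequality with exponents $p$ and $p'$ (using $-p'/p=1/(1-p)$) converts the ratio on the right into $\frac{1}{\mu(B)}\int_B\omega\cdot\bigl[\frac{1}{\mu(B)}\int_B\omega^{1/(1-p)}\bigr]^{p-1}$, which is at most $[\omega]_{A_p(\mathcal{X})}$. For the reverse inequality one tests with $f:=(\omega+\varepsilon)^{1/(1-p)}\mathbf{1}_B$ and lets $\varepsilon\to 0^+$; a computation using $\frac{p}{1-p}+1=\frac{1}{1-p}$ shows that, up to the $\varepsilon$-correction, the test ratio recovers the $A_p$ average on $B$, so the supremum over $(B,f)$ is at least $[\omega]_{A_p(\mathcal{X})}$. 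The truncation step is the only place requiring a little care; it is needed because $\omega^{1/(1-p)}$ need not lie in $L^p_\omega$ a priori.

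Item (iii) is a direct application of (ii) with the pair $(\lambda B,\mathbf{1}_B)$: the left-hand side of the inequality in (ii) becomes $\mu(B)^p\omega(\lambda B)/[\mu(\lambda B)^p\omega(B)]$, so rearranging gives the stated bound. For (iv) with $1<p\le q<\infty$, apply H\"older's inequality with exponents $\frac{q-1}{p-1}$ and $\frac{q-1}{q-p}$ (the case $q=p$ is trivial) to the factor $\bigl[\frac{1}{\mu(B)}\int_B\omega^{1/(1-q)}\bigr]^{q-1}$; the identity $\frac{1}{1-q}\cdot\frac{q-1}{p-1}=\frac{1}{1-p}$ converts this expression into $\bigl[\frac{1}{\mu(B)}\int_B\omega^{1/(1-p)}\bigr]^{p-1}$, and multiplying by $\frac{1}{\mu(B)}\int_B\omega$ proves $[\omega]_{A_q(\mathcal{X})}\le[\omega]_{A_p(\mathcal{X})}$. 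The remaining case $p=1\le q$ uses (i) together with $\bigl[\frac{1}{\mu(B)}\int_B\omega^{1/(1-q)}\bigr]^{q-1}\le(\mathop{\mathrm{ess\,inf}}_B\omega)^{-1}$, again yielding the required bound. The only subtle points in the whole lemma are the need to truncate in (ii) and the appeal to Lebesgue differentiation in (i); these are the only places where the proof has to be aware of the underlying homogeneous-type structure rather than being a purely algebraic manipulation.
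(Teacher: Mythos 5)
The paper does not actually prove Lemma~\ref{Lemma2.1}: it is recalled from the literature (see the references \cite{bbd20}, \cite{st89}, and \cite{ins19} cited immediately before the statement), so your proposal supplies an argument where the authors only cite. Your route is the standard one, and items (ii) (for $p>1$), (iii), and (iv) are correct as outlined: the H\"older computation for the upper bound in (ii), the truncated test function $(\omega+\varepsilon)^{1/(1-p)}\mathbf{1}_B$ together with monotone convergence for the lower bound, the specialization to the pair $(\lambda B,\mathbf{1}_B)$ for (iii), and H\"older with the conjugate exponents $\frac{q-1}{p-1}$ and $\frac{q-1}{q-p}$ for (iv) all go through verbatim on a space of homogeneous type.

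Two points need repair. First, in (i) the displayed deduction is a non sequitur: from $\frac{1}{\mu(B)}\int_B\omega\,d\mu\le[\omega]_{A_1(\mathcal{X})}\mathop{\mathrm{ess\,inf}}_{y\in B}\omega(y)$ for every ball $B\ni x$, taking the supremum over such $B$ yields $\mathcal{M}\omega(x)\le[\omega]_{A_1(\mathcal{X})}\sup_{B\ni x}\mathop{\mathrm{ess\,inf}}_{B}\omega$, not the bound with $\omega_*(x)=\inf_{B\ni x}\mathop{\mathrm{ess\,inf}}_{B}\omega$ that you wrote; the ball optimizing the left-hand side need not be the ball optimizing the right-hand side, so the inequality with the infimum does not follow (and is false in general). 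The correct final step is $\sup_{B\ni x}\mathop{\mathrm{ess\,inf}}_{B}\omega\le\omega(x)$ for $\mu$-a.e.\ $x$, which is what the Lebesgue differentiation theorem on $(\mathcal{X},\rho,\mu)$ delivers (compare $\omega(x)$ with averages over small balls at a Lebesgue point); with ``sup'' in place of ``inf'' your argument becomes the standard one. Second, your proof of (ii) silently assumes $p>1$: H\"older with exponent $p'=\infty$ and the test function $(\omega+\varepsilon)^{1/(1-p)}$ both degenerate at $p=1$, while the lemma asserts (ii) for all $p\in[1,\infty)$. The case $p=1$ needs a separate (easy) argument: the upper bound follows from $\int_B|f|\omega\,d\mu\ge\mathop{\mathrm{ess\,inf}}_{B}\omega\int_B|f|\,d\mu$, and the lower bound from testing with $f=\mathbf{1}_{E_\varepsilon}$, where $E_\varepsilon:=\{x\in B:\omega(x)<\mathop{\mathrm{ess\,inf}}_{B}\omega+\varepsilon\}$ and $\varepsilon\to0^+$.
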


The following is the main theorem
of this subsection.

\begin{theorem}\label{04021646}
Let $(\mathcal{X},\rho,\mu)$ be a space of homogeneous type.
Let $0 < q \leq p < \infty$
and $Y(\mathcal{X})$ be a ball
quasi-Banach function space
such that
$Y^{\frac{1}{p}}(\mathcal{X})$
satisfies Assumption~\ref{ma}.
Then there exists a positive constant $C$ such that,
for any $f\in [Y(\mathcal{X})/\mathbb{R}]
\cap\bigcup_{s\in(0,1)} \dot{W}^{s,q}_Y({\mathcal{X}})$,
\begin{align*}
\varlimsup_{s \to 0^+}
s^{\frac{1}{q}}\left\|\left\{\int_{\mathcal{X}}
\frac{|f(\cdot)-f(y)|^q}{U(\cdot,y)[\rho(\cdot,y)]^{sq}}
\, d\mu(y)\right\}^{\frac{1}{q}}\right\|_{{Y(\mathcal{X})}}
\leq C
\|{\mathcal M}\|_{[Y^{\frac{1}{p}}(\mathcal{X})]'\to
[Y^{\frac{1}{p}}(\mathcal{X})]'}^{\frac{2}{p}}
\|f\|_{Y(\mathcal{X})/\mathbb{R}}\notag.
\end{align*}
\end{theorem}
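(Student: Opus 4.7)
The plan is to first establish the desired upper estimate on weighted Lebesgue spaces $L^p_\omega(\mathcal{X})$ with explicit quantitative dependence on the Muckenhoupt characteristic $[\omega]_{A_{p/q}}$, and then lift it to $Y(\mathcal{X})$ via the extrapolation lemma (Lemma~\ref{4.6}) advertised in the introduction. The factor $\|\mathcal{M}\|_{[Y^{1/p}(\mathcal{X})]' \to [Y^{1/p}(\mathcal{X})]'}^{2/p}$ appearing in the conclusion is precisely what this extrapolation machinery produces, and the hypothesis that $Y^{1/p}(\mathcal{X})$ satisfies Assumption~\ref{ma} is exactly what legitimises its use.

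To derive the weighted estimate, fix $f \in [Y(\mathcal{X})/\mathbb{R}] \cap \dot{W}^{s_0, q}_Y(\mathcal{X})$ for some $s_0 \in (0,1)$, choose $c \in \mathbb{R}$ nearly minimizing $\|f-c\|_{Y(\mathcal{X})}$, and set $g := f - c$; since $|f(x)-f(y)| = |g(x)-g(y)|$, one may work with $g$ on the left-hand side. Pick any $R > 0$ and split the inner integral at $\rho(x,y) = R$. On the near piece $\rho(x,y) \leq R$, the pointwise bound $\rho(x,y)^{-sq} \leq R^{(s_0-s)q} \rho(x,y)^{-s_0 q}$ (valid for $s < s_0$) dominates the contribution by $R^{(s_0-s)q} h_{s_0}(x)^q$, where $h_{s_0}(x)^q := \int_\mathcal{X} |g(x)-g(y)|^q / [U(x,y)\rho(x,y)^{s_0 q}]\, d\mu(y)$ is integrable in the $Y(\mathcal{X})$-sense; this piece vanishes in $\varlimsup_{s \to 0^+}$ after multiplication by $s^{1/q}$.

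On the far piece $\rho(x,y) > R$, use the elementary inequality $|g(x)-g(y)|^q \leq C_q(|g(x)|^q + |g(y)|^q)$, a dyadic decomposition $\{2^k R \leq \rho(x,y) < 2^{k+1} R\}_{k \in \mathbb{Z}_+}$, the comparison $U(x,y) \sim \mu(B(x, \rho(x,y)))$ coming from doubling, and~\eqref{ud}, to derive
\begin{align*}
\int_{\rho(x,y) > R} \frac{d\mu(y)}{U(x,y) \rho(x,y)^{sq}} \lesssim \sum_{k=0}^{\infty} (2^k R)^{-sq} \lesssim \frac{1}{s R^{sq}},
\end{align*}
and analogously bound the piece carrying $|g(y)|^q$ by $(sR^{sq})^{-1}\mathcal{M}(|g|^q)(x)$. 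Taking $q$-th roots and multiplying by $s^{1/q}$ yields the pointwise control
\begin{align*}
s^{1/q}\left(\int_\mathcal{X} \frac{|g(x)-g(y)|^q}{U(x,y)\rho(x,y)^{sq}}\, d\mu(y)\right)^{1/q} \lesssim s^{1/q} R^{s_0-s} h_{s_0}(x) + R^{-s}\left[|g(x)| + \mathcal{M}(|g|^q)(x)^{1/q}\right],
\end{align*}
in which the crucial cancellation $s^{1/q} \cdot (1/s)^{1/q} = 1$ is the heart of the Maz'ya--Shaposhnikova phenomenon.

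Taking $L^p_\omega$-norms with $\omega \in A_{p/q}$, sending $s \to 0^+$ (the first term dies and $R^{-s} \to 1$), and invoking the boundedness of $\mathcal{M}$ on $L^{p/q}_\omega(\mathcal{X})$, which is legitimate because $p/q \geq 1$ by the hypothesis $q \leq p$, produce the weighted upper estimate with quantitative dependence on $[\omega]_{A_{p/q}}$ tracked through Lemma~\ref{Lemma2.1}. Applying the extrapolation lemma (Lemma~\ref{4.6}) then lifts this to the $Y(\mathcal{X})$-estimate, producing the factor $\|\mathcal{M}\|^{2/p}_{[Y^{1/p}(\mathcal{X})]' \to [Y^{1/p}(\mathcal{X})]'}$. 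The main technical obstacle is tracking the weight-constant dependence with sufficient precision to meet the hypotheses of the extrapolation lemma; structurally, the most delicate step is the dyadic estimate that yields the $1/s$ factor precisely counterbalancing the $s^{1/q}$ prefactor, since without this exact asymptotic matching the entire formula collapses.
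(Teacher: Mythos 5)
Your overall architecture --- split the inner integral at $\rho(x,y)=R$, kill the near piece using membership in some $\dot{W}^{s_0,q}_Y(\mathcal{X})$, establish a weighted $L^p_\omega$ bound for the far piece with explicit dependence on the weight characteristic, and lift it to $Y(\mathcal{X})$ via the Rubio de Francia extrapolation of Lemma~\ref{4.6} --- is exactly the paper's (Lemmas~\ref{12202129},~\ref{keythmofwss}, and~\ref{12202120}). The near piece and the term carrying $|g(x)|^q$ are handled as in the paper [compare \eqref{QW1}].

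However, there is a genuine gap in your treatment of the far-piece term carrying $|g(y)|^q$. You bound it pointwise by $(sR^{sq})^{-1}\mathcal{M}(|g|^q)(x)$ and then invoke ``the boundedness of $\mathcal{M}$ on $L^{p/q}_\omega(\mathcal{X})$, which is legitimate because $p/q\geq1$.'' This is false at the endpoint: when $p=q$ (an allowed case, since the hypothesis is $0<q\leq p$), one needs the strong-type boundedness of $\mathcal{M}$ on $L^1_\omega(\mathcal{X})$, which fails for every nontrivial weight (only the weak type $(1,1)$ holds for $\omega\in A_1(\mathcal{X})$). Even for $p/q>1$ the argument degenerates: the extrapolation in Lemma~\ref{4.6} feeds in weights $\omega=R_{[Y^{1/p}(\mathcal{X})]'}g\in A_1(\mathcal{X})$ and requires the weighted constant to be exactly $[\omega]_{A_1(\mathcal{X})}^2$ in order to output the exponent $\frac{2}{p}$ on $\|\mathcal{M}\|_{[Y^{1/p}(\mathcal{X})]'\to[Y^{1/p}(\mathcal{X})]'}$, whereas the sharp weighted bound $\|\mathcal{M}\|_{L^{p/q}_\omega\to L^{p/q}_\omega}\lesssim[\omega]_{A_{p/q}}^{(p/q)'/(p/q)}$ produces a power that blows up as $p/q\to1^+$. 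The paper's Lemma~\ref{02201609} avoids the maximal operator entirely: after the dyadic decomposition it applies H\"older's inequality together with the $A_{p/q}(\mathcal{X})$-characterization of Lemma~\ref{Lemma2.1}(ii) to convert the averages $\frac{1}{\mu(B)}\int_B|f|^q\,d\mu$ into $[\frac{[\omega]_{A_{p/q}}}{\omega(B)}\int_B|f|^p\omega\,d\mu]^{q/p}$, and then uses Fubini's theorem plus the $A_1$ estimate $\int_{B(y,2^{j+1}R)}\frac{\omega(x)}{\omega(B(x,2^{j+1}R))}\,d\mu(x)\lesssim[\omega]_{A_1(\mathcal{X})}$ to close the argument with the constant $[\omega]_{A_1(\mathcal{X})}^2$, uniformly down to $p=q$. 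To repair your proof you must replace the maximal-function step by such a duality/Fubini argument (for $p=q$ a direct Fubini already suffices), or restrict to $q<p$ and accept a different, non-uniform constant.
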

To show Theorem \ref{04021646},
we need the following two technical lemmas.

\begin{lemma}\label{12202129}
Let $(\mathcal{X},\rho,\mu)$ be a space of homogeneous type.
Let $q\in(0,\infty)$ and $Y(\mathcal{X})$
be a ball quasi-Banach function space.
Then, for any $R\in(0,\infty)$ and $f\in \bigcup_{s\in(0,1)}
\dot{W}^{s,q}_Y({\mathcal{X}})$,
\begin{align*}
\lim_{s \to 0^+}
s^{\frac{1}{q}}\left\|\left\{\int_{B(\cdot,R)}
\frac{|f(\cdot)-f(y)|^q}{U(\cdot,y)[\rho(\cdot,y)]^{sq}}
\, d\mu(y)
\right\}^{\frac{1}{q}}\right\|_{{Y(\mathcal{X})}}=0.
\end{align*}
\end{lemma}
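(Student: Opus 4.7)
The plan is to control the local integral at scale $s$ by the full homogeneous Sobolev seminorm at some fixed scale $s_0$ where $f$ is already known to live, exploiting a trivial pointwise estimate that compares the two kernels on $B(x,R)$.

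First I would choose $s_0\in(0,1)$ such that $f\in \dot{W}^{s_0,q}_Y(\mathcal{X})$, which exists by the hypothesis $f\in\bigcup_{s\in(0,1)}\dot{W}^{s,q}_Y(\mathcal{X})$. Restricting attention to $s\in(0,s_0)$ ensures $(s_0-s)q>0$, so that for any $y\in B(x,R)$ the inequality $\rho(x,y)\le R$ yields $[\rho(x,y)]^{(s_0-s)q}\le R^{(s_0-s)q}$. Writing
\begin{align*}
\frac{|f(x)-f(y)|^q}{U(x,y)[\rho(x,y)]^{sq}}
=\frac{|f(x)-f(y)|^q}{U(x,y)[\rho(x,y)]^{s_0q}}\,[\rho(x,y)]^{(s_0-s)q},
\end{align*}
integrating over $B(x,R)$, applying the above estimate, and enlarging the domain to $\mathcal{X}$ produces the pointwise bound
\begin{align*}
\int_{B(x,R)}\frac{|f(x)-f(y)|^q}{U(x,y)[\rho(x,y)]^{sq}}\,d\mu(y)
\le R^{(s_0-s)q}\int_{\mathcal{X}}\frac{|f(x)-f(y)|^q}{U(x,y)[\rho(x,y)]^{s_0q}}\,d\mu(y)
\end{align*}
valid for every $x\in\mathcal{X}$.

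Next I would take the $q$-th root, pulling the constant $R^{s_0-s}$ out of the braces, and then invoke the monotonicity axiom (ii) of the ball quasi-Banach function space $Y(\mathcal{X})$ in Definition~\ref{Debqfs} to obtain
\begin{align*}
s^{\frac{1}{q}}\left\|\left\{\int_{B(\cdot,R)}\frac{|f(\cdot)-f(y)|^q}{U(\cdot,y)[\rho(\cdot,y)]^{sq}}\,d\mu(y)\right\}^{\frac{1}{q}}\right\|_{Y(\mathcal{X})}
\le s^{\frac{1}{q}}R^{s_0-s}\,\|f\|_{\dot{W}^{s_0,q}_Y(\mathcal{X})}.
\end{align*}
Letting $s\to 0^+$, the right-hand side tends to $0$ because $s^{1/q}\to 0$, the factor $R^{s_0-s}\to R^{s_0}$ stays bounded, and $\|f\|_{\dot{W}^{s_0,q}_Y(\mathcal{X})}$ is finite. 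There is no serious obstacle here; the argument is purely elementary and uses only the definition of $\dot{W}^{s_0,q}_Y(\mathcal{X})$ together with the order-preserving property of the quasi-norm on $Y(\mathcal{X})$. The only point to check carefully is that $\rho(x,y)\le R$ combined with $(s_0-s)q>0$ gives the required upper bound on $[\rho(x,y)]^{(s_0-s)q}$ irrespective of whether $R\ge 1$ or $R<1$.
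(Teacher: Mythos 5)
Your argument is correct and is essentially identical to the paper's own proof: both fix $s_0$ with $f\in\dot{W}^{s_0,q}_Y(\mathcal{X})$, factor the kernel as the $s_0$-kernel times $[\rho(\cdot,y)]^{(s_0-s)q}$, bound that factor by $R^{(s_0-s)q}$ on $B(\cdot,R)$, enlarge the integration domain to $\mathcal{X}$, and use the monotonicity of $\|\cdot\|_{Y(\mathcal{X})}$ to conclude via $s^{1/q}\to0$. Your final remark is also correctly resolved, since $t\mapsto t^{(s_0-s)q}$ is increasing for $(s_0-s)q>0$ regardless of the size of $R$.
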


\begin{proof}
From $f\in \bigcup_{s\in(0,1)}
\dot{W}^{s,q}_Y({\mathcal{X}})$,
we deduce that there exists $s_0\in(0,1)$
such that $f\in \dot{W}^{s_0,q}_Y({\mathcal{X}})$,
which further implies that
\begin{align*}
0&\leq \varliminf_{s \to 0^+}
s^{\frac{1}{q}}\left\|
\left\{\int_{B(\cdot,R)}
\frac{|f(\cdot)-f(y)|^q}{U(\cdot,y)[\rho(\cdot,y)]^{sq}}
\, d\mu(y)
\right\}^{\frac{1}{q}}\right\|_{{Y(\mathcal{X})}}\\
\notag
& \leq \varlimsup_
{s \to 0^+}s^{\frac{1}{q}}\left\|
\left\{\int_{B(\cdot,R)}
\frac{|f(\cdot)-f(y)|^q}{U(\cdot,y)[\rho(\cdot,y)]^{s_0q}}
[\rho(\cdot,y)]^{(s_0-s)q}\, d\mu(y)
\right\}^{\frac{1}{q}}\right\|_{{Y(\mathcal{X})}}\\
\notag& \leq \varlimsup_{s \to 0^+}
s^{\frac{1}{q}}R^{(s_0-s)}
\|f\|_{\dot{W}^{s_0,q}_Y({\mathcal{X}})}
=0.
\end{align*}
This finishes the proof of Lemma \ref{12202129}.
\end{proof}

\begin{lemma}\label{12202120}
Let $(\mathcal{X},\rho,\mu)$ be a space of homogeneous type.
Let $0 < q \leq p < \infty$
and $Y(\mathcal{X})$ be a ball
quasi-Banach function space
such that
$Y^{\frac{1}{p}}(\mathcal{X})$
satisfies Assumption~\ref{ma}.
Then there exists a positive constant $C$ such that,
for any $R\in(1,\infty)$ and
$f\in Y({\mathcal{X}}) / \mathbb{R}$,
\begin{align}\label{06171257}
\sup_{s\in(0,1)} s^{\frac{1}{q}}\left\|
\left\{\int_{[B(\cdot,R)]^\complement}
\frac{|f(\cdot)-f(y)|^q}{U(\cdot,y)[\rho(\cdot,y)]^{sq}}
\, d\mu(y)
\right\}^{\frac{1}{q}}\right\|_{{Y(\mathcal{X})}}
\leq C \|{\mathcal M}\|_{[Y^{\frac{1}{p}}(\mathcal{X})]'\to
[Y^{\frac{1}{p}}(\mathcal{X})]'}^{\frac{2}{p}}
\|f\|_{Y(\mathcal{X})/ \mathbb{R}}.
\end{align}
\end{lemma}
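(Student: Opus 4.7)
The plan is to reduce to a fixed representative of the equivalence class, split the inner integral via the elementary inequality $|g(x)-g(y)|^q\lesssim|g(x)|^q+|g(y)|^q$, estimate each piece by a dyadic annular decomposition, and close the maximal-function term through the boundedness of $\mathcal{M}$ on the associate space $[Y^{\frac{1}{p}}(\mathcal{X})]'$.

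First I would observe that the integrand $|f(\cdot)-f(y)|^q$ in the left-hand side of \eqref{06171257} is invariant under $f\mapsto f+a$, so by the definition of $Y(\mathcal{X})/\mathbb{R}$ I may fix $a\in\mathbb{R}$ with $g:=f+a\in Y(\mathcal{X})$ and $\|g\|_{Y(\mathcal{X})}\leq 2\|f\|_{Y(\mathcal{X})/\mathbb{R}}$, and work with $g$ throughout. Bounding the integrand by $|g(x)|^q+|g(y)|^q$, the inner integral is $\lesssim|g(x)|^qA(x)+B(x)$, where
\[
A(x):=\int_{[B(x,R)]^{\complement}}\frac{d\mu(y)}{U(x,y)[\rho(x,y)]^{sq}},\qquad B(x):=\int_{[B(x,R)]^{\complement}}\frac{|g(y)|^q}{U(x,y)[\rho(x,y)]^{sq}}\,d\mu(y).
\]
I would then decompose $[B(x,R)]^{\complement}=\bigsqcup_{k=0}^{\infty}E_k$ with $E_k:=\{y\in\mathcal{X}:2^kR\leq\rho(x,y)<2^{k+1}R\}$. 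On each $E_k$, the quasi-triangle inequality of $\rho$ together with the doubling condition \eqref{ud} give $U(x,y)\sim\mu(B(x,2^kR))$ and $\rho(x,y)\sim 2^kR$; summing the resulting geometric series yields $A(x)\lesssim R^{-sq}/(1-2^{-sq})$ and $B(x)\lesssim R^{-sq}(1-2^{-sq})^{-1}\mathcal{M}(|g|^q)(x)$.

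After taking $q$-th roots and multiplying by $s^{\frac{1}{q}}$, the prefactor $s^{\frac{1}{q}}R^{-s}[1-2^{-sq}]^{-\frac{1}{q}}$ is uniformly bounded for $s\in(0,1)$ and $R\in(1,\infty)$, because $R^{-s}\leq 1$ and $1-2^{-sq}\sim sq\log 2$ as $s\to 0^+$, which exactly compensates the $s^{\frac{1}{q}}$. This reduces the left-hand side of \eqref{06171257} to $\lesssim\|g\|_{Y(\mathcal{X})}+\|[\mathcal{M}(|g|^q)]^{\frac{1}{q}}\|_{Y(\mathcal{X})}$. For the second term, since $q\leq p$, Jensen's inequality applied to each averaging ball yields the pointwise bound $\mathcal{M}(|g|^q)\leq[\mathcal{M}(|g|^p)]^{\frac{q}{p}}$, whence
\[
\left\|[\mathcal{M}(|g|^q)]^{\frac{1}{q}}\right\|_{Y(\mathcal{X})}\leq\|\mathcal{M}(|g|^p)\|_{Y^{\frac{1}{p}}(\mathcal{X})}^{\frac{1}{p}}.
\]
The last norm is controlled through the associate-space duality of Definition~\ref{defXpie}, combined with the pointwise-adjoint estimate $\int\mathcal{M}h\cdot\phi\,d\mu\lesssim\int h\cdot\mathcal{M}\phi\,d\mu$ (valid on spaces of homogeneous type via a dyadic/Whitney argument) and the hypothesis that $\mathcal{M}$ is bounded on $[Y^{\frac{1}{p}}(\mathcal{X})]'$; the stated exponent $2/p$ arises from absorbing one additional factor of $\|\mathcal{M}\|$ through a Rubio de Francia-type iteration needed to pass from weighted $L^p$-bounds back to the $Y^{\frac{1}{p}}$-bound.

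The principal obstacle I anticipate is the uniform control of the $s$-dependent prefactor: each single annulus already generates a divergence of order $1/(sq\log 2)$ as $s\to 0^+$, and the precise arithmetic cancellation with $s^{\frac{1}{q}}$ must be verified to land on a constant independent of both $s\in(0,1)$ and $R\in(1,\infty)$. A secondary but non-trivial technical point is the comparability $U(x,y)\sim\mu(B(x,2^kR))$ on $E_k$, which requires the doubling condition together with the quasi-triangle inequality in order to switch centers inside the minimum defining $U$.
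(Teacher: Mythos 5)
Your reduction to a representative $g=f+a$, the splitting $|g(x)-g(y)|^q\lesssim|g(x)|^q+|g(y)|^q$, the annular estimate for $A(x)$, and the uniform control of the prefactor $s^{\frac{1}{q}}R^{-s}(1-2^{-sq})^{-\frac{1}{q}}$ all match the paper's Lemma~\ref{keythmofwss} and \eqref{1530}. The gap is in how you close the term $B(x)$. First, the ``pointwise-adjoint estimate'' $\int_{\mathcal{X}}\mathcal{M}h\cdot\phi\,d\mu\lesssim\int_{\mathcal{X}}h\cdot\mathcal{M}\phi\,d\mu$ is false: it is the strong $(1,1)$ Fefferman--Stein inequality, which already fails for $\phi\equiv1$ on $\mathbb{R}^n$, where it would assert that $\mathcal{M}$ is bounded on $L^1$. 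Consequently you cannot transfer boundedness of $\mathcal{M}$ from $[Y^{\frac{1}{p}}(\mathcal{X})]'$ to $Y^{\frac{1}{p}}(\mathcal{X})$ this way, and no such implication holds in general: take $Y=L^p$, so that $Y^{\frac{1}{p}}=L^1$ and $[Y^{\frac{1}{p}}]'=L^\infty$; $\mathcal{M}$ is bounded on $L^\infty$ but not on $L^1$.

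Second, and more structurally, the pointwise bound $B(x)\lesssim\frac{R^{-sq}}{1-2^{-sq}}\mathcal{M}(|g|^q)(x)$ forces you to control $\|[\mathcal{M}(|g|^q)]^{\frac{1}{q}}\|_{Y(\mathcal{X})}$, which after your Jensen step amounts to the boundedness of $\mathcal{M}$ on $Y^{\frac{1}{p}}(\mathcal{X})$ (or, if routed through the extrapolation Lemma~\ref{4.6}, to the boundedness of $\mathcal{M}$ on $L^{p/q}_{\omega}(\mathcal{X})$ for $A_1$ weights $\omega$). This fails precisely in the endpoint case $q=p$, which the lemma must cover (it is the case $Y=L^p$, $q=p$ of the classical Maz'ya--Shaposhnikova setting). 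The paper circumvents this by never introducing $\mathcal{M}(|g|^q)$: Lemma~\ref{4.6} reduces \eqref{06171257} to the weighted estimate $\int_{\mathcal{X}}[B(x)]^{p/q}\omega(x)\,d\mu(x)\lesssim[\omega]_{A_1(\mathcal{X})}^2\|g\|_{L^p_{\omega}(\mathcal{X})}^p$ for $\omega=R_{[Y^{1/p}(\mathcal{X})]'}g\in A_1(\mathcal{X})$, and Lemma~\ref{02201609} proves that estimate directly by using the $A_{p/q}$ characterization in Lemma~\ref{Lemma2.1}(ii) to convert the unweighted $L^q$-averages of $g$ over the annuli into weighted $L^p$-averages, followed by H\"older's inequality in $j$ and Fubini's theorem. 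If you restrict to $q<p$ strictly, your maximal-function route can be repaired by replacing the false adjoint inequality with Lemma~\ref{4.6} plus the boundedness of $\mathcal{M}$ on $L^{p/q}_{\omega}(\mathcal{X})$ for $\omega\in A_1(\mathcal{X})$; but as written the argument does not prove the lemma in the stated generality.
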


To prove Lemma \ref{12202120}, we need
the following lemma, which is a special case of
Lemma~\ref{12202120} with
$Y(\mathcal{X})=L^p_{\omega}(\mathcal{X})$.

\begin{lemma}\label{keythmofwss}
Let $(\mathcal{X},\rho,\mu)$ be a space of homogeneous type.
Let $0 < q \leq p < \infty$ and
$\omega \in A_1({\mathcal{X}})$.
Then there exists a positive constant $C$
such that, for any $R\in(1,\infty)$
and $f\in L^p_{\omega}(\mathcal{X})$,
\begin{align}\label{keyineqwss}
\sup_{s\in(0,1)}s^{\frac{p}{q}}\int_{\mathcal{X}}
\left\{\int_{[B(x,R)]^\complement}
\frac{|f(x)-f(y)|^q}{U(x,y)[\rho(x,y)]^{sq}}
\, d\mu(y)\right\}^{\frac{p}{q}}\omega(x)\,d\mu(x)
\leq C[\omega]_{A_1({\mathcal{X}})}^2
\|f\|^{p}_{L^p_{\omega}(\mathcal{X}) }.
\end{align}
\end{lemma}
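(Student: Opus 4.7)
The plan is to reduce the tail integral to pointwise bounds controlled by $|f(x)|^q$ and ${\mathcal M}(|f|^q)(x)$ via a dyadic annular decomposition, and then to close the estimate by exploiting the $A_1$-property of $\omega$ together with a Fefferman--Stein-type weighted maximal inequality. To set up the decomposition, I would first use the elementary inequality $|f(x)-f(y)|^q\le C_{(q)}[|f(x)|^q+|f(y)|^q]$, valid for all $q\in(0,\infty)$, to split the inner integral as
\[
\int_{[B(x,R)]^\complement}\frac{|f(x)-f(y)|^q}{U(x,y)[\rho(x,y)]^{sq}}\,d\mu(y)\lesssim J_1(x)+J_2(x),
\]
where $J_1(x):=|f(x)|^q\int_{[B(x,R)]^\complement}[U(x,y)[\rho(x,y)]^{sq}]^{-1}\,d\mu(y)$ and $J_2(x):=\int_{[B(x,R)]^\complement}|f(y)|^q[U(x,y)[\rho(x,y)]^{sq}]^{-1}\,d\mu(y)$. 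Each piece is then treated via the dyadic decomposition $[B(x,R)]^\complement=\bigsqcup_{k=0}^\infty[B(x,2^{k+1}R)\setminus B(x,2^kR)]$; on the $k$-th annulus one has $\rho(x,y)\ge 2^kR$ and, by the doubling condition \eqref{dc}, $U(x,y)\sim\mu(B(x,2^kR))\sim\mu(B(x,2^{k+1}R))$.

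Summing the geometric series $\sum_{k=0}^\infty(2^kR)^{-sq}\lesssim(sq\,R^{sq})^{-1}$, which is precisely where the critical $s^{-1}$-factor (and hence the $s^{1/q}$-scaling of the lemma) originates, yields the pointwise bounds
\[
s\,J_1(x)\lesssim R^{-sq}|f(x)|^q\le|f(x)|^q,\qquad s\,J_2(x)\lesssim R^{-sq}{\mathcal M}(|f|^q)(x)\le{\mathcal M}(|f|^q)(x)
\]
for $R>1$, where the bound on $J_2$ uses $\mu(B(x,2^kR))^{-1}\int_{B(x,2^{k+1}R)}|f|^q\,d\mu\le C{\mathcal M}(|f|^q)(x)$. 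Since $p/q\ge 1$ and $|f(x)|^q\le{\mathcal M}(|f|^q)(x)$ for $\mu$-a.e.\ $x$, the two estimates combine into the key pointwise control
\[
s^{p/q}[J_1(x)+J_2(x)]^{p/q}\lesssim[{\mathcal M}(|f|^q)(x)]^{p/q}.
\]
When $p>q$, I would then integrate against $\omega$ and apply the Fefferman--Stein inequality $\int({\mathcal M}g)^r\omega\,d\mu\lesssim\int|g|^r{\mathcal M}\omega\,d\mu$ for $r=p/q>1$ (standard on spaces of homogeneous type) together with Lemma~\ref{Lemma2.1}(i), which gives ${\mathcal M}\omega\le[\omega]_{A_1({\mathcal X})}\omega$ $\mu$-a.e.; this produces a bound by $C[\omega]_{A_1({\mathcal X})}\|f\|_{L^p_\omega({\mathcal X})}^p$, well within the claimed $C[\omega]_{A_1({\mathcal X})}^2\|f\|_{L^p_\omega({\mathcal X})}^p$.

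The main obstacle is the endpoint $p=q$, where ${\mathcal M}$ is not bounded on $L^1_\omega({\mathcal X})$ and the pointwise maximal bound is insufficient. In that case I would bypass the maximal function on $J_2$ entirely and instead apply Fubini directly to $\int_{\mathcal X}J_2(x)\omega(x)\,d\mu(x)$. Using the symmetries $U(x,y)=U(y,x)$ and $y\notin B(x,R)\Leftrightarrow x\notin B(y,R)$, this rewrites as $\int_{\mathcal X}|f(y)|^qK_s(y)\,d\mu(y)$ with kernel $K_s(y):=\int_{[B(y,R)]^\complement}\omega(x)[U(x,y)[\rho(x,y)]^{sq}]^{-1}\,d\mu(x)$. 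Re-running the dyadic decomposition in $x$ around $y$ and invoking both the $A_1$-property $\omega(B)\le[\omega]_{A_1({\mathcal X})}\omega(y)\mu(B)$, valid for $\mu$-a.e.\ $y\in B$, and the doubling of $\mu$ produces $sK_s(y)\lesssim[\omega]_{A_1({\mathcal X})}\omega(y)$, which closes this remaining case and completes the proof of \eqref{keyineqwss}.
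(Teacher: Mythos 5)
Your proposal is correct, and for the essential term (the one involving $|f(y)|^q$) it takes a genuinely different route from the paper. The paper proves a separate Lemma~\ref{02201609} that handles this term for all $0<q\le p<\infty$ in one stroke: after the same dyadic annular decomposition, it uses the $A_{p/q}$-characterization in Lemma~\ref{Lemma2.1}(ii) to replace the unweighted average of $|f|^q$ over $B(x,2^{j+1}R)$ by $[\frac{1}{\omega(B)}\int_B|f|^p\omega\,d\mu]^{q/p}$, applies H\"older's inequality in the dyadic index $j$ to pull the exponent $p/q$ inside the sum, and then uses Fubini together with the comparison $\omega(B(y,2^{j+1}R))\lesssim[\omega]_{A_1(\mathcal{X})}\omega(B(x,2^{j+1}R))$; this is self-contained modulo Lemma~\ref{Lemma2.1} and produces the stated $[\omega]_{A_1(\mathcal{X})}^2$. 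You instead split into the cases $p>q$ and $p=q$: for $p>q$ you reduce to the pointwise bound by $\mathcal{M}(|f|^q)$ and invoke the Fefferman--Stein weighted maximal inequality with exponent $r=p/q>1$ plus $\mathcal{M}\omega\le[\omega]_{A_1(\mathcal{X})}\omega$, and for $p=q$ you dualize by Fubini and estimate the kernel $K_s(y)$ directly from the $A_1$-condition. Both cases are sound (your kernel bound $U(x,y)\gtrsim\mu(B(y,2^kR))$ on the $k$-th annulus follows from the quasi-triangle inequality and doubling, exactly as in the paper's treatment), and your argument even yields the better power $[\omega]_{A_1(\mathcal{X})}^1$. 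The trade-offs: your route imports the Fefferman--Stein inequality on spaces of homogeneous type as an external (though standard) ingredient and requires the case distinction precisely because $\mathcal{M}$ fails to be bounded on $L^1_\omega(\mathcal{X})$, whereas the paper's H\"older-plus-Fubini argument avoids the maximal operator on the function side entirely and treats the endpoint $p=q$ uniformly; note also that substituting the $A_{p/q}$-boundedness of $\mathcal{M}$ for Fefferman--Stein would \emph{not} work here, since the Buckley-type operator norm blows up as $p/q\to1$.
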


To show Lemma \ref{keythmofwss}, we need the
following useful lemma, which also plays
an important role in the proof of the lower
estimate in Theorem \ref{Thm2}.

\begin{lemma}\label{02201609}
Let $(\mathcal{X},\rho,\mu)$ be a space of homogeneous type,
$0 < q \leq p < \infty$, and
$\omega \in A_1({\mathcal{X}})$.
Then there exists a positive constant $C$ such that,
for any $s\in(0,1)$, $R\in(0,\infty)$,
and $f\in L^p_{\omega}(\mathcal{X})$,
\begin{align*}
s^{\frac{p}{q}}\int_{\mathcal{X}}
\left\{\int_{[B(x,R)]^\complement}
\frac{|f(y)|^q}{U(x,y)[\rho(x,y)]^{sq}}
\, d\mu(y)\right\}^{\frac{p}{q}}\omega(x)\,d\mu(x)
\leq C\left[\frac{s 2^{sq}}{R^{sq}
(2^{sq}-1)}\right]^{\frac{p}{q}}
[\omega]_{A_1({\mathcal{X}})}^2
\|f\|_{L^p_{\omega}(\mathcal{X})}^p.
\end{align*}
\end{lemma}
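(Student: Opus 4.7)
The plan is to dominate the inner integral pointwise by the Hardy--Littlewood maximal function of $|f|^q$ via a dyadic decomposition of $[B(x,R)]^\complement$, and then to control the resulting weighted $L^{p/q}$-norm of $\mathcal{M}(|f|^q)$ using properties of the $A_1$ weight $\omega$. The key algebraic observation is that the geometric series $\sum_{k\geq 0}(2^{k}R)^{-sq} = 2^{sq}/[R^{sq}(2^{sq}-1)]$ is precisely the $q$-th power of the factor appearing on the right-hand side; the prefactor $s^{p/q}$ on both sides will cancel.

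First I would decompose $[B(x,R)]^\complement = \bigcup_{k\in\mathbb{Z}_+} [B(x, 2^{k+1} R) \setminus B(x, 2^k R)]$. For $y$ in the $k$-th annulus, I have $\rho(x,y) \geq 2^k R$ and, by the quasi-triangle inequality together with the doubling condition~\eqref{dc}, $\mu(B(y, 2^k R)) \sim \mu(B(x, 2^{k+1}R))$, whence $U(x,y) \gtrsim \mu(B(x, 2^{k+1} R))$. Summing the contributions over $k$ and recognising an average of $|f|^q$ over $B(x, 2^{k+1}R)$ yields the pointwise bound
\begin{align*}
\int_{[B(x,R)]^\complement} \frac{|f(y)|^q}{U(x,y)[\rho(x,y)]^{sq}}\,d\mu(y) \lesssim \frac{2^{sq}}{R^{sq}(2^{sq}-1)}\, \mathcal{M}(|f|^q)(x).
\end{align*}
Raising to the power $p/q \geq 1$ and integrating against $\omega$ reduces the desired estimate to
$\int_{\mathcal{X}} [\mathcal{M}(|f|^q)(x)]^{p/q} \omega(x)\,d\mu(x) \lesssim [\omega]_{A_1(\mathcal{X})}^{2}\, \|f\|_{L^p_\omega(\mathcal{X})}^p$.

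When $p/q > 1$, this last inequality follows from the Fefferman--Stein weighted inequality on spaces of homogeneous type combined with the pointwise bound $\mathcal{M}\omega \leq [\omega]_{A_1(\mathcal{X})}\,\omega$ from Lemma~\ref{Lemma2.1}(i), giving in fact a single power of $[\omega]_{A_1(\mathcal{X})}$. The main obstacle is the endpoint case $p = q$, where $p/q = 1$ and $\mathcal{M}$ fails to be strongly bounded on $L^1_\omega$, so the Fefferman--Stein approach is unavailable. For this case I would avoid the maximal reduction altogether: apply Fubini directly to the double integral, re-perform the dyadic decomposition centred at $y$ rather than $x$, note that $U(x,y) \gtrsim \mu(B(y, 2^k R))$ for $x$ in the corresponding annulus around $y$, and then use the essential-infimum formulation of the $A_1$ condition to control $\omega(B(y, 2^{k+1}R))/\mu(B(y, 2^{k+1}R))$ by $[\omega]_{A_1(\mathcal{X})}\,\omega(y)$ for $\mu$-almost every $y$. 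The same geometric sum then produces the desired estimate. Since both branches give at most one power of $[\omega]_{A_1(\mathcal{X})}$, and $[\omega]_{A_1(\mathcal{X})} \geq 1$ by Lemma~\ref{Lemma2.1}(i), the stated bound with $[\omega]_{A_1(\mathcal{X})}^{2}$ follows a fortiori.
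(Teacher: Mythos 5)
Your proposal is correct, but after the common first step (the dyadic decomposition of $[B(x,R)]^\complement$ and the bound $U(x,y)\gtrsim\mu(B(x,2^{k+1}R))$ on each annulus, both of which match the paper) you diverge from the paper's argument. The paper does not pass through the maximal function at all: it uses Lemma~\ref{Lemma2.1}(ii) and (iv) to convert the unweighted average $\frac{1}{\mu(B)}\int_B|f|^q\,d\mu$ into $\bigl[\frac{[\omega]_{A_{p/q}}}{\omega(B)}\int_B|f|^p\omega\,d\mu\bigr]^{q/p}$, applies H\"older's inequality to the sum in $k$ with exponents $\frac{p}{q}$ and $(\frac{p}{q})'$, then Fubini, and finally the $A_1$ doubling estimate $\omega(B(y,2^{k+1}R))\lesssim[\omega]_{A_1}\omega(B(x,2^{k+1}R))$; this is where its two powers of $[\omega]_{A_1}$ come from, and the argument is uniform in $p\geq q$ because the H\"older factor degenerates to $1$ when $p=q$. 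Your route instead dominates the inner integral by $\frac{2^{sq}}{R^{sq}(2^{sq}-1)}\mathcal{M}(|f|^q)(x)$ and invokes the Fefferman--Stein inequality together with $\mathcal{M}\omega\leq[\omega]_{A_1}\omega$; this yields only a single power of $[\omega]_{A_1}$ (which suffices, as $[\omega]_{A_1}\geq1$), but it imports Fefferman--Stein on spaces of homogeneous type as an external black box not stated in the paper, and it forces the separate endpoint treatment of $p=q$, where your Fubini-plus-$A_1$ argument is essentially the paper's proof specialized to that case. Both approaches are valid; the paper's buys self-containedness and a single uniform computation, yours buys a sharper weight constant at the cost of an extra cited theorem and a case split. (Minor point: in your $p=q$ branch, to get a single exceptional null set independent of the ball you should pass through $\frac{\omega(B(y,r))}{\mu(B(y,r))}\leq\mathcal{M}\omega(y)\leq[\omega]_{A_1}\omega(y)$ as in Lemma~\ref{Lemma2.1}(i), rather than applying the essential-infimum definition ball by ball.)
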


\begin{proof}
Let $s\in(0,1)$, $R\in(0,\infty)$,
and $f\in L^p_{\omega}(\mathcal{X})$.
By \eqref{dc}, both (ii) and (iv) of Lemma~\ref{Lemma2.1},
H\"older's inequality,
and Fubini's theorem,
we conclude that
\begin{align}\label{12202108}
&\nonumber\int_{\mathcal{X}}
\left\{\int_{[B(x,R)]^\complement}
\frac{|f(y)|^q}{U(x,y)[\rho(x,y)]^{sq}}
\, d\mu(y)\right\}^{\frac{p}{q}}\omega(x)\,d\mu(x)\\
\notag&\quad= \int_{\mathcal{X}} \left\{\sum_{j=0}^{\infty}
\int_{B(x,2^{j+1}R) \setminus B(x,2^{j}R)}
\frac{|f(y)|^q}{U(x,y)[\rho(x,y)]^{sq}}
\, d\mu(y)\right\}^{\frac{p}{q}}\omega(x)\,d\mu(x)\\
\notag&\quad\lesssim\int_{\mathcal{X}}
\left[\sum_{j=0}^{\infty} \frac{(2^{j}R)^{-sq}}{\mu(B(x,2^{j+1}R))}
\int_ {B(x,2^{j+1}R)}
{|f(y)|^q}\,d\mu(y)\right]^{\frac{p}{q}}\omega(x)\,d\mu(x)\\
\notag&\quad\leq \int_{\mathcal{X}}
\left\{\sum_{j=0}^{\infty} (2^{j}R)^{-sq}
\left[\frac{[\omega]_{A_{p/q}({\mathcal{X}})}}{\omega(B(x,2^{j+1}R))}
\int_{B(x,2^{j+1}R)}{|f(y)|^p}\omega(y)\,d\mu(y)\right]
^{\frac{q}{p}}\right\}^{\frac{p}{q}}
\omega(x)\,d\mu(x)\\
\notag&\quad\leq  [\omega]_{A_{1}({\mathcal{X}})}\int_{\mathcal{X}}
\left\{\sum_{j=0}^{\infty} (2^{j}R)^{-sq}
\right.\\ \notag
&\left.\qquad\times
\left[\frac{1}{\omega(B(x,2^{j+1}R))}
\int_{B(x,2^{j+1}R)}	{|f(y)|^p}\omega(y)\,d\mu(y)
\right]^{\frac{q}{p}}\right\}
^{\frac{p}{q}}\omega(x)\,d\mu(x)\\
\notag&\quad\leq[\omega]_{A_{1}({\mathcal{X}})}\int_{\mathcal{X}}\left\{
\left[\sum_{j=0}^{\infty} (2^{j}R)^{-sq}
\right]^{\frac{1}{\left(\frac{p}{q}\right)'}}\right.\\
\notag&\qquad\left.\times\left[\sum_{j=0}^{\infty}
\frac{(2^{j}R)^{-sq}}{\omega(B(x,2^{j+1}R))}
\int_{B(x,2^{j+1}R)}|f(y)|^p\omega(y)\,d\mu(y)
\right]^{\frac{q}{p}}\right\}^{\frac{p}{q}}
\omega(x)\,d\mu(x)\\
\notag&\quad=[\omega]_{A_{1}({\mathcal{X}})}
\left[\frac{ 2^{sq}}{R^{sq} (2^{sq}-1)}\right]^{\frac{p}{q}-1}
\sum_{j=0}^{\infty} (2^{j}R)^{-sq}\\
\notag&\qquad\times
\int_{\mathcal{X}}
\left[\frac{1}{\omega(B(x,2^{j+1}R))}
\int_{B(x,2^{j+1}R)}|f(y)|^p\omega(y)\,d\mu(y)\right] \omega(x)\,d\mu(x)\\
\notag&\quad=
[\omega]_{A_{1}({\mathcal{X}})}
\left[\frac{ 2^{sq}}{R^{sq} (2^{sq}-1)}\right]^{\frac{p}{q}-1}\\
&\qquad\times
\sum_{j=0}^{\infty} (2^{j}R)^{-sq}\int_{\mathcal{X}}
\left[\int_{B(y,2^{j+1}R)}
\frac{\omega(x)}{\omega(B(x,2^{j+1}R))}\,d\mu(x)\right]
|f(y)|^p\omega(y)\,d\mu(y).
\end{align}
From Definition \ref{Deqms}(iii),
we infer that,
for any $j\in\mathbb{Z}_+$,
$y\in{\mathcal{X}}$, and $x\in B(y,2^{j+1}R)$,
$$B(y,2^{j+1}R)\subset B(x,K_0 2^{j+2}R),$$
which, combined with Lemma~\ref{Lemma2.1}(iii)
and \eqref{ud}, further implies that
\begin{align*}
\omega(B(y,2^{j+1}R))
& \leq \omega(B(x,K_0 2^{j+2}R))
\leq [\omega]_{A_1({\mathcal{X}})}
\frac{\mu(B(x,K_0 2^{j+2}R))}{\mu(B(x,2^{j+1}R))}
\omega(B(x,2^{j+1}R))\\
&\lesssim[\omega]_{A_1({\mathcal{X}})}
\omega(B(x,2^{j+1}R)).
\end{align*}
By this and \eqref{12202108},
we find that
\begin{align*}
&s^{\frac{p}{q}}\int_{\mathcal{X}}
\left\{\int_{[B(x,R)]^\complement}
\frac{|f(y)|^q}{U(x,y)[\rho(x,y)]^{sq}}
\, d\mu(y)\right\}^{\frac{p}{q}}\omega(x)\,d\mu(x)\\
&\quad\lesssim
[\omega]^2_{A_1({\mathcal{X}})}
\left[\frac{s 2^{sq}}{R^{sq} (2^{sq}-1)}\right]^{\frac{p}{q}-1}\\
&\qquad\times
s\sum_{j=0}^{\infty} (2^{j}R)^{-sq}\int_{\mathcal{X}}
\left[\int_{B(y,2^{j+1}R)}
\frac{\omega(x)}{\omega(B(y,2^{j+1}R))}\,d\mu(x)\right]
|f(y)|^p\omega(y)\,d\mu(y)\\
&\quad =
[\omega]^2_{A_1({\mathcal{X}})}
\left[\frac{s 2^{sq}}{R^{sq} (2^{sq}-1)}\right]^{\frac{p}{q}}
\|f\|^{p}_{L^p_{\omega}(\mathcal{X})},
\end{align*}
which completes the proof of Lemma \ref{02201609}.
\end{proof}

Now, we use Lemma \ref{02201609} to prove Lemma \ref{keythmofwss}.

\begin{proof}[Proof of Lemma \ref{keythmofwss}]
Let $s\in(0,1)$, $R \in (1,\infty)$, and $f\in L^p_\omega(\mathcal{X})$.
Then we have
\begin{align}\label{12241742}
&\nonumber s^{\frac{p}{q}}\int_{\mathcal{X}}
\left\{\int_{[B(x,R)]^\complement}
\frac{|f(x)-f(y)|^q}{U(x,y)[\rho(x,y)]^{sq}}
\, d\mu(y)\right\}^{\frac{p}{q}}\omega(x)\,d\mu(x)
\\
\notag&\quad\lesssim s^{\frac{p}{q}}\int_{\mathcal{X}}
\left\{\int_{[B(x,R)]^\complement}
\frac{1}{U(x,y)[\rho(x,y)]^{sq}}
\, d\mu(y)\right\}^{\frac{p}{q}}|f(x)|^p\omega(x)\,d\mu(x)\\
\notag&\qquad+ s^{\frac{p}{q}}\int_{\mathcal{X}}
\left\{\int_{[B(x,R)]^\complement}
\frac{|f(y)|^q}{U(x,y)[\rho(x,y)]^{sq}}
\, d\mu(y)\right\}^{\frac{p}{q}}\omega(x)\,d\mu(x)\\
&\quad=: I_1(s) + I_2(s).
\end{align}
To estimate $I_1(s)$,
from \eqref{dc}, we deduce that,
for any $x\in\mathcal{X}$,
\begin{align}\label{QW1}
&\notag\int_{[B(x,R)]^\complement}
\frac{1}{U(x,y)[\rho(x,y)]^{sq}}
\, d\mu(y)\\
&\quad\nonumber=\sum_{j=0}^{\infty}
\int_{B(x,2^{j+1}R) \setminus B(x,2^{j}R)}
\frac{1}{U(x,y)[\rho(x,y)]^{sq}}
\, d\mu(y)\\
&\quad \lesssim \sum_{j=0}^{\infty}
(2^jR)^{-sq}
\int_{B(x,2^{j+1}R) \setminus B(x,2^{j}R)}
\frac{1}{\mu(B(x,2^jR))}
\, d\mu(y)
\lesssim\frac{2^{sq}}{R^{sq}(2^{sq}-1)}.
\end{align}
By this and Lemma~\ref{Lemma2.1}(i),
we conclude that, for any $s\in(0,1)$,
\begin{align}\label{12241741}
I_1(s)\lesssim\left[\frac{s 2^{sq}}{R^{sq}
(2^{sq}-1)}\right]^{\frac{p}{q}}[\omega]_{A_1({\mathcal{X}})}^2
\|f\|^{p}_{L^p_{\omega}(\mathcal{X})}.
\end{align}
Observe that
\begin{align}\label{1530}
\sup_{s\in(0,1)}\frac{s 2^{sq}}{R^{sq}
(2^{sq}-1)}<\infty.
\end{align}
From this, \eqref{12241742},
\eqref{12241741}, Lemma \ref{02201609},
and $R\in(1,\infty)$,
we infer that \eqref{keyineqwss}
holds,
which then completes the proof of Lemma
\ref{keythmofwss}.
\end{proof}

To show Lemma \ref{12202120},
we also need two technical lemmas.
By a slight modification of the proof of
\cite[Lemmas 4.6 and 4.7]{dlyyz21a} with
$\mathbb{R}^n$ therein replaced by $\mathcal{X}$,
we obtain the following lemmas.

\begin{lemma}\label{lemma3.7}
Let $(\mathcal{X},\rho,\mu)$ be a space of homogeneous type.
Let $Y(\mathcal{X})\subset\mathscr{M}(\mathcal{X})$
be a
linear normed space, equipped with the norm
$\|\cdot\|_{{Y(\mathcal{X})}}$
which makes sense for all functions in $\mathscr{M}(\mathcal{X})$.
Assume that the Hardy--Littlewood maximal operator $\mathcal{M}$
is bounded on $Y(\mathcal{X})$ with its operator norm
denoted by $\|\mathcal{M}\|_{Y(\mathcal{X})\to Y(\mathcal{X})}$.
For any $g\in Y(\mathcal{X})$ and $x\in\mathcal{X}$, let
\begin{align*}
R_{Y(\mathcal{X})}g(x):=\sum_{k=0}^\infty
\frac{\mathcal{M}^kg(x)}{2^k\|\mathcal{M}
\|^k_{Y(\mathcal{X})\to Y(\mathcal{X})}},
\end{align*}
where, for any $k\in\mathbb{N}$,
$\mathcal{M}^k$ is the $k$-fold iteration of $\mathcal{M}$
and $\mathcal{M}^0g:=|g|$.
Then, for any $g\in Y(\mathcal{X})$,
the following assertions hold:
\begin{enumerate}
\item[\textup{(i)}]
for any $x\in\mathcal{X}$, $|g(x)|\leq R_{Y(\mathcal{X})}g(x)$,
\item[\textup{(ii)}]
$R_{{Y(\mathcal{X})}}g\in A_1(\mathcal{X})$ and
$[R_{{Y(\mathcal{X})}}g]_{A_1(\mathcal{X})}
\leq2\|\mathcal{M}\|_{Y(\mathcal{X})\to Y(\mathcal{X})}$,
\item[\textup{(iii)}]
$\|R_{Y(\mathcal{X})}g\|_{{Y(\mathcal{X})}}\leq2\|g\|_{{Y(\mathcal{X})}}$.
\end{enumerate}
\end{lemma}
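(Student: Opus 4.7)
\medskip

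\noindent\textbf{Proof proposal for Lemma~\ref{lemma3.7}.}
The plan is to imitate the classical Rubio de Francia iteration algorithm, adapted to the setting of spaces of homogeneous type, and to verify the three conclusions in order, each flowing from a basic property of $\mathcal{M}$ (positivity, sublinearity, boundedness on $Y(\mathcal{X})$). For convenience, set $A:=\|\mathcal{M}\|_{Y(\mathcal{X})\to Y(\mathcal{X})}$, so that the defining series reads $R_{Y(\mathcal{X})}g=\sum_{k=0}^\infty (2A)^{-k}\mathcal{M}^k g$, with $\mathcal{M}^0 g=|g|$. All summands are nonnegative, which will be used throughout.

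Assertion (i) should be immediate: the $k=0$ term of the series is precisely $|g|$, and every other term is nonnegative, so $|g(x)|\leq R_{Y(\mathcal{X})}g(x)$ pointwise.

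For (ii) I would start from the sublinearity of $\mathcal{M}$ on finite sums, namely $\mathcal{M}(\sum_{k=0}^N (2A)^{-k}\mathcal{M}^k g)\leq \sum_{k=0}^N (2A)^{-k}\mathcal{M}^{k+1}g$, and then pass to the limit $N\to\infty$. Because the partial sums $\sum_{k=0}^N (2A)^{-k}\mathcal{M}^k g$ increase monotonically in $N$, the ball averages $\frac{1}{\mu(B)}\int_B(\cdot)\,d\mu$ commute with the limit by monotone convergence, and taking suprema over balls containing $x$ then yields $\mathcal{M}(R_{Y(\mathcal{X})}g)\leq \sum_{k=0}^\infty (2A)^{-k}\mathcal{M}^{k+1}g$. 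Reindexing $k\mapsto k-1$ rewrites the right-hand side as $2A\sum_{k=1}^\infty (2A)^{-k}\mathcal{M}^k g\leq 2A\, R_{Y(\mathcal{X})}g$, which is the required $A_1$-inequality. Combined with the definition of $[\cdot]_{A_1(\mathcal{X})}$, this gives $[R_{Y(\mathcal{X})}g]_{A_1(\mathcal{X})}\leq 2A$.

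For (iii) I would use the norm triangle inequality on partial sums together with the bound $\|\mathcal{M}^k g\|_{Y(\mathcal{X})}\leq A^k\|g\|_{Y(\mathcal{X})}$ coming from iterating the $Y(\mathcal{X})$-boundedness of $\mathcal{M}$, obtaining $\bigl\|\sum_{k=0}^N (2A)^{-k}\mathcal{M}^k g\bigr\|_{Y(\mathcal{X})}\leq \sum_{k=0}^N 2^{-k}\|g\|_{Y(\mathcal{X})}\leq 2\|g\|_{Y(\mathcal{X})}$, and then pass to the limit. The main technical point, which I expect to be the subtlest step, is precisely this passage to the limit in the $Y(\mathcal{X})$-norm: since the partial sums are monotone increasing to $R_{Y(\mathcal{X})}g$, either a Fatou-type property inherited from how $\|\cdot\|_{Y(\mathcal{X})}$ is defined on all of $\mathscr{M}(\mathcal{X})$, or the analogous property available in the applications we have in mind, permits the conclusion $\|R_{Y(\mathcal{X})}g\|_{Y(\mathcal{X})}\leq 2\|g\|_{Y(\mathcal{X})}$. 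Justifying this interchange of limit and norm, together with the earlier interchange of $\mathcal{M}$ with the infinite sum in (ii), is where the proof actually requires care; the rest is algebra of geometric series.
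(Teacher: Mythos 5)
Your proposal is correct and is exactly the standard Rubio de Francia iteration argument; the paper itself does not reproduce a proof but obtains the lemma by citing \cite[Lemmas 4.6 and 4.7]{dlyyz21a} (with $\mathbb{R}^n$ replaced by $\mathcal{X}$), and that proof proceeds precisely as you describe. Your flagging of the two limit interchanges (monotone convergence for $\mathcal{M}$ applied to the series, and the Fatou-type property of $\|\cdot\|_{Y(\mathcal{X})}$ for the norm bound) is apt, since in the intended application $Y(\mathcal{X})$ is the associate space of a ball Banach function space, where both properties hold.
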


\begin{lemma}\label{4.6}
Let $(\mathcal{X},\rho,\mu)$ be a space of homogeneous type.
Let $p\in(0,\infty)$ and
$Y(\mathcal{X})$ be a ball
quasi-Banach function space
such that
$Y^{\frac{1}{p}}(\mathcal{X})$
satisfies Assumption~\ref{ma}.
Then, for any $f\in Y(\mathcal{X})$,
\begin{equation*}
\|f\|_{{Y(\mathcal{X})}}\leq
\sup_{\|g\|_{[Y^{\frac{1}{p}}(\mathcal{X})]'}\leq1}
\left\{\int_{\mathcal{X}}
\left|f(x)\right|^pR_{[Y^{\frac{1}{p}}(\mathcal{X})]'}
g(x)\,d\mu(x)\right\}^\frac{1}{p}
\leq2^\frac{1}{p}\|f\|_{{Y(\mathcal{X})}}.
\end{equation*}
\end{lemma}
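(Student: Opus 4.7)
The plan is to reduce the claim to the standard duality between a ball Banach function space and its associate space, and then graft in Lemma~\ref{lemma3.7}. First, by Definition~\ref{Detuhua},
\[
\|f\|_{Y(\mathcal{X})}^{p} = \bigl\||f|^{p}\bigr\|_{Y^{1/p}(\mathcal{X})}.
\]
Since $Y^{1/p}(\mathcal{X})$ is a ball Banach function space (this is built into Assumption~\ref{ma}), it coincides with its double associate space $[Y^{1/p}(\mathcal{X})]''$ with equal norms; this is the ball Banach function space analogue of the Lorentz--Luxemburg theorem, proved on spaces of homogeneous type by a routine adaptation of \cite[Proposition~2.3]{shyy17} (the analogue already invoked in the proof of Proposition~\ref{2357}). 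Consequently,
\[
\bigl\||f|^{p}\bigr\|_{Y^{1/p}(\mathcal{X})} = \sup_{\|g\|_{[Y^{1/p}(\mathcal{X})]'} \leq 1} \int_{\mathcal{X}} |f(x)|^{p}\,|g(x)|\, d\mu(x),
\]
where the supremum may be restricted to nonnegative $g$.

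For the lower bound, Lemma~\ref{lemma3.7}(i) gives the pointwise domination $|g(x)| \leq R_{[Y^{1/p}(\mathcal{X})]'}g(x)$, so that for each admissible $g$,
\[
\int_{\mathcal{X}} |f|^{p}\,|g|\, d\mu \leq \int_{\mathcal{X}} |f|^{p}\, R_{[Y^{1/p}(\mathcal{X})]'}g\, d\mu.
\]
Taking the supremum over $\|g\|_{[Y^{1/p}(\mathcal{X})]'} \leq 1$ and then $p$-th roots yields $\|f\|_{Y(\mathcal{X})}$ on the left and the middle quantity of the displayed inequality on the right, proving the first inequality.

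For the upper bound, I would apply the Hölder-type inequality built into the definition of the associate norm, with $h := |f|^{p}$ and $k := R_{[Y^{1/p}(\mathcal{X})]'}g$, namely
\[
\int_{\mathcal{X}} |f|^{p}\, R_{[Y^{1/p}(\mathcal{X})]'}g\, d\mu \leq \bigl\||f|^{p}\bigr\|_{Y^{1/p}(\mathcal{X})}\,\bigl\|R_{[Y^{1/p}(\mathcal{X})]'}g\bigr\|_{[Y^{1/p}(\mathcal{X})]'}.
\]
Lemma~\ref{lemma3.7}(iii) then supplies $\|R_{[Y^{1/p}(\mathcal{X})]'}g\|_{[Y^{1/p}(\mathcal{X})]'} \leq 2\|g\|_{[Y^{1/p}(\mathcal{X})]'} \leq 2$, so each integral is at most $2\|f\|_{Y(\mathcal{X})}^{p}$; taking the supremum over $g$ and then $p$-th roots produces the constant $2^{1/p}$ on the right.

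The only non-routine ingredient is the identification $Y^{1/p}(\mathcal{X}) = [Y^{1/p}(\mathcal{X})]''$ on a space of homogeneous type; the rest is a transparent sandwich between the pointwise lower bound and the norm upper bound on $R_{[Y^{1/p}(\mathcal{X})]'}g$. Assumption~\ref{ma} enters only to guarantee that the Rubio de Francia series defining $R_{[Y^{1/p}(\mathcal{X})]'}g$ converges in $[Y^{1/p}(\mathcal{X})]'$, so no substantive obstacle is expected.
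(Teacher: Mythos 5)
Your proof is correct and is essentially the argument the paper intends: the paper does not write out a proof of Lemma~\ref{4.6} but obtains it by transporting \cite[Lemma~4.7]{dlyyz21a} from $\mathbb{R}^n$ to $\mathcal{X}$, and that argument is exactly your sandwich via the double associate space identity $Y^{\frac1p}(\mathcal{X})=[Y^{\frac1p}(\mathcal{X})]''$, the pointwise bound of Lemma~\ref{lemma3.7}(i), and H\"older's inequality for associate spaces combined with Lemma~\ref{lemma3.7}(iii). No gaps.
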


Next, we are ready to prove Lemma \ref{12202120}.

\begin{proof}[Proof of Lemma \ref{12202120}]
Let $f\in Y({\mathcal{X}})/\mathbb{R}$.
Then, from Proposition~\ref{2357},
we deduce that there exists a constant $a\in\mathbb{R}$ such that
$f+a\in Y(\mathcal{X})$ and
$\|f+a\|_{{Y(\mathcal{X})}}=\|f\|_{Y(\mathcal{X})/\mathbb{R}}$.
Notice that \eqref{06171257} still holds
if we replace $f$ by $f+C$ for any $C\in\mathbb{R}$.
Thus, without loss of generality,
we may assume that $a=0$.
In this case, we have $f\in Y(\mathcal{X})$ and
\begin{align}\label{2220}
\|f\|_{{Y(\mathcal{X})}}=\|f\|_{Y(\mathcal{X})/\mathbb{R}}.
\end{align}
By this and Lemma \ref{4.6}, we find that,
for any $s\in(0,1)$,
\begin{align*}
&\left\|\left\{
\int_{B(\cdot,R)^\complement}
\frac{|f(\cdot)-f(y)|^q}{U(\cdot,y)[\rho(\cdot,y)]^{sq}}
\, d\mu(y)\right\}^{\frac{1}{q}}\right\|^p_{{Y(\mathcal{X})}}\\
&\quad\leq\sup_{\|g\|_{[Y^{\frac{1}{p}}(\mathcal{X})]'}\leq1}
\int_{\mathcal{X}} \left\{
\int_{[B(x,R)]^\complement}
\frac{|f(x)-f(y)|^q}{U(x,y)[\rho(x,y)]^{sq}}
\, d\mu(y)
\right\}^{\frac{p}{q}}R_{[Y^{\frac{1}{p}}(\mathcal{X})]'}g(x)\,d\mu(x).
\end{align*}
From this, Lemma \ref{keythmofwss}
with $\omega$ therein replaced by
$R_{[Y^{\frac{1}{p}}(\mathcal{X})]'}g$,
Lemmas \ref{lemma3.7}(ii) and~\ref{4.6},
and \eqref{2220},
we infer that, for any $s\in(0,1)$,
\begin{align*}
&s^{\frac{p}{q}}
\left\|\left\{\int_{B(\cdot,R)^\complement}
\frac{|f(\cdot)-f(y)|^q}{U(\cdot,y)[\rho(\cdot,y)]^{sq}}
\, d\mu(y) \right\}^{\frac{1}{q}}\right\|^p_{{Y(\mathcal{X})}}\\
&\quad\lesssim \sup_{\|g\|_{[Y^{\frac{1}{p}}(\mathcal{X})]'}\leq 1}
[R_{[Y^{\frac{1}{p}}(\mathcal{X})]'}g]_{A_1({\mathcal{X}})}^2\int_{\mathcal{X}}
|f(x)|^p R_{[Y^{\frac{1}{p}}(\mathcal{X})]'}g(x)\,d\mu(x)\\
&\quad\lesssim \|{\mathcal M}\|_
{[Y^{\frac{1}{p}}(\mathcal{X})]'\to
[Y^{\frac{1}{p}}(\mathcal{X})]'}^2\|f\|^{p}_
{{Y(\mathcal{X})}}
=\|{\mathcal M}\|_
{[Y^{\frac{1}{p}}(\mathcal{X})]'\to
[Y^{\frac{1}{p}}(\mathcal{X})]'}^2\|f\|^{p}_
{{Y(\mathcal{X})/\mathbb{R}}}.
\end{align*}
This finishes the proof of Lemma \ref{12202120}.
\end{proof}

Now, we use Lemma \ref{12202120} to show Theorem \ref{04021646}.

\begin{proof}[Proof of Theorem \ref{04021646}]
By the quasi-triangle inequality of
$\|\cdot\|_{Y(\mathcal{X})}$ and
Lemmas \ref{12202129} and \ref{12202120},
we conclude that,
for any $R\in(1,\infty)$ and
$f\in [Y(\mathcal{X})/\mathbb{R}]
\cap\bigcup_{s\in(0,1)} \dot{W}^{s,q}_Y({\mathcal{X}})$,
\begin{align*}
&\varlimsup_{s \to 0^+}
s^{\frac{1}{q}}\left\|\left\{\int_{\mathcal{X}}
\frac{|f(\cdot)-f(y)|^q}{U(\cdot,y)[\rho(\cdot,
y)]^{sq}}
\, d\mu(y)\right\}^{\frac{1}{q}}\right\|_{{Y(\mathcal{X})}}\\
&\quad\lesssim \varlimsup_{s \to 0^+}
s^{\frac{1}{q}}\left\|
\left\{\int_{B(\cdot,R)}
\frac{|f(\cdot)-f(y)|^q}{U(\cdot,y)[\rho(\cdot,y)]^{sq}}
\, d\mu(y) \right\}^{\frac{1}{q}}\right\|_{{Y(\mathcal{X})}}\\
&\qquad+ \varlimsup_{s \to 0^+}
s^{\frac{1}{q}}\left\|
\left\{\int_{[B(\cdot,R)]^\complement}
\frac{|f(\cdot)-f(y)|^q}{U(\cdot,y)[\rho(\cdot,y)]^{sq}}
\, d\mu(y) \right\}^{\frac{1}{q}}\right\|_{{Y(\mathcal{X})}}\\
&\quad \leq \sup_{s\in(0,1)} s^{\frac{1}{q}}\left\|
\left\{\int_{[B(\cdot,R)]^\complement}
\frac{|f(\cdot)-f(y)|^q}{U(\cdot,y)[\rho(\cdot,y)]^{sq}}
\, d\mu(y)
\right\}^{\frac{1}{q}}\right\|_{{Y(\mathcal{X})}}\\
&\quad\lesssim
\|{\mathcal
M}\|_{[Y^{\frac{1}{p}}(\mathcal{X})]'\to
[Y^{\frac{1}{p}}(\mathcal{X})]'}^{\frac{2}{
p}}\|f\|_{Y(\mathcal{X})/\mathbb{R}}\notag.
\end{align*}
This finishes the proof of Theorem \ref{04021646}.
\end{proof}

\subsection{Proof of Theorem \ref{Thm2}: Lower Estimate}
\label{sec-2-2}

The target of this subsection is to establish
the lower estimate in \eqref{eq1.5}.
The main result of this subsection reads as follows.

\begin{theorem}\label{04021714}
Let $(\mathcal{X},\rho,\mu)$ be a space
of homogeneous type satisfying the {\rm WRD} condition.
Let $0 < q \leq p < \infty$
and $Y(\mathcal{X})$ be a ball
quasi-Banach function space
such that
$Y^{\frac{1}{p}}(\mathcal{X})$
satisfies Assumption~\ref{ma}.
Then there exists a positive constant $C$
such that the following statements hold.
\begin{enumerate}
\item[\rm (i)]
Let $\beta\in(0,\infty)$. Then,
for any $f\in C_{\mathrm{b}}^\beta(\mathcal{X})$,
\begin{equation*}
C\|f\|_{{Y(\mathcal{X})}}
\leq \varliminf_{s \to 0^+}
s^{\frac{1}{q}}\left\|\left\{\int_{\mathcal{X}}
\frac{|f(\cdot)-f(y)|^q}{U(\cdot,y)[\rho(\cdot,y)]^{sq}}
\, d\mu(y) \right\}^{\frac{1}{q}}\right\|_{{Y(\mathcal{X})}}.
\end{equation*}
\item[\rm (ii)]
If $Y(\mathcal{X})$
has an absolutely continuous quasi-norm
and $\mu$ is Borel-semiregular,
then, for any $f\in Y({\mathcal{X}})/\mathbb{R}$,
\begin{equation}\label{04231458}
C\|f\|_{Y(\mathcal{X})/\mathbb{R}}
\leq \varliminf_{s \to 0^+}
s^{\frac{1}{q}}\left\|\left\{\int_{\mathcal{X}}
\frac{|f(\cdot)-f(y)|^q}{U(\cdot,y)[\rho(\cdot,y)]^{sq}}
\, d\mu(y) \right\}^{\frac{1}{q}}\right\|_{{Y(\mathcal{X})}}.
\end{equation}
\end{enumerate}
\end{theorem}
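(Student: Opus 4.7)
The plan is to reduce the lower estimate in $Y(\mathcal{X})$ to a weighted Muckenhoupt-type lower estimate via the extrapolation machinery of Lemmas~\ref{lemma3.7} and~\ref{4.6}, in parallel with the strategy used for the upper estimate in Theorem~\ref{04021646}. The weighted estimate itself will come from a pointwise asymptotic lower bound for H\"older test functions, where the $\mathrm{WRD}$ condition plays the decisive role through the key Lemma~\ref{2225} announced in the introduction. Part~(ii) is then obtained from part~(i) by a density argument in $Y(\mathcal{X})$ that uses the absolute continuity of the quasi-norm and the Borel-semiregularity of $\mu$.

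For part~(i), fix $f\in C_{\mathrm{b}}^{\beta}(\mathcal{X})$ with $\mathrm{supp}\,f\subset B(x_0,R_0)$. For each $x\in\mathcal{X}$, I would choose $R_x\in(2K_0(R_0+\rho(x,x_0)),\infty)$ so that $f\equiv 0$ on $[B(x,R_x)]^{\complement}$, and split
\begin{align*}
I_s(x):=\int_{\mathcal{X}}\frac{|f(x)-f(y)|^q}{U(x,y)[\rho(x,y)]^{sq}}\,d\mu(y)
= I_s^{\mathrm{near}}(x)+I_s^{\mathrm{far}}(x).
\end{align*}
The near piece is dominated by $\|f\|_{\dot{C}^{\beta}(\mathcal{X})}^{q}\int_{B(x,R_x)}[\rho(x,y)]^{(\beta-s)q}U(x,y)^{-1}\,d\mu(y)$, which stays bounded as $s\to 0^{+}$, so $s\,I_s^{\mathrm{near}}(x)\to 0$. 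Since $f$ vanishes on $[B(x,R_x)]^{\complement}$, one has
\begin{align*}
I_s^{\mathrm{far}}(x)=|f(x)|^{q}\int_{[B(x,R_x)]^{\complement}}\frac{1}{U(x,y)[\rho(x,y)]^{sq}}\,d\mu(y),
\end{align*}
and Lemma~\ref{2225} ensures that $s$ times the latter integral is bounded below by a positive constant $C$ uniformly for all sufficiently small $s$. Combining the two contributions yields the pointwise asymptotic bound $\varliminf_{s\to 0^{+}}s\,I_s(x)\geq C|f(x)|^{q}$ for $\mu$-a.e. $x\in\mathcal{X}$, the case $f(x)=0$ being trivial.

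To transfer this pointwise inequality to a norm estimate in $Y(\mathcal{X})$, I would first integrate against an arbitrary weight $\omega\in A_1(\mathcal{X})$ and apply Fatou's lemma to obtain the weighted Maz'ya--Shaposhnikova lower bound
\begin{align*}
\int_{\mathcal{X}}|f(x)|^{p}\omega(x)\,d\mu(x)\leq C\,\varliminf_{s\to 0^{+}}s^{p/q}\int_{\mathcal{X}}[I_s(x)]^{p/q}\omega(x)\,d\mu(x).
\end{align*}
Taking $\omega=R_{[Y^{1/p}(\mathcal{X})]'}g$ for $g\in[Y^{1/p}(\mathcal{X})]'$ with $\|g\|_{[Y^{1/p}(\mathcal{X})]'}\leq 1$, Lemma~\ref{lemma3.7}(ii)--(iii) guarantees both $[\omega]_{A_1(\mathcal{X})}\leq 2\|\mathcal{M}\|_{[Y^{1/p}(\mathcal{X})]'\to[Y^{1/p}(\mathcal{X})]'}$ and $\|\omega\|_{[Y^{1/p}(\mathcal{X})]'}\leq 2$; the weighted integral on the right is then controlled via the H\"older-type pairing by $\|[I_s]^{1/q}\|_{Y(\mathcal{X})}^{p}\cdot\|\omega\|_{[Y^{1/p}(\mathcal{X})]'}\lesssim\|[I_s]^{1/q}\|_{Y(\mathcal{X})}^{p}$. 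Taking the supremum over admissible $g$ and invoking Lemma~\ref{4.6} produces $\|f\|_{Y(\mathcal{X})}^{p}\lesssim\varliminf_{s\to 0^{+}}s^{p/q}\|[I_s]^{1/q}\|_{Y(\mathcal{X})}^{p}$, which is the desired conclusion after taking $p$-th roots.

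For part~(ii), Proposition~\ref{2357} furnishes a unique $a\in\mathbb{R}$ with $f+a\in Y(\mathcal{X})$ and $\|f+a\|_{Y(\mathcal{X})}=\|f\|_{Y(\mathcal{X})/\mathbb{R}}$; since $I_s$ is invariant under adding constants to $f$, one may assume $f\in Y(\mathcal{X})$ and prove the same $Y(\mathcal{X})$-estimate. I would approximate $f$ in $Y(\mathcal{X})$-quasi-norm by a sequence $\{f_n\}\subset C_{\mathrm{b}}^{\beta}(\mathcal{X})$, using absolute continuity of $\|\cdot\|_{Y(\mathcal{X})}$ and Borel-semiregularity of $\mu$ to produce such approximants from truncations of $f$, then apply part~(i) to each $f_n$, and pass to the limit with the error in the Gagliardo-type semi-norm controlled via the upper estimate already furnished by Theorem~\ref{04021646}. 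The main obstacle is the precise verification of Lemma~\ref{2225}: a straightforward annular decomposition using the doubling condition supplies the matching upper bound for free, but the lower bound on $\int_{[B(x,R)]^{\complement}}U(x,y)^{-1}[\rho(x,y)]^{-sq}\,d\mu(y)$ genuinely requires non-degeneracy of $\mu$ on large balls, which is exactly what the $\mathrm{WRD}$ asymptotic $\varliminf_{r\to\infty}\mu(B(x_0,\lambda r))/\mu(B(x_0,r))\geq C_{(\mu)}>1$ is calibrated to supply when summing the shell contributions from below.
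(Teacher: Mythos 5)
Your argument reaches the correct conclusion, but by a route that differs from the paper's in both parts, so let me compare. For part (i), the paper works directly in $Y(\mathcal{X})$: it uses the lattice property of Definition~\ref{Debqfs}(ii) together with Lemma~\ref{2225}, the Fatou lemma for $Y(\mathcal{X})$ (Lemma~\ref{03281146}), and the quasi-triangle inequality, which leaves a tail term $\int_{\mathrm{far}}|f(y)|^q(\cdots)\,d\mu(y)$ that is killed by Lemma~\ref{02201535}(i). You instead prove the pointwise asymptotic bound $\varliminf_{s\to0^+}s\,I_s(x)\ge C|f(x)|^q$ — exploiting that, with an $x$-dependent radius $R_x$ adapted to $\mathrm{supp\,}f$, the far part of your $I_s(x)$ is exactly $|f(x)|^q$ times the kernel integral, to which Lemma~\ref{2225} applies because $[B(x,s^{-1/q})]^\complement\subset[B(x,R_x)]^\complement$ for small $s$ — and then transfer it to $Y(\mathcal{X})$ by duality: Fatou along a minimizing sequence against the weight $R_{[Y^{1/p}(\mathcal{X})]'}g$, the H\"older inequality for associate spaces, and Lemma~\ref{4.6}. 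This is a valid alternative that dispenses with the tail lemma in (i); note that the $A_1$ bound of Lemma~\ref{lemma3.7}(ii) is not actually needed on the lower-estimate side, only parts (i) and (iii) of that lemma.

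For part (ii) the divergence is sharper: the paper does \emph{not} use density, but reruns the argument of (i) with $f+a$ and handles the tail by Lemma~\ref{02201535}(ii) (weighted potential estimate, extrapolation, and absolute continuity of the quasi-norm), whereas you approximate by $C_{\mathrm{b}}^\beta(\mathcal{X})$ functions via Lemma~\ref{lemma2.30} (this is where Borel-semiregularity enters) and absorb the error through the upper estimate. Your scheme works, but the step ``error controlled via Theorem~\ref{04021646}'' hides a point that must be addressed: Theorem~\ref{04021646} bounds $\varlimsup_{s\to0^+}s^{1/q}\|[I_s(f-f_n)]^{1/q}\|_{Y(\mathcal{X})}$ only for $f-f_n\in[Y(\mathcal{X})/\mathbb{R}]\cap\bigcup_{s\in(0,1)}\dot W^{s,q}_Y(\mathcal{X})$, and for a generic $f\in Y(\mathcal{X})$ the near-diagonal part of $I_s(f-f_n)$ need not even be finite. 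You must therefore (a) reduce to the case $f\in\bigcup_{s\in(0,1)}\dot W^{s,q}_Y(\mathcal{X})$, which is legitimate since otherwise the right-hand side of \eqref{04231458} is $+\infty$ and the inequality is vacuous, and (b) check that $C_{\mathrm{b}}^\beta(\mathcal{X})\subset\bigcup_{s\in(0,1)}\dot W^{s,q}_Y(\mathcal{X})$ (true for $s<\beta$ by a routine near/far splitting, but not stated in your proposal). With (a) and (b) supplied, the estimate $\varliminf_s a_s\le C(\varliminf_s b_s+\varlimsup_s c_s)$ closes the argument, so the gap is one of unverified hypotheses rather than a wrong idea.
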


To prove Theorem \ref{04021714}, we need
the following three technical lemmas.

\begin{lemma}\label{02201535}
Let $(\mathcal{X},\rho,\mu)$ be a space of homogeneous type.
Let $0 < q \leq p < \infty$
and $Y(\mathcal{X})$ be a ball
quasi-Banach function space
such that
$Y^{\frac{1}{p}}(\mathcal{X})$
satisfies Assumption~\ref{ma}.
Fix $x_0\in {\mathcal{X}}$.
Then the following assertions hold.
\begin{enumerate}
\item[\rm (i)]
Let $\beta\in(0,\infty)$. Then,
for any $f\in C_{\mathrm{b}}^\beta(\mathcal{X})$,
\begin{equation}\label{02201541}
\lim_{s \to 0^+}s^{\frac{1}{q}}\left\|\left\{
\int_{[B(\cdot,K_0s^{-\frac{1}{q}})]^\complement
\cap [B(x_0,s^{-\frac{1}{q}})]^\complement}
\frac{|f(y)|^q}{U(\cdot,y)[\rho(\cdot,y)]^{sq}}
\, d\mu(y) \right\}^{\frac{1}{q}}\right\|_{{Y(\mathcal{X})}}=0.
\end{equation}
\item[\rm (ii)]
If $Y(\mathcal{X})$
has an absolutely continuous quasi-norm,
then, for any $f\in Y({\mathcal{X}})/\mathbb{R}$,
there exists a constant $a\in\mathbb{R}$ such that
\begin{equation}\label{02201542}
\lim_{s \to 0^+}
s^{\frac{1}{q}}\left\|\left\{\int_{[B(\cdot,K_0s^{-\frac{1}{q}})]^\complement
\cap [B(x_0,s^{-\frac{1}{q}})]^\complement}
\frac{|f(y)+a|^q}{U(\cdot,y)[\rho(\cdot,y)]^{sq}}
\, d\mu(y) \right\}^{\frac{1}{q}}\right\|_{{Y(\mathcal{X})}}=0.
\end{equation}
\end{enumerate}
\end{lemma}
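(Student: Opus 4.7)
The plan is to treat part (i) directly, exploiting the compact support of $f\in C_{\mathrm{b}}^\beta(\mathcal{X})$, and then to reduce part (ii) to a cut-off version of part (i) based on the absolutely continuous quasi-norm, controlling the exterior ``tail'' piece by combining the already-established Lemma~\ref{02201609} with the extrapolation scheme of Lemma~\ref{4.6}.

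For part (i), I would fix $M\in(0,\infty)$ with $\mathrm{supp\,}f\subset B(x_0,M)$. For every $s\in(0,1)$ with $s^{-\frac{1}{q}}>M$ and every $x\in\mathcal{X}$,
\[
[B(x,K_0s^{-\frac{1}{q}})]^\complement\cap[B(x_0,s^{-\frac{1}{q}})]^\complement\subseteq[B(x_0,s^{-\frac{1}{q}})]^\complement\subseteq[B(x_0,M)]^\complement,
\]
on which $f$ vanishes. The inner integral is therefore identically zero for all such $s$, so the whole $Y(\mathcal{X})$-quasi-norm is $0$, and \eqref{02201541} follows immediately.

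For part (ii), by the definition of $Y(\mathcal{X})/\mathbb{R}$ and Proposition~\ref{2357} there exists a unique $a\in\mathbb{R}$ with $g:=f+a\in Y(\mathcal{X})$, and this is the $a$ that I claim works. Fix $\varepsilon>0$. Since $\mathbf{1}_{[B(x_0,R)]^\complement}\to 0$ $\mu$-almost everywhere as $R\to\infty$ and $Y(\mathcal{X})$ has absolutely continuous quasi-norm, I can choose $R_\varepsilon\in(1,\infty)$ so that $\|g_2\|_{Y(\mathcal{X})}<\varepsilon$, where $g_1:=g\mathbf{1}_{B(x_0,R_\varepsilon)}$ and $g_2:=g-g_1$. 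Because $g_1$ and $g_2$ have disjoint supports, $|g|^q=|g_1|^q+|g_2|^q$ pointwise, and the quasi-triangle inequality of $\|\cdot\|_{Y(\mathcal{X})}$ reduces \eqref{02201542} to estimating the $g_1$- and $g_2$-contributions separately. The $g_1$-piece is $0$ once $s^{-\frac{1}{q}}>R_\varepsilon$, exactly as in part (i).

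For the $g_2$-piece I use that the integration domain is contained in $[B(\cdot,K_0s^{-\frac{1}{q}})]^\complement$ and apply Lemma~\ref{02201609} with the free parameter $R$ specialized to $R:=K_0s^{-\frac{1}{q}}$. The critical elementary fact is that, with this choice,
\[
\frac{s\,2^{sq}}{(K_0s^{-\frac{1}{q}})^{sq}(2^{sq}-1)}
=\frac{s^{1+s}\,2^{sq}}{K_0^{sq}(2^{sq}-1)}
\]
stays bounded (indeed tends to $(q\log 2)^{-1}$) as $s\to 0^+$, since $2^{sq}-1\sim sq\log 2$ and $s^s$, $K_0^{sq}$, $2^{sq}$ all tend to $1$. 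Hence Lemma~\ref{02201609} yields, uniformly for all small $s$ and every $\omega\in A_1(\mathcal{X})$,
\[
s^{\frac{p}{q}}\int_{\mathcal{X}}\left\{\int_{[B(x,K_0s^{-\frac{1}{q}})]^\complement}\frac{|g_2(y)|^q}{U(x,y)[\rho(x,y)]^{sq}}\,d\mu(y)\right\}^{\frac{p}{q}}\omega(x)\,d\mu(x)\lesssim[\omega]_{A_1(\mathcal{X})}^2\|g_2\|_{L^p_\omega(\mathcal{X})}^p.
\]
Extrapolating via Lemma~\ref{4.6} just as in the proof of Lemma~\ref{12202120}, namely setting $\omega:=R_{[Y^{\frac{1}{p}}(\mathcal{X})]'}\phi$ with $\|\phi\|_{[Y^{\frac{1}{p}}(\mathcal{X})]'}\leq 1$ and controlling $[\omega]_{A_1(\mathcal{X})}\leq 2\|\mathcal{M}\|_{[Y^{\frac{1}{p}}(\mathcal{X})]'\to[Y^{\frac{1}{p}}(\mathcal{X})]'}$ via Lemma~\ref{lemma3.7}(ii), one concludes that the $g_2$-contribution is dominated by a constant multiple of $\|g_2\|_{Y(\mathcal{X})}<\varepsilon$ for all small $s$. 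Letting $\varepsilon\to 0^+$ gives \eqref{02201542}. The main obstacle is precisely the uniform-in-$s$ boundedness of the constant produced by Lemma~\ref{02201609} under the $s$-dependent choice $R=K_0s^{-\frac{1}{q}}$; once this is in hand, everything else is a routine repackaging of material already developed in Subsection~\ref{sec-2-1}.
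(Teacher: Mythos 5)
Your proof is correct and follows essentially the same route as the paper: part (i) is identical (support considerations kill the integrand for small $s$), and part (ii) rests on the same two ingredients the paper uses, namely Lemma~\ref{02201609} applied with the $s$-dependent radius $R=K_0s^{-\frac{1}{q}}$ together with the uniform bound $\sup_{s\in(0,1)}\frac{s^{1+s}2^{sq}}{K_0^{sq}(2^{sq}-1)}<\infty$, followed by extrapolation via Lemmas~\ref{lemma3.7}(ii) and~\ref{4.6} and the absolutely continuous quasi-norm. The only (cosmetic) difference is that you pre-split $f+a$ at a fixed $\varepsilon$-dependent radius $R_\varepsilon$, whereas the paper keeps the $s$-dependent indicator $\mathbf{1}_{[B(x_0,s^{-1/q})]^{\complement}}$ attached to $f+a$ through the extrapolation and invokes absolute continuity only at the end; both are equivalent.
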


\begin{proof}
We first show (i).
Since $f\in C_{\mathrm{b}}^\beta(\mathcal{X})$, it follows that
there exists $M\in(0,\infty)$ such that $\mathrm{supp\,}f\subset B(x_0,M)$,
which further implies that,
for any given $s\in(0,M^{-q})$ and for any $x\in X$,
$$
\int_{[B(x,K_0s^{-\frac{1}{q}})]^\complement
\cap [B(x_0,s^{-\frac{1}{q}})]^\complement}
\frac{|f(y)|^q}{U(x,y)[\rho(x,y)]^{sq}}
\, d\mu(y)=0
$$
and hence
$$
\lim_{s \to 0^+}
s^{\frac{1}{q}}\left\|\left\{\int_{[B(\cdot,K_0s^{-\frac{1}{q}})]^\complement
\cap [B(x_0,s^{-\frac{1}{q}})]^\complement}
\frac{|f(y)|^q}{U(\cdot,y)[\rho(\cdot,y)]^{sq}}
\, d\mu(y) \right\}^{\frac{1}{q}}\right\|_{{Y(\mathcal{X})}}=0,
$$
which completes the proof of \eqref{02201541}
and hence (i).

Next, we prove (ii).
Let $s\in(0,1)$ and
$f\in Y({\mathcal{X}})/\mathbb{R}$.
Then there exists a constant $a\in\mathbb{R}$
such that $f+a\in Y(\mathcal{X})$.
By Lemma \ref{4.6}, we find that
\begin{align*}
&\left\|\left\{\int_{[B(\cdot,K_0s^{-\frac{1}{q}})]^\complement
\cap [B(x_0,s^{-\frac{1}{q}})]^\complement}
\frac{|f(y)+a|^q}{U(\cdot,y)[\rho(\cdot,y)]^{sq}}
\, d\mu(y) \right\}^{\frac{1}{q}}\right\|^p_{{Y(\mathcal{X})}}\\
&\quad\leq\sup_{\|g\|_{[Y^{\frac{1}{p}}(\mathcal{X})]'}\leq 1}
\int_{\mathcal{X}}
\left\{\int_{[B(x,K_0s^{-\frac{1}{q}})]^\complement}
\frac{|f(y)+a|^q\mathbf{1}_
{[B(x_0,s^{-\frac{1}{q}})]^{\complement}}(y)}
{U(\cdot,y)[\rho(\cdot,y)]^{sq}}\,d\mu(y)
\right\}^{\frac{p}{q}}
R_{[Y^{\frac{1}{p}}(\mathcal{X})]'}g(x)\,d\mu(x),
\end{align*}
which, combined with
Lemma~\ref{02201609},
the fact that $\sup_{s\in(0,1)}\frac{s^{1+s}
2^{sq}}{K_0^{sq}(2^{sq}-1)}<\infty$,
and Lemmas~\ref{lemma3.7}(ii) and \ref{4.6},
further yields
\begin{align}\label{03281205}
&\notag s^{\frac{p}{q}}\left\|\left\{\int_{[B(\cdot,K_0s^{-\frac{1}{q}})]^\complement
\cap [B(x_0,s^{-\frac{1}{q}})]^\complement}
\frac{|f(y)+a|^q}{U(\cdot,y)[\rho(\cdot,y)]^{sq}}
\, d\mu(y) \right\}^{\frac{1}{q}}\right\|^p_{{Y(\mathcal{X})}}\\
&\quad\lesssim \left[\frac{s^{1+s}
2^{sq}}{K_0^{sq}(2^{sq}-1)}\right]^{\frac{p}{q}}
\sup_{\|g\|_{[Y^{\frac{1}{p}}(\mathcal{X})]'}\leq 1}
[R_{[Y^{\frac{1}{p}}(\mathcal{X})]'}g]_{A_1({\mathcal{X}})}^2\notag\\
&\qquad\times
\int_{\mathcal{X}}
|f(x)+a|^p\mathbf{1}_{[B(x_0,s^{-\frac{1}{q}})]
^{\complement}}(x)
R_{[Y^{\frac{1}{p}}(\mathcal{X})]'}g(x)\,d\mu(x) \notag \\
&\quad\lesssim \|{\mathcal M}\|_{[Y^{\frac{1}{p}}(\mathcal{X})]'\to
[Y^{\frac{1}{p}}(\mathcal{X})]'}^2
\left\|\left(f+a\right)\mathbf{1}_{[B(x_0,s^{-\frac{1}{q}}
)]^{\complement}}\right\|^{p}_{{Y(\mathcal{X})}}.
\end{align}
This, together with $f+a\in Y(\mathcal{X})$,
the fact that
$\mathbf{1}_{[B(x_0,s^{-\frac{1}{q}})]^{\complement}}\to 0$
as $s\to 0^+$,
and the assumption that $Y(\mathcal{X})$
has an absolutely continuous quasi-norm,
further implies that \eqref{02201542} holds,
which completes the proof of (ii) and hence Lemma \ref{02201535}.
\end{proof}

We also need the following key lower estimate
on spaces of homogeneous type by taking advantage
of the WRD condition satisfied by the measure.

\begin{lemma}\label{2225}
Let $q\in (0,\infty)$ and
$(\mathcal{X},\rho,\mu)$ be a space
of homogeneous type satisfying the {\rm WRD} condition.
Then there exists $C\in(0,\infty)$ such that, for any $x\in\mathcal{X}$,
\begin{align*}
\varliminf_{s\to 0^+}s\int_{[B(x,s^{-\frac{1}{q}})]^\complement}
\frac{1}{U(x,y)[\rho(x,y)]^{sq}}
\, d\mu(y)\ge C.
\end{align*}
\end{lemma}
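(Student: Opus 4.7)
The plan is to exploit the WRD condition through a dyadic annular decomposition at scale $\widetilde\lambda$. First I would invoke Proposition~\ref{1017} (the uniform extension of the WRD condition announced in Remark~(ii) after Definition~\ref{951}) to obtain constants $\widetilde\lambda\in(1,\infty)$ and $\widetilde C\in(1,C_{(\mu)})$ \emph{independent of $x$} so that, at any fixed $x\in\mathcal{X}$, there is a threshold $r_0(x)\in(0,\infty)$ with $\mu(B(x,\widetilde\lambda r))\ge\widetilde C\mu(B(x,r))$ for all $r\ge r_0(x)$. Setting $R:=s^{-1/q}$, which tends to $\infty$ as $s\to 0^+$, we have $\widetilde\lambda^j R\ge r_0(x)$ for every $j\in\mathbb{Z}_+$ as soon as $s$ is sufficiently small, so the WRD inequality applies simultaneously on every annulus.

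Next I would decompose $[B(x,R)]^\complement=\bigsqcup_{j=0}^\infty[B(x,\widetilde\lambda^{j+1}R)\setminus B(x,\widetilde\lambda^j R)]$ and, on the $j$-th annulus, use the trivial pointwise bounds $[\rho(x,y)]^{sq}\le(\widetilde\lambda^{j+1}R)^{sq}$ and $U(x,y)\le\mu(B(x,\rho(x,y)))\le\mu(B(x,\widetilde\lambda^{j+1}R))$ to obtain
\begin{align*}
\int_{B(x,\widetilde\lambda^{j+1}R)\setminus B(x,\widetilde\lambda^j R)}\frac{d\mu(y)}{U(x,y)[\rho(x,y)]^{sq}}\ge\frac{\mu(B(x,\widetilde\lambda^{j+1}R)\setminus B(x,\widetilde\lambda^j R))}{\mu(B(x,\widetilde\lambda^{j+1}R))(\widetilde\lambda^{j+1}R)^{sq}}.
\end{align*}
Rewriting the measure ratio as $1-\mu(B(x,\widetilde\lambda^j R))/\mu(B(x,\widetilde\lambda^{j+1}R))$ and applying the WRD inequality gives the uniform lower bound $(1-1/\widetilde C)(\widetilde\lambda^{j+1}R)^{-sq}$ on each annulus.

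Summing the resulting geometric series,
\begin{align*}
\int_{[B(x,R)]^\complement}\frac{d\mu(y)}{U(x,y)[\rho(x,y)]^{sq}}\ge\frac{(1-1/\widetilde C)\widetilde\lambda^{-sq}}{R^{sq}(1-\widetilde\lambda^{-sq})},
\end{align*}
I would then multiply by $s$ and let $s\to 0^+$, using the elementary asymptotics $R^{sq}=s^{-s}\to 1$, $\widetilde\lambda^{-sq}\to 1$, and $s/(1-\widetilde\lambda^{-sq})\to 1/(q\ln\widetilde\lambda)$ (Taylor expansion of the exponential). This yields
\begin{align*}
\varliminf_{s\to 0^+}s\int_{[B(x,s^{-1/q})]^\complement}\frac{d\mu(y)}{U(x,y)[\rho(x,y)]^{sq}}\ge\frac{1-1/\widetilde C}{q\ln\widetilde\lambda}=:C,
\end{align*}
which is a positive constant depending only on $q$, $\widetilde\lambda$, and $\widetilde C$, hence independent of $x$.

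The main technical obstacle is not in the geometric-series estimate itself, which is routine, but in establishing the \emph{$x$-uniform} version of the WRD condition: namely, confirming that the same pair $(\widetilde\lambda,\widetilde C)$ works for every basepoint, so that the final constant $C$ does not depend on $x$. I would take this uniformity directly from the forthcoming Proposition~\ref{1017}; without it, one would only obtain a basepoint-dependent constant, which would be insufficient for the intended application in the proof of the lower estimate in Theorem~\ref{Thm2}.
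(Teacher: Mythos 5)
Your proposal is correct, and the core mechanism is the same as the paper's: a dyadic annular decomposition with ratio $\lambda$, the WRD inequality to bound each measure ratio below by $1-1/\widetilde{C}$, the trivial bounds $U(x,y)\le\mu(B(x,\rho(x,y)))$ and $[\rho(x,y)]^{sq}\le(\lambda^{j+1}R)^{sq}$ on each annulus, and the limit $s/(1-\lambda^{-sq})\to 1/(q\ln\lambda)$ together with $s^{-s}\to 1$.

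The one structural difference is where the annuli are centered. The paper keeps the WRD basepoint $x_0$ fixed (via Proposition~\ref{12346781}), uses the quasi-triangle inequality to pass from $[B(x,s^{-1/q})]^\complement$ to $[B(x_0,2K_0s^{-1/q})]^\complement$, and then decomposes into annuli centered at $x_0$; this forces a hidden $\gtrsim$ when comparing $U(x,y)$ and $[\rho(x,y)]^{sq}$ with quantities measured from $x_0$, which costs doubling constants. You instead invoke Proposition~\ref{1017} to get an $x$-uniform pair $(\widetilde\lambda,\widetilde C)$ and run the annular estimate centered at $x$ itself, which makes every pointwise bound exact and yields the clean constant $(1-1/\widetilde C)/(q\ln\widetilde\lambda)$ with no implicit losses. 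Since Proposition~\ref{1017} is proved in the paper by essentially the same ball-transfer argument the paper performs inside its proof of the lemma, the two routes carry the same total content; yours just front-loads the basepoint transfer into the cited proposition. You are also right that the uniformity of $(\widetilde\lambda,\widetilde C)$ in $x$ is the only point that needs care, and Proposition~\ref{1017} supplies exactly that.
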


To show Lemma~\ref{2225},
we need the following equivalent characterization of the WRD condition.

\begin{proposition}\label{12346781}
Let $(\mathcal{X},\rho,\mu)$ be a quasi-metric measure space.
Then $(\mathcal{X},\rho,\mu)$ satisfies the {\rm WRD} condition if and only if
there exist $x_0\in\mathcal{X}$, $\lambda,\widetilde{C}_{(\mu)}\in(1,\infty)$,
and $r_{x_0}\in(0,\infty)$ such that,
for any $r\in(r_{x_0},\infty)$,
$\mu(B(x_0,\lambda r))\geq\widetilde{C}_{(\mu)}\mu(B(x_0,r))$.
\end{proposition}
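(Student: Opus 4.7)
The plan is to prove both directions directly from the definition of the limit inferior. No deep structural properties of quasi-metric spaces are needed; the statement is essentially a reformulation that replaces an asymptotic inequality (the $\varliminf$ bound) with an eventually-holding pointwise inequality, and this transition is exactly what $\varliminf$ encodes.

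For the easier direction, assume the uniform inequality: there exist $x_0\in\mathcal{X}$, $\lambda,\widetilde{C}_{(\mu)}\in(1,\infty)$, and $r_{x_0}\in(0,\infty)$ such that $\mu(B(x_0,\lambda r))\geq\widetilde{C}_{(\mu)}\mu(B(x_0,r))$ for all $r\in(r_{x_0},\infty)$. Taking the limit inferior as $r\to\infty$ immediately gives
\begin{align*}
\varliminf_{r\to\infty}\frac{\mu(B(x_0,\lambda r))}{\mu(B(x_0,r))}\geq\widetilde{C}_{(\mu)}>1,
\end{align*}
so the WRD condition holds with $C_{(\mu)}:=\widetilde{C}_{(\mu)}$.

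For the converse, assume the WRD condition, and set $L:=\varliminf_{r\to\infty}\mu(B(x_0,\lambda r))/\mu(B(x_0,r))$, which satisfies $L\geq C_{(\mu)}>1$. By the definition of $\varliminf$ (and the fact that $L$ could be $+\infty$, in which case we simply pick any finite value exceeding $1$), fix any $\varepsilon\in(0,\min\{L,C_{(\mu)}\}-1)$; then there exists $R_0\in(0,\infty)$ such that $\inf_{r\geq R_0}\mu(B(x_0,\lambda r))/\mu(B(x_0,r))\geq L-\varepsilon$. In particular, for every $r\in(R_0,\infty)$, $\mu(B(x_0,\lambda r))\geq(L-\varepsilon)\mu(B(x_0,r))$, so taking $\widetilde{C}_{(\mu)}:=L-\varepsilon>1$ and $r_{x_0}:=R_0$ yields the desired conclusion.

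The only point requiring a sentence of care is handling the case $L=+\infty$ cleanly when choosing $\varepsilon$, but this is a trivial measure-theoretic bookkeeping detail rather than a genuine obstacle; the substantive content of the proposition is nothing more than the standard fact that $\varliminf_{r\to\infty}f(r)\geq c$ is equivalent to $f(r)$ eventually exceeding any threshold strictly below $c$.
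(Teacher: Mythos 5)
Your proof is correct and takes essentially the same route as the paper's: both directions amount to unwinding $\varliminf_{r\to\infty}$ as $\sup_{R}\inf_{r>R}$. The only cosmetic difference is that the paper works with the threshold $C_{(\mu)}-\epsilon_0$ directly (always finite) rather than your $L-\varepsilon$, which lets it sidestep the $L=+\infty$ case you have to flag separately.
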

\begin{proof}
We first prove the sufficiency. Assume that
there exist $x_0\in\mathcal{X}$, $\lambda,\widetilde{C}_{(\mu)}\in(1,\infty)$,
and $r_{x_0}\in(0,\infty)$ such that,
for any $\widetilde{r}\in(r_{x_0},\infty)$,
$\mu(B(x_0,\lambda\widetilde{r}))\geq\widetilde{C}_{(\mu)}
\mu(B(x_0,\widetilde{r}))$.
From this, we deduce that,
for any $\widetilde{r}\in(r_{x_0},\infty)$,
$$
\inf_{r>\widetilde{r}}
\frac{\mu(B(x_0,\lambda r))}{\mu(B(x_0,r))}\ge\widetilde{C}_{(\mu)}.
$$
Letting $\widetilde{r}\to\infty$, we then conclude that
$(\mathcal{X},\rho,\mu)$ satisfies the WRD condition.
This finishes the proof of the sufficiency.

Now, we show the necessity.
Assume that $(\mathcal{X},\rho,\mu)$ satisfies the WRD condition.
Then there exist $x_0\in\mathcal{X}$ and $\lambda,C_{(\mu)}\in(1,\infty)$
such that \eqref{reverseD} holds.
Let $\epsilon_{0}\in(0,C_{(\mu)}-1)$.
Then \eqref{reverseD} implies that
$$
\sup_{r_{0}\in(0,\infty)}
\inf_{r>r_{0}}
\frac{\mu(B(x_0,\lambda r))}{\mu(B(x_0,r))}>C_{(\mu)}-\epsilon_{0}.
$$
By this, we conclude that exists $r_{x_0}\in(0,\infty)$
such that
$$
\inf_{r>r_{x_0}}
\frac{\mu(B(x_0,\lambda r))}{\mu(B(x_0,r))}>C_{(\mu)}-\epsilon_{0}.
$$
Using this and letting $\widetilde{C}_{(\mu)}:=C_{(\mu)}-\epsilon_{0}$
then complete the proof of the necessity
and hence Proposition~\ref{12346781}.
\end{proof}

Using Proposition~\ref{12346781},
we obtain the following equivalent formulation
of the WRD condition.

\begin{proposition}\label{1017}
Let $(\mathcal{X},\rho,\mu)$ be a quasi-metric measure
space. Then $(\mathcal{X},\rho,\mu)$
satisfies the {\rm WRD} condition if and only if
there exist
$\lambda,C_{(\mu)}\in(1,\infty)$
such that, for any $x\in\mathcal{X}$,
\begin{align*}
\varliminf_{r\to\infty}\frac{\mu(B(x,\lambda r))}{\mu(B(x,r))}
\ge C_{(\mu)}.
\end{align*}
\end{proposition}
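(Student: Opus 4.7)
The ``if'' direction is immediate: the universal bound, specialized to any fixed $x_0\in\mathcal{X}$, recovers \eqref{reverseD}. So the content is the ``only if'' direction, namely upgrading the a priori single-point WRD assumption into a uniform-in-$x$ statement, possibly at the cost of enlarging $\lambda$ (and shrinking $C_{(\mu)}$). The plan is to use the quasi-triangle inequality to sandwich balls around an arbitrary $x$ between balls around $x_0$, and then to exploit the quantitative reformulation already supplied by Proposition~\ref{12346781}.

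First I would invoke Proposition~\ref{12346781} to replace \eqref{reverseD} by an honest pointwise inequality: there exist $\lambda,\widetilde{C}_{(\mu)}\in(1,\infty)$ and $r_{x_0}\in(0,\infty)$ with $\mu(B(x_0,\lambda r))\ge\widetilde{C}_{(\mu)}\mu(B(x_0,r))$ for every $r>r_{x_0}$. Now fix an arbitrary $x\in\mathcal{X}$ and let $a:=\rho(x,x_0)$. From the quasi-triangle inequality (with constant $K_0$) one checks the two elementary inclusions
\begin{align*}
B(x_0,r/(2K_0))\subset B(x,r)\qquad\text{and}\qquad B(x,r)\subset B(x_0,2K_0 r)
\end{align*}
valid once $r>2K_0 a$. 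The specific constants $1/(2K_0)$ and $2K_0$ are chosen only so that the $a$-dependent error is absorbed; any analogous pair would work.

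Next I would set $\widetilde{\lambda}:=4K_0^2\lambda$, which is independent of $x$ and still lies in $(1,\infty)$. Combining the two inclusions above (the first applied at radius $\widetilde{\lambda} r$ and the second at radius $r$) gives, whenever $r$ is large enough that $r>2K_0 a$ and $2K_0 r>r_{x_0}$,
\begin{align*}
\frac{\mu(B(x,\widetilde{\lambda} r))}{\mu(B(x,r))}
\ge\frac{\mu(B(x_0,\widetilde{\lambda} r/(2K_0)))}{\mu(B(x_0,2K_0 r))}
=\frac{\mu(B(x_0,\lambda\cdot 2K_0 r))}{\mu(B(x_0,2K_0 r))}
\ge\widetilde{C}_{(\mu)}.
\end{align*}
Taking $\varliminf_{r\to\infty}$ absorbs the $a$-dependent thresholds, yielding the desired inequality at the arbitrary point $x$ with the \emph{uniform} constants $\widetilde{\lambda}$ and $\widetilde{C}_{(\mu)}$.

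The only subtlety is that the hypothesis \eqref{reverseD} is phrased with a $\varliminf$ rather than an eventual inequality, so one cannot directly iterate or plug in radii; this is precisely why Proposition~\ref{12346781} is needed first. Once that quantitative form is in hand, the rest is a routine book-keeping exercise in the quasi-triangle inequality. I expect no serious obstacle beyond verifying that the chosen $\widetilde{\lambda}$ is strictly greater than $1$ (which is automatic since $K_0\ge 1$ and $\lambda>1$) and that the lower bound $\widetilde{C}_{(\mu)}$ produced by Proposition~\ref{12346781} is strictly greater than $1$, so that the conclusion fits the format required by Definition~\ref{951}.
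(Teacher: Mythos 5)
Your proposal is correct and follows essentially the same route as the paper: both first invoke Proposition~\ref{12346781} to convert the $\varliminf$ hypothesis into an eventual pointwise inequality at $x_0$, then sandwich balls centered at an arbitrary $x$ between balls centered at $x_0$ via the quasi-triangle inequality, arriving at the same effective dilation factor $4K_0^2\lambda$. The bookkeeping of the inclusions and thresholds checks out, so no changes are needed.
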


\begin{proof}
We only prove the necessity because the sufficiency
is obvious. Assume that $(\mathcal{X},\rho,\mu)$
satisfies the WRD condition. Then, from Proposition~\ref{12346781},
we infer that
there exist $x_0\in\mathcal{X}$,
$\lambda,C_{(\mu)}\in(1,\infty)$,
and $r_{x_0}\in(0,\infty)$ such that,
for any $r\in(r_{x_0},\infty)$,
\begin{align}\label{1108}
\mu(B(x_0,\lambda r))\ge C_{(\mu)}\mu(B(x_0,r)).
\end{align}
Let $x\in\mathcal{X}$
and $r_x:=2K_0\max\{r_{x_0},\,\rho(x,x_0)\}$.
Then, by Definition~\ref{Deqms}(iii),
we are easy to show that,
for any $r\in(r_x,\infty)$,
\begin{align*}
B(x_0,\lambda r)\subset B(x,K_0[\lambda r+\rho(x,x_0)])
\subset B(x,2K_0\lambda r)
\end{align*}
and
\begin{align*}
B(x_0,r)\supset B\left(x,\frac{r}{K_0}-\rho(x,x_0)\right)
\supset B\left(x,\frac{r}{2K_0}\right),
\end{align*}
which, combined with \eqref{1108},
further implies that
\begin{align*}
\frac{\mu(B(x,2K_0\lambda r))}{\mu(B(x,\frac{r}{2K_0}))}
\ge\frac{\mu(B(x_0,\lambda r))}{\mu(B(x_0,r))}\ge C_{(\mu)}.
\end{align*}
This, together with Proposition~\ref{12346781},
then finishes the proof of Proposition~\ref{1017}.
\end{proof}

\begin{proof}[Proof of Lemma~\ref{2225}]
From Proposition~\ref{12346781} and the
assumption that $(\mathcal{X},\rho,\mu)$
satisfies the {\rm WRD} condition, we deduce that
there exist $x_0\in\mathcal{X}$,
$\lambda,\widetilde{C}_{(\mu)}\in(1,\infty)$,
and $r_{x_0}\in(0,\infty)$ such that,
for any $r\in(r_{x_0},\infty)$,
\begin{align}\label{2122}
\mu(B(x_0,\lambda r))\geq\widetilde{C}_{(\mu)}\mu(B(x_0,r)).
\end{align}
Let $x\in\mathcal{X}$. By Definition~\ref{Deqms}(iii),
we find that, for any given $s\in(0,\infty)$
and for any $y\in B(x,s^{-\frac{1}{q}})$,
\begin{align*}
\rho(x_0,y)\leq K_0\left[\rho(x_0,x)+\rho(x,y)\right]
\leq K_0\left[\rho(x_0,x)+s^{-\frac{1}{q}}\right]
\end{align*}
and hence
\begin{align}\label{2123}
B\left(x,s^{-\frac{1}{q}}\right)\subset B\left(x_0,K_0
\left[\rho(x_0,x)+s^{-\frac{1}{q}}\right]\right).
\end{align}
Let $s_0\in(0,\infty)$ be such that
$$
2K_0s_0^{-\frac{1}{q}}
\ge K_0\left[\rho(x_0,x)+s_0^{-\frac{1}{q}}\right]>r_{x_0}.
$$
From this, \eqref{2123}, and \eqref{2122}, we infer that
\begin{align*}
&\varliminf_{s\to 0^+}s\int_{[B(x,s^{-\frac{1}{q}})]^\complement}
\frac{1}{U(x,y)[\rho(x,y)]^{sq}}
\, d\mu(y)\\
&\quad\ge\varliminf_{s\to 0^+}s\int_{[B(x_0,K_0
[\rho(x_0,x)+s^{-\frac{1}{q}}])]^\complement}
\frac{1}{U(x,y)[\rho(x,y)]^{sq}}
\, d\mu(y)\\
&\quad\ge\varliminf_{\genfrac{}{}{0pt}{}{s\in(0,s_0)}{s\to 0}}
s\int_{[B(x_0,2K_0s^{-\frac{1}{q}})]^\complement}
\frac{1}{U(x,y)[\rho(x,y)]^{sq}}
\, d\mu(y)\\
&\quad=\varliminf_{\genfrac{}{}{0pt}{}{s\in(0,s_0)}{s\to 0}}
s\sum_{j=0}^{\infty}
\int_{B(x_0,\lambda^{j+1}2K_0s^{-\frac{1}{q}})
\setminus B(x_0,\lambda^{j}2K_0s^{-\frac{1}{q}})}
\frac{1}{U(x,y)[\rho(x,y)]^{sq}}
\, d\mu(y)\\
&\quad\gtrsim\varliminf_{\genfrac{}{}{0pt}{}{s\in(0,s_0)}{s\to 0}}
s\sum_{j=0}^{\infty}
\left(\lambda^{j+1}2K_0s^{-\frac{1}{q}}\right)^{-sq}
\frac{\mu(B(x_0,\lambda^{j+1}2K_0s^{-\frac{1}{q}}))
-\mu(B(x_0,\lambda^{j}2K_0s^{-\frac{1}{q}}))}
{\mu(B(x_0,\lambda^{j+1}2K_0s^{-\frac{1}{q}}))}\\
&\quad\ge\frac{\widetilde{C}_{(\mu)}-1}{\widetilde{C}_{(\mu)}}
\varliminf_{\genfrac{}{}{0pt}{}{s\in(0,s_0)}{s\to 0}}
(2K_0\lambda)^{-sq}\frac{s^{s+1}}{1-\lambda^{-sq}}
=\frac{\widetilde{C}_{(\mu)}-1}{\widetilde{C}_{(\mu)}}\frac{1}{q\ln\lambda}.
\end{align*}
This finishes the proof of Lemma \ref{2225}.
\end{proof}

The following Fatou's lemma on ball quasi-Banach function spaces
is exactly \cite[Lemma 2.9]{yhyy21};
see also \cite[Lemma~2.4]{wyy22}
for the case $\mathcal{X}=\mathbb{R}^n$.

\begin{lemma}\label{03281146}
Let $(\mathcal{X},\rho,\mu)$ be a space of homogeneous type and
$Y(\mathcal{X})$ a ball quasi-Banach function space.
Then, for any sequences
$\{f_m\}_{m\in \mathbb{N}}$ in $Y(\mathcal{X})$,
\begin{equation*}
\left\|\varliminf_{m\to\infty}
|f_m|\right\|_{{Y(\mathcal{X})}} \leq \varliminf_{m\to\infty}
\left\|f_m\right\|_{Y(\mathcal{X})}.
\end{equation*}
\end{lemma}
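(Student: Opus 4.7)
The plan is to reduce the statement to property (iii) of Definition~\ref{Debqfs} (the monotone-convergence Fatou property built into the definition of ball quasi-Banach function spaces) by means of a standard truncation, exactly mirroring how classical Fatou's lemma for $L^p$ is deduced from the monotone convergence theorem. The measure-theoretic substrate is identical; only the axioms (ii) and (iii) of Definition~\ref{Debqfs} are used, so no doubling structure of $(\mathcal{X},\rho,\mu)$ plays any role.

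Concretely, for each $m\in\mathbb{N}$ I would set $g_m:=\inf_{k\geq m}|f_k|$, which belongs to $\mathscr{M}(\mathcal{X})$ because a countable infimum of measurable functions is measurable. By the very definition of the limit inferior, $\{g_m\}_{m\in\mathbb{N}}$ is a pointwise nondecreasing sequence of nonnegative measurable functions with
\begin{equation*}
0\leq g_m\uparrow \varliminf_{m\to\infty}|f_m|\quad\mu\text{-almost everywhere on }\mathcal{X}.
\end{equation*}
Applying Definition~\ref{Debqfs}(iii) to $\{g_m\}_{m\in\mathbb{N}}$ then gives
$\|g_m\|_{Y(\mathcal{X})}\uparrow\|\varliminf_{m\to\infty}|f_m|\|_{Y(\mathcal{X})}$.

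On the other hand, for every $k\geq m$ one has $0\leq g_m\leq|f_k|$ pointwise, so Definition~\ref{Debqfs}(ii) yields $\|g_m\|_{Y(\mathcal{X})}\leq\|f_k\|_{Y(\mathcal{X})}$. Taking the infimum over $k\geq m$ gives $\|g_m\|_{Y(\mathcal{X})}\leq\inf_{k\geq m}\|f_k\|_{Y(\mathcal{X})}$, and letting $m\to\infty$ on both sides combines with the previous display to produce
\begin{equation*}
\left\|\varliminf_{m\to\infty}|f_m|\right\|_{Y(\mathcal{X})}
=\lim_{m\to\infty}\|g_m\|_{Y(\mathcal{X})}
\leq\lim_{m\to\infty}\inf_{k\geq m}\|f_k\|_{Y(\mathcal{X})}
=\varliminf_{m\to\infty}\|f_m\|_{Y(\mathcal{X})},
\end{equation*}
which is the desired inequality.

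There is essentially no obstacle: the argument is a two-line application of axioms (ii) and (iii) of the ball quasi-Banach function space structure. The only minor bookkeeping points are the measurability of $g_m$ (automatic for countable infima) and the pointwise almost-everywhere monotone convergence $g_m\uparrow\varliminf_{m\to\infty}|f_m|$ (which is nothing more than the elementary definition of $\varliminf$ for sequences of real numbers, applied pointwise). Hence the result holds in the stated generality with no further assumptions on $\mathcal{X}$ or on $Y(\mathcal{X})$ beyond its being a ball quasi-Banach function space.
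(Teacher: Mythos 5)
Your proof is correct: it is the standard deduction of the Fatou property from the monotone-convergence axiom (iii) together with the lattice axiom (ii) of Definition~\ref{Debqfs}, and indeed no structure of $(\mathcal{X},\rho,\mu)$ beyond measurability is used. The paper itself gives no proof but simply cites \cite[Lemma 2.9]{yhyy21}, whose argument is exactly the one you wrote, so there is nothing to add.
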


Next, we are ready to prove Theorem \ref{04021714}.

\begin{proof}[Proof of Theorem \ref{04021714}]
We first show (i).
Fix $x_0\in{\mathcal{X}}$ and let $s\in(0,1)$ and $x\in\mathcal{X}$.
By Definition~\ref{Deqms}(iii),
we easily find that
\begin{align*}
B\left(x,K_0\rho(x,x_0)+K_0s^{-\frac{1}{q}}\right)
\supset
\left[B\left(x,K_0 s^{-\frac{1}{q}}\right)
\cup
B\left(x_0,s^{-\frac{1}{q}}\right)\right].
\end{align*}
From this, Definition \ref{Debqfs}(ii),
and the quasi-triangle inequality of
$\|\cdot\|_{Y(\mathcal{X})}$,
we deduce that
\begin{align*}
&\left\|\left\{s\int_{
B(\cdot,K_0\rho(\cdot,x_0)+K_0s^{-\frac{1}{q}})^{\complement}}
\frac{|f(\cdot)|^q}{U(\cdot,y)[\rho(\cdot,y)]^{sq}}
\, d\mu(y) \right\}^{\frac{1}{q}}\right\|_{{Y(\mathcal{X})}}\\
\notag&\quad\leq\left\|\left\{s\int_{
[B(\cdot,K_0 s^{-\frac{1}{q}})]^{\complement}
\cap
[B(x_0,s^{-\frac{1}{q}})]^{\complement}
}
\frac{|f(\cdot)|^q}{U(\cdot,y)[\rho(\cdot,y)]^{sq}}
\, d\mu(y) \right\}^{\frac{1}{q}}\right\|_{{Y(\mathcal{X})}}\\
\notag&\quad\lesssim \left\|\left\{s\int_{
[B(\cdot,K_0 s^{-\frac{1}{q}})]^{\complement}
\cap
[B(x_0,s^{-\frac{1}{q}})]^{\complement}}
\frac{|f(\cdot)-f(y)|^q}{U(\cdot,y)[\rho(\cdot,y)]^{sq}}
\, d\mu(y) \right\}^{\frac{1}{q}}\right\|_{{Y(\mathcal{X})}}\\
\notag&\qquad + \left\|\left\{s\int_{
[B(\cdot,K_0 s^{-\frac{1}{q}})]^{\complement}
\cap
[B(x_0,s^{-\frac{1}{q}})]^{\complement}}
\frac{|f(y)|^q}{U(\cdot,y)[\rho(\cdot,y)]^{sq}}
\, d\mu(y) \right\}^{\frac{1}{q}}\right\|_{{Y(\mathcal{X})}}\\
\notag&\quad\leq \left\|\left\{s\int_{\mathcal{X}}
\frac{|f(\cdot)-f(y)|^q}{U(\cdot,y)[\rho(\cdot,y)]^{sq}}
\, d\mu(y) \right\}^{\frac{1}{q}}\right\|_{{Y(\mathcal{X})}}\\
&\qquad +\left\|\left\{s\int_{
[B(\cdot,K_0 s^{-\frac{1}{q}})]^{\complement}
\cap
[B(x_0,s^{-\frac{1}{q}})]^{\complement}}
\frac{|f(y)|^q}{U(\cdot,y)[\rho(\cdot,y)]^{sq}}
\, d\mu(y) \right\}^{\frac{1}{q}}\right\|_{{Y(\mathcal{X})}}.
\end{align*}
Using this, Lemma \ref{2225}, Definition \ref{Debqfs}(ii),
Lemma \ref{03281146},
and Lemma \ref{02201535}(i),
we conclude that
\begin{align*}
\|f\|_{{Y(\mathcal{X})}}
&\lesssim\left\|\varliminf_{s\to 0^+}\left\{s\int_{
[B(\cdot,2K_0s^{-\frac{1}{q}})]^{\complement}}
\frac{|f(\cdot)|^q}{U(\cdot,y)[\rho(\cdot,y)]^{sq}}
\, d\mu(y) \right\}^{\frac{1}{q}}\right\|_{{Y(\mathcal{X})}}\notag\\
&\leq\left\|\varliminf_{s\to 0^+}\left\{s\int_{
[B(\cdot,K_0\rho(\cdot,x_0)+K_0s^{-\frac{1}{q}})]^{\complement}}
\frac{|f(\cdot)|^q}{U(\cdot,y)[\rho(\cdot,y)]^{sq}}
\, d\mu(y) \right\}^{\frac{1}{q}}\right\|_{{Y(\mathcal{X})}}\notag\\
\notag& \leq \varliminf_{s\to 0^+}\left\|\left\{s\int_{
[B(\cdot,K_0\rho(\cdot,x_0)+K_0s^{-\frac{1}{q}})]^{\complement}}
\frac{|f(\cdot)|^q}{U(\cdot,y)[\rho(\cdot,y)]^{sq}}
\, d\mu(y) \right\}^{\frac{1}{q}}\right\|_{{Y(\mathcal{X})}} \\
\notag& \lesssim \varliminf_{s\to 0^+} \left\|\left\{s\int_{\mathcal{X}}
\frac{|f(\cdot)-f(y)|^q}{U(\cdot,y)[\rho(\cdot,y)]^{sq}}
\, d\mu(y) \right\}^{\frac{1}{q}}\right\|_{{Y(\mathcal{X})}}\\
&\quad +\lim_{s\to 0^+} \left\|\left\{s\int_{
[B(\cdot,K_0 s^{-\frac{1}{q}})]^{\complement}
\cap
[B(x_0,s^{-\frac{1}{q}})]^{\complement}}
\frac{|f(y)|^q}{U(\cdot,y)[\rho(\cdot,y)]^{sq}}
\, d\mu(y) \right\}^{\frac{1}{q}}\right\|_{{Y(\mathcal{X})}}\\
\notag&=\varliminf_{s\to 0^+} \left\|\left\{s\int_{\mathcal{X}}
\frac{|f(\cdot)-f(y)|^q}{U(\cdot,y)[\rho(\cdot,y)]^{sq}}
\, d\mu(y) \right\}^{\frac{1}{q}}\right\|_{{Y(\mathcal{X})}},
\end{align*}
which completes the proof of (i).

Now, we prove (ii).
Let $f\in Y({\mathcal{X}})/\mathbb{R}$.
Then there exists a constant $a\in\mathbb{R}$ such that
$f+a\in Y(\mathcal{X})$.
From this and an argument similar to that used in the proof of (i),
we infer that,
to show \eqref{04231458}, it is sufficient to prove
\begin{equation}\label{03290105}
\lim_{s \to 0^+}
\left\|\left\{s\int_{
[B(\cdot,K_0 s^{-\frac{1}{q}})]^{\complement}
\cap
[B(x_0,s^{-\frac{1}{q}})]^{\complement}
}
\frac{|f(y)+a|^q}{U(\cdot,y)[\rho(\cdot,y)]^{sq}}
\, d\mu(y) \right\}^{\frac{1}{q}}\right\|_{{Y(\mathcal{X})}}=0.
\end{equation}
By the assumption that $Y(\mathcal{X})$ has an absolutely
continuous quasi-norm and Lemma \ref{02201535}(ii),
we find that \eqref{03290105} holds
and hence complete the proof of (ii).
This finishes the proof of Theorem \ref{04021714}.
\end{proof}
Next, we use Theorems \ref{04021646} and \ref{04021714}
to show Theorem \ref{Thm2}.
\begin{proof}[Proof of Theorem \ref{Thm2}]
On the one hand,
from Theorems \ref{04021646} and \ref{04021714}(i),
it follows that Theorem \ref{Thm2}(i) holds.
On the other hand,
assume that $Y(\mathcal{X})$ has an
absolutely continuous quasi-norm. Then,
using Theorems \ref{04021646} and \ref{04021714}(ii),
we obtain Theorem \ref{Thm2}(ii) holds,
which completes the proof Theorem \ref{Thm2}.
\end{proof}

\subsection{Proof of Theorem \ref{Thm3}}\label{sec-2-3}

The aim of this subsection is to prove Theorem \ref{Thm3}.
We need two technical lemmas.
The following lemma is precisely \cite[Lemma 2.29]{tnyyz24}.

\begin{lemma}\label{03281616}
Let $(\mathcal{X},\rho,\mu)$ be a space of homogeneous type.
Let $p\in(0,\infty)$ ,
$\beta\in(0,\infty)$, and
$Y(\mathcal{X})$ be a ball
quasi-Banach function space
such that
$Y^{\frac{1}{p}}(\mathcal{X})$
satisfies Assumption~\ref{ma2}.
Assume that $g$ is a non-negative operator
on $\mathscr M({\mathcal{X}})$
and there exist positive constants $C$ and $r$ such that,
for any $m\in{\mathbb N}$ and $f\in C_{\mathrm{b}}^\beta(\mathcal{X})$,
\begin{align*}
\left\|\left[g(f)\right]^{\frac{1}{\theta_m}}
\right\|_{{Y(\mathcal{X})}}\leq C
\|{\mathcal M}\|_{[Y^{\frac{1}{p\theta_m}}(\mathcal{X})]'
\to[Y^{\frac{1}{p\theta_m}}(\mathcal{X})]'}^
{\frac{r}{\theta_m}}
\left\||f|^{\frac{1}{\theta_m}}\right\|_{Y(\mathcal{X})},
\end{align*}
where $\{\theta_{m}\}_{m\in\mathbb{N}}$
is the same as in Assumption~\ref{ma2}.
Then there exists a positive constant $\widetilde{C}$ such that,
for any $f\in C_{\mathrm{b}}^\beta(\mathcal{X})$,
$\|g(f)\|_{{Y(\mathcal{X})}}\leq
\widetilde{C}
\|f\|_{Y(\mathcal{X})}$.
\end{lemma}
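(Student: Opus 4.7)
The plan is to exploit the endpoint boundedness of $\mathcal{M}$ postulated in Assumption~\ref{ma2} by passing to the limit $m\to\infty$ in the hypothesized bound and using Fatou's lemma for $Y(\mathcal{X})$ (Lemma~\ref{03281146}). Fix $f\in C_{\mathrm{b}}^\beta(\mathcal{X})$. Since $\theta_m\in(0,1)$ and $\theta_m\to1$, for every $x\in\mathcal{X}$ one has $|g(f)(x)|^{1/\theta_m}\to|g(f)(x)|$, so Lemma~\ref{03281146} applied to $\{[g(f)]^{1/\theta_m}\}_{m\in\mathbb{N}}$ together with the assumed inequality yields
\begin{align*}
\|g(f)\|_{Y(\mathcal{X})}
&\leq\varliminf_{m\to\infty}\bigl\|[g(f)]^{1/\theta_m}\bigr\|_{Y(\mathcal{X})}\\
&\leq\varliminf_{m\to\infty}C\,\|\mathcal{M}\|_{[Y^{1/(p\theta_m)}(\mathcal{X})]'\to[Y^{1/(p\theta_m)}(\mathcal{X})]'}^{r/\theta_m}\,\bigl\||f|^{1/\theta_m}\bigr\|_{Y(\mathcal{X})}.
\end{align*}

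The prefactor on the right is controlled by Assumption~\ref{ma2} applied to $Y^{1/p}(\mathcal{X})$: the numbers $\|\mathcal{M}\|_{[Y^{1/(p\theta_m)}(\mathcal{X})]'\to[Y^{1/(p\theta_m)}(\mathcal{X})]'}$ have a finite limit, and since $\theta_m\to1$ the exponent $r/\theta_m\to r$, so the whole prefactor stays bounded as $m\to\infty$. For the second factor I exploit that $f\in C_{\mathrm{b}}^\beta(\mathcal{X})$ is bounded (being $\beta$-Hölder continuous with bounded support): setting $M:=\max\{\|f\|_{L^\infty(\mathcal{X})},1\}$, the elementary inequality
$$|f(x)|^{1/\theta_m}=|f(x)|\cdot|f(x)|^{1/\theta_m-1}\leq M^{1/\theta_m-1}\,|f(x)|$$
is valid for every $x\in\mathcal{X}$ because $0\leq|f(x)|\leq M$ and $1/\theta_m-1\geq0$; taking the $Y(\mathcal{X})$-quasi-norm and letting $m\to\infty$ gives $\varlimsup_{m\to\infty}\||f|^{1/\theta_m}\|_{Y(\mathcal{X})}\leq\|f\|_{Y(\mathcal{X})}$, since $M^{1/\theta_m-1}\to1$.

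Combining these two controls with the previous display produces the desired estimate $\|g(f)\|_{Y(\mathcal{X})}\leq\widetilde{C}\|f\|_{Y(\mathcal{X})}$ with
$$\widetilde{C}:=C\left[\lim_{m\to\infty}\|\mathcal{M}\|_{[Y^{1/(p\theta_m)}(\mathcal{X})]'\to[Y^{1/(p\theta_m)}(\mathcal{X})]'}\right]^{r}.$$
The only substantive point in the argument is the one-sided control of $\||f|^{1/\theta_m}\|_{Y(\mathcal{X})}$ by $\|f\|_{Y(\mathcal{X})}$; the boundedness of $f$ inherent to $C_{\mathrm{b}}^\beta(\mathcal{X})$ is precisely what lets this dominated-convergence-style step go through without any appeal to an absolute continuity property of the quasi-norm on $Y(\mathcal{X})$, and matches the role of the $C_{\mathrm{b}}^\beta$-hypothesis elsewhere in the paper.
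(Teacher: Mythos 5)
Your proof is correct. The paper does not reprove this lemma (it is quoted verbatim from \cite[Lemma~2.29]{tnyyz24}), and your argument --- Fatou's property of $Y(\mathcal{X})$ applied to $\{[g(f)]^{1/\theta_m}\}_{m\in\mathbb{N}}$ on the left-hand side, combined with the pointwise bound $|f|^{1/\theta_m}\le M^{1/\theta_m-1}|f|$ for the bounded function $f\in C_{\mathrm{b}}^\beta(\mathcal{X})$ on the right-hand side, both exploiting $\theta_m\to1$ and the finiteness of $\lim_{m\to\infty}\|\mathcal{M}\|_{[Y^{1/(p\theta_m)}(\mathcal{X})]'\to[Y^{1/(p\theta_m)}(\mathcal{X})]'}$ --- is exactly the standard proof of that cited result.
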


Recall that the \emph{lower smoothness index}
$\mathrm{ind\,}(\mathcal{X},\rho)$,
introduced by Mitrea et al. \cite[Definition 4.26]{MiMiMiMo13},
of a quasi-metric space $(\mathcal{X},\rho)$ is defined by
setting
\begin{align}\label{2205}
\mathrm{ind\,}(\mathcal{X},\rho):=
\sup_{\varrho\sim\rho}\left(\log_2C_\varrho\right)^{-1},
\end{align}
where the supremum is taken over all quasi-metrics $\varrho$
on $\mathcal{X}\times\mathcal{X}$
which are equivalent to $\rho$ and,
for any given quasi-metric $\varrho$ defined on
$\mathcal{X}\times\mathcal{X}$,
$$
C_\varrho:=\sup_{
\genfrac{}{}{0pt}{}{x,y,z\in\mathcal{X}}{\text{not all equal}}}
\frac{\varrho(x,y)}{\max\{\varrho(x,z),\,\varrho(z,y)\}}.
$$
Throughout this article, $0<\beta\preceq\mathrm{ind\,}(\mathcal{X},\rho)$
means that $\beta\in(0,\infty)$ and
$\beta\leq\mathrm{ind\,}(\mathcal{X},\rho)$,
where the equality $\beta=\mathrm{ind\,}(\mathcal{X},\rho)$
is only permissible when the supremum in \eqref{2205} is attained.

The following lemma is exactly \cite[Lemma~2.27]{tnyyz24}.

\begin{lemma}\label{lemma2.30}
Let $(\mathcal{X},\rho,\mu)$ be a space of
homogeneous type with $\mu$ being a
Borel-semiregular measure. Assume that
$Y(\mathcal{X})$ is a ball quasi-Banach function
space having an absolutely continuous quasi-norm. Then,
for any $0<\beta\preceq\mathrm{ind\,}(\mathcal{X},\rho)$,
$C_{\mathrm{b}}^\beta(\mathcal{X})$
is dense in $Y(\mathcal{X})$.
\end{lemma}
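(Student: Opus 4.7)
The plan is to prove density of $C_{\mathrm{b}}^\beta(\mathcal{X})$ in $Y(\mathcal{X})$ by a standard three-step reduction: first truncate to functions that are bounded and supported in a ball; then approximate these by simple functions supported in a ball; and finally, approximate characteristic functions of measurable sets by functions in $C_{\mathrm{b}}^\beta(\mathcal{X})$ built via a suitable Urysohn-type construction. Throughout, the two hypotheses play complementary roles: absolute continuity of the quasi-norm controls all the norm errors, and Borel-semiregularity combined with $\beta\preceq\mathrm{ind\,}(\mathcal{X},\rho)$ supplies the quantitative regularity needed to build the Hölder approximants.

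For the first step, fix $f\in Y(\mathcal{X})$ and $x_0\in\mathcal{X}$. Since $\mathbf{1}_{[B(x_0,R)]^{\complement}}\to 0$ $\mu$-almost everywhere as $R\to\infty$, absolute continuity yields $\|f\mathbf{1}_{[B(x_0,R)]^{\complement}}\|_{Y(\mathcal{X})}\to 0$; similarly $\|f\mathbf{1}_{\{|f|>N\}}\|_{Y(\mathcal{X})}\to 0$ as $N\to\infty$. Hence the bounded, boundedly supported truncation $f_{R,N}:=\mathrm{sign}(f)\min\{|f|,N\}\mathbf{1}_{B(x_0,R)}$ approximates $f$ in $Y(\mathcal{X})$. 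For the second step, approximate $f_{R,N}$ uniformly by a simple function $s=\sum_{j=1}^{M}c_j\mathbf{1}_{E_j}$ with the $E_j$ pairwise disjoint subsets of $B(x_0,R)$. Since $|f_{R,N}-s|\leq\|f_{R,N}-s\|_{L^\infty(\mathcal{X})}\mathbf{1}_{B(x_0,R)}$ and $\mathbf{1}_{B(x_0,R)}\in Y(\mathcal{X})$ by Definition~\ref{Debqfs}(iv), monotonicity of the quasi-norm reduces the $Y$-error to a multiple of $\|f_{R,N}-s\|_{L^\infty(\mathcal{X})}$, which is arbitrarily small.

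The third and decisive step is to approximate each $\mathbf{1}_{E_j}$, with $E_j\subseteqq B(x_0,R)$ of finite measure, by a function in $C_{\mathrm{b}}^\beta(\mathcal{X})$. Invoking Borel-semiregularity of $\mu$, replace $E_j$ by a Borel set with the same indicator $\mu$-a.e.; by inner/outer regularity of Borel measures on a quasi-metric space, for any $\delta>0$ there are a closed set $K$ and an open set $U$ with $K\subseteqq E_j\subseteqq U\subseteqq B(x_0,2R)$ (for $R$ taken slightly larger) and $\mu(U\setminus K)<\delta$. The hypothesis $\beta\preceq\mathrm{ind\,}(\mathcal{X},\rho)$, via the Macías--Segovia-type metrization theorem (in the sharp form of Mitrea--Mitrea--Mitrea--Monniaux), supplies an equivalent quasi-metric $\varrho$ such that $\varrho^{\beta}$ is itself a genuine metric on $\mathcal{X}$. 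One then sets
\begin{align*}
\phi(x):=\frac{\mathrm{dist}_{\varrho^{\beta}}(x,U^{\complement})}
{\mathrm{dist}_{\varrho^{\beta}}(x,U^{\complement})+\mathrm{dist}_{\varrho^{\beta}}(x,K)},
\end{align*}
which is $\beta$-Hölder with respect to $\rho$, takes values in $[0,1]$, equals $1$ on $K$, and vanishes off $U$; multiplying by a similar Hölder cutoff adapted to a slightly larger $\varrho$-ball makes the support bounded, placing $\phi\in C_{\mathrm{b}}^\beta(\mathcal{X})$. Since $|\mathbf{1}_{E_j}-\phi|\leq\mathbf{1}_{U\setminus K}$, monotonicity of the quasi-norm and absolute continuity (applied to the sequence $\mathbf{1}_{U_k\setminus K_k}\to 0$ $\mu$-a.e. as $\delta\to 0$ along a subsequence) give $\|\mathbf{1}_{E_j}-\phi\|_{Y(\mathcal{X})}$ arbitrarily small. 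Linear combination across $j=1,\ldots,M$ and the quasi-triangle inequality of $\|\cdot\|_{Y(\mathcal{X})}$ then complete the approximation of $f$ by an element of $C_{\mathrm{b}}^\beta(\mathcal{X})$.

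The main obstacle is precisely Step~3: the verification that the Macías--Segovia construction provides an equivalent quasi-metric whose $\beta$-power is a metric for the full range $0<\beta\preceq\mathrm{ind\,}(\mathcal{X},\rho)$ (including, when meaningful, the endpoint at which the supremum in the definition of $\mathrm{ind\,}(\mathcal{X},\rho)$ is attained), together with confirming that Borel regularity on a space of homogeneous type supplies the requisite inner-closed/outer-open approximation after Borel-semiregular adjustment. Once these ingredients are in hand, all other norm estimates follow from Definition~\ref{Debqfs} and the absolutely continuous quasi-norm hypothesis in a routine manner.
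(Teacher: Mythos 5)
The paper does not actually prove this lemma: it is quoted verbatim from \cite[Lemma~2.27]{tnyyz24}, so there is no in-paper argument to compare against. Your three-step scheme (truncation via absolute continuity, uniform approximation by simple functions against $\mathbf{1}_{B(x_0,R)}\in Y(\mathcal{X})$, and a H\"older Urysohn function built from the sharp metrization theorem of Mitrea et al.) is the standard and expected route, and Steps~1 and~2 are correct as written; the role you assign to Borel-semiregularity and to $\beta\preceq\mathrm{ind\,}(\mathcal{X},\rho)$ is also the right one.

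There is, however, one concrete gap in Step~3. The quotient
\begin{equation*}
\phi(x)=\frac{\mathrm{dist}_{\varrho^{\beta}}(x,U^{\complement})}
{\mathrm{dist}_{\varrho^{\beta}}(x,U^{\complement})+\mathrm{dist}_{\varrho^{\beta}}(x,K)}
\end{equation*}
is continuous but is \emph{not} in general $\beta$-H\"older: the denominator is bounded below only when $\mathrm{dist}_{\varrho^{\beta}}(K,U^{\complement})>0$, and two disjoint closed sets $K$ and $U^{\complement}$ produced by inner/outer regularity need not be at positive distance (neither is compact in general). As stated, you therefore have not shown $\phi\in C_{\mathrm{b}}^{\beta}(\mathcal{X})$. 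The repair is routine but must be made explicit: after choosing a closed $K\subseteqq F$ with $\mu(F\setminus K)$ small, take $U:=\{x:\mathrm{dist}_{\varrho^{\beta}}(x,K)<\epsilon\}$ for small $\epsilon$ (so that $\mathrm{dist}_{\varrho^{\beta}}(K,U^{\complement})\ge\epsilon$ and $\phi$ is $\epsilon^{-1}$-Lipschitz with respect to the metric $\varrho^{\beta}$, hence $\beta$-H\"older with respect to $\rho$), or equivalently use $\phi(x)=\max\{0,1-\epsilon^{-1}\mathrm{dist}_{\varrho^{\beta}}(x,K)\}$; the bound $\mu(U\setminus K)\to0$ as $\epsilon\to0^+$ then follows from continuity from above, since $\bigcap_{\epsilon>0}U_{\epsilon}=K$ and the $U_{\epsilon}$ are contained in a fixed bounded set of finite measure. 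With this modification (and the separability of $\mathcal{X}$, which guarantees that such open sets are measurable), your argument closes. You should also note that with $U=U_\epsilon$ the support of $\phi$ is automatically bounded, so the extra multiplicative cutoff you mention is not needed.
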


Now, we are ready to show Theorem \ref{Thm3}.

\begin{proof}[Proof of Theorem \ref{Thm3}]
We first prove (i).
Fix $R \in (1,\infty)$.
From the proof of Theorem \ref{Thm2},
we deduce that,
to show \eqref{eq1.5},
it suffices to prove that,
for any $f\in C_{\mathrm{b}}^\beta(\mathcal{X})$,
\begin{equation}\label{03281117}
\sup_{s\in(0,1)} s^{\frac{1}{q}}
\left\|\left\{\int_{[B(\cdot,R)]^\complement}
\frac{|f(\cdot)-f(y)|^q}{U(\cdot,y)[\rho(\cdot,y)]^{sq}}\,
d\mu(y) \right\}^{\frac{1}{q}}\right\|_{{Y(\mathcal{X})}}
\lesssim \|f\|_{{Y(\mathcal{X})}}
\end{equation}
and
\begin{equation}\label{03281211}
\lim_{s \to 0^+}
s^{\frac{1}{q}}\left\|\left\{\int_{
B(\cdot,K_0 s^{-\frac{1}{q}})^{\complement}
\cap
B(x_0,s^{-\frac{1}{q}})^{\complement}}
\frac{|f(y)|^q}{U(\cdot,y)[\rho(\cdot,y)]^{sq}}
\, d\mu(y) \right\}^{\frac{1}{q}}\right\|_{{Y(\mathcal{X})}}=0.
\end{equation}
Let sequence $\{\theta_m\}_{m\in{\mathbb N}}$ in $(0,1)$ be
such that $\lim_{m\to\infty}\theta_m=1$.
By Lemma \ref{12202120}
and Assumption \ref{ma2},
we conclude that, for any $m\in{\mathbb N}$,
\begin{align*}
\sup_{s\in(0,1)} \left\|
\left\{s\int_{[B(\cdot,R)]^\complement}
\frac{|f(\cdot)-f(y)|^q}{U(\cdot,y)[\rho(\cdot,y)]^{sq}}\, d\mu(y)
\right\}^{\frac{1}{q}}\right\|_{Y^{\frac{1}{\theta_m}}(\mathcal{X})}\lesssim
\|{\mathcal M}\|_{[Y^{\frac{1}{p\theta_m}}(\mathcal{X})]'
\to[Y^{\frac{1}{p\theta_m}}(\mathcal{X})]'}^2
\|f\|_{Y^{\frac{1}{\theta_m}}(\mathcal{X})},
\end{align*}
which further implies that, for any $s\in(0,1)$,
\begin{align*}
\left\| \left\{ s\int_{[B(\cdot,R)]^\complement}
\frac{|f(\cdot)-f(y)|^q}{U(\cdot,y)[\rho(\cdot,y)]^{sq}} \, d\mu(y)
\right\}^{\frac{1}{q\theta_m}}\right\|_{{Y(\mathcal{X})}}\lesssim
\|{\mathcal M}\|_{[Y^{\frac{1}{p\theta_m}}(\mathcal{X})]'
\to[Y^{\frac{1}{p\theta_m}}(\mathcal{X})]'}^{\frac{2}{\theta_m}}
\left\|f^{\frac{1}{\theta_m}}\right\|_{Y(\mathcal{X})}.
\end{align*}
From this and Lemma \ref{03281616} with
$g(f)(\cdot)$ replaced by
\begin{equation*}
\left\{s\int_{[B(\cdot,R)]^\complement}
\frac{|f(\cdot)-f(y)|^q}{U(\cdot,y)[\rho(\cdot,y)
]^{sq}}\,d\mu(y)\right\}^{\frac{1}{q}} ,
\end{equation*}
we infer that, for any $s\in(0,1)$,
\begin{equation*}
s^{\frac{1}{q}}\left\|\left\{\int_{[B(\cdot,R)]^\complement}
\frac{|f(\cdot)-f(y)|^q}{U(\cdot,y)[\rho(\cdot,y)]^{sq}}\,d\mu(y)\right\}
^{\frac{1}{q}}\right\|_{{Y(\mathcal{X})}}
\lesssim \|f\|_{Y(\mathcal{X})},
\end{equation*}
where the implicit positive constant is independent of $s$.
Taking the supremum over all $s\in(0,1)$, we then obtain
\eqref{03281117}.

Next, we show \eqref{03281211}.
By \eqref{03281205}
and an argument similar to that used in
the proof of \eqref{03281117},
we find that, for any $s\in(0,1)$,
\begin{align}\label{03281728}
s^{\frac{1}{q}}\left\|\left\{\int_{
[B(\cdot,K_0 s^{-\frac{1}{q}})]^{\complement}
\cap
[B(x_0,s^{-\frac{1}{q}})]^{\complement}}
\frac{|f(y)|^q}{U(\cdot,y)[\rho(\cdot,y)]^{sq}}
\,d\mu(y)\right\}^{\frac{1}{q}}\right\|_{{Y(\mathcal{X})}}
\lesssim
\left\|f\mathbf{1}_{[B(x_0,s^{-\frac{1}{q}})
]^{\complement}}\right\|_{{Y(\mathcal{X})}},
\end{align}
which, combined with $f\in C_{\mathrm{b}}^\beta(\mathcal{X})$
and via letting $s\to0^+$,
further implies that \eqref{03281211}
holds
and hence completes the proof of (i).

Now, we prove (ii).
Let $f\in Y({\mathcal{X}})/\mathbb{R}$.
From Proposition~\ref{2357}, we deduce that there
exists a constant $a\in\mathbb{R}$ such that
$f+a\in Y(\mathcal{X})$ and
$\|f+a\|_{{Y(\mathcal{X})}}=\|f\|_{Y(\mathcal{X})/\mathbb{R}}$.
By the proof of Theorem \ref{Thm2}, we conclude that,
to show \eqref{eq1.6},
it suffices to prove that
\begin{equation}\label{06102109}
\sup_{s\in(0,1)} s^{\frac{1}{q}}
\left\|\left\{\int_{[B(\cdot,R)]^\complement}
\frac{|[f(\cdot)+a]-[f(y)+a]|^q}{U(\cdot,y)[\rho(\cdot,y)]^{sq}}\,
d\mu(y) \right\}^{\frac{1}{q}}\right\|_{{Y(\mathcal{X})}}
\lesssim \|f+a\|_{{Y(\mathcal{X})}}
\end{equation}
and
\begin{equation}\label{06102111}
\lim_{s \to 0^+}
s^{\frac{1}{q}}\left\|\left\{\int_{
[B(\cdot,K_0 s^{-\frac{1}{q}})]^{\complement}
\cap
[B(x_0,s^{-\frac{1}{q}})]^{\complement}}
\frac{|f(y)+a|^q}{U(\cdot,y)[\rho(\cdot,y)]^{sq}}
\, d\mu(y) \right\}^{\frac{1}{q}}\right\|_{{Y(\mathcal{X})}}=0.
\end{equation}

Next, we show \eqref{06102109}.
Let $R\in(1,\infty)$.
Fix $x_{0}\in \mathcal{X}$ and,
for any $N\in{\mathbb N}$, let
$$E_N:=B(x_{0},N)\times B(x_{0},N).$$
From the assumption that $Y(\mathcal{X})$
has an absolutely continuous quasi-norm and
Lemma \ref{lemma2.30},
it follows that $C_{\mathrm{b}}^\beta(\mathcal{X})$
is dense in $Y(\mathcal{X})$,
where the positive constant $\beta$ is the same as in Lemma \ref{lemma2.30}.
Thus, there exists a sequence
$\{f_k\}_{k\in{\mathbb N}}$ in $C_{\mathrm{b}}^\beta(\mathcal{X})$ such that
\begin{align}\label{03281218}
\lim_{k\to\infty}\|f-f_k\|_{Y(\mathcal{X})}=0.
\end{align}
Using the quasi-triangle inequality of
$\|\cdot\|_{Y(\mathcal{X})}$,
we find that,
for any $s\in(0,1)$ and $N,k\in{\mathbb N}$,
\begin{align}\label{03281716}
&\notag s^{\frac{1}{q}}\left\|
\left\{\int_{[B(\cdot,R)]^\complement}
\frac{|f(\cdot)-f(y)|^q}{U(\cdot,y)[\rho(\cdot,y)]^{sq}}
\mathbf{1}_{E_N}(\cdot,y)\, d\mu(y)
\right\}^{\frac{1}{q}}\right\|_{{Y(\mathcal{X})}}\\
\notag &\quad \lesssim \left\|
\left\{s\int_{[B(\cdot,R)]^\complement}
\frac{|f(\cdot)-f_k(\cdot)|^q}{U(\cdot,y)[\rho(\cdot,y)]^{sq}}
\mathbf{1}_{E_N}(\cdot,y)\, d\mu(y)
\right\}^{\frac{1}{q}}\right\|_{{Y(\mathcal{X})}}\\
\notag &\qquad +
\left\|\left\{s\int_{[B(\cdot,R)]^\complement}
\frac{|f_k(\cdot)-f_k(y)|^q}{U(\cdot,y)[\rho(\cdot,y)]^{sq}}
\mathbf{1}_{E_N}(\cdot,y)\, d\mu(y)
\right\}^{\frac{1}{q}}\right\|_{{Y(\mathcal{X})}}\\
\notag &\qquad +
\left\|\left\{s\int_{[B(\cdot,R)]^\complement}
\frac{|f_k(y)-f(y)|^q}{U(\cdot,y)[\rho(\cdot,y)]^{sq}}
\mathbf{1}_{E_N}(\cdot,y)\, d\mu(y)
\right\}^{\frac{1}{q}}\right\|_{{Y(\mathcal{X})}}\\
&\quad =: I_1(s,N,k) + I_2(s,N,k) + I_3(s,N,k).
\end{align}
To estimate $I_1(s,N,k)$,
from \eqref{QW1} and \eqref{1530},
we infer that, for any $s\in(0,1)$
and $N,k\in{\mathbb N}$,
\begin{equation}\label{03281717}
I_1(s,N,k) \lesssim\left\|(f-f_k)
\mathbf{1}_{B(x_{0},N)}\right\|_{{Y(\mathcal{X})}}.
\end{equation}
To estimate $I_2(s,N,k)$, applying \eqref{03281117}
with $f$ therein replaced by $f_k$ and
the quasi-triangle inequality of
$\|\cdot\|_{Y(\mathcal{X})}$,
we obtain,
for any $s\in(0,1)$ and $N,k\in{\mathbb N}$,
\begin{equation}\label{03281718}
I_2(s,N,k)\lesssim \|f_k\|_{{Y(\mathcal{X})}}
\lesssim\|f\|_{{Y(\mathcal{X})}}
+ \|f-f_k\|_{{Y(\mathcal{X})}}.
\end{equation}
To estimate $I_3(s,N,k)$, from an argument
similar to that used in the estimation of
\eqref{03281205},
we deduce that, for any $s\in(0,1)$ and $N,k\in{\mathbb N}$,
\begin{equation*}
I_3(s,N,k) \lesssim
\left\|(f-f_k)\mathbf{1}_{B(x_{0},N)}\right\|_{Y(\mathcal{X})}.
\end{equation*}
Applying this, \eqref{03281218}, \eqref{03281716}, \eqref{03281717},
and \eqref{03281718} and first
letting $k\to\infty$ and then letting $N\to\infty$,
we find that
\eqref{06102109} holds for any $f\in Y({\mathcal{X}})/\mathbb{R}$.

Now, we prove \eqref{06102111}.
By the quasi-triangle inequality of
$\|\cdot\|_{Y(\mathcal{X})}$, we conclude that,
for any $s\in(0,1)$ and $N,k\in{\mathbb N}$,
\begin{align}\label{03281740}
&\notag s^{\frac{1}{q}}\left\|\left\{\int_{
[B(\cdot,K_0 s^{-\frac{1}{q}})]^{\complement}
\cap
[B(x_0,s^{-\frac{1}{q}})]^{\complement}}
\frac{|f(y)+a|^q}{U(\cdot,y)[\rho(\cdot,y)]^{sq}}\mathbf{1}_{E_N}(\cdot,y)
\, d\mu(y) \right\}^{\frac{1}{q}}\right\|_{{Y(\mathcal{X})}}\\
&\quad\lesssim\left\|\left\{s\int_{
[B(\cdot,K_0 s^{-\frac{1}{q}})]^{\complement}
\cap
[B(x_0,s^{-\frac{1}{q}})]^{\complement}}
\frac{|f_k(y)+a|^q}{U(\cdot,y)[\rho(\cdot,y)]^{sq}}\mathbf{1}_{E_N}(\cdot,y)
\, d\mu(y) \right\}^{\frac{1}{q}}\right\|_{{Y(\mathcal{X})}}\notag\\
&\qquad +\left\|\left\{s\int_{
[B(\cdot,K_0 s^{-\frac{1}{q}})]^{\complement}
\cap
[B(x_0,s^{-\frac{1}{q}})]^{\complement}}
\frac{|f(y)-f_k(y)|^q}{U(\cdot,y)[\rho(\cdot,y)]^{sq}}\mathbf{1}_{E_N}(\cdot,y)
\, d\mu(y) \right\}^{\frac{1}{q}}\right\|_{{Y(\mathcal{X})}}\notag\\
&\quad =: J_1(s,N,k) + J_2(s,N,k).
\end{align}
To deal with $J_1(s,N,K)$, from \eqref{03281728}
and the quasi-triangle inequality of
$\|\cdot\|_{Y(\mathcal{X})}$,
it follows that, for any $s\in(0,1)$ and
$N,k\in{\mathbb N}$,
\begin{align}\label{03281741}
J_1(s,N,k) &\notag\lesssim
\left\|(f_k+a)\mathbf{1}_{[B(x_0,
s^{-\frac{1}{p}})]^{\complement}}\right\|_{{Y(\mathcal{X})}}
\\
&\lesssim \left\|(f+a)\mathbf{1}_{[B(x_0,
s^{-\frac{1}{p}})]^{\complement}}
\right\|_{{Y(\mathcal{X})}}
+ \left\|(f-f_k)\mathbf{1}_{[B(x_0,
s^{-\frac{1}{p}})]^{\complement}}\right\|_{{Y(\mathcal{X})}}.
\end{align}
To deal with $J_2(s,N,k)$, by an argument
similar to that used in the estimation of
\eqref{03281205},
we obtain, for any $s\in(0,1)$ and
$N,k\in{\mathbb N}$,
\begin{align*}
J_2(s,N,k) \lesssim
\|(f-f_k)\mathbf{1}_{B(x_{0},N)}\|_{Y(\mathcal{X})}.
\end{align*}
From this, \eqref{03281218}, \eqref{03281740}, \eqref{03281741},
and the assumption that
$Y(\mathcal{X})$ has an absolutely continuous
quasi-norm and
first letting $k\to\infty$ and then
letting $N\to\infty$,
we infer that \eqref{06102111} holds for any
$f\in Y({\mathcal{X}})/\mathbb{R}$.
This, together with \eqref{06102109},
finishes the proof of Theorem \ref{Thm3}.
\end{proof}

\subsection{Failure of Maz'ya--Shaposhnikova Representation
on \\ Some	Spaces of Homogeneous Type}\label{example}

In this section, we give an example which shows that the
conclusion of Theorem~\ref{Thm2} fails on
some special spaces of homogeneous type.
This indicates that the weak reverse doubling
assumption on the measure under consideration in Theorem~\ref{Thm2}
is necessary in some sense.
To this end,
we first give such an example of underlying spaces under consideration.

\begin{proposition}\label{830}
Let $\mathcal{X}:=\{2^{2^k}\}_{k\in\mathbb{N}}$,
$\rho(x,y):=|x-y|$ for any $x,y\in\mathcal{X}$,
and $\mu(\{2^{2^k}\})=2^k$ for any $k\in\mathbb{N}$.
Then $(\mathcal{X},\rho,\mu)$ is a space of homogeneous type.
\end{proposition}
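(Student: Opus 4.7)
The plan is to verify the two defining clauses of Definition~\ref{Dehts}: that $(\mathcal{X},\rho)$ is a quasi-metric space, and that $\mu$ satisfies the doubling condition \eqref{dc}. The first clause is immediate: $\rho(x,y)=|x-y|$ is the restriction of the Euclidean metric, so it is in fact a metric, and hence satisfies Definition~\ref{Deqms} with $K_0=1$. Extending $\mu$ to the full power set of $\mathcal{X}$ gives a nonnegative measure defined on a $\sigma$-algebra containing every ball; since the points of $\mathcal{X}$ accumulate only at $\infty$, every ball $B(x,r)$ is a finite set, so $0<\mu(B(x,r))<\infty$ is automatic.

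The heart of the proof is the doubling inequality, and my plan is to exploit a sharp mismatch in growth rates: the masses $\mu(\{2^{2^k}\})=2^k$ grow merely geometrically, whereas the consecutive gaps $2^{2^{k+1}}-2^{2^k}=2^{2^k}(2^{2^k}-1)$ grow doubly exponentially. Fix $x=2^{2^k}$ and $r>0$, and let $m\ge k$ be the largest index with $2^{2^m}\in B(x,r)$ (this is well defined because $x\in B(x,r)$). The key lemma I will prove is that $2^{2^{m+2}}\notin B(x,2r)$. Indeed, $2^{2^{m+1}}\notin B(x,r)$ gives $r\le 2^{2^{m+1}}-2^{2^k}$, hence $2r\le 2^{2^{m+1}+1}-2^{2^k+1}$, while
\[
\rho\bigl(x,2^{2^{m+2}}\bigr)=\bigl(2^{2^{m+1}}\bigr)^{2}-2^{2^k}
>2\cdot 2^{2^{m+1}}-2^{2^k+1}\ge 2r,
\]
because $2^{2^{m+1}}>2$. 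Thus any point of $B(x,2r)\setminus B(x,r)$ lies in the finite set $\{2^{2^{m+1}}\}\cup\{2^{2^j}:1\le j<k\}$, whose total $\mu$-mass is at most $2^{m+1}+\sum_{j=1}^{k-1}2^{j}\le 2^{m+1}+2^{k}\le 3\cdot 2^{m}$. Combining this with the trivial lower bound $\mu(B(x,r))\ge\mu(\{2^{2^m}\})=2^m$ yields $\mu(B(x,2r))\le 4\mu(B(x,r))$, so \eqref{dc} holds with $L_{(\mu)}\le 4$.

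There is no real obstacle beyond careful bookkeeping; the only delicate point is the sharp comparison $(2^{2^{m+1}})^{2}>2^{2^{m+1}+1}$ used to rule out the next right neighbor $2^{2^{m+2}}$, and this is precisely the place where the doubly exponential spacing of $\mathcal{X}$ dominates the geometric growth of the masses. The argument does not depend on whether the leftmost index $k=1$ creates a boundary case, since the sum $\sum_{j=1}^{k-1}2^{j}$ is simply empty there, only making the estimate easier.
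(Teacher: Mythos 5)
Your proof is correct, and it rests on exactly the same mechanism as the paper's: the doubly exponential gaps $2^{2^{m+2}}-2^{2^m}$ guarantee that doubling the radius admits at most one new point to the right of the ball, while the masses grow only geometrically, yielding the same constant $L_{(\mu)}\le 4$. The only difference is organizational — you formulate this as a single lemma about the largest index $m$ with $2^{2^m}\in B(x,r)$ and count the mass of $B(x,2r)\setminus B(x,r)$, whereas the paper splits into explicit cases according to the range of $r$ — so this is essentially the paper's argument in a slightly tidier packaging.
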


\begin{proof}
Let $x\in\mathcal{X}$ be an arbitrary point and
$r\in(0,\infty)$.
Then there exists $k\in\mathbb{N}$ such that $x=2^{2^k}$.
Next, we consider the following two cases for $r$.

\emph{Case (1)} $r\in(0,2^{2^{k+1}}-2^{2^k}]$. In this case,
\begin{align*}
\frac{\mu(B(2^{2^k},2r))}{\mu(B(2^{2^k},r))}
\leq\frac{1}{\mu(\{2^{2^k}\})}
\sum_{j=1}^{k+1}
\mu\left(\left\{2^{2^j}\right\}\right)=\frac{2^{k+2}-2}{2^k}\leq4.
\end{align*}

\emph{Case (2)} $r\in(2^{2^{i+1}}-2^{2^k},2^{2^{i+2}}-2^{2^k}]$ with
$i\in\mathbb{N}\cap[k,\infty)$. In this case,
\begin{align*}
\frac{\mu(B(2^{2k},2r))}{\mu(B(2^{2k},r))}\leq
\frac{1}{\mu(\{2^{2(i+1)}\})}
\sum_{j=1}^{i+2}\mu\left(\left\{2^{2j}\right\}\right)
=\frac{2^{i+3}-2}{2^{i+1}}\leq4.
\end{align*}
Combining the above two cases, we conclude that, for any
$r\in(0,\infty)$,
$$
\frac{\mu(B(x,2r))}{\mu(B(x,r))}\leq4,
$$
which, together with the arbitrariness of $x\in\mathcal{X}$,
further implies that
$(\mathcal{X},\rho,\mu)$ is a space of homogeneous type.
This finishes the proof of Proposition~\ref{830}.
\end{proof}

The following proposition
indicates that $(\mathcal{X},\rho,\mu)$ in Proposition~\ref{830}
does not satisfy the {\rm WRD} condition.

\begin{proposition}\label{1116}
Let $(\mathcal{X},\rho,\mu)$ be the same as in Proposition~\ref{830}.
Then $(\mathcal{X},\rho,\mu)$ does not satisfy the {\rm WRD} condition.
\end{proposition}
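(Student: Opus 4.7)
The plan is to refute the WRD condition by exhibiting one base point at which the liminf in \eqref{reverseD} never exceeds $1$; by Proposition~\ref{1017}, this is enough to conclude that $(\mathcal{X},\rho,\mu)$ does not satisfy the WRD condition. I would take $x_0:=4=2^{2^{1}}\in\mathcal{X}$ and exploit the fact that the spacing of $\mathcal{X}$ is doubly exponential, so that the next point $2^{2^{k+1}}=(2^{2^k})^2$ is the \emph{square} of the preceding one $2^{2^k}$. This is what will force any fixed dilation factor $\lambda$ to become negligible compared with the gap to the next point, as soon as one goes far enough out.

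The key step is to construct, for every fixed $\lambda\in(1,\infty)$, a sequence of radii $r_n\to\infty$ along which $B(x_0,r_n)$ and $B(x_0,\lambda r_n)$ coincide as subsets of $\mathcal{X}$, so that the measure ratio is exactly $1$. A natural choice is $r_n:=2^{2^n}$. For all $n$ large enough, I claim that
$$
B(x_0,r_n)=\left\{2^{2^j}:1\le j\le n\right\},
$$
because, on the one hand, $2^{2^n}-4<r_n$ places $2^{2^n}$ inside $B(x_0,r_n)$, while, on the other hand, $2^{2^{n+1}}-4=(2^{2^n})^2-4\ge r_n$ keeps the next point outside. Hence $\mu(B(x_0,r_n))=\sum_{j=1}^n 2^j=2^{n+1}-2$. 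Moreover, since $(2^{2^n})^2-4\ge\lambda\cdot 2^{2^n}$ for all sufficiently large $n$, the next point $2^{2^{n+1}}$ still lies outside $B(x_0,\lambda r_n)$, so $B(x_0,\lambda r_n)=B(x_0,r_n)$ and therefore
$$
\frac{\mu(B(x_0,\lambda r_n))}{\mu(B(x_0,r_n))}=1.
$$

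To conclude, the inclusion $B(x_0,\lambda r)\supseteq B(x_0,r)$, valid since $\lambda>1$, forces the ratio in \eqref{reverseD} to be at least $1$ for every $r\in(0,\infty)$; the subsequence constructed above then pins the liminf to exactly $1$, which is strictly less than any admissible $C_{(\mu)}\in(1,\infty)$. By the arbitrariness of $\lambda$, the characterization of Proposition~\ref{1017} is violated at $x_0$, and so $(\mathcal{X},\rho,\mu)$ does not satisfy the WRD condition. The only mildly delicate point in the argument is verifying that the ``plateau'' range $r_n\le r<r_{n+1}/\lambda$ on which both balls stabilize is non-empty for every fixed $\lambda$, and this is precisely where the doubly exponential (rather than merely exponential) growth of $\{2^{2^k}\}_{k\in\mathbb{N}}$ is essential: a merely exponential spacing would allow some large dilation $\lambda$ to ``bridge'' two consecutive points for every $n$, which would invalidate the coincidence $B(x_0,\lambda r_n)=B(x_0,r_n)$.
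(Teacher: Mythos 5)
Your proposal is correct and follows essentially the same strategy as the paper's proof: one exhibits radii $r_n\to\infty$ lying in the doubly-exponential gaps of $\mathcal{X}$ so that $B(x_0,\lambda r_n)$ and $B(x_0,r_n)$ contain exactly the same points of $\mathcal{X}$, forcing the limit inferior in \eqref{reverseD} to equal $1<C_{(\mu)}$. The only differences are cosmetic: the paper verifies the failure at every base point $x=2^{2^k}$ and every $\lambda$ directly from Definition~\ref{951} (choosing $r_j=2^{2^{k+j_0+j}}-2^{2^k}$), whereas you fix the single point $x_0=4$, take $r_n=2^{2^n}$ (a choice that cleanly keeps the point $2^{2^n}$ strictly inside the open ball $B(x_0,r_n)$ while leaving $2^{2^{n+1}}$ outside $B(x_0,\lambda r_n)$ for large $n$), and then legitimately invoke Proposition~\ref{1017} to upgrade failure at one point, for all $\lambda$, to a refutation of the WRD condition.
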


\begin{proof}
To prove that $(\mathcal{X},\rho,\mu)$ does not satisfy the {\rm WRD} condition,
it suffices to show that, for any given $x\in\mathcal{X}$
and $\lambda\in(1,\infty)$,
there exists a sequence $\{r_j\}_{j\in\mathbb{N}}$ in $(0,\infty)$
satisfying that $\lim_{j\to\infty}r_j=\infty$ such that,
for any $j\in\mathbb{N}$,
$\mu(B(x,\lambda r_j))=\mu(B(x,r_j))$,
by observing that this implies
$$
\varliminf_{r\to\infty}\frac{\mu(B(x,\lambda r))}{\mu(B(x,r))}=1.
$$
Without loss of generality,
we may assume that $x=2^{2^k}$, where $k\in\mathbb{N}$.
Choose $j_0\in\mathbb{N}$ such that $2^{2^{k+j_0}}>2\lambda$.
For any $j\in\mathbb{N}$, let
$r_j:=2^{2^{k+j_0+j}}-2^{2^k}$.
Then, for any $j\in\mathbb{N}$,
$2^{2^{k+j_0+j}}<x+r_j<x+\lambda r_j<2^{2^{k+j_0+j+1}}$
and hence
\begin{align*}
\mu\left(B\left(x,r_j\right)\right)
\ge\mu\left(\left\{2^{2^i}\right\}_{i=1}^{k+j_0+j}\right)
\ge\mu\left(B\left(x,\lambda r_j\right)\right)
\ge\mu\left(B\left(x,r_j\right)\right).
\end{align*}
Thus, the desired conclusion holds and therefore
$(\mathcal{X},\rho,\mu)$ does not satisfy the {\rm WRD} condition.
This finishes the proof of Proposition~\ref{1116}.
\end{proof}

The following proposition proves
that the Maz'ya--Shaposhnikova representation
\eqref{eq1.5} fails on $(\mathcal{X},\rho,\mu)$ as in Proposition~\ref{830}.

\begin{proposition}\label{835}
Let $(\mathcal{X},\rho,\mu)$ be the same as in Proposition~\ref{830}.
Then there exists a function $f\in L^2(\mathcal{X})\cap
\bigcup_{s\in(0,1)}\dot{W}^{s,1}_{L^2}(\mathcal{X})$ such that
$\|f\|_{L^2(\mathcal{X})}\in(0,\infty)$
and
\begin{align}\label{759}
\lim_{s\to0^+}s^2\int_\mathcal{X}\left\{\int_\mathcal{X}
\frac{|f(x)-f(y)|}{U(x,y)[\rho(x,y)]^s}\,d\mu(y)\right\}^2\,d\mu(x)=0.
\end{align}
Consequently, Theorem~\ref{Thm2}(ii)
with both $q=1$ and $Y=L^2$ fails in this setting.
\end{proposition}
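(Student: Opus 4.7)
The plan is to take $f:=\mathbf{1}_{\{x_1\}}$ with $x_k:=2^{2^k}$ for $k\in\mathbb{N}$. Since $\mu(\{x_1\})=2$, the normalization $\|f\|_{L^2(\mathcal{X})}=\sqrt{2}\in(0,\infty)$ is immediate. The geometric input, immediate from the doubly-exponential spacing of $\{x_i\}$, is that for every $1\leq k<j$,
\begin{align*}
B(x_k,\rho(x_k,x_j))\cap\mathcal{X}=\{x_1,\ldots,x_{j-1}\},\qquad B(x_j,\rho(x_k,x_j))\cap\mathcal{X}=\{x_{k+1},\ldots,x_j\},
\end{align*}
so $U(x_k,x_j)=2^j-2$ and $\rho(x_k,x_j)\sim 2^{2^j}$.

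I would split the outer integral in \eqref{759} according to $x=x_1$ versus $x=x_k$ with $k\geq 2$. For $x=x_1$ the inner integral equals $\sum_{j\geq 2}\tfrac{2^j}{(2^j-2)(2^{2^j}-4)^s}$, which is uniformly comparable (for $s\in(0,1)$) to
\begin{align*}
S(s):=\sum_{j\geq 2}2^{-s\cdot 2^j},
\end{align*}
contributing an amount of order $S(s)^2$ after squaring and multiplying by $\mu(\{x_1\})$. For $x=x_k$, $k\geq 2$, the inner integral equals $\tfrac{2}{(2^k-2)(2^{2^k}-4)^s}$; its square times $\mu(\{x_k\})=2^k$ is bounded by $16\cdot 2^{-k}$, so the total over $k\geq 2$ is $O(1)$ uniformly in $s$. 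Hence \eqref{759} reduces to the single assertion $s^2S(s)^2\to 0$ as $s\to 0^+$.

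The key estimate is $S(s)\lesssim 1+\log_2(1/s)$, obtained by splitting the sum at $j_0:=\lceil\log_2(1/s)\rceil$: for $j<j_0$ the trivial bound $2^{-s\cdot 2^j}\leq 1$ produces at most $j_0$ terms; for $j\geq j_0$ the exponent satisfies $s\cdot 2^j\geq 2^{j-j_0}$, so the super-exponential decay yields a uniformly bounded tail $\sum_{m\geq 0}2^{-2^m}$. Consequently $s^2 S(s)^2\lesssim s^2[\log_2(1/s)]^2\to 0$. The Sobolev membership $f\in\dot{W}^{1/2,1}_{L^2}(\mathcal{X})$ is verified by the same calculation at $s=\tfrac12$, where $S(\tfrac12)=\sum_{j\geq 2}2^{-2^{j-1}}$ converges super-geometrically. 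Since $\mu(\mathcal{X})=\infty$ forces $\|f\|_{L^2(\mathcal{X})/\mathbb{R}}=\|f\|_{L^2(\mathcal{X})}>0$ by Proposition~\ref{2357}, the vanishing in \eqref{759} contradicts the lower bound in Theorem~\ref{Thm2}(ii) applied with $q=1$ and $Y=L^2$. The only delicate step is the logarithmic bound on $S(s)$; every other step is a direct discrete computation, and the doubly-exponential spacing of $\{x_k\}$ is precisely what simultaneously breaks the WRD condition (per Proposition~\ref{1116}) and forces $S$ to grow only logarithmically, which is exactly enough for $s^2 S(s)^2$ to vanish.
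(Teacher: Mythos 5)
Your proposal is correct and follows essentially the same route as the paper: the same test function $f=\mathbf{1}_{\{4\}}$, the same computation of $U(x_k,x_j)=2^j-2$, and the same reduction of \eqref{759} to showing $s\sum_{j\ge2}2^{-s2^j}\to0$. Your logarithmic bound $S(s)\lesssim 1+\log_2(1/s)$ is a clean justification of that final limit, which the paper asserts without detail.
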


\begin{proof}
Let
\begin{align*}
f(x):=
\begin{cases}
1&\text{if }x=4,\\
0&\text{if }x\in\mathcal{X}\setminus\{4\}.
\end{cases}
\end{align*}
Then $\|f\|_{L^2(\mathcal{X})}=2^\frac{1}{2}$
and, for any given $s\in(0,1)$,
\begin{align}\label{920}
\left\|f\right\|_{\dot{W}^{s,1}_{L^2}(\mathcal{X})}^2
&\notag=\int_\mathcal{X}\left\{\int_\mathcal{X}
\frac{|f(x)-f(y)|}{U(x,y)[\rho(x,y)]^s}\,d\mu(y)\right\}^2\,d\mu(x)\\
&=2\left[\sum_{j=2}^\infty\frac{2^j}{(2^j-2)(2^{2^j}-4)^s}\right]^2
+\sum_{j=2}^\infty2^j\left[\frac{2}{(2^j-2)(2^{2^j}-4)^s}\right]^2\nonumber\\
&\sim\left(\sum_{j=2}^\infty2^{-s2^j}\right)^2
+\sum_{j=2}^\infty2^{-j-s2^{(j+1)}}<\infty
\end{align}
and hence $f\in\bigcup_{s\in(0,1)}\dot{W}^{s,1}_{L^2}(\mathcal{X})$.

We turn to show \eqref{759}. By the estimation of \eqref{920},
we find that
\begin{align*}
&\varlimsup_{s\to0^+}s^2\int_\mathcal{X}\left\{\int_\mathcal{X}
\frac{|f(x)-f(y)|}{U(x,y)[\rho(x,y)]^s}\,d\mu(y)\right\}^2\,d\mu(x)\\
&\quad\sim\varlimsup_{s\to0^+}s^2
\left[\left(\sum_{j=2}^\infty2^{-s2^j}\right)^2
+\sum_{j=2}^\infty2^{-j-s2^{(j+1)}}\right]\\
&\quad\leq\varlimsup_{s\to0^+}\left[\left(s\sum_{j=2}^\infty2^{-s2^j}\right)^2
+2^{-1}s^2\right]
=\left(\varlimsup_{s\to0^+}s\sum_{j=2}^\infty2^{-s2^j}\right)^2=0
\end{align*}
and hence \eqref{759} holds. This finishes the proof of Proposition~\ref{835}.
\end{proof}

\begin{remark}\label{sharp1}
From Propositions~\ref{830},~\ref{1116}, and~\ref{835},
we deduce that the space
of homogeneous type $(\mathcal{X},\rho,\mu)$ in Propositions~\ref{830}
does not satisfy the WRD condition, on which the
Maz'ya--Shaposhnikova representation \eqref{eq1.5} fails.
In this sense,
the WRD condition in Theorems~\ref{Thm2} and~\ref{Thm3}
is necessary.
\end{remark}

\section{Maz'ya--Shaposhnikova Representation on
Domains Satisfying the WMD Condition}\label{32123}

In this section, we establish the Maz'ya--Shaposhnikova
representation of quasi-norms of ball
quasi-Banach function spaces on the open set $\Omega\subseteqq\mathcal{X}$.
In Subsection~\ref{3.1}, we prove Theorem~\ref{MS2}.
In Subsection~\ref{3.2}, we show Theorem~\ref{MS3}.
In Subsection~\ref{example2}, we prove that
the weak measure density assumption on the domain under consideration
is necessary in some sense.

Let $\Omega\subseteqq\mathcal{X}$ be an open set and
$\mathscr{M}(\Omega)$ denote the set of
all $\mu$-measurable functions on $\Omega$.
For any function $g$ defined on $\mathcal{X}$,
denote by $g|_\Omega$ the \emph{restriction} of $g$ to $\Omega$.
The following concept of the
space $Y(\Omega)$ of a ball quasi-Banach function space
can be found in \cite[Definition~2.6]{zyy23b}.

\begin{definition}\label{1635}
Let $Y(\mathcal{X})$ be a ball quasi-Banach function space.
The space $Y(\Omega)$ is defined to be the restriction of
$Y(\mathcal{X})$ to $\Omega$, that is,
\begin{align*}
Y(\Omega):=\left\{f\in\mathscr{M}(\Omega):
f=g|_\Omega\text{ for some }g\in Y(\mathcal{X})\right\};
\end{align*}
moreover, for any $f\in Y(\Omega)$, let
$\|f\|_{X(\Omega)}:=\inf\{\|g\|_{Y(\mathcal{X})}:
f=g|_\Omega,\ g\in Y(\mathcal{X})\}$.
\end{definition}

\begin{remark}\label{2048}
Let $Y(\mathcal{X})$ be a ball quasi-Banach function space
and $Y(\Omega)$ its restriction to $\Omega$.
\begin{enumerate}
\item[\rm(i)]
By \cite[Proposition~2.7]{zyy23b},
we conclude that, for any $f\in Y(\Omega)$,
$\|f\|_{Y(\Omega)}=
\|\widetilde{f}\|_{Y(\mathcal{X})}$,
where
\begin{align}\label{1448}
\widetilde{f}(x):=
\begin{cases}
f(x)&\text{if }x\in\Omega,\\
0&\text{if }x\in\Omega^\complement.
\end{cases}
\end{align}
Then, for any $f\in\mathscr{M}(\Omega)$, we have
$f\in Y(\Omega)$ if and only if $\widetilde{f}\in Y(\mathcal{X})$.
\item[\rm(ii)]
From \cite[Proposition~2.8]{zyy23b},
we infer that $Y(\Omega)$ satisfies all
the conditions of Definition~\ref{Debqfs}
with $\mathcal{X}$ therein replaced by $\Omega$.
\item[\rm(iii)]
By (ii) and (iv) of
\cite[Proposition~2.8]{zyy23b},
we find that $C_{\mathrm{b}}^\beta(\Omega)
\subset Y(\Omega)$ for any $\beta\in(0,\infty)$.
\end{enumerate}
\end{remark}

For any given open set $\Omega\subseteqq\mathcal{X}$,
we introduce the concept of
homogeneous fractional ball quasi-Banach Sobolev spaces
on $\Omega$ as follows.

\begin{definition}\label{2200}
Let $s\in(0,1)$, $q\in(0,\infty)$, $Y(\mathcal{X})$
be a ball quasi-Banach function space,
and $Y(\Omega)$ the restriction of $Y(\mathbb{R}^n)$ to $\Omega\subseteqq\mathcal{X}$.
The \emph{homogeneous fractional ball quasi-Banach Sobolev space
$\dot{W}^{s,q}_Y(\Omega)$} is defined
to be the set of all measurable functions $f$ on $\Omega$ such that
\begin{equation*}
\|f\|_{\dot{W}^{s,q}_Y(\Omega)}:=
\left\|\left\{\int_\Omega
\frac{|f(\cdot)-f(y)|^q}{U(\cdot,y)[\rho(\cdot,y)]^{sq}}
\,dy\right\}^{\frac{1}{q}}\right\|_{Y(\Omega)}<\infty.
\end{equation*}
\end{definition}

\begin{remark}
If $\Omega\subseteqq\mathbb{R}^n$ is
equipped with the standard Euclidean distance and
the Lebesgue measure
and $Y=L^q$ with $q\in[1,\infty)$,
then ${\dot{W}^{s,q}_{{Y}}(\Omega)}$
is precisely $\dot{W}^{s,q}(\Omega)$ with equivalent quasi-norms,
the classical homogeneous fractional Sobolev space on $\Omega$.
\end{remark}

We now introduce the quotient space of ball quasi-Banach function spaces
on $\Omega$.

\begin{definition}
Let $Y(\mathcal{X})$ be a ball quasi-Banach function space
and $\Omega\subseteqq\mathcal{X}$.
The \emph{quotient space $Y(\Omega)/\mathbb{R}$} is defined
to be the set of all equivalent classes $[f]$ of
measurable functions on $\Omega$ such that
\begin{align*}
\left\|[f]\right\|_{Y(\Omega)/\mathbb{R}}:=
\inf_{a\in\mathbb{R}}\|f+a\|_{Y(\Omega)}<\infty.
\end{align*}
\end{definition}

\subsection{Proof of Theorem~\ref{MS2}}\label{3.1}

To show Theorem~\ref{MS2}, we need two lemmas.
The first technical lemma gives
a lower estimate related to the domain $\Omega$
of the space of homogeneous type $\mathcal{X}$,
with the WMD condition on $\Omega$
and the WRD condition on $\mathcal{X}$ being required.

\begin{lemma}\label{2158}
Let $q\in(0,\infty)$,
$(\mathcal{X},\rho,\mu)$ be a space of homogeneous type
satisfying the $\rm{WRD}$ condition,
and $\Omega\subseteqq\mathcal{X}$ satisfy the $\rm{WMD}$ condition.
Then there exists
a positive constant $C$ such that, for any $x\in\Omega$,
\begin{align*}
\varliminf_{s\to0^+}s\int_{\Omega\cap
[B(x,s^{-\frac{1}{q}})]^\complement}\frac{1}
{U(x,y)[\rho(x,y)]^{sq}}\,d\mu(y)\ge C.
\end{align*}
\end{lemma}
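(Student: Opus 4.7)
The plan is to adapt the dyadic annular argument of Lemma~\ref{2225} to the restricted integration domain $\Omega\cap [B(x,s^{-1/q})]^{\complement}$, using the WMD property of $\Omega$ to guarantee that a definite fraction of each annular mass lies inside $\Omega$. I begin by transferring both growth conditions to the moving point $x\in\Omega$: Proposition~\ref{1017} supplies the WRD condition at $x$ with universal constants $\lambda,\widetilde{C}_{(\mu)}\in(1,\infty)$, and the same quasi-triangle and doubling transfer (underlying Remark~\ref{1933}(iv)) yields a \emph{uniform} WMD bound at $x$: indeed, for $r>2K_0\rho(x,x_0)$ one has $B(x_0,r/(2K_0))\subset B(x,r)\subset B(x_0,2K_0r)$, so iterating \eqref{dc} produces a constant $C_0'\in(0,1]$, depending only on $K_0,L_{(\mu)},C_0$, with $\varliminf_{r\to\infty}\mu(B(x,r)\cap\Omega)/\mu(B(x,r))\ge C_0'$ for every $x\in\Omega$.

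Next, I iterate the WRD condition: choose $k\in\mathbb{N}$ with $\widetilde{C}_{(\mu)}^{\,k}>1/C_0'$ and set $\Lambda:=\lambda^{k}$, so that $\mu(B(x,\Lambda r))\ge \widetilde{C}_{(\mu)}^{\,k}\mu(B(x,r))$ for all sufficiently large $r$. Combining this with the uniform WMD bound above, I obtain, for every $j\in\mathbb{Z}_+$ and all sufficiently small $s>0$, the annular mass estimate
\begin{align*}
\mu\left(\Omega\cap\left[B\left(x,\Lambda^{j+1}s^{-\frac{1}{q}}\right)\setminus B\left(x,\Lambda^{j}s^{-\frac{1}{q}}\right)\right]\right)
\ge\left(C_0'-\widetilde{C}_{(\mu)}^{-k}\right)\mu\left(B\left(x,\Lambda^{j+1}s^{-\frac{1}{q}}\right)\right),
\end{align*}
whose bracketed constant is strictly positive by the choice of $k$.

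The remainder is a direct dyadic computation modelled on the proof of Lemma~\ref{2225}. I decompose $[B(x,s^{-1/q})]^{\complement}$ into the annuli $B(x,\Lambda^{j+1}s^{-1/q})\setminus B(x,\Lambda^{j}s^{-1/q})$ with $j\in\mathbb{Z}_+$; on each annulus, the inequalities $U(x,y)\le\mu(B(x,\Lambda^{j+1}s^{-1/q}))$ and $[\rho(x,y)]^{sq}\le(\Lambda^{j+1}s^{-1/q})^{sq}$ hold. Integrating the annular mass bound against $[U(x,y)\,\rho(x,y)^{sq}]^{-1}$, summing the resulting geometric series in $\Lambda^{-sq}$, and multiplying by $s$ reduces the lower bound to
\begin{align*}
\left(C_0'-\widetilde{C}_{(\mu)}^{-k}\right)s^{1+s}\,\frac{\Lambda^{-sq}}{1-\Lambda^{-sq}},
\end{align*}
whose limit as $s\to 0^{+}$ equals $(C_0'-\widetilde{C}_{(\mu)}^{-k})/(qk\ln\lambda)>0$ and is independent of $x\in\Omega$, yielding the claimed constant $C$.

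The main obstacle is upgrading the WMD condition (stated at a single reference point $x_0$) to a lower bound that is uniform in $x\in\Omega$; this is the quantitative content of Remark~\ref{1933}(iv) and is essential because the conclusion must hold with a single constant $C$ for every $x$. A secondary but unavoidable point is the iteration of WRD: since $C_0'$ may be small and $\widetilde{C}_{(\mu)}$ may be only slightly larger than $1$, one must take $k$ large to ensure $\widetilde{C}_{(\mu)}^{\,k}C_0'>1$; without this iteration the bracket $C_0'-\widetilde{C}_{(\mu)}^{-k}$ could be non-positive and the annular mass estimate would become vacuous.
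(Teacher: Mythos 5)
Your proposal is correct and follows essentially the same route as the paper's proof: transfer the WMD bound to an arbitrary $x\in\Omega$ via the quasi-triangle inequality and doubling, iterate the WRD condition so that the resulting constant exceeds the reciprocal of the transferred WMD constant, and then run the dyadic annular decomposition with a geometric series in $\Lambda^{-sq}$. The only (harmless) difference is cosmetic: you normalize each annulus by the measure of its outer ball throughout, which slightly streamlines the final ratio compared with the paper's use of the inner ball plus one extra application of \eqref{ud}.
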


\begin{proof}
From the assumption that $\Omega$
satisfies the WMD condition,
we deduce that
there exist $x_0\in\mathcal{X}$ and
two constants $r_{x_0},C\in(0,\infty)$
such that, for any $r\in(r_{x_0},\infty)$,
\begin{align}\label{1541}
\frac{\mu(B(x_0,r)\cap\Omega)}{\mu(B(x_0,r))}\ge C.
\end{align}
Let $x\in\Omega$ and $r_x:=(2K_0)^2[\rho(x_0,x)+r_{x_0}]$.
Then Definition~\ref{Deqms}(iii) implies that,
for any $r\in(r_x,\infty)$,
$$
B\left(x,\frac{r}{(2K_0)^2}\right)\subset
B\left(x_0,\frac{r}{2K_0}\right)\subset B(x,r),
$$
which, together with \eqref{1541} and \eqref{ud}, implies that
\begin{align}\label{1549}
\mu(B(x,r)\cap\Omega)&\ge
\mu\left(B\left(x_0,\frac{r}{2K_0}\right)\cap\Omega\right)\ge
C\mu\left(B\left(x_0,\frac{r}{2K_0}\right)\right)\nonumber\\
&\ge C\mu\left(B\left(x,\frac{r}{(2K_0)^2}\right)\right)
\ge\widetilde{C}\mu(B(x,r)),
\end{align}
where $\widetilde{C}:=\frac{C}{L_{(\mu)}(2K_0)^{2d}}$.

On the other hand, by the assumption that $(\mathcal{X},\rho,\mu)$
satisfies the WRD condition and
Propositions~\ref{12346781} and~\ref{1017},
we conclude that there exist
constants $\widetilde{r}_x\in(0,\infty)$
and $\lambda,C_{(\mu)}\in(1,\infty)$
such that, for any $r\in(\widetilde{r}_x,\infty)$,
\begin{align*}
\mu(B(x,\lambda r))\ge C_{(\mu)}\,\mu(B(x,r)),
\end{align*}
which, further implies that
there exist $\Lambda\in(1,\infty)$ and
$\widetilde{C}_{(\mu)}\in(\frac{1}{\widetilde{C}},\infty)$
such that
\begin{align}\label{1613}
\mu(B(x,\Lambda r))\ge\widetilde{C}_{(\mu)}\,\mu(B(x,r)),
\end{align}
Let $R_x:=\max\{r_x,\,\widetilde{r}_x\}$.
From this, \eqref{1549}, \eqref{1613},
and \eqref{ud}, we infer that
\begin{align*}
&\varliminf_{s\to0^+}s\int_{\Omega\cap
[B(x,s^{-\frac{1}{q}})]^\complement}\frac{1}
{U(x,y)[\rho(x,y)]^{sq}}\,d\mu(y)\\
&\quad=\varliminf_{\genfrac{}{}{0pt}{}{s\in(0,(R_x)^{-q})}{s\to 0}}
s\sum_{j=1}^{\infty}
\int_{\Omega\cap B(x,\Lambda^js^{-\frac{1}{q}})
\setminus B(x,\Lambda^{j-1}s^{-\frac{1}{q}})}
\frac{1}{U(x,y)[\rho(x,y)]^{sq}}\,d\mu(y)\\
&\quad\geq\varliminf_{\genfrac{}{}{0pt}{}{s\in(0,(R_x)^{-q})}{s\to 0}}
s\sum_{j=1}^{\infty}
(\Lambda^js^{-\frac{1}{q}})^{-sq}
\frac{\mu(\Omega\cap
B(x,\Lambda^js^{-\frac{1}{q}}))-\mu(\Omega\cap
B(x,\Lambda^{j-1}s^{-\frac{1}{q}}))}{\mu(B(x,\Lambda^js^{-\frac{1}{q}}))}\\
&\quad\geq\varliminf_{\genfrac{}{}{0pt}{}{s\in(0,(R_x)^{-q})}{s\to 0}}
s^{1+s}\sum_{j=1}^{\infty}\Lambda^{-sqj}
\frac{\widetilde{C}\widetilde{C}_{(\mu)}\mu(
B(x,\Lambda^{j-1}s^{-\frac{1}{q}}))-\mu(
B(x,\Lambda^{j-1}s^{-\frac{1}{q}}))}{\mu(B(x,\Lambda^js^{-\frac{1}{q}}))}\\
&\quad\sim\lim_{s\to0^+}s^{s+1}\sum_{j=1}^\infty
\Lambda^{-sqj}\sim1.
\end{align*}
This finishes the proof of Lemma \ref{2158}.
\end{proof}

The following Fatou's lemma on $Y(\Omega)$
can be easily deduced from \cite[Lemma~2.9]{yhyy21}
and Remark~\ref{2048}(i).

\begin{lemma}\label{Fatou}
Let $(\mathcal{X},\rho,\mu)$ be a space of homogeneous type,
$\Omega\subseteqq\mathcal{X}$,
$Y(\mathcal{X})$ be a ball quasi-Banach function space, and
$Y(\Omega)$ the restriction of $Y(\mathcal{X})$ to $\Omega$.
Then, for any sequences
$\{f_k\}_{k\in\mathbb{N}}$ in $Y(\Omega)$,
\begin{align*}
\left\|\varliminf_{k\to\infty}\left|f_k\right|\right\|_{Y(\Omega)}
\leq\varliminf_{k\to\infty}\left\|f_k\right\|_{Y(\Omega)}.
\end{align*}
\end{lemma}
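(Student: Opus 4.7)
The plan is to reduce everything to the already-established Fatou property on $Y(\mathcal{X})$ (Lemma~\ref{03281146}) by passing to zero extensions, which is exactly the mechanism that Remark~\ref{2048}(i) provides.

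First, for each $k\in\mathbb{N}$, I would form the zero extension $\widetilde{f_k}$ of $f_k$ to $\mathcal{X}$ as in \eqref{1448}, so that $\widetilde{f_k}\in Y(\mathcal{X})$ and, by Remark~\ref{2048}(i), $\|f_k\|_{Y(\Omega)}=\|\widetilde{f_k}\|_{Y(\mathcal{X})}$. Second, I would observe the pointwise identity
\begin{align*}
\varliminf_{k\to\infty}\bigl|\widetilde{f_k}(x)\bigr|
=\begin{cases}
\varliminf_{k\to\infty}|f_k(x)| & \text{if } x\in\Omega,\\
0 & \text{if } x\in\Omega^\complement,
\end{cases}
\end{align*}
that is, $\varliminf_{k\to\infty}|\widetilde{f_k}|$ is precisely the zero extension of $\varliminf_{k\to\infty}|f_k|$ to $\mathcal{X}$. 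Applying Remark~\ref{2048}(i) once more (to the measurable function $\varliminf_{k\to\infty}|f_k|$ on $\Omega$) yields
\begin{align*}
\Bigl\|\varliminf_{k\to\infty}|f_k|\Bigr\|_{Y(\Omega)}
=\Bigl\|\varliminf_{k\to\infty}\bigl|\widetilde{f_k}\bigr|\Bigr\|_{Y(\mathcal{X})}.
\end{align*}

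Third, I would invoke Lemma~\ref{03281146} (Fatou's lemma for the ball quasi-Banach function space $Y(\mathcal{X})$) applied to the sequence $\{\widetilde{f_k}\}_{k\in\mathbb{N}}\subset Y(\mathcal{X})$ to obtain
\begin{align*}
\Bigl\|\varliminf_{k\to\infty}\bigl|\widetilde{f_k}\bigr|\Bigr\|_{Y(\mathcal{X})}
\leq\varliminf_{k\to\infty}\bigl\|\widetilde{f_k}\bigr\|_{Y(\mathcal{X})}.
\end{align*}
Chaining the two displays above with the identity $\|\widetilde{f_k}\|_{Y(\mathcal{X})}=\|f_k\|_{Y(\Omega)}$ then gives the claimed inequality.

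No substantive obstacle is anticipated: the only issue worth double-checking is that $\varliminf_{k\to\infty}|f_k|$ is itself a $\mu$-measurable function on $\Omega$ (so that its $Y(\Omega)$-quasi-norm makes sense via Remark~\ref{2048}(i)), which is immediate since it is the pointwise liminf of $\mu$-measurable functions. Thus the lemma truly does follow, as asserted, by a one-line transfer from the corresponding statement on $Y(\mathcal{X})$.
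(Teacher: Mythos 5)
Your proposal is correct and is essentially the paper's own argument: the paper deduces Lemma~\ref{Fatou} precisely from the Fatou property on $Y(\mathcal{X})$ (Lemma~\ref{03281146}) together with the zero-extension identity in Remark~\ref{2048}(i), which is exactly the reduction you carry out.
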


Next, we are ready to prove Theorem~\ref{MS2}.

\begin{proof}[Proof of Theorem~\ref{MS2}]
To show (i), let
$f\in C_{\mathrm{b}}^\beta(\Omega)\cap
\bigcup_{s\in(0,1)}\dot{W}^{s,q}_Y(\Omega)$
and $\widetilde{f}$ be as in \eqref{1448}.
Then $\widetilde{f}\in C_{\mathrm{b}}^\beta(\mathcal{X})$
and $\|\widetilde{f}\|_{Y(\mathcal{X})}=\|f\|_{Y(\Omega)}$.

We first prove the upper estimate in (i).
By the assumption that $f\in\bigcup_{s\in(0,1)}\dot{W}^{s,q}_Y(\Omega)$
and Definition~\ref{2200}
and following the proof of Lemma~\ref{12202129}, we find that,
for any $R\in(0,\infty)$,
\begin{align}\label{924}
\lim_{s\to0^+}s^\frac{1}{q}
\left\|\left\{\int_{\Omega\cap B(\cdot,R)}\frac{|f(\cdot)-f(y)|^q}
{U(\cdot,y)[\rho(\cdot,y)]^{sq}}\,d\mu(y)
\right\}^\frac{1}{q}\right\|_{Y(\Omega)}=0.
\end{align}
On the other hand, from Lemma~\ref{12202120}
with $f$ therein replaced by $\widetilde{f}$, we deduce that,
for any $R\in(1,\infty)$,
\begin{align*}
&\varlimsup_{s\to0^+}s^\frac{1}{q}
\left\|\left\{\int_{\Omega\setminus B(\cdot,R)}\frac{|f(\cdot)-f(y)|^q}
{U(\cdot,y)
[\rho(\cdot,y)]^{sq}}\,d\mu(y)\right\}^\frac{1}{q}\right\|_{Y(\Omega)}\\
&\quad\leq\sup_{s\in(0,1)}s^\frac{1}{q}
\left\|\left\{\int_{\mathcal{X}\setminus B(\cdot,R)}
\frac{|\widetilde{f}(\cdot)-\widetilde{f}(y)|^q}
{U(\cdot,y)[\rho(\cdot,y)]^{sq}}\,d\mu(y)
\right\}^\frac{1}{q}\right\|_{Y(\mathcal{X})}
\lesssim\left\|\widetilde{f}\right\|_{Y(\mathcal{X})}
=\|f\|_{Y(\Omega)}.
\end{align*}
This, together with \eqref{924}, implies that
\begin{align*}
\varlimsup_{s\to0^+}s^\frac{1}{q}
\left\|\left\{\int_{\Omega}\frac{|f(\cdot)-f(y)|^q}
{U(\cdot,y)[\rho(\cdot,y)]^{sq}}\,d\mu(y)
\right\}^\frac{1}{q}\right\|_{Y(\Omega)}
\lesssim\|f\|_{Y(\Omega)},
\end{align*}
which completes the proof of the upper estimate in (i).

Now, we prove the lower estimate in (i).
Notice that, for any given $x_0\in\Omega$
and for any $x\in\Omega$ and $s\in(0,1)$,
\begin{align*}
B\left(x,K_0\rho(x,x_0)+K_0s^{-\frac{1}{q}}\right)
\supset
\left[B\left(x,K_0 s^{-\frac{1}{q}}\right)
\cup
B\left(x_0,s^{-\frac{1}{q}}\right)\right],
\end{align*}
which further implies that
\begin{align}\label{III}
&\notag\left\|\left\{s\int_{\Omega\cap[B(\cdot,
K_0\rho(\cdot,x_0)+K_0s^{-\frac{1}{q}})]^\complement}\frac{|f(\cdot)|^q}
{U(\cdot,y)[\rho(\cdot,y)]^{sq}}\,d\mu(y)
\right\}^\frac{1}{q}\right\|_{Y(\Omega)}\\
&\quad\lesssim
\left\|\left\{s\int_{\Omega\cap[B(\cdot,
K_0\rho(\cdot,x_0)+K_0s^{-\frac{1}{q}})]^\complement}
\frac{|f(\cdot)-f(y)|^q}
{U(\cdot,y)[\rho(\cdot,y)]^{sq}}\,d\mu(y)
\right\}^\frac{1}{q}\right\|_{Y(\Omega)}
\nonumber\\
&\qquad+\left\|\left\{s\int_{\Omega\cap[B(\cdot,K_0 s^{-\frac{1}{q}})
]^\complement\cap[B(x_0,s^{-\frac{1}{q}})]^\complement
}\frac{|f(y)|^q}
{U(\cdot,y)[\rho(\cdot,y)]^{sq}}\,d\mu(y)
\right\}^\frac{1}{q}\right\|_{Y(\Omega)}
\nonumber\\
&\quad\leq\left\|\left\{s\int_\Omega\frac{|f(\cdot)-f(y)|^q}
{U(\cdot,y)[\rho(\cdot,y)]^{sq}}\,d\mu(y)
\right\}^\frac{1}{q}\right\|_{Y(\Omega)}
\nonumber\\
&\qquad+\left\|\left\{s\int_{[B(\cdot,K_0s^{-\frac{1}{q}})]^\complement
\cap [B(x_0,s^{-\frac{1}{q}})]^\complement}
\frac{|\widetilde{f}(y)|^q}
{U(\cdot,y)[\rho(\cdot,y)]^{sq}}\,d\mu(y)
\right\}^\frac{1}{q}
\right\|_{Y(\mathcal{X})}.
\end{align}
On the one hand, by Lemmas~\ref{Fatou} and~\ref{2158},
we conclude that
\begin{align}\label{I}
&\notag\varliminf_{s\to0^+}\left\|\left\{s\int_{\Omega\cap[B(\cdot,
K_0\rho(\cdot,x_0)+K_0s^{-\frac{1}{q}})]^\complement}\frac{|f(\cdot)|^q}
{U(\cdot,y)[\rho(\cdot,y)]^{sq}}\,d\mu(y)\right\}^\frac{1}{q}\right\|_{Y(\Omega)}\\
&\quad\ge\left\|\left\{\varliminf_{s\to0^+}s\int_{\Omega\cap[B(\cdot,
K_0\rho(\cdot,x_0)+K_0s^{-\frac{1}{q}})]^\complement}\frac{1}
{U(\cdot,y)[\rho(\cdot,y)]^{sq}}\,d\mu(y)\right\}^\frac{1}{q}|f(\cdot)|
\right\|_{Y(\Omega)}\gtrsim\|f\|_{Y(\Omega)}.
\end{align}
On the other hand, from Lemma~\ref{02201535}(i),
we infer that
\begin{align*}
\lim_{s\to0^+}\left\|\left\{s
\int_{[B(\cdot,K_0s^{-\frac{1}{q}})]^\complement
\cap[B(x_0,s^{-\frac{1}{q}})]^\complement}
\frac{|\widetilde{f}(y)|^q}
{U(\cdot,y)[\rho(\cdot,y)]^{sq}}\,d\mu(y)
\right\}^\frac{1}{q}\right\|_{Y(\mathcal{X})}=0,
\end{align*}
which, combined with both \eqref{III} and \eqref{I},
completes the proof of the lower estimate in (i).
This finishes the proof of (i).

Next, we show (ii). The proof of the upper estimate in (ii)
is actually the same as that in (i).
The proof of the lower estimate in (ii) is quite similar to that of
(i) with Lemma~\ref{02201535}(i) replaced by
Lemma~\ref{02201535}(ii)
and hence we omit the details,
which completes the proof of Theorem~\ref{MS2}.
\end{proof}

\subsection{Proof of Theorem~\ref{MS3}}\label{3.2}

In this subsection, we turn to prove Theorem~\ref{MS3}.

\begin{proof}[Proof of Theorem~\ref{MS3}]
We first show (i).
Let $f\in C_{\mathrm{b}}^\beta(\Omega)\cap
\bigcup_{s\in(0,1)}\dot{W}^{s,q}_Y(\Omega)$
and $\widetilde{f}$ be as in \eqref{1448}.
Fix $x_0\in\Omega$.
By the proof of Theorem~\ref{MS2}(i), we find that
it suffices to prove that, for some $R\in(0,\infty)$,
\begin{align}\label{107}
\sup_{s\in(0,1)}s^\frac{1}{q}
\left\|\left\{\int_{[B(\cdot,R)]^\complement}
\frac{|\widetilde{f}(\cdot)-\widetilde{f}(y)|^q}
{U(\cdot,y)[\rho(\cdot,y)]^{sq}}\,d\mu(y)
\right\}^\frac{1}{q}\right\|_{Y(\mathcal{X})}
\lesssim\left\|\widetilde{f}\right\|_{Y(\mathcal{X})}
\end{align}
and
\begin{align}\label{108}
\lim_{s\to0^+}\left\|\left\{s\int_{[B(\cdot,K_0s^{-\frac{1}{q}})]^\complement\cap
[B(x_0,s^{-\frac{1}{q}})]^\complement
}\frac{|\widetilde{f}(y)|^q}
{U(\cdot,y)[\rho(\cdot,y)]^{sq}}\,d\mu(y)
\right\}^\frac{1}{q}\right\|_{Y(\mathcal{X})}=0.
\end{align}
Notice that $\widetilde{f}\in C_{\mathrm{b}}^\beta(\mathcal{X})$.
From this, \eqref{03281117}, and \eqref{03281211},
it follows that both \eqref{107} and \eqref{108} hold.
This finishes the proof of (i).

Now, we show (ii). Let $f\in[Y(\Omega)/\mathbb{R}]
\cap\bigcup_{s\in(0,1)}\dot{W}^{s,q}_Y(\Omega)$.
Then, by Proposition~\ref{2357},
we conclude that there exists $a\in\mathbb{R}$ such that
$f+a\in Y(\Omega)$ and $\|f+a\|_{Y(\Omega)}=\|f\|_{Y(\Omega)/\mathbb{R}}$.
Let $g:=\widetilde{f+a}$ be defined as in \eqref{1448}
with $f$ therein replaced by $f+a$.
Then we are easy to prove that $g\in Y(\mathcal{X})$
and $\|g\|_{Y(\mathcal{X})}=\|f+a\|_{Y(\Omega)}$.
From the proof of Theorem~\ref{MS2}(ii),
we deduce that it suffices to show that, for some $R\in(0,\infty)$,
\begin{align}\label{121}
\sup_{s\in(0,1)}s^\frac{1}{q}
\left\|\left\{\int_{[B(\cdot,R)]^\complement}\frac{|g(\cdot)-g(y)|^q}
{U(\cdot,y)[\rho(\cdot,y)]^{sq}}\,d\mu(y)
\right\}^\frac{1}{q}\right\|_{Y(\mathcal{X})}
\lesssim\|g\|_{Y(\mathcal{X})}
\end{align}
and
\begin{align}\label{122}
\lim_{s\to0^+}\left\|\left\{s\int_{ [B(\cdot,K_0s^{-\frac{1}{q}})]^\complement\cap
[B(x_0,s^{-\frac{1}{q}})]^\complement
}\frac{|g(y)|^q}
{U(\cdot,y)[\rho(\cdot,y)]^{sq}}\,d\mu(y)
\right\}^\frac{1}{q}\right\|_{Y(\mathcal{X})}=0.
\end{align}
Indeed, by \eqref{06102109} and \eqref{06102111},
we find that both \eqref{121} and \eqref{122} hold.
This finishes the proof of (ii)
and hence Theorem~\ref{MS3}.
\end{proof}

\subsection{Failure of Maz'ya--Shaposhnikova Representation
on Some	Domains $\Omega\subseteqq\mathbb{R}^n$}\label{example2}

In this section, we give an example of a domain that
does not satisfy the $\rm{WMD}$ condition
and prove that the conclusion of Theorem~\ref{MS2}
fails in this setting.
This indicates that the assumption that $\Omega$ satisfies the $\rm{WMD}$ condition
in Theorem~\ref{MS2}
is necessary in some sense.
We consider the case where the underlying space
$(\mathcal{X},\rho,\mu)$
is the Euclidean space $\mathbb{R}$ equipped with the
standard Euclidean distance and the Lebesgue measure.

\begin{proposition}\label{1233}
Let $\Omega:=\bigcup_{j=1}^\infty(4^j,4^j+2^j)$. Then
$\Omega$ does not satisfy the $\rm{WMD}$ condition
and there exists $f\in L^1(\Omega)\cap
\bigcup_{s\in(0,1)}\dot{W}^{s,1}(\Omega)$
such that $\|f\|_{L^1(\Omega)}\in(0,\infty)$
and
\begin{align}\label{1523}
\lim_{s\to0^+}s
\int_{\Omega}\int_\Omega\frac{|f(x)-f(y)|}
{|x-y|^{n+s}}\,dy\,dx=0.
\end{align}
Consequently, Theorem~\ref{MS2} with both $q=1$
and $Y=L^1$ fails in this setting.
\end{proposition}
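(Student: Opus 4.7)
The argument naturally splits into three pieces: failure of WMD, construction of a witness $f$, and verification of~\eqref{1523}.

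To see $\Omega$ fails the WMD condition, I would use the equivalent formulation in Remark~\ref{1933}(iv) with $x_0:=0$. Since $(4^k,4^k+2^k)\subset(-4^j,4^j)$ if and only if $k\leq j-1$, a direct count gives $|B(0,4^j)\cap\Omega|=\sum_{k=1}^{j-1}2^k=2^j-2$, while $|B(0,4^j)|=2\cdot 4^j$. The ratio tends to zero along the sequence $r_j:=4^j$, so $\varliminf_{r\to\infty}|B(0,r)\cap\Omega|/|B(0,r)|=0$, and hence WMD fails.

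For the witness I would take $f:=\mathbf{1}_{(4,6)}$, the indicator of the first component of $\Omega$. Trivially $\|f\|_{L^1(\Omega)}=2\in(0,\infty)$. Writing $I_1:=(4,6)$ and $I_j:=(4^j,4^j+2^j)$ for $j\geq 2$, the difference $|f(x)-f(y)|$ equals $1$ precisely when one of $x,y$ lies in $I_1$ and the other in some $I_j$ with $j\geq 2$. Hence, by symmetry,
\[
\int_\Omega\int_\Omega\frac{|f(x)-f(y)|}{|x-y|^{1+s}}\,dx\,dy
=2\sum_{j=2}^\infty\int_{I_1}\int_{I_j}\frac{dx\,dy}{|x-y|^{1+s}}.
\]
For $x\in I_1$ and $y\in I_j$ with $j\geq 2$ one has $|x-y|\geq 4^j-6\geq 4^j/2$, so the $j$-th inner integral is dominated by $|I_1|\cdot|I_j|\cdot 2^{1+s}\cdot 4^{-j(1+s)}\lesssim 2^j\cdot 4^{-j}\leq 2^{-j}$, uniformly in $s\in(0,1)$. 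Summing in $j$ gives a finite bound independent of $s$, so $f\in\dot{W}^{s,1}(\Omega)$ for every $s\in(0,1)$.

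Finally, multiplying the same bound by $s$ yields
\[
s\int_\Omega\int_\Omega\frac{|f(x)-f(y)|}{|x-y|^{1+s}}\,dx\,dy
\lesssim s\sum_{j=2}^\infty 2^{-j}\longrightarrow 0
\qquad\text{as}\ s\to 0^+,
\]
which is~\eqref{1523}. If Theorem~\ref{MS2} with $q=1$ and $Y=L^1$ were valid on this $\Omega$, then its lower estimate in~\eqref{1750}, combined with $\|f\|_{L^1(\Omega)}=2>0$, would force the above limit to be strictly positive, a contradiction. There is no real obstacle in this construction: the entire argument hinges on pairing the geometric decay $4^{-j}$ coming from the separation of components against the merely linear growth $2^j$ of the component lengths, which is precisely the imbalance that already caused the WMD condition to fail in the first step.
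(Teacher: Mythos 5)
Your proposal is correct and follows essentially the same route as the paper: the same computation showing the volume ratio tends to $0$ along $r_j\sim 4^j$, the same witness $f=\mathbf{1}_{(4,6)}$, and the same uniform-in-$s$ bound on the Gagliardo seminorm obtained by pairing the separation $\gtrsim 4^j$ of the components against their lengths $2^j$. One small precision: since $\mathbf{1}_{(4,6)}$ does not belong to $C_{\mathrm{b}}^\beta(\Omega)$ (its zero extension to $\mathbb{R}$ is discontinuous at $4$ and $6$), the contradiction should be drawn from the lower estimate in \eqref{1751} of Theorem~\ref{MS2}(ii), which applies to all of $[L^1(\Omega)/\mathbb{R}]\cap\bigcup_{s\in(0,1)}\dot W^{s,1}(\Omega)$ and yields $\|f\|_{L^1(\Omega)/\mathbb{R}}=\|f\|_{L^1(\Omega)}=2>0$ on the left while the right-hand side vanishes, rather than from \eqref{1750}.
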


\begin{proof}
We first show that $\Omega$ does not satisfy the $\rm{WMD}$ condition.
Notice that
\begin{align*}
\frac{|B(0,4^j+2^j)\cap\Omega|}{4^j+2^j}\leq2^{1-j}\to0
\end{align*}
as $j\to\infty$. Thus, $\Omega$ does not satisfy the $\rm{WMD}$ condition.

Next, we prove that there exists $f\in L^1(\Omega)
\cap\bigcup_{s\in(0,1)}\dot{W}^{s,1}(\Omega)$
such that $\|f\|_{L^1(\Omega)}\in(0,\infty)$.
Let
\begin{align*}
f(x):=
\begin{cases}
1&\text{if }x\in(4,6),\\
0&\text{if }x\in\Omega\setminus(4,6).
\end{cases}
\end{align*}
Then $\|f\|_{L^1(\Omega)}=2$.
For any $s\in(0,1)$,
\begin{align}\label{1526}
\|f\|_{\dot{W}^{s,1}(\Omega)}
&\notag=2\int_4^6\sum_{j=2}^\infty\int_{4^j}^{4^j+2^j}(y-x)^{-1-s}\,dy\,dx\\
&\sim\sum_{j=2}^\infty\int_{4^j}^{4^j+2^j}y^{-1-s}\,dy
\sim\sum_{j=2}^\infty2^{-j}4^{-js}\leq\frac{1}{2},
\end{align}
where the positive equivalence constants are
independent of $s$,
and hence $f\in\bigcup_{s\in(0,1)}\dot{W}^{s,1}(\Omega)$.
Moreover, \eqref{1526} further implies that \eqref{1523} holds.
This finishes the proof of Proposition~\ref{1233}.
\end{proof}

\begin{remark}\label{sharp2}
From Proposition~\ref{1233} and Theorem~\ref{MS2},
we infer that $\Omega$ in Proposition~\ref{1233}
does not satisfy the WMD condition, on which the
Maz'ya--Shaposhnikova representation,
namely the conclusion of Theorems~\ref{MS2}
and~\ref{MS3}, fails.
In this sense,
the WMD condition in Theorems~\ref{MS2}
and~\ref{MS3}
is necessary.
\end{remark}

\section{Applications to Specific Ball
Quasi-Banach Function Spaces}\label{sec4}
In this section, we apply the main theorems
of this article, namely
Theorems~\ref{Thm2},~\ref{Thm3},~\ref{MS2}, and \ref{MS3},
to weighted (or variable) Lebesgue spaces,
weighted Lorentz spaces, weighted Orlicz spaces,
generalized Morrey (or Lorenz--Morrey, Orlicz--Morrey)
spaces, and generalized block
(or Lorentz-block, Orlicz-block) spaces.
Throughout this section, we \emph{always assume} that
$(\mathcal{X},\rho,\mu)$
is a space of homogeneous type satisfying the WRD condition
and
$\Omega\subseteqq\mathcal{X}$ is an open
set satisfying the $\rm{WMD}$ condition.

\subsection{Weighted Lebesgue Spaces}

Recall that
weighted Lebesgue spaces
(see Definition \ref{twl}) are
quasi-Banach function spaces (see \cite[p.\,86]{shyy17}),
but it may not satisfy the conditions of
a Banach function space in \cite{bs88}.
By a slight modification of the proof of \cite[Theorem~4.1]{pyyz24}
with \cite[Theorem 2.12]{pyyz24} replaced by Theorem~\ref{MS2},
we obtain the following conclusion and omit the details.

\begin{theorem}\label{01011838}
Let $0<q\leq p \leq r<\infty$ and $\omega\in A_{\frac{r}{p}}(\mathcal{X})$.
Then there exist $C,\widetilde{C}\in(0,\infty)$ such that,
for any $f\in [L^{r}_{\omega}(\Omega)/\mathbb{R}]\cap\bigcup_{s\in(0,1)}
\dot{W}^{s,q}_{L^{r}_{\omega}}(\Omega)$,
\begin{align*}
C\|f\|_{L^{r}_{\omega}(\Omega)/\mathbb{R}}
&\leq \varliminf_{s \to 0^+}
s^{\frac{1}{q}}\left\|\left\{\int_\Omega
\frac{|f(\cdot)-f(y)|^q}{U(\cdot,y)[\rho(\cdot,y)]^{sq}}
\, d\mu(y) \right\}^{\frac{1}{q}}\right\|_{L^{r}_{\omega}(\Omega)}\\
&\leq\varlimsup_{s \to 0^+}
s^{\frac{1}{q}}\left\|\left\{\int_\Omega
\frac{|f(\cdot)-f(y)|^q}{U(\cdot,y)[\rho(\cdot,y)]^{sq}}
\,d\mu(y)\right\}^{\frac{1}{q}}\right\|_{L^{r}_{\omega}(\Omega)}
\leq\widetilde{C}\|f\|_{L^{r}_{\omega}(\Omega)/\mathbb{R}}.
\end{align*}
\end{theorem}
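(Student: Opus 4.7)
The plan is to verify that the weighted Lebesgue space $L^{r}_\omega(\mathcal{X})$ satisfies every hypothesis required to apply Theorem~\ref{MS2}(ii); once this is done, the stated two-sided estimate is an immediate specialization. First I would record that $L^{r}_\omega(\mathcal{X})$ is a ball quasi-Banach function space for every $r\in(0,\infty)$—monotonicity, Fatou continuity and local integrability of characteristic functions of balls are direct from Definition~\ref{twl}—and that its $p$-convexification $[L^{r}_\omega(\mathcal{X})]^{1/p}$ coincides with $L^{r/p}_\omega(\mathcal{X})$ isometrically. Because $r/p\ge 1$, this convexification is in fact a ball Banach function space.

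The central computation is to identify the associate space and verify the boundedness of the Hardy--Littlewood maximal operator on it. A routine H\"older argument with conjugate exponent $(r/p)'$ gives
\begin{align*}
[L^{r/p}_\omega(\mathcal{X})]' = L^{(r/p)'}_{\omega^{1-(r/p)'}}(\mathcal{X})
\end{align*}
with equivalent norms. By the standard self-duality of Muckenhoupt classes on spaces of homogeneous type (accessible through the references preceding Lemma~\ref{Lemma2.1}), the hypothesis $\omega\in A_{r/p}(\mathcal{X})$ is equivalent to $\omega^{1-(r/p)'}\in A_{(r/p)'}(\mathcal{X})$; combined with the classical Muckenhoupt maximal inequality in the homogeneous setting (valid for any $u\in(1,\infty)$ and $\nu\in A_u(\mathcal{X})$), this gives that $\mathcal{M}$ is bounded on $[L^{r/p}_\omega(\mathcal{X})]'$. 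Hence Assumption~\ref{ma} holds for $[L^{r}_\omega(\mathcal{X})]^{1/p}$.

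Finally, I would check that $L^{r}_\omega(\mathcal{X})$ has an absolutely continuous quasi-norm: for any $f\in L^{r}_\omega(\mathcal{X})$ and any sequence $\{E_j\}_{j\in\mathbb{N}}$ of measurable sets with $\mathbf{1}_{E_j}\to 0$ $\mu$-almost everywhere, the Lebesgue dominated convergence theorem applied to $|f|^{r}\omega$ yields $\|f\mathbf{1}_{E_j}\|_{L^{r}_\omega(\mathcal{X})}\to 0$. Since $\Omega$ is assumed to satisfy the WMD condition and $(\mathcal{X},\rho,\mu)$ to satisfy the WRD condition, every hypothesis of Theorem~\ref{MS2}(ii) is in place, and applying it with $Y(\mathcal{X}):=L^{r}_\omega(\mathcal{X})$ delivers the desired estimate.

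The main obstacle I anticipate is essentially bookkeeping: carefully tracking the weight and the conjugate exponent through the associate-space identification, and confirming that each classical ingredient—weighted duality, $A_p$-self-duality, and the Muckenhoupt maximal inequality—has a clean analogue on the space of homogeneous type $(\mathcal{X},\rho,\mu)$. With these standard facts organized, the argument parallels the Euclidean version \cite[Theorem~4.1]{pyyz24} almost verbatim, with Theorem~\ref{MS2} taking the role of \cite[Theorem~2.12]{pyyz24}.
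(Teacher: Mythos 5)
Your proposal is correct and follows essentially the same route as the paper, which likewise obtains Theorem~\ref{01011838} by verifying the hypotheses of Theorem~\ref{MS2} for $Y(\mathcal{X})=L^r_\omega(\mathcal{X})$ (via $[L^r_\omega(\mathcal{X})]^{1/p}=L^{r/p}_\omega(\mathcal{X})$, the weighted duality $[L^{r/p}_\omega(\mathcal{X})]'=L^{(r/p)'}_{\omega^{1-(r/p)'}}(\mathcal{X})$, and the Muckenhoupt maximal inequality), exactly as in the Euclidean case \cite[Theorem~4.1]{pyyz24}. The only point to handle separately is the endpoint $r=p$, where $(r/p)'=\infty$ and the associate space is the weighted $L^\infty$-type space $\{f:\ \mathrm{ess\,sup}\,|f|/\omega<\infty\}$, on which the boundedness of $\mathcal{M}$ follows directly from the $A_1(\mathcal{X})$ condition $\mathcal{M}\omega\lesssim\omega$ rather than from $A_p$-self-duality.
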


\begin{remark}
When $\mathcal{X}=\mathbb{R}^n=\Omega$,
$\rho$ is the standard Euclidean distance,
and $\mu$ is the $n$-dimensional Lebesgue measure,
Theorem~\ref{01011838} in this case reduces to \cite[Theorem~4.1]{pyyz24}.
The other cases of Theorem~\ref{01011838} are new.
\end{remark}

The following is a corollary of Theorem \ref{01011838}
with $\omega\equiv1$.

\begin{corollary}
Let $0<q \leq r < \infty$.
Then there exist $C,\widetilde{C}\in(0,\infty)$ such that,
for any $f\in[L^{r}(\Omega)/\mathbb{R}]\cap\bigcup_{s\in(0,1)}
\dot{W}^{s,q}_{L^{r}}(\Omega)$,
\begin{align*}
C\|f\|_{L^{r}(\Omega)/\mathbb{R}}
&\leq \varliminf_{s \to 0^+}
s^{\frac{1}{q}}\left[\int_\Omega\left\{\int_\Omega
\frac{|f(x)-f(y)|^q}{U(x,y)[\rho(x,y)]^{sq}}
\,d\mu(y)\right\}^{\frac{r}{q}}d\mu(x)\right]^\frac{1}{r}
\\
&\leq \varlimsup_{s \to 0^+}
s^{\frac{1}{q}}\left[\int_\Omega\left\{\int_\Omega
\frac{|f(x)-f(y)|^q}{U(x,y)[\rho(x,y)]^{sq}}
\,d\mu(y)\right\}^{\frac{r}{q}}d\mu(x)\right]^\frac{1}{r}
\leq\widetilde{C}\|f\|_{L^{r}(\Omega)/\mathbb{R}}.
\end{align*}	
\end{corollary}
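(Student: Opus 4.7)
The plan is to specialize Theorem~\ref{01011838} to the constant weight $\omega\equiv 1$, which is the simplest member of every Muckenhoupt class. First I would select any admissible exponent $p\in[q,r]$ (for concreteness, $p=r$, so $r/p=1$), and verify that $\omega\equiv 1\in A_{r/p}(\mathcal{X})=A_1(\mathcal{X})$. This is immediate from the definition: for any ball $B\subset\mathcal{X}$,
\begin{align*}
\frac{1}{\mu(B)}\int_B 1\,d\mu(x)=1=\mathop{\mathrm{ess\,inf\,}}_{y\in B}1,
\end{align*}
so $[\omega]_{A_1(\mathcal{X})}=1<\infty$, and hence the hypothesis of Theorem~\ref{01011838} is satisfied.

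Next I would identify $L^{r}_{\omega}(\Omega)$ with $L^{r}(\Omega)$. By Definition~\ref{twl}, when $\omega\equiv 1$ one has $\|f\|_{L^{r}_{\omega}(\mathcal{X})}=\|f\|_{L^{r}(\mathcal{X})}$, so the ball quasi-Banach function space $L^{r}_{\omega}(\mathcal{X})$ coincides with the standard Lebesgue space. Restricting to $\Omega$ via Definition~\ref{1635} and Remark~\ref{2048}(i) then gives $L^{r}_{\omega}(\Omega)=L^{r}(\Omega)$ as normed spaces, and in particular the quotient spaces $L^{r}_{\omega}(\Omega)/\mathbb{R}=L^{r}(\Omega)/\mathbb{R}$ agree with identical norms. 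Likewise the homogeneous fractional Sobolev spaces $\dot{W}^{s,q}_{L^{r}_{\omega}}(\Omega)$ and $\dot{W}^{s,q}_{L^{r}}(\Omega)$ from Definition~\ref{2200} coincide.

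Finally I would unwind the $L^{r}(\Omega)$-norm appearing on the right-hand side of Theorem~\ref{01011838} into an explicit iterated integral. Applying the definition of $\|\cdot\|_{L^{r}(\Omega)}$ to the function
\begin{align*}
x\mapsto\left\{\int_\Omega\frac{|f(x)-f(y)|^{q}}{U(x,y)[\rho(x,y)]^{sq}}\,d\mu(y)\right\}^{1/q}
\end{align*}
reproduces exactly the expression $\left[\int_\Omega\{\int_\Omega\cdots\}^{r/q}\,d\mu(x)\right]^{1/r}$ in the statement of the corollary. Combining these three reductions with Theorem~\ref{01011838} yields the claimed double inequality with the very same constants $C$ and $\widetilde{C}$. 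There is essentially no obstacle here: the corollary is a direct specialization, and the only substantive check, namely $1\in A_1(\mathcal{X})$, is trivial; the standing hypothesis $q\leq r$ is precisely what permits the admissible choice $p=r$ needed to invoke Theorem~\ref{01011838}.
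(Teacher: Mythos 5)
Your proposal is correct and matches the paper's approach exactly: the paper states this result as an immediate corollary of Theorem~\ref{01011838} with $\omega\equiv1$, which is precisely your specialization (the choice $p=r$, the trivial verification that the constant weight lies in $A_1(\mathcal{X})$, and the identification of $L^r_\omega(\Omega)$ with $L^r(\Omega)$ are all as the paper intends). No gaps.
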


\subsection{Variable Lebesgue Spaces}
Let $r$ be a positive measurable function on $\mathcal{X}$. Let
$$\widetilde{r}_-:=\underset{x\in \mathcal{X}}
{\mathrm{ess\,inf}}\,r(x)\ \ \text{and}
\ \ \widetilde{r}_+:=\underset{x\in\mathcal{X}}
{\mathrm{ess\,sup}}\,r(x).$$
Fix $x_0\in \mathcal{X}$. The function $r$
is said to be \emph{globally
log-H\"older continuous on $\mathcal{X}$} if there exist
$r_{\infty}\in{\mathbb R}$ and a positive
constant $C$ such that, for any
$x,y\in \mathcal{X}$,
\begin{equation*}
|r(x)-r(y)|\leq \frac{C}{\log[e+\frac{1}{\rho(x,y)}]}
\ \ \text{and}\ \
|r(x)-r_\infty|\leq \frac{C}{\log[e+\rho(x_0,x )]}.
\end{equation*}
The \emph{variable Lebesgue space
$L^{r(\cdot)}(\mathcal{X})$} is defined to
be the set
of all measurable functions $f$ on $\mathcal{X}$
such that
\begin{equation*}
\|f\|_{L^{r(\cdot)}(\mathcal{X})}:=\inf\left\{\lambda
\in(0,\infty):\int_{\mathcal{X}}\left[\frac{|f(x)|}
{\lambda}\right]^{r(x)}\,d\mu(x)\le1\right\}<\infty.
\end{equation*}
The study of the variable Lebesgue space
can be traced back to \cite{kr91}.
These spaces were employed to
investigate both the related nonlinear elliptic boundary value problems
and the mapping properties of Nemytskii operators.
For more studies on variable Lebesgue spaces, we
refer to \cite{b18,bbd21,cf13,cw14,dhhr11,dhr09,ns12,n50,n51}.

Applying a slight modification of the proof of \cite[Theorem~4.20]{pyyz24}
with \cite[Theorem 2.12]{pyyz24} replaced by Theorem~\ref{MS2},
we obtain the following conclusion and omit the details.

\begin{theorem}\label{01011835}
Let $r$ be a positive globally log-H\"older continuous function.
Assume that $0<q< \widetilde r_{-}\leq \widetilde
r_{+}<\infty$.
Then there exist $C,\widetilde{C}\in(0,\infty)$ such that,
for any $f\in [L^{r(\cdot)}(\Omega)/\mathbb{R}]\cap\bigcup_{s\in(0,1)}
\dot{W}^{s,q}_{L^{r(\cdot)}}(\Omega)$,
\begin{align*}
C\|f\|_{L^{r(\cdot)}(\Omega)/\mathbb{R}}
&\leq \varliminf_{s \to 0^+}
s^{\frac{1}{q}}\left\|\left\{\int_\Omega
\frac{|f(\cdot)-f(y)|^q}{U(\cdot,y)[\rho(\cdot,y)]^{sq}}
\, d\mu(y) \right\}^{\frac{1}{q}}\right\|_{L^{r(\cdot)}
(\Omega)}\notag\\
&\leq \varlimsup_{s \to 0^+}
s^{\frac{1}{q}}\left\|\left\{\int_\Omega
\frac{|f(\cdot)-f(y)|^q}{U(\cdot,y)[\rho(\cdot,y)]^{sq}}
\, d\mu(y) \right\}^{\frac{1}{q}}\right\|_{L^{r(\cdot)}
(\Omega)}
\leq\widetilde{C}\|f\|_{L^{r(\cdot)}(\Omega)/\mathbb{R}}.
\end{align*}
\end{theorem}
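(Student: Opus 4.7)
The plan is to derive Theorem~\ref{01011835} as an application of Theorem~\ref{MS2} by verifying the structural hypotheses with $Y(\mathcal{X}):=L^{r(\cdot)}(\mathcal{X})$. The first step is to confirm that $L^{r(\cdot)}(\mathcal{X})$ is a ball quasi-Banach function space on the space of homogeneous type $(\mathcal{X},\rho,\mu)$: conditions (i)--(iii) of Definition~\ref{Debqfs} are standard Luxemburg-norm facts (positivity, lattice property, Fatou), while condition (iv), namely $\mathbf{1}_B\in L^{r(\cdot)}(\mathcal{X})$ for every ball $B$, follows from $\widetilde{r}_+<\infty$ and $\mu(B)<\infty$. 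In particular, $Y(\mathcal{X})$ enters the framework of Theorem~\ref{MS2}.

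Next, I would choose $p\in[q,\widetilde{r}_-)$ so that the exponent $r(\cdot)/p$ satisfies $1<\widetilde{(r/p)}_-\le\widetilde{(r/p)}_+<\infty$ and is itself globally log-H\"older continuous. Under these conditions the $p$-convexification $Y^{1/p}(\mathcal{X})=L^{r(\cdot)/p}(\mathcal{X})$ is a ball Banach function space. I then identify its associate space: by the K\"othe-dual theory for variable Lebesgue spaces (the $\mathcal{X}$-analogue of the classical Euclidean identification), $[L^{r(\cdot)/p}(\mathcal{X})]'=L^{(r(\cdot)/p)'}(\mathcal{X})$ with equivalent norms, where $(r(\cdot)/p)'$ is the pointwise conjugate exponent. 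Since $r/p$ is log-H\"older, so is $(r/p)'$, and $1<\widetilde{(r/p)'}_-\le\widetilde{(r/p)'}_+<\infty$. The Diening--Harjulehto--H\"ast\"o--R\r{u}\v{z}i\v{c}ka boundedness theorem for the Hardy--Littlewood maximal operator, in the space-of-homogeneous-type formulation (which is available under the log-H\"older hypotheses), then gives that $\mathcal{M}$ is bounded on $[Y^{1/p}(\mathcal{X})]'$; consequently $Y^{1/p}(\mathcal{X})$ satisfies Assumption~\ref{ma}.

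For part~(ii) of the application I also need the absolute continuity of the quasi-norm of $L^{r(\cdot)}(\mathcal{X})$. Because $\widetilde{r}_+<\infty$, the dominated convergence theorem applied to the modular gives, for $f\in L^{r(\cdot)}(\mathcal{X})$ and $\mathbf{1}_{E_j}\to 0$ $\mu$-a.e., that $\int_{\mathcal{X}}(|f|\mathbf{1}_{E_j}/\lambda)^{r(\cdot)}\,d\mu\to 0$ for every $\lambda>0$, whence $\|f\mathbf{1}_{E_j}\|_{L^{r(\cdot)}(\mathcal{X})}\to 0$. With the ball quasi-Banach function space structure, the convexification hypothesis, and absolute continuity all verified, Theorem~\ref{MS2} applies directly to $Y=L^{r(\cdot)}$ and $\Omega$, delivering both \eqref{1750} and \eqref{1751}, which is precisely the asserted Maz'ya--Shaposhnikova estimate for $L^{r(\cdot)}(\Omega)$.

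The principal obstacle I anticipate is the associate-space identification and the boundedness of $\mathcal{M}$ on $L^{(r(\cdot)/p)'}(\mathcal{X})$: in $\mathbb{R}^n$ these are textbook, but on a general space of homogeneous type they must be invoked from the extension of the Diening--Cruz-Uribe--Fiorenza variable-exponent theory to this setting (using the log-H\"older hypothesis on $r$ and the doubling of $\mu$). Once that transfer is justified, the rest of the argument is a straightforward verification of hypotheses, exactly paralleling the Euclidean proof of \cite[Theorem~4.20]{pyyz24}, with Theorem~\ref{MS2} playing the role that \cite[Theorem~2.12]{pyyz24} does there.
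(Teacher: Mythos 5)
Your proposal is correct and follows essentially the same route as the paper, which itself proves this theorem by "a slight modification of the proof of \cite[Theorem~4.20]{pyyz24} with \cite[Theorem 2.12]{pyyz24} replaced by Theorem~\ref{MS2}," i.e., by verifying that $Y=L^{r(\cdot)}$ is a ball quasi-Banach function space with absolutely continuous quasi-norm whose $p$-convexification, for some $p\in[q,\widetilde r_-)$, satisfies Assumption~\ref{ma} via the associate-space identification and the log-H\"older boundedness of $\mathcal{M}$. Your explicit flagging of the transfer of the variable-exponent maximal-function theory to spaces of homogeneous type is exactly the point the paper leaves implicit.
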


\begin{remark}
When $\mathcal{X}=\mathbb{R}^n=\Omega$,
$\rho$ is the standard Euclidean distance,
and $\mu$ is the $n$-dimensional Lebesgue measure,
Theorem~\ref{01011835} in this case reduces to \cite[Theorem~4.20]{pyyz24}.
The other cases of Theorem~\ref{01011835} are new.
\end{remark}

\subsection{Weighted Lorentz Spaces}
Let $\omega$ be a weight on $\mathcal{X}$.
For any $r,\tau\in(0,\infty)$,
the \emph{weighted Lorentz space $L^{r,\tau}_{\omega}(\mathcal{X})$}
is defined to be the set of all measurable
functions
$f$ on $\mathcal{X}$ such that
\begin{equation*}
\|f\|_{L^{r,\tau}_{\omega}(\mathcal{X})}
:=\left\{\int_0^{\infty}
\left[t^{\frac{1}{r}}f^*(t)\right]^\tau
\,\frac{dt}{t}\right\}^{\frac{1}{\tau}}<\infty,
\end{equation*}
where, for any $t\in[0,\infty)$,
$f^*(t):=\inf\left\{s\in(0,\infty):
\omega(\left\{x\in \mathcal{X}:|f(x)|>s\right\})\leq t\right\}$.
If $\omega\equiv1$,
then $L^{r,\tau}_{\omega}(\mathcal{X})$ is the usual Lorentz space
introduced by Lorentz \cite{l50,l51},
which is denoted simply by $L^{r,\tau}(\mathcal{X})$.
Lorentz \cite{c64} established the intermediate spaces
of Lebesgue spaces via the real interpolation method.
We refer to
\cite{adnop23,chk1982,cf80,cf84,g93,h66,kk91,osttw12,st01}
for more studies on weighted Lorentz spaces.

By a slight modification of the proof of \cite[Theorem~4.22]{pyyz24}
with \cite[Theorem 2.12]{pyyz24} replaced by Theorem~\ref{MS2},
we obtain the following conclusion and omit the details.

\begin{theorem}\label{2047}
Let $r,\tau\in(0,\infty)$, $0<q\le p<\min\{r,\,\tau\}$,
and $\omega\in A_{\frac{r}{p}}(\mathcal{X})$.
Then there exist $C,\widetilde{C}\in(0,\infty)$ such that,
for any $f\in [L^{r,\tau}_{\omega}(\Omega)/\mathbb{R}]
\cap\bigcup_{s\in(0,1)}
\dot{W}^{s,q}_{L^{r,\tau}_{\omega}}(\Omega)$,
\begin{align*}
C\|f\|_{L^{r,\tau}_{\omega}(\Omega)/\mathbb{R}}
&\leq \varliminf_{s \to 0^+}
s^{\frac{1}{q}}\left\|\left\{\int_\Omega
\frac{|f(\cdot)-f(y)|^q}{U(\cdot,y)[\rho(\cdot,y)]^{sq}}
\, d\mu(y) \right\}^{\frac{1}{q}}\right\|_{L^{r,\tau}_{\omega}(\Omega)}
\notag\\
&\leq \varlimsup_{s \to 0^+}
s^{\frac{1}{q}}\left\|\left\{\int_\Omega
\frac{|f(\cdot)-f(y)|^q}{U(\cdot,y)[\rho(\cdot,y)]^{sq}}
\, d\mu(y) \right\}^{\frac{1}{q}}\right\|_{L^{r,\tau}_{\omega}(\Omega)}
\leq\widetilde{C}\|f\|_{L^{r,\tau}_{\omega}(\Omega)/\mathbb{R}}.
\end{align*}
\end{theorem}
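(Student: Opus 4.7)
The plan is to obtain Theorem~\ref{2047} as a direct application of Theorem~\ref{MS2} with $Y(\mathcal{X}) := L^{r,\tau}_\omega(\mathcal{X})$. All the work therefore consists of verifying the three hypotheses needed on $Y(\mathcal{X})$: (a) $L^{r,\tau}_\omega(\mathcal{X})$ is a ball quasi-Banach function space; (b) the convexification $[L^{r,\tau}_\omega]^{1/p}(\mathcal{X})$ satisfies Assumption~\ref{ma}; and (c) $L^{r,\tau}_\omega(\mathcal{X})$ has an absolutely continuous quasi-norm (only needed for the quotient part). The transfer to the restriction $L^{r,\tau}_\omega(\Omega)$ is automatic from Definition~\ref{1635} and Remark~\ref{2048}.

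For (a) and (c), I would check that conditions (i)--(iii) of Definition~\ref{Debqfs} follow from the monotonicity of $f \mapsto f^*$ together with the monotone convergence theorem, while $\mathbf{1}_B \in L^{r,\tau}_\omega(\mathcal{X})$ for every ball $B$ reduces to $\omega(B) < \infty$, which follows from the doubling-type property of $\omega$ implicit in $\omega \in A_{r/p}(\mathcal{X})$ via Lemma~\ref{Lemma2.1}. For absolute continuity, whenever $f \in L^{r,\tau}_\omega$ and $\mathbf{1}_{E_j}\to 0$ $\mu$-almost everywhere, the dominated convergence theorem applied with the integrable dominating function $[t^{1/r} f^*(t)]^\tau t^{-1}$ forces $\|f \mathbf{1}_{E_j}\|_{L^{r,\tau}_\omega} \to 0$.

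The core step, and the main obstacle, is (b). Since $p < \min\{r,\tau\}$ one has $r/p, \tau/p \in (1,\infty)$, so $[L^{r,\tau}_\omega]^{1/p}(\mathcal{X}) = L^{r/p,\tau/p}_\omega(\mathcal{X})$ is itself a ball Banach function space, and the real task is to show that $\mathcal{M}$ is bounded on its associate space. The approach is to first identify $[L^{r/p,\tau/p}_\omega(\mathcal{X})]'$, up to equivalence of norms, with a weighted Lorentz space of the form $L^{(r/p)',(\tau/p)'}_{\omega^{1-(r/p)'}}(\mathcal{X})$ through the standard pairing $(f,g)\mapsto \int fg\,d\mu$; this identification extends to spaces of homogeneous type because all arguments depend only on the non-increasing rearrangement. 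The hypothesis $\omega \in A_{r/p}(\mathcal{X})$ is equivalent, by the classical duality for $A_r$-weights, to $\omega^{1-(r/p)'} \in A_{(r/p)'}(\mathcal{X})$, and the boundedness of $\mathcal{M}$ on $L^{(r/p)',(\tau/p)'}_{\omega^{1-(r/p)'}}(\mathcal{X})$ then follows from the weighted Lorentz-space maximal inequality, i.e., the analogue on spaces of homogeneous type of the Chung--Hunt--Kurtz theorem \cite{chk1982} (cf.\ also \cite{adnop23}). Once (a)--(c) are in hand, Theorem~\ref{MS2}(ii) applied to $Y = L^{r,\tau}_\omega$ yields \eqref{1751}, which is precisely the content of Theorem~\ref{2047}.
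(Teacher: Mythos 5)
Your overall strategy---feeding $Y(\mathcal{X})=L^{r,\tau}_{\omega}(\mathcal{X})$ into Theorem~\ref{MS2} after checking that it is a ball quasi-Banach function space with absolutely continuous quasi-norm and that $[L^{r,\tau}_{\omega}(\mathcal{X})]^{\frac{1}{p}}=L^{\frac{r}{p},\frac{\tau}{p}}_{\omega}(\mathcal{X})$ satisfies Assumption~\ref{ma}---is exactly the route the paper takes (it invokes a ``slight modification'' of the unweighted Euclidean argument of \cite[Theorem~4.22]{pyyz24}); your points (a), (c) and the convexification identity are fine.

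The gap is in step (b), in your identification of the associate space. Because $f^*$ in the definition of $L^{r,\tau}_{\omega}(\mathcal{X})$ is the decreasing rearrangement with respect to the measure $\omega\,d\mu$, one has $L^{P,Q}_{\omega}(\mathcal{X})=L^{P,Q}(\mathcal{X},\omega\,d\mu)$ with $P=\frac{r}{p}$ and $Q=\frac{\tau}{p}$, and the K\"othe dual with respect to the pairing $\int_{\mathcal{X}}fg\,d\mu=\int_{\mathcal{X}}(f\omega^{-1})g\,\omega\,d\mu$ is $\{f:\ f\omega^{-1}\in L^{P',Q'}(\mathcal{X},\omega\,d\mu)\}$ with norm $\|f\omega^{-1}\|_{L^{P',Q'}_{\omega}(\mathcal{X})}$, \emph{not} $L^{P',Q'}_{\omega^{1-P'}}(\mathcal{X})$. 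The two descriptions coincide only when $P'=Q'$ (the Lebesgue case), where $\|f\omega^{-1}\|_{L^{P'}(\omega\,d\mu)}=\|f\|_{L^{P'}_{\omega^{1-P'}}(\mathcal{X})}$ is an identity of integrals; for $Q'\neq P'$ the rearrangement of $f\omega^{-1}$ with respect to $\omega\,d\mu$ and the rearrangement of $f$ with respect to $\omega^{1-P'}\,d\mu$ are genuinely different (test $f=\mathbf{1}_E$: the latter is the indicator of an interval, the former is not). Consequently, applying the Chung--Hunt--Kurtz theorem to the weight $\omega^{1-(r/p)'}\in A_{(r/p)'}(\mathcal{X})$ proves boundedness of $\mathcal{M}$ on $L^{(r/p)',(\tau/p)'}_{\omega^{1-(r/p)'}}(\mathcal{X})$, which is not the space Assumption~\ref{ma} asks about. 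What you actually need is boundedness of $\mathcal{M}$ on $\omega\cdot L^{(r/p)',(\tau/p)'}(\omega\,d\mu)$, i.e., the estimate $\|\omega^{-1}\mathcal{M}(\omega h)\|_{L^{(r/p)',(\tau/p)'}_{\omega}(\mathcal{X})}\lesssim\|h\|_{L^{(r/p)',(\tau/p)'}_{\omega}(\mathcal{X})}$, and this must be proved separately. It can be repaired: the self-improvement of $A_{r/p}(\mathcal{X})$ gives $\omega\in A_{u'}(\mathcal{X})$, hence $\omega^{1-u}\in A_{u}(\mathcal{X})$ and $\|\omega^{-1}\mathcal{M}(\omega h)\|_{L^{u}_{\omega}(\mathcal{X})}\lesssim\|h\|_{L^{u}_{\omega}(\mathcal{X})}$, for all $u$ in an open interval around $(r/p)'$; real interpolation of the sublinear operator $h\mapsto\omega^{-1}\mathcal{M}(\omega h)$ over the measure space $(\mathcal{X},\omega\,d\mu)$ then yields the Lorentz bound at the exponent pair $((r/p)',(\tau/p)')$. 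As written, however, your step (b) does not establish Assumption~\ref{ma} for $L^{\frac{r}{p},\frac{\tau}{p}}_{\omega}(\mathcal{X})$.
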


\begin{remark}
When $\mathcal{X}=\mathbb{R}^n=\Omega$,
$\rho$ is the standard Euclidean distance,
$\mu$ is the $n$-dimensional Lebesgue measure,
and $\omega\equiv1$,
Theorem~\ref{2047} in this case
reduces to \cite[Theorem~4.22]{pyyz24}.
The other cases of Theorem~\ref{2047} are new.
\end{remark}

\subsection{Weighted Orlicz Spaces\label{s6.8}}

A non-decreasing function $\Phi:[0,\infty)
\to [0,\infty)$ is called an \emph{Orlicz function}
if $\Phi(0)= 0$, $\Phi(t)>0$ for any
$t\in(0,\infty)$, and
$\lim_{t\to\infty}\Phi(t)=\infty$.
The function $\Phi$ is said to be of \emph{lower}
(resp. \emph{upper}) \emph{type}
$r\in{\mathbb R}$ if there exists
$C_{(r)}\in[1,\infty)$ such that,
for any $t\in(0,\infty)$ and
$s\in(0,1)$ [resp. $s\in(1,\infty)$],
$\Phi(st)\leq C_{(r)} s^r\Phi(t)$.
Assume that
$\Phi$
is an Orlicz function with positive lower
type $r_{\Phi}^-$ and positive upper type
$r_{\Phi}^+$.
Let $\omega$ be a weight on $\mathcal{X}$.
The \emph{weighted Orlicz space
$L^{\Phi}_{\omega}(\mathcal{X})$}
is defined to be the set of all measurable
functions $f$ on $\mathcal{X}$ such that
\begin{equation*}
\|f\|_{L^{\Phi}_{\omega}(\mathcal{X})}:=\inf\left\{\lambda\in
(0,\infty):\int_{\mathcal{X}}\Phi\left(\frac{|f(x)|}
{\lambda}\right)\omega(x)\,d\mu(x)\le1\right\}<\infty.
\end{equation*}
Then $\|\cdot\|_{L^{\Phi}_{\omega}(\mathcal{X})}$ is a quasi-norm
and thereby $L^{\Phi}_{\omega}(\mathcal{X})$ is a ball quasi-Banach function space.
If $\Phi$ is convex, then $\|\cdot\|_{L^{\Phi}_{\omega}(\mathcal{X})}$ is a norm
and thereby $L^{\Phi}_{\omega}(\mathcal{X})$ is a ball Banach function space.
The unweighted Orlicz space $L^{\Phi}_{1}(\mathcal{X})$
was originally introduced in \cite{bo1931,o1932},
which is widely applied to various branches of analysis.
We refer to
\cite{b21,dfmn21,kk91,lyz2023,ns14,rr91,rr02,ylk2017}
for more studies on Orlicz spaces.

Let $\Phi$ and $\Psi$ be
two Orlicz functions.
Write $\Phi\approx\Psi$
if there exists a positive constant $C$ such that,
for any $t\in(0,\infty)$,
$\Phi(C^{-1}t)\le\Psi(t)\le\Phi(Ct)$.
In this case, $L^{\Phi}_{\omega}(\mathcal{X})=L^{\Psi}_{\omega}(\mathcal{X})$
with equivalent quasi-norms.
For any given Orlicz function $\Phi$,
if $1\le r_{\Phi}^-\le r_{\Phi}^+<\infty$,
then there exists a convex Orlicz function $\Psi$ such that
$\Phi\approx\Psi$.
In this case, $L^{\Phi}_{\omega}(\mathcal{X})$ is a Banach
space with the norm $\|\cdot\|_{L^{\Psi}_{\omega}(\mathcal{X})}$.
In the remainder of this subsection, we always assume that $\Phi$
is convex if $1\le r_{\Phi}^-\le r_{\Phi}^+<\infty$.
Let $\Phi^{(\frac{1}{p})}(t):=\Phi(t^\frac{1}{p})$
for any $t\in[0,\infty)$.
Then $r_{\Phi^{(\frac{1}{p})}}^{\pm}=\frac{r_{\Phi}^{\pm}}{p}$.
From this and the obvious fact that any convex Orlicz function
is strictly increasing, we may assume that any
Orlicz function is strictly increasing.

For any given Orlicz function $\Phi$ with
$1<r_{\Phi}^-\le r_{\Phi}^+<\infty$,
let $\widetilde{\Phi}$ be its \emph{complementary function}, that is,
for any $t\in[0,\infty)$,
\begin{align}\label{1539}
\widetilde{\Phi}(t):=\sup\left\{tu-\Phi(u):u\in[0,\infty)\right\}.
\end{align}
Then $\widetilde{\Phi}$ is also an Orlicz function with
$1<r_{\widetilde{\Phi}}^-\le r_{\widetilde{\Phi}}^+<\infty$
and $\widetilde{\widetilde{\Phi}}=\Phi$.
The complementary pair $(\Phi,\widetilde\Phi)$ of Orlicz functions
satisfies
$
\frac{1}{r_{\Phi}^{\pm}}+\frac{1}{r_{\widetilde{\Phi}}^{\mp}}=1
$ and
$t\le\Phi^{-1}(t)\,\widetilde\Phi^{-1}(t)
\le2t
$
for any $t\in[0,\infty)$.
Moreover,
$L^{\Phi}_{\omega}(\mathcal{X})$ and $L^{\widetilde{\Phi}}_{\omega}(\mathcal{X})$
are the dual and the associate spaces of each other.
If $1<r_{\Phi}^-\le r_{\Phi}^+<\infty$ and $\omega\in A_{r_{\Phi}^-}$,
then the Hardy--Littlewood maximal operator $\mathcal{M}$ is
bounded on $L^{\Phi}_{\omega}(\mathcal{X})$.

Let $\Phi$ be an Orlicz function with
lower type $r_{\Phi}^-$ and upper type $r_{\Phi}^+$.
Then, by the above fact
and $(\frac{r_{\Phi}^+}{p})'=r_{\widetilde{\Phi^{(\frac{1}{p})}}}^-$,
we conclude that, if $p\in(0,r_{\Phi}^-)$ and
$\omega\in A_{(\frac{r_{\Phi}^+}{p})'}(\mathcal{X})$, then
$
[L^{\Phi}_{\omega}(\mathcal{X})]^\frac{1}{p}
=L^{\Phi^{(\frac{1}{p})}}_{\omega}(\mathcal{X})
$
is a ball Banach function space
and the Hardy--Littlewood maximal operator $\mathcal{M}$ is bounded on
$[L^{\Phi^{(\frac{1}{p})}}_{\omega}(\mathcal{X})]'
=L^{\widetilde{\Phi^{(\frac{1}{p})}}}_{\omega}(\mathcal{X})$.

By a slight modification of the proof of \cite[Theorem~4.25]{pyyz24}
with \cite[Theorem 2.12]{pyyz24} replaced by Theorem~\ref{MS2},
we obtain the following conclusion and omit the details.

\begin{theorem}\label{01011841}
Let $\Phi$ be an Orlicz function with lower
type $r_{\Phi}^-$ and upper
type $r_{\Phi}^+$.
Assume that $0<q\le p< r_{\Phi}^-\leq r_{\Phi}^+<\infty$
and $\omega\in A_{(\frac{r_{\Phi}^+}{p})'}(\mathcal{X})$.
Then there exist $C,\widetilde{C}\in(0,\infty)$ such that,
for any $f\in[L^{\Phi}_{\omega}(\Omega)/\mathbb{R}]\cap\bigcup_{s\in(0,1)}
\dot{W}^{s,q}_{L^{\Phi}_{\omega}}(\Omega)$,
\begin{align*}
C\|f\|_{L^{\Phi}_{\omega}(\Omega)/\mathbb{R}}
&\leq \varliminf_{s \to 0^+}
s^{\frac{1}{q}}\left\|\left\{\int_\Omega
\frac{|f(\cdot)-f(y)|^q}{U(\cdot,y)[\rho(\cdot,y)]^{sq}}
\, d\mu(y) \right\}^{\frac{1}{q}}\right\|_{L^{\Phi}_{\omega}
(\Omega)}\notag\\
&\leq \varlimsup_{s \to 0^+}
s^{\frac{1}{q}}\left\|\left\{\int_\Omega
\frac{|f(\cdot)-f(y)|^q}{U(\cdot,y)[\rho(\cdot,y)]^{sq}}
\, d\mu(y) \right\}^{\frac{1}{q}}\right\|_{L^{\Phi}_{\omega}
(\Omega)}
\leq\widetilde{C}\|f\|_{L^{\Phi}_{\omega}(\Omega)/\mathbb{R}}.
\end{align*}
\end{theorem}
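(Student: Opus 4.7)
The plan is to derive Theorem~\ref{01011841} as a direct application of Theorem~\ref{MS2}(ii) with $Y(\mathcal{X}):=L^\Phi_\omega(\mathcal{X})$. The work therefore reduces to checking that the weighted Orlicz space meets the hypotheses of that theorem with the prescribed parameters. The preparatory remarks in Subsection~\ref{s6.8} have already collected the relevant structural facts about $L^\Phi_\omega(\mathcal{X})$, so the argument is essentially a matter of verification and identification.

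First, I would assemble the ingredients in the order demanded by Assumption~\ref{ma}. After passing to an equivalent convex Orlicz function where available, $L^\Phi_\omega(\mathcal{X})$ is a ball quasi-Banach function space, and its $\frac{1}{p}$-convexification is
\begin{align*}
\left[L^\Phi_\omega(\mathcal{X})\right]^{\frac{1}{p}}=L^{\Phi^{(\frac{1}{p})}}_\omega(\mathcal{X}),
\end{align*}
where $\Phi^{(\frac{1}{p})}(t):=\Phi(t^{\frac{1}{p}})$ has lower/upper types $r^{\pm}_{\Phi^{(1/p)}}=r^{\pm}_\Phi/p$. Since $p<r^-_\Phi$, we have $r^-_{\Phi^{(1/p)}}>1$, so $L^{\Phi^{(1/p)}}_\omega(\mathcal{X})$ is a ball Banach function space. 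Next, using the duality between an Orlicz space and its complementary Orlicz space (recalled around \eqref{1539}), the associate space is identified as
\begin{align*}
\left[L^{\Phi^{(\frac{1}{p})}}_\omega(\mathcal{X})\right]'=L^{\widetilde{\Phi^{(\frac{1}{p})}}}_\omega(\mathcal{X}),
\end{align*}
and the reciprocal relation $1/r^\pm_{\Phi^{(1/p)}}+1/r^\mp_{\widetilde{\Phi^{(1/p)}}}=1$ yields $r^-_{\widetilde{\Phi^{(1/p)}}}=(r^+_\Phi/p)'$ together with $r^+_{\widetilde{\Phi^{(1/p)}}}=(r^-_\Phi/p)'\in(1,\infty)$. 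The hypothesis $\omega\in A_{(r^+_\Phi/p)'}(\mathcal{X})$ is then precisely the Muckenhoupt condition required to invoke the standard boundedness of $\mathcal{M}$ on weighted Orlicz spaces recalled in Subsection~\ref{s6.8}, which verifies Assumption~\ref{ma} for $Y^{1/p}(\mathcal{X})$.

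Finally, to conclude via Theorem~\ref{MS2}(ii) rather than merely part (i), I would check that $L^\Phi_\omega(\mathcal{X})$ has an absolutely continuous quasi-norm. This follows from the doubling ($\Delta_2$) property of $\Phi$, which is implicit in the assumption $r^+_\Phi<\infty$ on the upper type: for any $f\in L^\Phi_\omega(\mathcal{X})$ and any sequence $\{E_j\}_{j\in\mathbb{N}}$ of measurable sets with $\mathbf{1}_{E_j}\to 0$ $\mu$-almost everywhere, one applies the dominated convergence theorem inside the Luxemburg modular to obtain $\|f\mathbf{1}_{E_j}\|_{L^\Phi_\omega(\mathcal{X})}\to 0$. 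With all hypotheses verified, Theorem~\ref{MS2}(ii) applies directly and delivers the required Maz'ya--Shaposhnikova representation on $L^\Phi_\omega(\Omega)/\mathbb{R}$. The only step demanding genuine care rather than bookkeeping is the identification of $[L^{\Phi^{(1/p)}}_\omega(\mathcal{X})]'$ together with the boundedness of $\mathcal{M}$ on it; since the statements needed for this were already prepared in Subsection~\ref{s6.8}, no real obstacle arises.
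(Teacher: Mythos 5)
Your proposal is correct and matches the paper's (largely omitted) argument: the paper proves this theorem by citing the proof of the analogous Euclidean result with the general theorem replaced by Theorem~\ref{MS2}, which amounts to exactly your verification that $[L^{\Phi}_{\omega}(\mathcal{X})]^{1/p}=L^{\Phi^{(1/p)}}_{\omega}(\mathcal{X})$ is a ball Banach function space whose associate space is $L^{\widetilde{\Phi^{(1/p)}}}_{\omega}(\mathcal{X})$ with $r^-_{\widetilde{\Phi^{(1/p)}}}=(r^+_{\Phi}/p)'$, so that $\omega\in A_{(r^+_{\Phi}/p)'}(\mathcal{X})$ gives the boundedness of $\mathcal{M}$ there, together with the absolute continuity of the quasi-norm from $r^+_{\Phi}<\infty$. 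All of these ingredients are precisely the facts prepared in Subsection~\ref{s6.8}, so no further comment is needed.
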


\begin{remark}
When
$\mathcal{X}=\mathbb{R}^n=\Omega$,
$\rho$ is the standard Euclidean distance,
$\omega\equiv1$,
and $\mu$ is the $n$-dimensional Lebesgue measure,
Theorem~\ref{01011841} in this case coincides with \cite[Theorem~4.25]{pyyz24}.
The other cases of Theorem~\ref{01011841} are new.
\end{remark}

\subsection{Generalized Morrey and Generalized Block Spaces}

The Morrey spaces were introduced by Morrey~\cite{m38}
in connection with PDEs.
For any function $\phi:\mathcal{X}\times(0,\infty)\to(0,\infty)$
and any ball $B=B(x,r)$
with $x\in \mathcal{X}$ and $r\in(0,\infty)$, we shall write
$\phi(B)$ in place of $\phi(x,r)$.
The \emph{local Lebesgue space}
$L^p_{\mathrm{loc}}(\mathcal{X})$ with $p\in(0,\infty)$ is
defined to be the set of all measurable functions
$f$ on $\mathcal{X}$ satisfying that,
for any $x\in \mathcal{X}$, there exists a positive
constant $r_x$ such that
$\|f\mathbf{1}_{B(x,r_x)}\|_{L^{p}(\mathcal{X})}<\infty$.
Now,
we recall the definition of generalized Morrey spaces.

\begin{definition}
Let $\phi:\mathcal{X}\times(0,\infty)\to(0,\infty)$ and $p\in(0,\infty)$.
The \emph{generalized Morrey space} $L_{p,\phi}(\mathcal{X})$
is defined to be the set of all $f\in L^p_{\mathrm{loc}}(\mathcal{X})$ such that
\begin{equation*}
\|f\|_{L_{p,\phi}(\mathcal{X})}
:= \sup_B \frac 1{\phi(B)}\left[ \frac 1{\mu(B)}
\int_{B} |f(x)|^p \,d\mu(x) \right]^{\frac{1}{p}}<\infty,
\end{equation*}
where the supremum is taken over all balls $B\subset\mathcal{X}$.
\end{definition}

The generalized Morrey space $L_{p,\phi}(\mathcal{X})$ is a quasi-Banach
space equipped with the quasi-norm $\|\cdot\|_{L_{p,\phi}(\mathcal{X})}$.
If $p\in[1,\infty)$, then $L_{p,\phi}(\mathcal{X})$ is a Banach space.
If  $p\in(0,\infty)$ and there exists a positive
constant $C$ such that, for any ball $B\subset\mathcal{X}$,
$C^{-1}\le\phi(B)[\mu(B)]^{\frac{1}{p}}\le C$,
then $L_{p,\phi}(\mathcal{X})=L^p(\mathcal{X})$.
In the case $\mathcal{X}=\mathbb{R}^n$,
the generalized Morrey space $L_{p,\phi}(\mathbb{R}^n)$
was introduced by Nakai \cite{n94} with
$\phi(x,r):=[|B(x,r)|^{-1}w(x,r)]^{\frac{1}{p}}$,
where $w:\mathbb{R}^n\times(0,\infty)\to(0,\infty)$.
We refer to \cite{a75,a15,cf87,sfh20a,sfh20b,st05,tnyyz24,y2025,ysy10}
for more studies on generalized Morrey spaces.

Next, we recall the concept of blocks.

\begin{definition}
Let $\phi:\mathcal{X}\times (0,\infty) \to (0,\infty)$
and $q\in(1,\infty]$.
A function $b$ is called a \emph{$[\phi,q]$-block}
if there exists a ball $B$, which is called the \emph{supporting ball} of $b$,
such that
\begin{enumerate}
\item[\rm (i)]
$\mathrm{supp\,}b\subset{B}$;
\item[\rm (ii)]
$\|b\|_{L^q(\mathcal{X})} \le \frac{1}{[
\mu(B)]^{\frac{1}{q'}}\phi(B)}$,
where $\frac{1}{q}+\frac{1}{q'}=1$.
\end{enumerate}
\end{definition}

Denote by $B[\phi,q]$ the set of all $[\phi,q]$-blocks.
Now, we recall the concept of generalized block spaces.
In what follows, for any given quasi-normed vector space $Y$,
let $Y^*$ be the dual space of $Y$.

\begin{definition}
Let $\phi:\mathcal{X}\times (0,\infty) \to (0,\infty)$
and $q\in(1,\infty]$.
Assume that ${L}_{q',\phi}(\mathcal{X})\not=\{0\}$.
The \emph{generalized block space}
$B^{[\phi,q]}(\mathcal{X})$ is defined to be the set
of all $f\in[L_{q',\phi}(\mathcal{X})]^*$
such that
there exist a sequence $\{b_j\}_{j\in\mathbb{N}}$ in $B[\phi,q]$
and a sequence $\{\lambda_j\}_{j\in\mathbb{N}}$ in $[0,\infty)$
satisfying $\sum_{j\in\mathbb{N}}\lambda_j <\infty$
such that
\begin{align}\label{expression}
f=\sum_{j\in\mathbb{N}}
\lambda_j b_j\ \text{in}\ \left[L_{q',\phi}(\mathcal{X})\right]^*.
\end{align}
For any $f\in B^{[\phi,q]}(\mathcal{X})$, define
\begin{equation*}
\|f\|_{B^{[\phi,q]}(\mathcal{X})}
:=\inf\left\{\sum_{j\in\mathbb{N}}\lambda_j:
f=\sum_{j\in\mathbb{N}}
\lambda_j b_j\ \text{in}\ \left[L_{q',\phi}(\mathcal{X})\right]^*\right\},
\end{equation*}
where the infimum is taken over all decompositions
of $f$ as in \eqref{expression}.
\end{definition}

The block space (the space generated by blocks) was introduced by
Lu et al.~\cite{ltw82} and Taibleson and Weiss~\cite{tw83}
in connection with the convergence of Fourier series.
The generalized block space $B^{[\phi,q]}(\mathcal{X})$ is a Banach space
equipped with the norm $\|\cdot\|_{B^{[\phi,q]}(\mathcal{X})}$,
which was introduced in \cite{n08}.
As was shown in \cite[Theorem~3.28]{tnyyz24} that,
under some additional assumptions on $\phi$,
the generalized
Morrey space $L_{q,\phi}(\mathcal{X})$ with $q\in(1,\infty)$
and the generalized block space $B^{[\phi,q']}(\mathcal{X})$
are mutually the associate space.
We refer to \cite{l84,st09,st15}
for more studies on generalized block spaces.

We consider the following conditions on $\phi$.

\begin{definition}
Let $\phi:\mathcal{X}\times(0,\infty)\to(0,\infty)$ and
$p\in(0,\infty)$.
\begin{enumerate}
\item[\rm(i)]
$\phi$ is said to satisfy the \emph{doubling condition} if
there exists a positive constant $C$ such that,
for any $x,y\in \mathcal{X}$ and $r,s\in(0,\infty)$
with $\frac{r}{s}\in[2^{-1},2]$,
\begin{align}\label{DC}
\frac1C\le\frac{\phi(x,r)}{\phi(x,s)}&\le C.
\end{align}

\item[\rm(ii)]
$\phi$ is said to satisfy the \emph{nearness condition} if
there exists a positive constant $C$ such that,
for any $x,y\in \mathcal{X}$ and $r\in(0,\infty)$
satisfying $\rho(x,y)\leq r$,
\begin{align}\label{NC}
\frac1C\le\frac{\phi(x,r)}{\phi(y,r)}&\le C.
\end{align}

\item[\rm(iii)]
$\phi$ is said to be \emph{almost increasing}
(resp. \emph{almost decreasing}) if
there exists a positive constant $C$ such that,
for any $x\in \mathcal{X}$ and $0<r<s<\infty$,
$\phi(x,r)\le C\phi(x,s)$
[resp. $C\phi(x,r)\ge \phi(x,s)$].

\item[\rm(iv)]
$\mathcal{G}^{\rm dec}_p$ is defined to be the set of all
functions $\phi:\mathcal{X}\times(0,\infty)\to(0,\infty)$
such that
$r\mapsto [\mu(B(x,r))]^{\frac{1}{p}}\phi(x,r)$ is almost increasing
and that
$r\mapsto\phi(x,r)$ is almost decreasing.
\end{enumerate}
\end{definition}

For a function $\alpha(\cdot):\mathcal{X}\to\mathbb{R}$, let
$\alpha_-:=\inf_{x\in \mathcal{X}}\alpha(x)$ and
$\alpha_+:=\sup_{x\in \mathcal{X}}\alpha(x)$.
Recall that $\alpha(\cdot):\mathcal{X}\to \mathbb{R}$ is
said to be \emph{log-H\"older continuous}
if there exists a positive constant
$C_{\alpha(\cdot)}$ such that,
for any $x,y\in \mathcal{X}$ with $\rho(x,y)\in(0,\frac{1}{2})$,
\begin{equation*}
|\alpha(x)-\alpha(y)|\le-\frac{C_{\alpha(\cdot)}}{\log\rho(x,y)}.
\end{equation*}
Let $\alpha_*\in\mathbb{R}$,
$\alpha(\cdot):\mathcal{X}\to\mathbb{R}$ be log-H\"older continuous with
$-\infty<\alpha_-\le\alpha_+<\infty$,
and, for any $x\in \mathcal{X}$,
\begin{equation*}
\phi(x,r):=
\begin{cases}
r^{\alpha(x)}&\text{if }r\in(0,\frac{1}{2}),\\
r^{\alpha_*}&\text{if }r\in[\frac{1}{2},\infty).
\end{cases}
\end{equation*}
Then $\phi$ satisfies both \eqref{DC} and \eqref{NC};
see \cite[Proposition~3.3]{n10}.
Moreover, if $\alpha_+,\alpha_*\in(-\infty,0]$ and
$[\mu(B(x,r))]^{\frac{1}{p}}\phi(x,r)$ is almost increasing,
then $\phi\in\mathcal{G}^{\rm dec}_p$.

Next, applying Theorems~\ref{MS2} and \ref{01011838},
we obtain the following conclusion.

\begin{theorem}\label{thm:Mor/R Gag}
Let $0<q<p<\infty$
and $\phi\in\mathcal{G}^{\rm dec}_p$ satisfy
both \eqref{NC} and that there exists
$C_{(\phi)}\in(0,\infty)$ such that,
for any $x\in \mathcal{X}$ and $r\in(0,\infty)$,
\begin{equation}\label{int phi}
\int_r^{\infty}\frac{\phi(x,t)}{t}\,dt\le C_{(\phi)}\,\phi(x,r).
\end{equation}
Assume that $\phi$ satisfies that there exists
$\widetilde{C}_{(\phi)}\in[1,\infty )$ such that,
for any ball $B\subset\mathcal{X}$,
\begin{equation}\label{const}
\frac{1}{\widetilde{C}_{(\phi)}}\le\phi(B)\left[
\mu(B)\right]^\frac{1}{p}\le\widetilde{C}_{(\phi)}
\end{equation}
or assume that $\phi$ satisfies both
\begin{align}\label{r 0}
\lim_{r\to0^+}\inf_{x\in B}\phi(x,r)=\infty
\end{align}
for any ball $B\subset\mathcal{X}$ and
\begin{align}\label{r infty}
\lim_{r\to\infty}\phi(x,r)\left[\mu(B(x,r))\right]^{\frac{1}{p}}=\infty
\end{align} for any $x\in \mathcal{X}$.
Then there exist $C,\widetilde{C}\in(0,\infty)$ such that,
for any $f\in [L_{p,\phi}(\Omega)/{\mathbb R}]\cap\bigcup_{s\in(0,1)}
\dot{W}^{s,q}_{L_{p,\phi}}(\Omega)$,
\begin{align}\label{Gag M}
C\|f\|_{L_{p,\phi}(\Omega)/{\mathbb R}}
&\nonumber\leq
\varliminf_{s \to 0^+}
s^{\frac{1}{q}}\left\|\left\{\int_\Omega
\frac{|f(\cdot)-f(y)|^q}{U(\cdot,y)[\rho(\cdot,y)]^{sq}}
\, d\mu(y) \right\}^{\frac{1}{q}}\right\|_{L_{p,\phi}(\Omega)}\\
&\leq
\varlimsup_{s \to 0^+}
s^{\frac{1}{q}}\left\|\left\{\int_\Omega
\frac{|f(\cdot)-f(y)|^q}{U(\cdot,y)[\rho(\cdot,y)]^{sq}}
\, d\mu(y) \right\}^{\frac{1}{q}}\right\|_{L_{p,\phi}(\Omega)}
\leq\widetilde{C}\|f\|_{L_{p,\phi}(\Omega)/{\mathbb R}}.
\end{align}
\end{theorem}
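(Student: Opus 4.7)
The plan is to reduce \eqref{Gag M} to a direct application of Theorem \ref{MS2} with $Y(\mathcal{X}) := L_{p,\phi}(\mathcal{X})$. The hypotheses on $\phi$ split into two cases. Under \eqref{const}, the quasi-norm $\|\cdot\|_{L_{p,\phi}(\mathcal{X})}$ is equivalent to $\|\cdot\|_{L^p(\mathcal{X})}$, so that \eqref{Gag M} is an immediate consequence of Theorem \ref{01011838} applied with $r = p$ and $\omega \equiv 1$. Accordingly, the substantive work lies in the complementary case, where $\phi$ obeys \eqref{r 0} and \eqref{r infty}.

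In that case, I would first verify that $L_{p,\phi}(\mathcal{X})$ is a ball quasi-Banach function space with absolutely continuous quasi-norm. Properties (i)--(iii) of Definition \ref{Debqfs} follow from the supremum structure of $\|\cdot\|_{L_{p,\phi}(\mathcal{X})}$ together with the monotone convergence theorem, while property (iv), namely $\mathbf{1}_B \in L_{p,\phi}(\mathcal{X})$ for every ball $B$, follows by combining the doubling of $\mu$, the nearness condition \eqref{NC}, and the almost-increasing property of $r \mapsto [\mu(B(x,r))]^{1/p}\phi(x,r)$ built into $\phi \in \mathcal{G}^{\rm dec}_p$. Absolute continuity of the quasi-norm uses \eqref{r 0} and \eqref{r infty} in an essential way: these two conditions respectively rule out mass concentration near small scales and at infinity, which are the two standard obstructions to absolute continuity of the Morrey quasi-norm.

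The main obstacle is to verify Assumption \ref{ma} for $[L_{p,\phi}(\mathcal{X})]^{1/p_0}$ for a suitable $p_0 \in (q,p]$, which allows Theorem \ref{MS2} to be applied with its parameter $p$ taken to be $p_0$. Choosing $p_0 \in (q,p)$ close to $p$, I would identify $[L_{p,\phi}(\mathcal{X})]^{1/p_0}$ with the generalized Morrey space $L_{p/p_0, \phi^{p_0}}(\mathcal{X})$, which is a ball Banach function space since $p/p_0 > 1$. Invoking \cite[Theorem~3.28]{tnyyz24}, after checking that $\phi^{p_0}$ inherits the doubling condition \eqref{DC}, the nearness condition \eqref{NC}, and the integral condition \eqref{int phi} from $\phi$ (the last using the almost-decreasing property of $\phi$ via a direct computation), identifies the associate space with the generalized block space $B^{[\phi^{p_0},(p/p_0)']}(\mathcal{X})$. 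Boundedness of the Hardy--Littlewood maximal operator $\mathcal{M}$ on this block space would then be obtained by dualizing the classical Hedberg-type estimate for $\mathcal{M}$ on the Morrey space $L_{p/p_0, \phi^{p_0}}(\mathcal{X})$ against an atomic-block decomposition. With Assumption \ref{ma} thus in hand, Theorem \ref{MS2}(ii) applied to $Y = L_{p,\phi}$, together with Proposition \ref{2357} for selecting a representative $f + a \in L_{p,\phi}(\Omega)$ of the quotient class, yields \eqref{Gag M}.
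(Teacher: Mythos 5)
Your upper estimate follows the paper's route: pick $p_1\in(q,p)$, identify $[L_{p,\phi}(\mathcal{X})]^{1/p_1}$ with $L_{p/p_1,\phi^{p_1}}(\mathcal{X})$, check that $\phi^{p_1}$ inherits \eqref{NC} and \eqref{int phi}, and use the duality with the block space $B^{[\phi^{p_1},(p/p_1)']}(\mathcal{X})$ together with the boundedness of $\mathcal{M}$ there (the paper simply cites \cite[Theorems~3.20, 3.28 and~3.30]{tnyyz24}); this gives Assumption~\ref{ma} for $[L_{p,\phi}]^{1/p_1}$ and the last inequality in \eqref{Gag M} via the proof of Theorem~\ref{MS2}. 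Your dispatch of the case \eqref{const} via Theorem~\ref{01011838} is also fine.

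The genuine gap is the lower estimate. You propose to obtain it from Theorem~\ref{MS2}(ii), which requires $Y(\mathcal{X})$ to have an absolutely continuous quasi-norm, and you claim that \eqref{r 0} and \eqref{r infty} supply this. They do not: generalized Morrey spaces essentially never have absolutely continuous quasi-norms, and \eqref{r 0}, \eqref{r infty} are compatible with this failure. For instance, on $\mathbb{R}^n$ take $\phi(x,r)=r^{-\beta}$ with $\beta\in(0,\frac{n}{p})$; then $\phi\in\mathcal{G}^{\rm dec}_p$ and both \eqref{r 0} and \eqref{r infty} hold, yet $f(x):=|x|^{-\beta}$ lies in $L_{p,\phi}(\mathbb{R}^n)$ while
\begin{equation*}
\left\|f\mathbf{1}_{B(\mathbf{0},\varepsilon)}\right\|_{L_{p,\phi}(\mathbb{R}^n)}
\gtrsim\varepsilon^{\beta-\frac{n}{p}}\left\|f\mathbf{1}_{B(\mathbf{0},\varepsilon)}\right\|_{L^p(\mathbb{R}^n)}\sim1
\end{equation*}
for every $\varepsilon\in(0,1)$, so the quasi-norm is not absolutely continuous. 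Part~(i) of Theorem~\ref{MS2} does not rescue you either, since it only covers $f\in C_{\mathrm{b}}^\beta(\Omega)$ rather than general elements of the quotient space. The paper avoids this obstruction by an entirely different mechanism: using \cite[Proposition~3.31]{tnyyz24} (which needs \eqref{DC} and \eqref{int phi}) it represents the Morrey quasi-norm as
\begin{equation*}
\|f\|_{L_{p,\phi}(\Omega)}\sim\sup_{B}
\frac{\|f\|_{L^p_{({\mathcal M}\mathbf{1}_B)^{1-\theta}}(\Omega)}}{\phi(B)[\mu(B)]^{\frac{1}{p}}}
\end{equation*}
for some $\theta\in(0,1)$, notes that $\sup_B[({\mathcal M}\mathbf{1}_B)^{1-\theta}]_{A_1(\mathcal{X})}<\infty$, applies the weighted Lebesgue result Theorem~\ref{01011838} to each weight $({\mathcal M}\mathbf{1}_B)^{1-\theta}$ with constants uniform in $B$, and finally interchanges the supremum over $B$ with the limit inferior. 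Without this representation (or some other substitute for the absolute-continuity hypothesis) your argument does not yield the first inequality in \eqref{Gag M}.
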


\begin{proof}
First, we prove the lower estimate in \eqref{Gag M}.
From Remark~\ref{2048}(i),
\cite[Proposition 3.31]{tnyyz24}, and the assumption that
$\phi$ satisfies both \eqref{DC} and \eqref{int phi},
we deduce that there exists $\theta\in(0,1)$ such that,
for any $f\in L_{p,\phi}(\Omega)$,
\begin{equation*}
\|f\|_{L_{p,\phi}(\Omega)}
\le
\sup_{B}\frac{\|f\|_{L^p_{({\mathcal M}
\mathbf{1}_B)^{1-\theta}}(\Omega)}}{\phi(B)[\mu(B)]^\frac{1}{p}}
\le
C\|f\|_{L_{p,\phi}(\Omega)},
\end{equation*}
where $C\in(0,\infty)$ is independent of $f$,
which, together with Proposition~\ref{2357},
further implies that, for any $f\in L_{p,\phi}(\Omega)/{\mathbb R}$,
\begin{equation*}
\|f\|_{L_{p,\phi}(\Omega)/{\mathbb R}}
\le
\sup_{B}\frac{\|f\|_{L^p_{({\mathcal M}\mathbf{1}_B)^{1-\theta}}(
\Omega)/{\mathbb R}}}{\phi(B)[\mu(B)]^\frac{1}{p}}
\le
C\|f\|_{L_{p,\phi}(\Omega)/{\mathbb R}}.
\end{equation*}
By this, the fact that
$\sup_B\big[({\mathcal M}\mathbf{1}_B)^{1-\theta}\big]_{A_1}<\infty$
(see, for example, \cite[Theorem~281]{sfh20a}),
and Theorem~\ref{01011838}, we find that,
for any $f\in L_{p,\phi}(\Omega)/{\mathbb R}$,
\begin{align*}
\|f\|_{L_{p,\phi}(\Omega)/{\mathbb R}}
&\leq
\sup_{B}\frac{\|f\|_{L^{p}_{({\mathcal M}\mathbf{1}_B)^{1-\theta}}(
\Omega)/{\mathbb R}}}{\phi(B)[\mu(B)]^\frac{1}{p}} \\
&\lesssim
\sup_{B}\frac1{\phi(B)[\mu(B)]^\frac{1}{p}}
\varliminf_{s \to 0^+}
s^{\frac{1}{q}}
\left\|\left\{\int_\Omega
\frac{|f(\cdot)-f(y)|^q}{U(\cdot,y)[\rho(\cdot,y)]^{sq}}
\, d\mu(y) \right\}^{\frac{1}{q}}\right\|_{L^{p}_{({\mathcal M}
\mathbf{1}_B)^{1-\theta}}(\Omega)} \\
&\leq
\varliminf_{s \to 0^+}
s^{\frac{1}{q}}
\sup_{B}\frac1{\phi(B)[\mu(B)]^\frac{1}{p}}
\left\|\left\{\int_\Omega
\frac{|f(\cdot)-f(y)|^q}{U(\cdot,y)[\rho(\cdot,y)]^{sq}}
\, d\mu(y) \right\}^{\frac{1}{q}}\right\|_{L^{p}_{({\mathcal M}
\mathbf{1}_B)^{1-\theta}}(\Omega)} \\
&\lesssim\varliminf_{s \to 0^+}
s^{\frac{1}{q}}
\left\|\left\{\int_\Omega
\frac{|f(\cdot)-f(y)|^q}{U(\cdot,y)[\rho(\cdot,y)]^{sq}}
\, d\mu(y) \right\}^{\frac{1}{q}}\right\|_{L_{p,\phi}(\Omega)}.
\end{align*}

Now, we show the upper estimate in \eqref{Gag M}.
Let $p_1\in(q,p)$. Then $\phi^{p_1}\in
\mathcal{G}^{\rm dec}_{\frac{p}{p_1}}$ satisfies \eqref{NC}.
Moreover, from \cite[Lemma~7.1]{n08},
we infer that
$\phi^{p_1}$ satisfies \eqref{int phi}.
By $\frac{p}{p_1}>1$ and
\cite[Theorem~3.20]{tnyyz24}, we conclude that
$[L_{p,\phi}(\mathcal{X})]^\frac{1}{p_1}=
L_{\frac{p}{p_1},\phi^{p_1}}(\mathcal{X})$
is a ball Banach function space.
From \cite[Theorems~3.28 and~3.30]{tnyyz24},
we deduce that the Hardy--Littlewood maximal
operator is bounded on
\begin{equation*}
B^{[\phi^{p_1},(\frac{p}{p_1})']}(\mathcal{X})
=
\left[L_{\frac{p}{p_1},\phi^{p_1}}(\mathcal{X})\right]'
=
\left(\left[L_{p,\phi}(\mathcal{X})\right]^\frac{1}{p_1}\right)'.
\end{equation*}
By these and the proof of Theorem~\ref{MS2},
we find that the last inequality in \eqref{Gag M} holds.
This finishes the proof of Theorem~\ref{thm:Mor/R Gag}.
\end{proof}

\begin{remark}
To the best of our knowledge,
Theorem~\ref{thm:Mor/R Gag} is completely new.
\end{remark}

The following theorem is an application of Theorem~\ref{MS3}.

\begin{theorem}\label{thm:Mor Gag}
Let $p\in(0,\infty)$, $\beta\in(0,\infty)$,
and $\phi\in\mathcal{G}^{\rm dec}_p$ satisfy
both $\eqref{NC}$ and \eqref{int phi}.
Assume that $\phi$ satisfies \eqref{const} or
assume that $\phi$ satisfies both \eqref{r 0} and \eqref{r infty}.
Then there exist $C,\widetilde{C}\in(0,\infty)$ such that,
for any $f\in C_{\mathrm{b}}^\beta(\Omega)
\cap\bigcup_{s\in(0,1)}\dot{W}^{s,p}_{L_{p,\phi}}(\Omega)$,
\begin{align}\label{Gag 2}
C\|f\|_{L_{p,\phi}(\Omega)}
&\notag\leq
\varliminf_{s \to 0^+}
s^{\frac{1}{p}}\left\|\left\{\int_\Omega
\frac{|f(\cdot)-f(y)|^p}{U(\cdot,y)[\rho(\cdot,y)]^{sp}}
\, d\mu(y) \right\}^{\frac{1}{p}}\right\|_{{L_{p,\phi}(\Omega)}}\\
&\leq
\varlimsup_{s \to 0^+}
s^{\frac{1}{p}}\left\|\left\{\int_\Omega
\frac{|f(\cdot)-f(y)|^p}{U(\cdot,y)[\rho(\cdot,y)]^{sp}}
\, d\mu(y) \right\}^{\frac{1}{p}}\right\|_{{L_{p,\phi}(\Omega)}}
\leq\widetilde{C}\|f\|_{L_{p,\phi}(\Omega)}.
\end{align}
\end{theorem}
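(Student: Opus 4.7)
The plan is to reduce the statement to Theorem~\ref{MS3}(i) by taking $Y(\mathcal{X}):=L_{p,\phi}(\mathcal{X})$ and letting $q=p$ in the notation of Theorem~\ref{MS3}. In this matching, the hypotheses $f\in C_{\mathrm{b}}^\beta(\Omega)\cap\bigcup_{s\in(0,1)}\dot{W}^{s,p}_{L_{p,\phi}}(\Omega)$ and the WMD/WRD assumptions are already in force, so the only non-trivial task is to verify Assumption~\ref{ma2} for the $\frac{1}{p}$-convexification
\begin{equation*}
\left[L_{p,\phi}(\mathcal{X})\right]^{\frac{1}{p}}=L_{1,\phi^{p}}(\mathcal{X}).
\end{equation*}
The Hardy--Littlewood maximal operator is \emph{not} in general bounded on the associate space of $L_{1,\phi^{p}}(\mathcal{X})$ (this is exactly why Theorem~\ref{MS3} rather than Theorem~\ref{MS2} is invoked here), so the endpoint formulation is essential.

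First, I would choose a sequence $\{\theta_{m}\}_{m\in\mathbb{N}}\subset(0,1)$ with $\theta_{m}\uparrow 1$ (for instance $\theta_{m}:=1-\frac{1}{m+1}$) and compute, by a direct rewriting of Definition~\ref{Detuhua},
\begin{equation*}
\left[L_{1,\phi^{p}}(\mathcal{X})\right]^{\frac{1}{\theta_{m}}}
=L_{\frac{1}{\theta_{m}},\,\phi^{p\theta_{m}}}(\mathcal{X}).
\end{equation*}
Since $\frac{1}{\theta_{m}}>1$, this is a ball Banach function space. Then I would check that $\phi^{p\theta_{m}}$ inherits the structural assumptions on $\phi$: it belongs to $\mathcal{G}^{\mathrm{dec}}_{1/\theta_{m}}$ (because $r\mapsto[\mu(B(x,r))]^{\theta_{m}}\phi^{p\theta_{m}}(x,r)=\{[\mu(B(x,r))]^{1/p}\phi(x,r)\}^{p\theta_{m}}$ preserves the almost-monotonicity inherited from $\phi\in\mathcal{G}^{\mathrm{dec}}_{p}$), satisfies \eqref{NC} trivially by raising to a positive power, and satisfies the integral bound \eqref{int phi} via \cite[Lemma~7.1]{n08} applied to $\phi^{p\theta_{m}}$. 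With these verifications in hand, \cite[Theorem~3.28]{tnyyz24} identifies the associate space of $L_{1/\theta_{m},\phi^{p\theta_{m}}}(\mathcal{X})$ with the generalized block space $B^{[\phi^{p\theta_{m}},(1/\theta_{m})']}(\mathcal{X})$, and \cite[Theorem~3.30]{tnyyz24} yields the boundedness of $\mathcal{M}$ on this block space.

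The delicate point, and the step I expect to be the main obstacle, is to show that the operator norms
\begin{equation*}
\left\|\mathcal{M}\right\|_{[L_{1,\phi^{p}}(\mathcal{X})]^{1/\theta_{m}}\to
[L_{1,\phi^{p}}(\mathcal{X})]^{1/\theta_{m}}}
\end{equation*}
remain \emph{uniformly bounded} as $\theta_{m}\uparrow 1$, since the associate exponent $(1/\theta_{m})'$ blows up in that limit. To deal with this, I would track constants inside the proofs of \cite[Theorems~3.28 and~3.30]{tnyyz24}, exploit that the structural constants attached to $\phi^{p\theta_{m}}$ (namely the almost-monotonicity constants and the constant $C_{(\phi)}$ in \eqref{int phi}) can be bounded uniformly in $\theta_{m}\in[\theta_{1},1)$, and obtain the needed bound through a Schur-type argument in the duality pairing between Morrey and block spaces. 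Once Assumption~\ref{ma2} is verified for $L_{1,\phi^{p}}(\mathcal{X})$, Theorem~\ref{MS3}(i) applies verbatim with $Y=L_{p,\phi}$ and $q=p$ and delivers \eqref{Gag 2}; the alternative hypothesis \eqref{const} (vs.\ \eqref{r 0}+\eqref{r infty}) intervenes only through \cite[Theorem~3.28]{tnyyz24} to secure the associate-space identification, exactly as in the proof of Theorem~\ref{thm:Mor/R Gag}.
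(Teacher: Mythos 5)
Your proposal follows essentially the same route as the paper's proof: identify $[L_{p,\phi}(\mathcal{X})]^{1/(p\theta)}=L_{1/\theta,\phi^{p\theta}}(\mathcal{X})$, verify that $\phi^{p\theta}$ inherits \eqref{NC} and \eqref{int phi} with constants controlled uniformly as $\theta\to1^-$ (the paper does this by writing out the explicit constant $C_{(\phi^{p\theta})}$ from \cite[Lemma~7.1]{n08} and tracking the operator norm of $\mathcal{M}$ on $L^{(1/\theta)'}$ through the proof of \cite[Theorem~3.30]{tnyyz24}), deduce Assumption~\ref{ma2} for $L_{1,\phi^{p}}(\mathcal{X})$ via the Morrey/block duality of \cite[Theorems~3.28 and~3.30]{tnyyz24}, and conclude by Theorem~\ref{MS3}(i). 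One minor slip: the displayed operator norm whose uniform boundedness you flag as the delicate point should be taken on the associate (block) space $\bigl([L_{1,\phi^{p}}(\mathcal{X})]^{1/\theta_m}\bigr)'=B^{[\phi^{p\theta_m},(1/\theta_m)']}(\mathcal{X})$, as Assumption~\ref{ma2} requires, which is clearly what your surrounding text intends.
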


\begin{proof}
Let $\theta\in(0,1]$.
Then $\phi^{p\theta}\in\mathcal{G}^{\rm dec}_\frac{1}{\theta}$
satisfies \eqref{NC}, which, combined with \cite[Theorem~3.20]{tnyyz24},
implies that $L_{\frac{1}{\theta}, \phi^{p\theta}}(\mathcal{X})$
is a ball Banach function space.
Moreover, from the proof of \cite[Lemma~7.1]{n08},
we infer that, for any $x\in \mathcal{X}$ and $r\in(0,\infty)$,
\begin{equation*}
\int_r^{\infty}\frac{[\phi(x,t)]^{p\theta}}{t}\,dt
\le C_{(\phi^{p\theta})}\left[\phi(x,r)\right]^{p\theta}
\end{equation*}
with
\begin{equation*}
C_{(\phi^{p\theta})}
=
\begin{cases}
2C_{(\phi)}\left(\frac{1-p\theta}{p\theta}\right)^{1-p\theta}
&\text{if }p\theta\in(0,1),\\
C_{(\phi)}\left(\frac{C^2}{\log 2}\right)^{p\theta-1}&\text{if } p\theta\in[1,\infty),
\end{cases}
\end{equation*}
where $C$ and $C_{(\phi)}$ are, respectively, the constants
in \eqref{DC} and \eqref{int phi}.
Thus, $\phi^{p\theta}$ satisfies \eqref{int phi}.
By \cite[Theorems~3.28 and~3.30]{tnyyz24},
we conclude that,
if $\theta\in(0,1)$,
then the Hardy--Littlewood maximal operator $\mathcal M$ is bounded on
\begin{equation*}
B^{[\phi^{p\theta},(\frac{1}{\theta})']}(\mathcal{X})
=
\left[L_{\frac{1}{\theta},\phi^{p\theta}}(\mathcal{X})\right]'
=
\left(\left[L_{p,\phi}(\mathcal{X})\right]^\frac{1}{p\theta}\right)'.
\end{equation*}
If $\theta\to1^-$, then $(\frac{1}{\theta})'\to\infty$ and hence
\begin{equation*}
\lim_{\theta\to1^-}\|\mathcal M\|_{L^{(\frac{1}{\theta})'}(\mathcal{X})
\to L^{(\frac{1}{\theta})'}(\mathcal{X})}<\infty
\ \ \text{and}\ \
\lim_{\theta\to1^-}C_{(\phi^{p\theta})}<\infty.
\end{equation*}
From the proof of \cite[Theorem~3.30]{tnyyz24}, we deduce that
\begin{equation*}
\lim_{\theta\to1^-}\|\mathcal M\|_{B^{[\phi^{p\theta},(\frac{1}{\theta})']}(\mathcal{X})
\to B^{[\phi^{p\theta},(\frac{1}{\theta})']}(\mathcal{X})}
<\infty.
\end{equation*}
Therefore, the Hardy--Littlewood maximal operator $\mathcal M$
is endpoint bounded on
$$
\left([L_{p,\phi}(\mathcal{X})]^\frac{1}{p}\right)'
=\left[L_{1,\phi^p}(\mathcal{X})\right]'.
$$
By this and Theorem~\ref{MS3}(i), we find that
\eqref{Gag 2} holds,
which completes the proof of Theorem~\ref{thm:Mor Gag}.
\end{proof}

\begin{remark}
To the best of our knowledge,
Theorem~\ref{thm:Mor Gag} is completely new.
\end{remark}

Let $\mathcal{X}=\mathbb{R}^n$, $p\in(0,\infty)$, $\lambda_*\in(0,n]$,
and $\lambda(\cdot):\mathbb{R}^n\to(0,n]$.
Let $\Omega\subseteqq\mathbb{R}^n$
satisfy the WMD condition.
For any $x\in\mathbb{R}^n$, let
\begin{equation}\label{lambda(dot)}
\phi(x,r)=
\begin{cases}
r^{-\frac{\lambda(x)}{p}}&\text{if }r\in(0,\frac{1}{2}), \\
r^{-\frac{\lambda_*}{p}}&\text{if }r\in[\frac{1}{2},\infty).
\end{cases}
\end{equation}
Denote $L_{p,\phi}(\mathbb{R}^n)$ by
$L^{p,\lambda(\cdot);\lambda_*}(\mathbb{R}^n)$.
Then we have the following corollary.

\begin{corollary}
Let $p\in(0,\infty)$.
Assume that $\lambda(\cdot):\mathbb{R}^n\to(0,n]$ is log-H\"older continuous,
$0<\lambda_-\le\lambda_+\le n$, and $0<\lambda_*<n$.
\begin{enumerate}
\item[\rm (i)]
If $q\in(0,p)$, then there exist $c,\widetilde{c}\in(0,\infty)$ such that,
for any $f\in [L^{p,\lambda(\cdot);\lambda_*}(\Omega)/{\mathbb R}]
\cap\bigcup_{s\in(0,1)}
\dot{W}^{s,q}_{L^{p,\lambda(\cdot);\lambda_*}}(\Omega)$,
\begin{align*}
c\|f\|_{L^{p,\lambda(\cdot);\lambda_*}(\Omega)/{\mathbb R}}
&\leq
\varliminf_{s \to 0^+}
s^{\frac{1}{q}}\left\|\left[\int_\Omega
\frac{|f(\cdot)-f(y)|^q}{|\cdot-y|^{n+sq}}
\, dy \right]^{\frac{1}{q}}\right\|_{{L^{p,\lambda(\cdot);\lambda_*}(\Omega)}}\\
&\leq
\varlimsup_{s \to 0^+}
s^{\frac{1}{q}}\left\|\left[\int_\Omega
\frac{|f(\cdot)-f(y)|^q}{|\cdot-y|^{n+sq}}
\, dy \right]^{\frac{1}{q}}\right\|_{{L^{p,\lambda(\cdot);\lambda_*}(\Omega)}}
\leq\widetilde{c}\|f\|_{L^{p,\lambda(\cdot);\lambda_*}(\Omega)/{\mathbb R}}.\notag
\end{align*}
\item[\rm (ii)]
If $q=p$ and $\beta\in(0,1]$,
then there exist $C,\widetilde{C}\in(0,\infty)$ such that,
for any $f\in C_{\mathrm{b}}^\beta(\Omega)
\cap\bigcup_{s\in(0,1)}
\dot{W}^{s,p}_{L^{p,\lambda(\cdot);\lambda_*}}(\Omega)$,
\begin{align*}
C\|f\|_{L^{p,\lambda(\cdot);\lambda_*}(\Omega)}
&\leq
\varliminf_{s \to 0^+}
s^{\frac{1}{p}}\left\|\left[\int_\Omega
\frac{|f(\cdot)-f(y)|^p}{|\cdot-y|^{n+sp}}
\, dy \right]^{\frac{1}{p}}\right\|_{L^{p,\lambda(\cdot);\lambda_*}(\Omega)}\\
&\leq
\varlimsup_{s \to 0^+}
s^{\frac{1}{p}}\left\|\left[\int_\Omega
\frac{|f(\cdot)-f(y)|^p}{|\cdot-y|^{n+sp}}
\, dy \right]^{\frac{1}{p}}\right\|_{L^{p,\lambda(\cdot);\lambda_*}(\Omega)}
\leq\widetilde{C}\|f\|_{L^{p,\lambda(\cdot);\lambda_*}(\Omega)}.\notag
\end{align*}
\end{enumerate}
\end{corollary}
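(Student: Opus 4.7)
The plan is to deduce the corollary directly from Theorems~\ref{thm:Mor/R Gag} and~\ref{thm:Mor Gag} applied to the specific weight function $\phi$ in~\eqref{lambda(dot)}, in the Euclidean setting $\mathcal{X}=\mathbb{R}^n$ with $\mu$ the Lebesgue measure (so $\mu(B(x,r))\sim r^n$ and $U(x,y)\sim |x-y|^n$). More precisely, part~(i) will follow from Theorem~\ref{thm:Mor/R Gag} and part~(ii) from Theorem~\ref{thm:Mor Gag}, once we verify that $\phi$ in~\eqref{lambda(dot)} meets every hypothesis of those two theorems.

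First, I would check $\phi\in\mathcal{G}^{\rm dec}_p$. For $r\in(0,\tfrac12)$, $\phi(x,r)=r^{-\lambda(x)/p}$ is strictly decreasing in $r$ because $\lambda(x)>0$, and $[\mu(B(x,r))]^{1/p}\phi(x,r)\sim r^{(n-\lambda(x))/p}$ is non-decreasing because $\lambda(x)\le n$; for $r\ge\tfrac12$ the same argument works with $\lambda_*$ in place of $\lambda(x)$, and $0<\lambda_*<n$ ensures strict monotonicity there. The behaviour at the joint $r=\tfrac12$ contributes only a bounded multiplicative constant, so the almost-monotonicity conditions hold globally. Next, for the nearness condition~\eqref{NC} I would use the log-H\"older continuity of $\lambda(\cdot)$: when $r\in(0,\tfrac12)$ and $|x-y|\le r$, the ratio $\phi(x,r)/\phi(y,r)=r^{(\lambda(y)-\lambda(x))/p}$ satisfies $|\lambda(y)-\lambda(x)|\le C/\log(1/|x-y|)\le C/\log(1/r)$, so that $r^{(\lambda(y)-\lambda(x))/p}\le e^{C/p}$; for $r\ge\tfrac12$ the function $\phi$ is independent of the base point, hence \eqref{NC} is trivial. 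The integral condition~\eqref{int phi} is verified by splitting $\int_r^\infty \phi(x,t)t^{-1}\,dt$ at $t=\tfrac12$: for $r\in[\tfrac12,\infty)$ the integral equals $\tfrac{p}{\lambda_*}r^{-\lambda_*/p}=\tfrac{p}{\lambda_*}\phi(x,r)$ since $\lambda_*>0$; for $r\in(0,\tfrac12)$ the piece from $r$ to $\tfrac12$ is bounded by $\tfrac{p}{\lambda_-}r^{-\lambda(x)/p}$, while the tail from $\tfrac12$ to infinity is a fixed finite constant absorbed into $\phi(x,r)$ because $\phi(x,r)\ge 2^{\lambda_-/p}$ on that range.

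Since $0<\lambda_*<n$, the constant condition~\eqref{const} fails (as $r\mapsto r^{(n-\lambda_*)/p}\to\infty$), so I would instead verify the alternative pair~\eqref{r 0} and~\eqref{r infty}. For~\eqref{r 0}, any ball $B\subset\mathbb{R}^n$ is bounded, so $\inf_{x\in B}\phi(x,r)\ge r^{-\lambda_-/p}\to\infty$ as $r\to 0^+$ because $\lambda_->0$. For~\eqref{r infty}, $\phi(x,r)[\mu(B(x,r))]^{1/p}\sim r^{(n-\lambda_*)/p}\to\infty$ as $r\to\infty$ because $\lambda_*<n$. All hypotheses of Theorems~\ref{thm:Mor/R Gag} and~\ref{thm:Mor Gag} are therefore satisfied, and the two assertions of the corollary follow upon applying those theorems with the observation that $\rho(x,y)=|x-y|$ and $U(x,y)\sim|x-y|^n$ in this setting (the proportionality constant is absorbed into $C$ and $\widetilde{C}$).

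The only non-routine point in the verification is the passage from the log-H\"older modulus of $\lambda(\cdot)$ to the nearness condition~\eqref{NC}, and the careful handling of the transition at $r=\tfrac12$ when checking~\eqref{int phi}; everything else is routine algebra on the explicit power weight. No additional structural machinery beyond what is already supplied by Theorems~\ref{thm:Mor/R Gag} and~\ref{thm:Mor Gag} will be needed.
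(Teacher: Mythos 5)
Your proposal is correct and follows exactly the route the paper intends: the corollary is a direct specialization of Theorems~\ref{thm:Mor/R Gag} and~\ref{thm:Mor Gag} to the weight $\phi$ in \eqref{lambda(dot)}, and your verifications of $\phi\in\mathcal{G}^{\rm dec}_p$, \eqref{NC} via the log-H\"older modulus, \eqref{int phi} via the split at $t=\tfrac12$ (which is where the strict inequalities $\lambda_->0$ and $\lambda_*>0$ enter), and the pair \eqref{r 0}--\eqref{r infty} (which is where $\lambda_*<n$ enters) are all accurate. The paper itself only records the $\mathcal{G}^{\rm dec}_p$, \eqref{DC}, and \eqref{NC} checks by citation, so your explicit computations are a faithful filling-in of the same argument rather than a different one.
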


By a slight modification of the proof of \cite[Theorem~3.37]{tnyyz24}
with \cite[Theorem 2.11(i)]{tnyyz24} replaced by Theorem~\ref{MS2},
we obtain the following conclusion and omit the details.

\begin{theorem}\label{thm:B Gag}
Let $p,q\in(1,\infty)$ satisfy $\frac{1}{p}+\frac{1}{q}=1$
and $\phi\in\mathcal{G}^{\rm dec}_{p}$ satisfy $\eqref{NC}$.
Assume that $\phi$ satisfies \eqref{const} or
assume that $\phi$ satisfies both \eqref{r 0} and \eqref{r infty}.
If $q_1\in(0,1]$ and $\beta\in(0,\infty)$,
then there exist $C,\widetilde{C}\in(0,\infty)$ such that,
for any $f\in C_{\mathrm{b}}^\beta(\Omega)
\cap\bigcup_{s\in(0,1)}\dot{W}^{s,q_1}_{B^{[\phi,q]}}(\Omega)$,
\begin{align*}
C\|f\|_{B^{[\phi,q]}(\Omega)}
&\leq
\varliminf_{s \to 0^+}
s^{\frac{1}{q_1}}\left\|\left\{\int_\Omega
\frac{|f(\cdot)-f(y)|^{q_1}}{U(\cdot,y)[\rho(\cdot,y)]^{sq_1}}
\, d\mu(y) \right\}^{\frac{1}{q_1}}\right\|_{B^{[\phi,q]}(\Omega)}\\
&\leq
\varlimsup_{s \to 0^+}
s^{\frac{1}{q_1}}\left\|\left\{\int_\Omega
\frac{|f(\cdot)-f(y)|^{q_1}}{U(\cdot,y)[\rho(\cdot,y)]^{sq_1}}
\, d\mu(y) \right\}^{\frac{1}{q_1}}\right\|_{B^{[\phi,q]}(\Omega)}
\leq\widetilde{C}\|f\|_{B^{[\phi,q]}(\Omega)}.\notag
\end{align*}
\end{theorem}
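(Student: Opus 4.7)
The plan is to deduce Theorem~\ref{thm:B Gag} as a direct application of Theorem~\ref{MS2}(i), with $Y(\mathcal{X}):=B^{[\phi,q]}(\mathcal{X})$ and the convexification exponent chosen to exploit the restriction $q_1\in(0,1]$. Since any $q_1\in(0,1]$ satisfies $q_1\leq 1$, I would take $p_1:=1$ in Theorem~\ref{MS2}, so that the requirement $0<q_1\leq p_1<\infty$ is automatic and $[B^{[\phi,q]}(\mathcal{X})]^{1/p_1}=B^{[\phi,q]}(\mathcal{X})$. The task then reduces to verifying that $B^{[\phi,q]}(\mathcal{X})$ satisfies Assumption~\ref{ma}.

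First I would record that, from the very definition of the generalized block space together with the quoted results \cite[Theorems~3.28 and~3.30]{tnyyz24}, $B^{[\phi,q]}(\mathcal{X})$ is a ball Banach function space under our hypotheses on $\phi$; indeed, the assumptions $\phi\in\mathcal{G}^{\rm dec}_{p}$, \eqref{NC}, and either \eqref{const} or \eqref{r 0}--\eqref{r infty} are exactly the ones used in Theorem~\ref{thm:Mor/R Gag} and Theorem~\ref{thm:Mor Gag} to legitimize the generalized Morrey/block machinery. Second, by the duality statement \cite[Theorem~3.28]{tnyyz24}, the associate space of $B^{[\phi,q]}(\mathcal{X})$ coincides with $L_{q',\phi}(\mathcal{X})=L_{p,\phi}(\mathcal{X})$ because $q'=p$. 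Third, I would verify that the Hardy--Littlewood maximal operator $\mathcal{M}$ is bounded on $L_{p,\phi}(\mathcal{X})$. Since $p\in(1,\infty)$ and $\phi\in\mathcal{G}^{\rm dec}_{p}$ with \eqref{NC}, this is precisely the content of \cite[Theorem~3.30]{tnyyz24} (the same boundedness that is already exploited inside the proof of Theorem~\ref{thm:Mor Gag} via the endpoint extrapolation on $B^{[\phi^{p\theta},(1/\theta)']}$). Thus Assumption~\ref{ma} holds for $Y^{1/p_1}(\mathcal{X})=B^{[\phi,q]}(\mathcal{X})$.

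Fourth, I would invoke Theorem~\ref{MS2}(i) directly with $q$ therein set equal to $q_1$, $p$ therein set equal to $p_1=1$, $Y(\mathcal{X}):=B^{[\phi,q]}(\mathcal{X})$, and $Y(\Omega)$ the restriction to $\Omega$ as in Definition~\ref{1635}. Since $\Omega\subseteqq\mathcal{X}$ satisfies the WMD condition and $\mathcal{X}$ satisfies the WRD condition by the standing assumptions of Section~\ref{sec4}, and since $\beta\in(0,\infty)$ is permitted in Theorem~\ref{MS2}(i), the two-sided bound
\begin{align*}
C\|f\|_{B^{[\phi,q]}(\Omega)}
&\leq\varliminf_{s\to 0^+}s^{1/q_1}
\left\|\left\{\int_\Omega\frac{|f(\cdot)-f(y)|^{q_1}}{U(\cdot,y)[\rho(\cdot,y)]^{sq_1}}\,d\mu(y)\right\}^{1/q_1}\right\|_{B^{[\phi,q]}(\Omega)}\\
&\leq\varlimsup_{s\to 0^+}s^{1/q_1}
\left\|\left\{\int_\Omega\frac{|f(\cdot)-f(y)|^{q_1}}{U(\cdot,y)[\rho(\cdot,y)]^{sq_1}}\,d\mu(y)\right\}^{1/q_1}\right\|_{B^{[\phi,q]}(\Omega)}
\leq\widetilde{C}\|f\|_{B^{[\phi,q]}(\Omega)}
\end{align*}
follows for every $f\in C_{\mathrm{b}}^\beta(\Omega)\cap\bigcup_{s\in(0,1)}\dot{W}^{s,q_1}_{B^{[\phi,q]}}(\Omega)$.

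The main obstacle, as I see it, lies entirely in step three: confirming that the conditions on $\phi$ are sharp enough to give the $L_{p,\phi}(\mathcal{X})$-boundedness of $\mathcal{M}$ and, relatedly, that the duality $[B^{[\phi,q]}(\mathcal{X})]'=L_{p,\phi}(\mathcal{X})$ in \cite[Theorem~3.28]{tnyyz24} is applicable under either the saturation scenario \eqref{const} or the pair \eqref{r 0}--\eqref{r infty}. Once that verification is packaged, the rest is a clean invocation of Theorem~\ref{MS2}(i), exactly mirroring the reduction used in the proof of Theorem~\ref{thm:Mor/R Gag}, but now with the roles of Morrey and block spaces interchanged via duality.
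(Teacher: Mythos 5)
Your proposal is correct and follows essentially the same route as the paper: the omitted proof is a direct application of Theorem~\ref{MS2}(i) with $Y=B^{[\phi,q]}$ and convexification exponent $1$ (forced by $q_1\in(0,1]$), after verifying Assumption~\ref{ma} via the duality $[B^{[\phi,q]}(\mathcal{X})]'=L_{q',\phi}(\mathcal{X})=L_{p,\phi}(\mathcal{X})$ and the boundedness of $\mathcal{M}$ on this generalized Morrey space --- exactly the pattern carried out explicitly for the Orlicz-block case in Theorem~\ref{thm:OB Gag}. The only slip is bibliographical: the boundedness of $\mathcal{M}$ on $L_{p,\phi}(\mathcal{X})$ is not \cite[Theorem~3.30]{tnyyz24} (which, as used elsewhere in this paper, concerns block spaces); you need the Morrey-space maximal theorem from that reference instead.
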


\begin{remark}
To the best of our knowledge,
Theorem~\ref{thm:B Gag} is completely new.
\end{remark}

Let $\mathcal{X}=\mathbb{R}^n$, $p\in(0,\infty)$, $\lambda_*\in(0,n]$,
and $\lambda(\cdot):\mathbb{R}^n\to(0,n]$.
For $\phi$ defined in \eqref{lambda(dot)},
denote $B^{[\phi,q]}(\mathbb{R}^n)$ by
$B^{\lambda(\cdot);\lambda_*,q}(\mathbb{R}^n)$.
The following corollary is a direct consequence of
Theorem~\ref{thm:B Gag}.

\begin{corollary}
Let $p,q\in(1,\infty)$ satisfy $\frac{1}{p}+\frac{1}{q}=1$.
Assume that $\lambda(\cdot):\mathbb{R}^n\to(0,n]$ is log-H\"older continuous,
$0<\lambda_-\le\lambda_+\le n$, and $0<\lambda_*<n$.
If $q_1\in(0,1]$ and $\beta\in(0,1]$,
then there exist $C,\widetilde{C}\in(0,\infty)$ such that,
for any $f\in C_{\mathrm{b}}^\beta(\Omega)
\cap\bigcup_{s\in(0,1)}\dot{W}^{s,q}_{B^{\lambda(\cdot);
\lambda_*,q}}(\Omega)$,
\begin{align*}
C\|f\|_{B^{\lambda(\cdot);\lambda_*,q}(\Omega)}
&\leq
\varliminf_{s \to 0^+}
s^{\frac{1}{q_1}}\left\|\left[\int_\Omega
\frac{|f(\cdot)-f(y)|^{q_1}}{|\cdot-y|^{n+sq_1}}
\, dy \right]^{\frac{1}{q_1}}\right\|_{B^{\lambda(\cdot);
\lambda_*,q}(\Omega)}\\
&\leq
\varlimsup_{s \to 0^+}
s^{\frac{1}{q_1}}\left\|\left[\int_\Omega
\frac{|f(\cdot)-f(y)|^{q_1}}{|\cdot-y|^{n+sq_1}}
\, dy \right]^{\frac{1}{q_1}}\right\|_{B^{\lambda(\cdot);
\lambda_*,q}(\Omega)}
\leq\widetilde{C}
\|f\|_{B^{\lambda(\cdot);\lambda_*,q}(\Omega)}.
\end{align*}
\end{corollary}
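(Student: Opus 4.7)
The plan is to derive this corollary directly from Theorem~\ref{thm:B Gag} by taking $\phi$ to be the two-piece function defined in \eqref{lambda(dot)} and verifying that this $\phi$ fulfills all the structural hypotheses required there. With the identification $B^{[\phi,q]}(\mathbb{R}^n)=B^{\lambda(\cdot);\lambda_*,q}(\mathbb{R}^n)$, and with $\mathcal{X}=\mathbb{R}^n$ equipped with the standard Euclidean distance and Lebesgue measure (so that $\rho(x,y)=|x-y|$ and $U(x,y)=|B(\mathbf{0},1)||x-y|^n$), the conclusion of Theorem~\ref{thm:B Gag} in this setting reduces exactly to the asserted two-sided bound, since $U(\cdot,y)[\rho(\cdot,y)]^{sq_1}$ becomes a positive constant multiple of $|\cdot-y|^{n+sq_1}$ and the constant can be absorbed into $C$ and $\widetilde{C}$.

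First I would verify $\phi\in\mathcal{G}^{\rm dec}_{p}$: since $\mu(B(x,r))=|B(\mathbf{0},1)|r^n$, the map $r\mapsto[\mu(B(x,r))]^{1/p}\phi(x,r)$ equals a multiple of $r^{(n-\lambda(x))/p}$ on $(0,\tfrac12)$ and of $r^{(n-\lambda_*)/p}$ on $[\tfrac12,\infty)$; both pieces are nondecreasing under the hypotheses $\lambda_+\le n$ and $\lambda_*\le n$, and continuity at $r=\tfrac12$ up to absolute constants yields the almost-increasing property. Similarly, $r\mapsto\phi(x,r)$ is a piecewise power with negative exponents $-\lambda(x)/p$ and $-\lambda_*/p$, hence almost decreasing because $\lambda_->0$ and $\lambda_*>0$.

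Next I would check the nearness condition \eqref{NC}: for $|x-y|\le r<\tfrac12$, one has $\phi(x,r)/\phi(y,r)=r^{(\lambda(y)-\lambda(x))/p}$, and log-Hölder continuity of $\lambda(\cdot)$ gives $|\lambda(x)-\lambda(y)|\log(1/r)\lesssim 1$, so this ratio is uniformly bounded above and below; for $r\ge\tfrac12$ the ratio equals $1$, and the remaining mixed case is absorbed by the doubling property at $r=\tfrac12$. This is exactly the verification carried out in \cite[Proposition~3.3]{n10}.

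Finally, I would opt for the alternative (\eqref{r 0} + \eqref{r infty}) rather than \eqref{const}. For \eqref{r 0}: for any ball $B\subset\mathbb{R}^n$ and any small enough $r$, $\inf_{x\in B}\phi(x,r)\ge r^{-\lambda_-/p}\to\infty$ as $r\to0^+$ since $\lambda_->0$. For \eqref{r infty}: for large $r$, $\phi(x,r)[\mu(B(x,r))]^{1/p}$ is a constant multiple of $r^{(n-\lambda_*)/p}\to\infty$ since $\lambda_*<n$. With all hypotheses of Theorem~\ref{thm:B Gag} now verified, a direct invocation yields the corollary. The main (and really only) obstacle is the log-Hölder verification of \eqref{NC}, but since this is established in the literature there is nothing new to grind through; the rest is essentially bookkeeping with power functions.
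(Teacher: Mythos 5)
Your proposal is correct and follows exactly the route the paper intends: the corollary is stated there as a direct consequence of Theorem~\ref{thm:B Gag}, with the verification that the two-piece power function $\phi$ of \eqref{lambda(dot)} lies in $\mathcal{G}^{\rm dec}_p$, satisfies \eqref{NC} (via log-H\"older continuity, as in \cite[Proposition~3.3]{n10}), and satisfies \eqref{r 0} and \eqref{r infty} being precisely the bookkeeping the paper leaves implicit. The only cosmetic remark is that \eqref{NC} compares $\phi(x,r)$ and $\phi(y,r)$ at the same radius, so there is no genuine ``mixed case'' at $r=\tfrac12$ to absorb; everything else, including the absorption of the constant $|B(\mathbf{0},1)|$ from $U(\cdot,y)$, is exactly right.
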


\subsection{Generalized Lorentz--Morrey and Generalized Lorentz-Block Spaces}

In this subsection,
we aim to establish the Maz'ya--Shaposhnikova representation
on generalized Lorentz--Morrey spaces
and generalized Lorentz-block spaces.
To this end, we begin with recalling the
concept of generalized Lorentz--Morrey spaces.
For a function $\phi:\mathcal{X}\times(0,\infty)\to(0,\infty)$
and a ball $B:=B(x,r)$ with $x\in \mathcal{X}$ and
$r\in(0,\infty)$, we shall write $\phi(B)$ in place of $\phi(x,r)$.
The \emph{locally Lorentz space} $L^{p,p_1}_{\mathrm{loc}}(\mathcal{X})$
with $p,p_1\in(0,\infty)$ is
defined to be the set of all measurable functions $f$ on $\mathcal{X}$ such that,
for any $x\in \mathcal{X}$, there exists a positive
constant $r_x$ satisfying
$\|f\mathbf{1}_{B(x,r_x)}\|_{L^{p,p_1}(\mathcal{X})}<\infty$.

\begin{definition}
Let $\phi:\mathcal{X}\times(0,\infty)\to(0,\infty)$ and $p,p_1\in(0,\infty)$.
The \emph{generalized Lorentz--Morrey space}
$L_{p,p_1,\phi}(\mathcal{X})$
is defined to be the set of all $f\in
L^{p,p_1}_{\mathrm{loc}}(\mathcal{X})$ such that
\begin{align*}
\|f\|_{L_{p,p_1,\phi}(\mathcal{X})}
:=\sup_B \frac{\|f\mathbf{1}_B\|_{L^{p,p_1}(\mathcal{X})}}
{\phi(B)[\mu(B)]^\frac{1}{p}}<\infty,
\end{align*}
where the supremum is taken over all balls $B\subset\mathcal{X}$.
\end{definition}

It is easy to prove that the Lorentz--Morrey space
$L_{p,p_1,\phi}(\mathcal{X})$ is a quasi-Banach space equipped
with the quasi-norm $\|\cdot\|_{L_{p,p_1,\phi}(\mathcal{X})}$.
If $p,p_1\in(1,\infty)$, then
there exists a norm $\||\cdot\||_{L_{p,p_1,\phi}(\mathcal{X})}$
such that, for any $f\in L_{p,p_1,\phi}(\mathcal{X})$,
\begin{align*}
\|f\|_{L_{p,p_1,\phi}(\mathcal{X})}
\le\||f\||_{L_{p,p_1,\phi}(\mathcal{X})}
\le\frac{p}{p-1}\|f\|_{L_{p,p_1,\phi}(\mathcal{X})}.
\end{align*}
That is,
$L_{p,p_1,\phi}(\mathcal{X})$ is a Banach space equipped with the
norm $\||\cdot\||_{L_{p,p_1,\phi}(\mathcal{X})}$.
If  $p,p_1\in(0,\infty)$ and there exists a positive
constant $C$ such that, for any ball $B\subset\mathcal{X}$,
$C^{-1}\le\phi(B)[\mu(B)]^\frac{1}{p}\le C$,
then $L_{p,p_1,\phi}(\mathcal{X})=L^{p,p_1}(\mathcal{X})$.
For more studies of Lorentz--Morrey spaces,
we refer to \cite{h24,llsy24}.

We recall the concept of generalized
Lorentz-block spaces. We begin with the following definition
of $[\phi,q,q_1]$-blocks.

\begin{definition}
Let $\phi:\mathcal{X}\times(0,\infty)\to(0,\infty)$
and $q,q_1\in(1,\infty)$.
A function $b$ is called a
\emph{$[\phi,q,q_1]$-block}
if there exists a ball $B$, called the \emph{supporting ball} of $b$,
such that
\begin{enumerate}
\item[\rm (i)]
$\mathrm{supp\,}b\subset{B}$;
\item[\rm (ii)]
$\|b\|_{L^{q,q_1}(\mathcal{X})} \le
\frac{1}{[\mu(B)]^\frac{1}{q'}\phi(B)}$,
where $\frac{1}{q}+\frac{1}{q'}=1$.
\end{enumerate}
Denote by $B[\phi,q,q_1]$ the set of all $[\phi,q,q_1]$-blocks.
\end{definition}

\begin{definition}
Let $\phi:\mathcal{X}\times(0,\infty)\to(0,\infty)$
and $q,q_1\in(1,\infty)$.
Assume that ${L}_{q',q_1',\phi}(\mathcal{X})\not=\{0\}$.
The \emph{generalized Lorentz-block space} $B^{[\phi,q,q_1]}(\mathcal{X})$
is defined to be the set of all
$f\in[L_{q',q_1',\phi}(\mathcal{X})]^*$
such that
there exist a sequence $\{b_j\}_{j\in\mathbb{N}}$ in $B[\phi,q,q_1]$
and a sequence $\{\lambda_j\}_{j\in\mathbb{N}}$ in $[0,\infty)$
satisfying $\sum_{j\in\mathbb{N}}\lambda_j<\infty$
such that
\begin{equation}\label{L-expression}
f=\sum_j \lambda_j
b_j\ \text{in}\ \left[L_{q',q_1',\phi}(\mathcal{X})\right]^*.
\end{equation}
For any $f\in B^{[\phi,q,q_1]}(\mathcal{X})$, define
\begin{equation*}
\|f\|_{B^{[\phi,q,q_1]}(\mathcal{X})}
:=\inf\left\{\sum_{j\in\mathbb{N}}\lambda_j:
f=\sum_{j\in\mathbb{N}} \lambda_j
b_j\ \text{in}\ \left[L_{q',q_1',\phi}(\mathcal{X})\right]^*\right\},
\end{equation*}
where the infimum is taken over all decompositions
of $f$ as in \eqref{L-expression}.
\end{definition}

\begin{remark}
Let $\phi:\mathcal{X}\times(0,\infty)\to(0,\infty)$
and $q,q_1\in(1,\infty)$.
As was shown in \cite[Theorem~3.51]{tnyyz24} that,
under some additional assumptions on $\phi$,
the generalized
Lorentz--Morrey space $L_{q,q_1,\phi}(\mathcal{X})$
and the generalized Lorentz-block space $B^{[\phi,q',q_1']}(\mathcal{X})$
are mutually the associate space.
\end{remark}

Applying Theorems~\ref{MS2} and \ref{2047},
we obtain the following Maz'ya--Shaposhnikova formula
on generalized Lorentz--Morrey spaces.

\begin{theorem}\label{thm:LMor/R Gag}
Let $p,p_1,q\in(0,\infty)$ satisfy $q<\min\{p,\,p_1\}$
and let $\phi\in\mathcal{G}^{\rm dec}_p$ satisfy
both $\eqref{NC}$ and \eqref{int phi}.
Assume that $\phi$ satisfies \eqref{const} or
assume that $\phi$ satisfies both \eqref{r 0} and \eqref{r infty}.
Then there exist $C,\widetilde{C}\in(0,\infty)$ such that,
for any $f\in [L_{p,p_1,\phi}(\Omega)/{\mathbb R}]\cap
\bigcup_{s\in(0,1)}\dot{W}^{s,q}_{L_{p,p_1,\phi}}(\Omega)$,
\begin{align}\label{Gag LM}
C\|f\|_{L_{p,p_1,\phi}(\Omega)/{\mathbb R}}
&\notag\leq
\varliminf_{s \to 0^+}
s^{\frac{1}{q}}\left\|\left\{\int_\Omega
\frac{|f(\cdot)-f(y)|^q}{U(\cdot,y)[\rho(\cdot,y)]^{sq}}
\, d\mu(y) \right\}^{\frac{1}{q}}\right\|_{L_{p,p_1,\phi}(\Omega)}\\
&\leq
\varlimsup_{s \to 0^+}
s^{\frac{1}{q}}\left\|\left\{\int_\Omega
\frac{|f(\cdot)-f(y)|^q}{U(\cdot,y)[\rho(\cdot,y)]^{sq}}
\, d\mu(y) \right\}^{\frac{1}{q}}\right\|_{L_{p,p_1,\phi}(\Omega)}
\leq\widetilde{C}\|f\|_{L_{p,p_1,\phi}(\Omega)/{\mathbb R}}.
\end{align}
\end{theorem}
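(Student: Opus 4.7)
The plan is to mimic the two-step strategy employed in the proof of Theorem~\ref{thm:Mor/R Gag}, replacing the weighted Lebesgue ingredient Theorem~\ref{01011838} by its weighted Lorentz analogue Theorem~\ref{2047}, and replacing the block-space duality used for the upper bound by the corresponding Lorentz-block duality established in \cite[Theorem~3.51]{tnyyz24}. Throughout, one fixes $r\in(q,\min\{p,p_1\})$ and works with the $\frac{1}{r}$-convexification of $L_{p,p_1,\phi}(\mathcal{X})$.

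For the lower estimate in \eqref{Gag LM}, I would first establish a Lorentz-variant of \cite[Proposition~3.31]{tnyyz24}: under the hypotheses on $\phi$, there exists $\theta\in(0,1)$ such that, for any $f\in L_{p,p_1,\phi}(\Omega)$,
\begin{align*}
\|f\|_{L_{p,p_1,\phi}(\Omega)}
\lesssim
\sup_{B}\frac{\|f\|_{L^{p,p_1}_{(\mathcal M\mathbf{1}_B)^{1-\theta}}(\Omega)}}
{\phi(B)[\mu(B)]^{\frac{1}{p}}}
\lesssim
\|f\|_{L_{p,p_1,\phi}(\Omega)},
\end{align*}
where the supremum is taken over all balls $B\subset\mathcal{X}$. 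The proof of this equivalence should be essentially identical to the Morrey case, since the decisive ingredient is the pointwise comparability between $\mathbf{1}_B$ and $(\mathcal{M}\mathbf{1}_B)^{1-\theta}$ on $B$ together with a Kolmogorov-type inequality that transfers the Lorentz quasi-norm onto the weighted one. Combining this equivalence with Proposition~\ref{2357} yields the same equivalence for the quotient space $L_{p,p_1,\phi}(\Omega)/\mathbb{R}$. Then, since $\sup_{B}[(\mathcal M\mathbf{1}_B)^{1-\theta}]_{A_1(\mathcal{X})}<\infty$ (see, e.g., \cite[Theorem~281]{sfh20a}), one can insert Theorem~\ref{2047} uniformly in $B$ to produce the desired lower estimate, after moving the $\varliminf_{s\to0^+}$ outside of $\sup_B$.

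For the upper estimate, the key is to verify that $Y^{\frac{1}{r}}(\mathcal{X})$ with $Y=L_{p,p_1,\phi}$ satisfies Assumption~\ref{ma}, so that Theorem~\ref{MS2} applies. Fix $r\in(q,\min\{p,p_1\})$. Then $\phi^{r}\in\mathcal{G}^{\rm dec}_{p/r}$ still satisfies \eqref{NC} and, by a lemma analogous to \cite[Lemma~7.1]{n08}, also \eqref{int phi}. Since $\frac{p}{r}>1$ and $\frac{p_1}{r}>1$, a straightforward checking gives
\begin{align*}
\left[L_{p,p_1,\phi}(\mathcal{X})\right]^{\frac{1}{r}}
=L_{\frac{p}{r},\frac{p_1}{r},\phi^{r}}(\mathcal{X}),
\end{align*}
which is a ball Banach function space. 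By \cite[Theorem~3.51]{tnyyz24}, its associate space is the generalized Lorentz-block space $B^{[\phi^{r},(\frac{p}{r})',(\frac{p_1}{r})']}(\mathcal{X})$. The remaining task is to confirm that the Hardy--Littlewood maximal operator $\mathcal M$ is bounded on this generalized Lorentz-block space under the present hypotheses; this should follow by a duality/extrapolation argument analogous to \cite[Theorem~3.30]{tnyyz24} combined with the boundedness of $\mathcal M$ on Lorentz spaces $L^{(p/r)',(p_1/r)'}(\mathcal{X})$. Once Assumption~\ref{ma} is verified, the second inequality in \eqref{Gag LM} is a direct invocation of Theorem~\ref{MS2}.

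The main obstacle will be the boundedness of $\mathcal M$ on the generalized Lorentz-block space $B^{[\phi^{r},(\frac{p}{r})',(\frac{p_1}{r})']}(\mathcal{X})$; the Morrey case in Theorem~\ref{thm:Mor/R Gag} relied on \cite[Theorems~3.28 and~3.30]{tnyyz24}, whose proofs dualize the boundedness of $\mathcal M$ on Lebesgue spaces through a decomposition of block atoms. To carry this out in the Lorentz setting, one must adapt that block decomposition to handle the second Lorentz parameter $(\frac{p_1}{r})'$, which typically requires a more delicate level-set decomposition of the atoms and careful bookkeeping of the Lorentz size condition; the aforementioned auxiliary norm equivalence for the Lorentz--Morrey space (needed for the lower bound) is essentially of the same difficulty. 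Once both of these ingredients are in place, the remainder of the argument is a direct transcription of the proof of Theorem~\ref{thm:Mor/R Gag}.
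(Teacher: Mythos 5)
Your proposal follows essentially the same route as the paper: the lower bound via the equivalence of $\|\cdot\|_{L_{p,p_1,\phi}}$ with $\sup_B\phi(B)^{-1}[\mu(B)]^{-1/p}\|\cdot\|_{L^{p,p_1}_{(\mathcal{M}\mathbf{1}_B)^{1-\theta}}}$ combined with Theorem~\ref{2047}, and the upper bound via the convexification $[L_{p,p_1,\phi}]^{1/p_2}=L_{p/p_2,p_1/p_2,\phi^{p_2}}$, Lorentz-block duality, and the proof of Theorem~\ref{MS2}. The two ingredients you flag as the main obstacles are in fact already available and are exactly what the paper cites: the weighted-Lorentz norm equivalence is \cite[Proposition~3.54]{tnyyz24}, and the boundedness of $\mathcal{M}$ on the generalized Lorentz-block space $B^{[\phi^{p_2},(p/p_2)',(p_1/p_2)']}(\mathcal{X})$ together with the duality is \cite[Theorems~3.51 and~3.53]{tnyyz24}, so no new block decomposition needs to be carried out.
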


\begin{proof}
We first porve the lower estimate in \eqref{Gag LM}.
From Remark~\ref{2048}(i),
\cite[Proposition 3.54]{tnyyz24}, and the assumption that
$\phi$ satisfies both \eqref{DC} and \eqref{int phi},
we infer that there exists $\theta\in(0,1)$ such that,
for any $f\in L_{p,p_1,\phi}(\Omega)$,
\begin{equation*}
\|f\|_{L_{p,p_1,\phi}(\Omega)}
\le
\sup_{B}\frac{\|f\|_{L^{p,p_1}_{({\mathcal M}
\mathbf{1}_B)^{1-\theta}}(\Omega)}}
{\phi(B)[\mu(B)]^\frac{1}{p}}
\le
C\|f\|_{L_{p,p_1,\phi}(\Omega)},
\end{equation*}
where $C\in(0,\infty)$ is independent of $f$,
which further implies that,
for any $f\in L_{p,p_1,\phi}(\Omega)/{\mathbb R}$,
\begin{equation*}
\|f\|_{L_{p,p_1,\phi}(\Omega)/{\mathbb R}}
\le
\sup_{B}\frac{\|f\|_{L^{p,p_1}_{({\mathcal M}\mathbf{1}_B)^{1-\theta}}
(\Omega)/{\mathbb R}}}{\phi(B)[\mu(B)]^\frac{1}{p}}
\le
C\|f\|_{L_{p,p_1,\phi}(\Omega)/{\mathbb R}}.
\end{equation*}
By this, the fact that
$\sup_B\big[({\mathcal M}\mathbf{1}_B)^{1-\theta}\big]_{A_1}<\infty$
(see, for example, \cite[Theorem~281]{sfh20a}),
and Theorem~\ref{2047}, we conclude that,
for any $f\in L_{p,p_1,\phi}(\Omega)/{\mathbb R}$,
\begin{align*}
\|f\|_{L_{p,p_1,\phi}(\Omega)/{\mathbb R}}
&\leq
\sup_{B}\frac{\|f\|_{L^{p,p_1}_{({\mathcal M}\mathbf{1}_B)^{1-\theta}}
(\Omega)/{\mathbb R}}}{\phi(B)[\mu(B)]^\frac{1}{p}}\\
&\lesssim
\sup_{B}\frac1{\phi(B)[\mu(B)]^\frac{1}{p}}
\varliminf_{s \to 0^+}
s^{\frac{1}{q}}
\left\|\left\{\int_\Omega
\frac{|f(\cdot)-f(y)|^q}{U(\cdot,y)[\rho(\cdot,y)]^{sq}}
\, d\mu(y) \right\}^{\frac{1}{q}}\right\|_{L^{p,p_1}_{({\mathcal M}
\mathbf{1}_B)^{1-\theta}}(\Omega)}\\
&\leq
\varliminf_{s \to 0^+}
s^{\frac{1}{q}}
\sup_{B}\frac1{\phi(B)[\mu(B)]^\frac{1}{p}}
\left\|\left\{\int_\Omega
\frac{|f(\cdot)-f(y)|^q}{U(\cdot,y)[\rho(\cdot,y)]^{sq}}
\, d\mu(y) \right\}^{\frac{1}{q}}\right\|_{L^{p,p_1}_{({\mathcal M}
\mathbf{1}_B)^{1-\theta}}(\Omega)}\\
&\lesssim\varliminf_{s \to 0^+}
s^{\frac{1}{q}}
\left\|\left\{\int_\Omega
\frac{|f(\cdot)-f(y)|^q}{U(\cdot,y)[\rho(\cdot,y)]^{sq}}
\, d\mu(y) \right\}^{\frac{1}{q}}\right\|_{L_{p,p_1,\phi}(\Omega)}.
\end{align*}
This finishes the lower estimate in \eqref{Gag LM}.

Next, we show the upper estimate in \eqref{Gag LM}.
Let $p_2\in(q,\min\{p,\,p_1\})$. Then $\phi^{p_2}\in
\mathcal{G}^{\rm dec}_{\frac{p}{p_2}}$ satisfies \eqref{NC}.
Moreover, from \cite[Lemma~7.1]{n08}, we deduce that
$\phi^{p_2}$ satisfies \eqref{int phi}.
By $\frac{p}{p_2}>1$, $\frac{p_1}{p_2}>1$,
and \cite[Theorem 3.45]{tnyyz24}, we find that
$[L_{p,p_1,\phi}(\mathcal{X})]^\frac{1}{p_2}=L_{\frac{p}{p_2},
\frac{p_1}{p_2},\phi^{p_2}}(\mathcal{X})$ is a ball Banach function space.
From \cite[Theorems~3.51 and~3.53]{tnyyz24},
we infer that the Hardy--Littlewood maximal
operator $\mathcal M$ is bounded on
\begin{equation*}
B^{[\phi^{p_2},(\frac{p}{p_2})',(\frac{p_1}{p_2})']}(\mathcal{X})
=
\left[L_{\frac{p}{p_2},\frac{p_1}{p_2},\phi^{p_2}}(\mathcal{X})\right]'
=
\left(\left[L_{p,p_1,\phi}(\mathcal{X})\right]^\frac{1}{p_2}\right)'.
\end{equation*}
By these and the proof of Theorem~\ref{MS2},
we conclude that the last inequality in \eqref{Gag LM} holds.
This finishes the proof of Theorem~\ref{thm:LMor/R Gag}.
\end{proof}

\begin{remark}
To the best of our knowledge,
Theorem~\ref{thm:LMor/R Gag} is completely new.
\end{remark}

Applying a slight modification of the proof of \cite[Theorem 3.57]{tnyyz24}
with \cite[Theorem 2.11(i)]{tnyyz24} replaced by Theorem~\ref{MS2},
we find that the following conclusion and omit the details.

\begin{theorem}\label{thm:LB Gag}
Let $p,p_1,q,q_1\in(1,\infty)$
satisfy both $\frac{1}{p}+\frac{1}{q}=1$ and
$\frac{1}{p_1}+\frac{1}{q_1}=1$
and let $\phi\in\mathcal{G}^{\rm dec}_{p}$ satisfy $\eqref{NC}$.
Assume that $\phi$ satisfies \eqref{const} or
assume that $\phi$ satisfies both \eqref{r 0} and \eqref{r infty}.
If $q_2\in(0,1]$ and $\beta\in(0,\infty)$,
then there exist $C,\widetilde{C}\in(0,\infty)$ such that,
for any $f\in C_{\mathrm{b}}^\beta(\Omega)
\cap\bigcup_{s\in(0,1)}\dot{W}^{s,q_2}_{B^{[\phi,q,q_1]}}(\Omega)$,
\begin{align*}
C\|f\|_{B^{[\phi,q,q_1]}(\Omega)}
&\leq\varliminf_{s \to 0^+}
s^{\frac{1}{q_2}}\left\|\left\{\int_\Omega
\frac{|f(\cdot)-f(y)|^{q_2}}{U(\cdot,y)[\rho(\cdot,y)]^{sq_2}}
\, d\mu(y) \right\}^{\frac{1}{q_2}}\right\|_{B^{[\phi,q,q_1]}(\Omega)}\\
&\leq\varlimsup_{s \to 0^+}
s^{\frac{1}{q_2}}\left\|\left\{\int_\Omega
\frac{|f(\cdot)-f(y)|^{q_2}}{U(\cdot,y)[\rho(\cdot,y)]^{sq_2}}
\, d\mu(y) \right\}^{\frac{1}{q_2}}\right\|_{B^{[\phi,q,q_1]}(\Omega)}
\leq\widetilde{C}\|f\|_{B^{[\phi,q,q_1]}(\Omega)}.\notag
\end{align*}
\end{theorem}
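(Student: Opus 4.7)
The plan is to apply Theorem~\ref{MS2}(i) directly, with $Y(\mathcal{X}):=B^{[\phi,q,q_1]}(\mathcal{X})$ and convexification exponent $p^{*}:=1$; the requirement $q_2\le p^{*}$ is then immediate from $q_2\in(0,1]$. To justify this, I would verify that $Y^{1/p^{*}}(\mathcal{X})=B^{[\phi,q,q_1]}(\mathcal{X})$ satisfies Assumption~\ref{ma}. That this is a ball Banach function space can be established directly from its definition via atomic decomposition by blocks, using the completeness of its ambient predual and the observation that, up to a normalizing scalar, the characteristic function of any ball is itself a $[\phi,q,q_1]$-block; alternatively, one can invoke the corresponding Banach-space structure result from \cite{tnyyz24}, in the same spirit as is done in the proof of Theorem~\ref{thm:B Gag}.

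The key step is to identify the associate space and then to invoke the boundedness of the Hardy--Littlewood maximal operator on it. From the duality between generalized Lorentz--Morrey and generalized Lorentz-block spaces stated in \cite[Theorem~3.51]{tnyyz24} (and recorded in the remark preceding the statement), one has $[B^{[\phi,q',q_1']}(\mathcal{X})]'=L_{q,q_1,\phi}(\mathcal{X})$. Relabeling $q\mapsto q'$ and $q_1\mapsto q_1'$ and using the hypotheses $\frac{1}{p}+\frac{1}{q}=1$ and $\frac{1}{p_1}+\frac{1}{q_1}=1$, which give $q'=p$ and $q_1'=p_1$, I obtain $[B^{[\phi,q,q_1]}(\mathcal{X})]'=L_{p,p_1,\phi}(\mathcal{X})$. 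Under the given hypotheses on $\phi$, namely $\phi\in\mathcal{G}^{\rm dec}_{p}$, \eqref{NC}, and either \eqref{const} or the pair of conditions \eqref{r 0} and \eqref{r infty}, the boundedness of $\mathcal{M}$ on $L_{p,p_1,\phi}(\mathcal{X})$ follows from \cite[Theorem~3.53]{tnyyz24}. This completes the verification of Assumption~\ref{ma} for $Y^{1/p^{*}}(\mathcal{X})$.

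With all hypotheses of Theorem~\ref{MS2}(i) now in place, applying it to the given $f\in C_{\mathrm{b}}^{\beta}(\Omega)\cap\bigcup_{s\in(0,1)}\dot{W}^{s,q_2}_{B^{[\phi,q,q_1]}}(\Omega)$ (with the role of $q$ played by $q_2$ and that of $p$ played by $1$) directly yields the desired double inequality, with constants $C,\widetilde{C}$ furnished by Theorem~\ref{MS2}. I do not anticipate any real obstacle: the whole argument reduces to a careful bookkeeping of the indices $p,p_1,q,q_1$ under conjugation and an appeal to results in \cite{tnyyz24} for the duality and the maximal function bound. The only subtle point is that the choice $\phi\in\mathcal{G}^{\rm dec}_{p}$ must match the convexification index via duality, which is precisely what the conjugate-exponent relations $\frac{1}{p}+\frac{1}{q}=\frac{1}{p_1}+\frac{1}{q_1}=1$ assumed in the statement are designed to guarantee.
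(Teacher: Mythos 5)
Your proposal is correct and follows essentially the same route as the paper: the paper's one-line proof (a slight modification of the corresponding weak-type argument in \cite{tnyyz24} with Theorem~\ref{MS2} substituted in) amounts to exactly your steps --- $B^{[\phi,q,q_1]}(\mathcal{X})$ is a ball Banach function space whose associate space is $L_{p,p_1,\phi}(\mathcal{X})$ via the conjugate-exponent relabeling, the Hardy--Littlewood maximal operator is bounded there under the stated hypotheses on $\phi$, and Theorem~\ref{MS2}(i) then applies with $p=1$ and $q=q_2\in(0,1]$; this is the same scheme the paper spells out explicitly for the Orlicz-block case in Theorem~\ref{thm:OB Gag}. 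The only caveat is the exact numbering of the results in \cite{tnyyz24} you cite for the maximal-function bound on the generalized Lorentz--Morrey space (the paper's own citations suggest Theorem~3.53 there concerns the block space rather than the Morrey space), but that is a bookkeeping matter, not a gap.
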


\begin{remark}
To the best of our knowledge,
Theorem~\ref{thm:LB Gag} is completely new.
\end{remark}

\subsection{Generalized Orlicz--Morrey and
Generalized Orlicz-Block Spaces}

The Orlicz--Morrey spaces were introduced by
\cite{n04} to unify Orlicz and Morrey spaces
and were studied in \cite{n08studia}.
For the definitions of the Orlicz function $\Phi$ and its positive lower
type $r_{\Phi}^-$ and its positive upper type $r_{\Phi}^+$,
see Section~\ref{s6.8}.
In this section, we always assume that $\Phi$ is strictly increasing.
If $1\le r_{\Phi}^-\le r_{\Phi}^+<\infty$,
we always assume that $\Phi$ is convex.
The \emph{locally Orlicz space} $L^{\Phi}_{\mathrm{loc}}(\mathcal{X})$
is defined to be the set of all measurable functions $f$ on $\mathcal{X}$ such that,
for any $x\in \mathcal{X}$, there exists a positive
constant $r_x$ satisfying
$\|f\mathbf{1}_{B(x,r_x)}\|_{L^{\Phi}(\mathcal{X})}<\infty$.
We now introduce the generalized Orlicz--Morrey space as follows.

\begin{definition}\label{defn:OM}
Let $\varphi:\mathcal{X}\times(0,\infty)\to(0,\infty)$ and
$\Phi$ be an Orlicz function with positive lower type
$r_{\Phi}^-$ and positive upper type $r_{\Phi}^+$.
The \emph{generalized Orlicz--Morrey space} $L^{(\Phi,\varphi)}(\mathcal{X})$
is defined to be the set of all $f\in L^{\Phi}_{\mathrm{loc}}(\mathcal{X})$ such that
$
\|f\|_{L^{(\Phi,\varphi)}(\mathcal{X})}
:= \sup_B \|f\|_{\Phi,\varphi,B}<\infty,
$
where the supremum is taken over all balls $B\subset\mathcal{X}$ and
\begin{equation*}
\|f\|_{\Phi,\varphi,B}
:=
\inf\left\{\lambda\in(0,\infty):
\frac 1{\mu(B)}\int_{B}\Phi\left(\frac{|f(x)|}{\lambda}
\right)\,d\mu(x)\le\varphi(B)\right\}.
\end{equation*}
\end{definition}

It is easy to prove that $L^{(\Phi,\varphi)}(\mathcal{X})$ is a quasi-Banach space
because
$
\|\cdot\|_{\Phi,\varphi,B}=
\|\cdot\|_{L^{\Phi}(B,\,\mu_B)},
$
where $\mu_B:=\frac{\mu}{\mu(B)\varphi(B)}$,
which is a quasi-norm on the Orlicz space $L^{\Phi}(B,\,\mu_B)$.
If $1\le r_{\Phi}^-\le r_{\Phi}^+<\infty$, then
$L^{(\Phi,\varphi)}(\mathcal{X})$ is a Banach space equipped
with the norm $\|\cdot\|_{L^{(\Phi,\varphi)}(\mathcal{X})}$.
If $\varphi(B)=[\mu(B)]^{-1}$, then
$L^{(\Phi,\varphi)}(\mathcal{X})=L^{\Phi}(\mathcal{X})$ which is an Orlicz space.
If $\Phi(t)=t^p$ with $p\in(0,\infty)$ and $\varphi=\phi^p$,
then $L^{(\Phi,\varphi)}(\mathcal{X})=L_{p,\phi}(\mathcal{X})$
which is a generalized Morrey space.

In what follows, if there is no confusion,
we always omit the variable in integrals.
For any given ball $B$, from H\"older's
inequality for Orlicz spaces,
we deduce that, for any $f\in L^{\Phi}(B,\,d\mu_B)$
and $g\in L^{\widetilde\Phi}(B,\,d\mu_B)$,
\begin{align}\label{int fg}
\int_B |fg|\,d\mu
&\notag=\mu(B)\varphi(B)\int_B |fg|\,d\mu_B\\
&\le2\mu(B)\varphi(B)\|f\|_{L^{\Phi}(B,\,d\mu_B)}
\|g\|_{L^{\widetilde\Phi}(B,\,d\mu_B)}\notag\\
&=2\mu(B)\varphi(B)\|f\|_{\Phi,\varphi,B}
\|g\|_{\widetilde\Phi,\varphi,B}.
\end{align}

Next, we define the generalized Orlicz-block space.

\begin{definition}\label{defn:pP block}
Let $\varphi:\mathcal{X}\times(0,\infty)\to(0,\infty)$ and
$\Psi$ be an Orlicz function with $1<r_{\Psi}^-\le r_{\Psi}^+<\infty$.
A function $b$ on $\mathcal{X}$ is called a \emph{$(\varphi,\Psi)$-block}
if there exists a ball $B$, called the \emph{supporting ball} of $b$,
such that
\begin{enumerate}
\item[\rm (i)]
$\mathrm{supp\,}b\subset{B}$;
\item[\rm (ii)]
$b\in L^{\Psi}(\mathcal{X})$ and
$\|b\|_{\Psi,\varphi,B} \le \frac{1}{\mu(B)\varphi(B)}$.
\end{enumerate}
Denote by $B(\varphi,\Psi)$ the set of all $(\varphi,\Psi)$-blocks.
\end{definition}

By \eqref{int fg} and Definitions~\ref{defn:OM} and
\ref{defn:pP block}, we find that,
for any $b\in B(\varphi,\Psi)$ with its supporting ball $B$,
\begin{equation}\label{bg}
\left|\int_\mathcal{X} bg\,d\mu\right|
\le
\int_B |bg|\,d\mu
\le
2\mu(B)\varphi(B)\|b\|_{\Psi,\varphi,B}\|g\|_{\widetilde\Psi,\varphi,B}
\le
2\|g\|_{L^{(\widetilde\Psi,\varphi)}(\mathcal{X})},
\end{equation}
where $\widetilde\Psi$ is the complementary Orlicz function of $\Psi$.
That is,
the mapping $g\mapsto\int_{\mathcal{X}} bg\,d\mu$
is a bounded linear functional on $L^{(\widetilde\Psi,\varphi)}(\mathcal{X})$
with norm not exceeding $2$.

\begin{definition}\label{defn:OB}
Let $\varphi:\mathcal{X}\times(0,\infty)\to(0,\infty)$ and
$\Psi$ be an Orlicz function with $1<r_{\Psi}^-\le r_{\Psi}^+<\infty$.
Assume that $L^{(\widetilde\Psi,\varphi)}(\mathcal{X})\not=\{0\}$.
The \emph{Orlicz-block space}
$B_{(\varphi,\Psi)}(\mathcal{X})$ is defined to be the set
of all
$f\in[L^{(\widetilde\Psi,\varphi)}(\mathcal{X})]^*$
such that
there exist a sequence $\{b_j\}_{j\in\mathbb{N}}$ in $B(\varphi,\Psi)$
and a sequence $\{\lambda_j\}_{j\in\mathbb{N}}$ in $[0,\infty)$
satisfying $\sum_{j\in\mathbb{N}}\lambda_j<\infty$
such that
\begin{equation}\label{OB expression}
f=\sum_{j\in\mathbb{N}}\lambda_j b_j\ \text{in}\
\left[L^{(\widetilde\Psi,\varphi)}(\mathcal{X})\right]^*.
\end{equation}
For any $f\in B_{(\varphi,\Psi)}(\mathcal{X})$, define
\begin{equation*}
\|f\|_{B_{(\varphi,\Psi)}(\mathcal{X})}
:=\inf\left\{\sum_{j\in\mathbb{N}}\lambda_j:
f=\sum_{j\in\mathbb{N}}\lambda_jb_j\ \text{in}\
\left[L^{(\widetilde\Psi,\varphi)}(\mathcal{X})\right]^*\right\},
\end{equation*}
where the infimum is taken over all decompositions
of $f$ as in \eqref{OB expression}.
\end{definition}

It is easy to show that $B_{(\varphi,\Psi)}(\mathcal{X})$ is a Banach space equipped
with the norm $\|\cdot\|_{B_{(\varphi,\Psi)}(\mathcal{X})}$.
The Orlicz-block space $B_{(\varphi,\Psi)}(\mathcal{X})$
was introduced in \cite{n11},
where it was proved to be the predual space
of $L^{(\widetilde\Psi,\varphi)}(\mathcal{X})$
in the case where
both $\mathcal{X} = \mathbb{R}^d$ and $\varphi$ is a function
from $(0,\infty)$ to $(0,\infty)$.
If $\Psi(t)=t^q$, $q\in(1,\infty)$, and $\varphi=\phi^{q'}$,
then $B_{(\varphi,\Psi)}(\mathcal{X})=B^{[\phi,q]}(\mathcal{X})$
which is a generalized block space.

Now, we first show that $L^{(\Phi,\varphi)}(\mathcal{X})$
is a ball quasi-Banach space.
The following lemma was proven by \cite[Lemma~4.1]{san21} in the case
where
$\mathcal{X}=\mathbb{R}^n$,
$r_{\Phi}^-\ge1$, and $\varphi:(0,\infty)\to(0,\infty)$.

\begin{lemma}\label{lem:chi norm}
Let $\varphi\in\mathcal{G}^{\rm dec}_1$ satisfy \eqref{NC} and
$\Phi$ be an Orlicz function with positive lower type
$r_{\Phi}^-$ and positive upper type $r_{\Phi}^+$.
Then there exists a positive constant $C$ such that,
for any ball $B$, $\mathbf{1}_{B}\in L^{(\Phi,\varphi)}(\mathcal{X})$ and
\begin{align}\label{chi norm}
\frac{1}{\Phi^{-1}(\varphi(B))}
\le
\left\|\mathbf{1}_B\right\|_{L^{(\Phi,\varphi)}(\mathcal{X})}
\le
\frac{C}{\Phi^{-1}(\varphi(B))}.
\end{align}
\end{lemma}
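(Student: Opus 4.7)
The plan is to compute $\|\mathbf{1}_B\|_{\Phi,\varphi,B'}$ explicitly for an arbitrary ball $B'$ and then take the supremum. Since $\Phi(\mathbf{1}_B/\lambda)=\Phi(1/\lambda)\mathbf{1}_B$, the defining inequality for the Luxemburg-type quantity reduces to $\Phi(1/\lambda)\,\mu(B\cap B')/\mu(B')\le\varphi(B')$, which (when $B\cap B'\neq\emptyset$) gives the closed form
\begin{equation*}
\|\mathbf{1}_B\|_{\Phi,\varphi,B'}=\frac{1}{\Phi^{-1}\!\left(\varphi(B')\dfrac{\mu(B')}{\mu(B\cap B')}\right)}
\end{equation*}
and the quantity vanishes otherwise. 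The lower bound in \eqref{chi norm} is then immediate by choosing $B'=B$, since this yields $\|\mathbf{1}_B\|_{\Phi,\varphi,B}=1/\Phi^{-1}(\varphi(B))$.

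For the upper bound, the core of the argument is to show that there is a positive constant $c$, independent of $B$ and $B'$, such that for every $B'$ with $B\cap B'\neq\emptyset$,
\begin{equation*}
\varphi(B')\,\frac{\mu(B')}{\mu(B\cap B')}\ge c\,\varphi(B).
\end{equation*}
I would prove this by splitting into the cases $r'\ge r_B$ and $r'<r_B$, where $B=B(x_B,r_B)$ and $B'=B(x',r')$. Since $B\cap B'\neq\emptyset$ forces $\rho(x_B,x')\le K_0(r_B+r')$, in the first case we get $\rho(x_B,x')\le2K_0r'$, so combining \eqref{NC} (with radius comparable to $r'$) with the doubling of $\varphi$ in the radial variable — itself a direct consequence of $\varphi\in\mathcal{G}^{\rm dec}_1$ and \eqref{dc} — gives $\varphi(x',r')\sim\varphi(x_B,r')$ and $\mu(B(x',r'))\sim\mu(B(x_B,r'))$; then the almost increasing property of $r\mapsto\mu(B(x_B,r))\varphi(x_B,r)$ yields $\varphi(B')\mu(B')\gtrsim\varphi(B)\mu(B)\ge\varphi(B)\mu(B\cap B')$. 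In the second case, $\mu(B\cap B')\le\mu(B')$ reduces matters to proving $\varphi(x',r')\gtrsim\varphi(B)$; here the almost decreasing property gives $\varphi(x',r')\gtrsim\varphi(x',r_B)$, and since $\rho(x_B,x')\le2K_0r_B$ a double application of \eqref{NC} and the radial doubling of $\varphi$ yields $\varphi(x',r_B)\sim\varphi(x_B,r_B)=\varphi(B)$.

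Finally, to transfer the multiplicative constant $c$ from inside $\Phi^{-1}$ to a multiplicative constant on $1/\Phi^{-1}(\varphi(B))$, I would invoke the positive lower type $r_\Phi^-$ of $\Phi$: for any $c\in(0,1]$ and any $t>0$, setting $s:=\Phi^{-1}(c\Phi(t))$ and applying the lower type estimate to $\Phi(s)=\Phi((s/t)t)$ gives $s/t\ge (c/C_{(r_\Phi^-)})^{1/r_\Phi^-}$, i.e. $\Phi^{-1}(c\tau)\ge (c/C_{(r_\Phi^-)})^{1/r_\Phi^-}\Phi^{-1}(\tau)$ for all $\tau>0$. Applying this with $\tau=\varphi(B)$ and using monotonicity of $\Phi^{-1}$, we deduce $\Phi^{-1}(\varphi(B')\mu(B')/\mu(B\cap B'))\ge c'\Phi^{-1}(\varphi(B))$ with $c'$ independent of $B,B'$, and taking the supremum over $B'$ gives the upper bound in \eqref{chi norm}; the same bound also certifies $\mathbf{1}_B\in L^{(\Phi,\varphi)}(\mathcal{X})$.

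The main obstacle is the geometric case analysis for the key inequality $\varphi(B')\mu(B')/\mu(B\cap B')\gtrsim\varphi(B)$; one must carefully track the dependence on the quasi-triangle constant $K_0$ while combining \eqref{NC}, the radial doubling induced by the $\mathcal{G}^{\rm dec}_1$ conditions, and \eqref{dc}, to ensure that all implied constants depend only on $\varphi$, $\Phi$, and $(\mathcal{X},\rho,\mu)$, but not on $B$ or $B'$.
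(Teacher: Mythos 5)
Your proof is correct and follows essentially the same route as the paper's: the lower bound comes from evaluating the Luxemburg quantity on $B$ itself, and the upper bound rests on the comparison $\varphi(B')\mu(B')/\mu(B\cap B')\gtrsim\varphi(B)$ for intersecting balls (which is precisely inequality \eqref{B1B2} with the roles of the two balls reversed), established by the same two-case geometric dichotomy the paper uses ($B'\subset kB$ versus $B\subset kB'$, equivalent to your comparison of radii). Your presentation via the closed-form expression for $\|\mathbf{1}_B\|_{\Phi,\varphi,B'}$ together with the explicit lower-type estimate $\Phi^{-1}(c\tau)\ge(c/C_{(r_{\Phi}^-)})^{1/r_{\Phi}^-}\Phi^{-1}(\tau)$ in fact makes explicit a step the paper leaves implicit when it passes from the bound ``modular $\lesssim 1$'' to \eqref{C lambda}.
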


\begin{proof}
Let $B$ be a ball.
An elementary calculation shows that
\begin{align}\label{chiB}
\|\mathbf{1}_{B}\|_{\Phi,\varphi,B}
=\frac{1}{\Phi^{-1}(\varphi(B))}.
\end{align}
From this and Definition~\ref{defn:OM},
we infer that the first inequality in \eqref{chi norm} holds.
Next, we prove the second inequality in \eqref{chi norm}.
Let $\lambda:=\frac{1}{\Phi^{-1}(\varphi(B))}$.
Then it is enough to show that there exists a constant
$C\in[1,\infty)$ such that, for any ball $B'$
with $B\cap B'\ne\emptyset$,
\begin{equation}\label{C lambda}
\frac{1}{\mu(B')\varphi(B')}\int_{B'}\Phi
\left(\frac{\mathbf{1}_{B}}{C\lambda}\right)\,d\mu
\le 1.
\end{equation}
Since $B\cap B'\ne\emptyset$, we can take a positive
constant $k\in[1,\infty)$, independent of $B$ and $B'$,
such that $B'\subset kB$ or $B\subset kB'$ holds.
We consider the following two cases.

\emph{Case (1)}
$B'\subset kB$. In this case,
by the fact that $\Phi(t)\ge\Phi(0)$ for any $t\in[0,\infty)$,
the definition of $\lambda$, and the
properties of $\varphi$, we conclude that
\begin{align*}
\frac{1}{\mu(B')\varphi(B')}\int_{B'}\Phi
\left(\frac{\mathbf{1}_{B}}{\lambda}\right)\,d\mu
\le\frac{1}{\varphi(B')}\Phi\left(\frac{1}{\lambda}\right)
=\frac{\varphi(B)}{\varphi(B')}
\lesssim 1.
\end{align*}

\emph{Case (2)}
$B\subset kB'$.
In this case, from the fact that $\Phi(t)\ge\Phi(0)$
for any $t\in[0,\infty)$,
the definition of $\lambda$, and the
properties of $\varphi$, it follows that
\begin{align*}
\frac{1}{\mu(B')\varphi(B')}\int_{B'}\Phi
\left(\frac{\mathbf{1}_{B}}{\lambda}\right)\,d\mu
\le
\frac{1}{\mu(B')\varphi(B')}\int_{B}\Phi
\left(\frac{1}{\lambda}\right)\,d\mu
=
\frac{\mu(B)\varphi(B)}{\mu(B')\varphi(B')}
\lesssim 1.
\end{align*}
By the both cases, we find that \eqref{C lambda}
and hence \eqref{chi norm} holds, which then completes
the proof of Lemma~\ref{lem:chi norm}.
\end{proof}

Using Definition~\ref{defn:OM} and Lemma~\ref{lem:chi norm},
we immediately obtain the following conclusion.

\begin{theorem}\label{thm:OM bBfs}
Let $\varphi\in\mathcal{G}^{\rm dec}_1$ satisfy \eqref{NC} and
$\Phi$ be an Orlicz function with positive lower type
$r_{\Phi}^-$ and positive upper type $r_{\Phi}^+$.
Then $L^{(\Phi,\varphi)}(\mathcal{X})$ is a ball quasi-Banach function space.
Moreover, if $1\le r_{\Phi}^-\le r_{\Phi}^+<\infty$,
then $L^{(\Phi,\varphi)}(\mathcal{X})$ is a ball Banach function space.
\end{theorem}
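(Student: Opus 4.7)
The plan is to verify the six conditions in Definition~\ref{Debqfs} one by one, leveraging the fact that, for any fixed ball $B$, the functional $\|\cdot\|_{\Phi,\varphi,B}$ coincides with the Luxemburg quasi-norm $\|\cdot\|_{L^{\Phi}(B,\,\mu_B)}$ on the Orlicz space built from the scaled measure $\mu_B:=\mu/[\mu(B)\varphi(B)]$. Consequently, each $\|\cdot\|_{\Phi,\varphi,B}$ already enjoys the Orlicz-space lattice and monotone-convergence properties, and the global quantity $\|f\|_{L^{(\Phi,\varphi)}(\mathcal{X})}=\sup_B \|f\|_{\Phi,\varphi,B}$ inherits them after taking the supremum over balls. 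Property~(iv) is exactly the content of Lemma~\ref{lem:chi norm}.

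First, I would establish that $\|\cdot\|_{L^{(\Phi,\varphi)}(\mathcal{X})}$ is a quasi-norm. The homogeneity and the vanishing characterization $\|f\|_{L^{(\Phi,\varphi)}(\mathcal{X})}=0\iff f=0$ $\mu$-a.e.\ follow ball-by-ball from the standard Orlicz-space arguments, together with the fact that $\mathcal{X}$ is covered by a countable family of balls (which in turn follows from the doubling condition on $\mu$ and the separability it entails). The quasi-triangle inequality with constant depending on $r_{\Phi}^-$ comes from a direct Luxemburg-type comparison: if $\Phi$ has lower type $r_{\Phi}^-$, then there exists $C_{(r_\Phi^-)}$ such that $\Phi(\frac{a+b}{C\lambda})\leq \frac12\Phi(\frac{a}{\lambda})+\frac12\Phi(\frac{b}{\lambda})$ with $C=2\,C_{(r_\Phi^-)}^{1/r_\Phi^-}$, whence $\|f+g\|_{\Phi,\varphi,B}\leq C[\|f\|_{\Phi,\varphi,B}+\|g\|_{\Phi,\varphi,B}]$ uniformly in $B$. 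In the convex case $r_\Phi^-\geq 1$ one may take $C=1$, yielding property~(v).

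Next, for the lattice property~(ii), the inequality $|g|\leq|f|$ $\mu$-a.e.\ forces $\Phi(|g|/\lambda)\leq\Phi(|f|/\lambda)$ for every $\lambda\in(0,\infty)$, which gives $\|g\|_{\Phi,\varphi,B}\leq\|f\|_{\Phi,\varphi,B}$ for every ball $B$, and hence~(ii) follows by taking the supremum. Property~(i) then follows from~(ii): if $\|f\|_{L^{(\Phi,\varphi)}(\mathcal{X})}=0$, then for every ball $B$, the restriction of $f$ to $B$ has vanishing Orlicz (quasi-)norm, hence is $0$ a.e., and covering $\mathcal{X}$ by countably many balls gives $f=0$ $\mu$-a.e. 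The Fatou property~(iii) is handled ball-by-ball by the monotone convergence theorem applied inside the integral definition of $\|\cdot\|_{\Phi,\varphi,B}$ (using that $\Phi$ is continuous and strictly increasing with $\Phi(0)=0$), which yields $\|f_m\|_{\Phi,\varphi,B}\uparrow\|f\|_{\Phi,\varphi,B}$; swapping the supremum over $B$ with the supremum over $m$ then produces $\|f_m\|_{L^{(\Phi,\varphi)}(\mathcal{X})}\uparrow\|f\|_{L^{(\Phi,\varphi)}(\mathcal{X})}$.

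Finally, to secure the ball Banach case under $1\leq r_\Phi^-\leq r_\Phi^+<\infty$, I must verify~(vi). Fixing a ball $B$ and applying H\"older's inequality as in \eqref{int fg} with $g=\mathbf{1}_B$, I get
\begin{align*}
\int_B|f|\,d\mu
\leq 2\mu(B)\varphi(B)\,\|f\|_{\Phi,\varphi,B}\,\|\mathbf{1}_B\|_{\widetilde\Phi,\varphi,B}
\leq \frac{2\mu(B)\varphi(B)}{\widetilde\Phi^{-1}(\varphi(B))}\,\|f\|_{L^{(\Phi,\varphi)}(\mathcal{X})},
\end{align*}
where the last estimate uses \eqref{chiB} for the complementary Orlicz function $\widetilde\Phi$; this yields the required constant $C_{(B)}$ and completes~(vi). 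The mildly delicate step, and the one that will need the most care to write out rigorously, is the quasi-triangle inequality with a controlled constant depending only on the type indices of $\Phi$ (together with verifying that the implicit comparison extends uniformly to the ball-wise suprema), because this is where non-convexity of $\Phi$ in the range $r_\Phi^-<1$ genuinely enters; the remaining items are essentially pointwise or monotone-convergence arguments lifted through the supremum.
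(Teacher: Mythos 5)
Your proposal is correct and follows the route the paper intends: the paper's own proof of Theorem~\ref{thm:OM bBfs} is a one-line assertion that the conclusion follows from Definition~\ref{defn:OM} and Lemma~\ref{lem:chi norm}, and you are simply writing out the standard ball-by-ball verifications of the axioms in Definition~\ref{Debqfs} (lattice, Fatou via monotone convergence under the supremum over balls, the quasi-triangle inequality from the lower type of $\Phi$, and (iv) from Lemma~\ref{lem:chi norm}) that this one-liner suppresses. One small point of care in your step (vi): when $r_\Phi^-=1$ the complementary function $\widetilde\Phi$ of \eqref{1539} degenerates (the paper only introduces it for $1<r_\Phi^-$), so either interpret $\widetilde\Phi^{-1}$ as a generalized inverse in your H\"older estimate, or avoid the complementary function altogether by using Jensen's inequality with the convexity of $\Phi$ to get $\frac{1}{\mu(B)}\int_B|f|\,d\mu\le\Phi^{-1}(\varphi(B))\,\|f\|_{\Phi,\varphi,B}$ directly; either way $C_{(B)}$ is finite and (vi) holds.
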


Now, we prove that $B_{(\varphi,\Psi)}(\mathcal{X})$ is
a ball quasi-Banach function space.
We show that, in some
special cases, the generalized Orlicz-block
space reduces to the Orlicz space.

\begin{lemma}\label{lem:LPs}
Let $\varphi:\mathcal{X}\times(0,\infty)\to(0,\infty)$ and
$\Psi$ be an Orlicz function with $1<r_{\Psi}^-\le r_{\Psi}^+<\infty$.
If there exists a positive constant $C_*$ such that, for any ball $B$,
\begin{equation}\label{var const}
\frac 1{C_*}\le\varphi(B)\mu(B)\le C_*,
\end{equation}
Then $B_{(\varphi,\Psi)}(\mathcal{X})=L^{\Psi}(\mathcal{X})$ and
there exists a positive constant $C$ such that,
for any $f\in B_{(\varphi,\Psi)}(\mathcal{X})$,
\begin{equation*}
C^{-1}\|f\|_{L^{\Psi}(\mathcal{X})}\le
\|f\|_{B_{(\varphi,\Psi)}(\mathcal{X})}\le C\|f\|_{L^{\Psi}(\mathcal{X})}.
\end{equation*}
\end{lemma}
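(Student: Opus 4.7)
The plan is to exploit the condition $\mu(B)\varphi(B)\sim 1$ provided by \eqref{var const} to reduce the Orlicz--Morrey-type local quasi-norm $\|\cdot\|_{\Psi,\varphi,B}$, restricted to functions supported in $B$, to the global $L^{\Psi}(\mathcal{X})$ quasi-norm. First I would establish a localization estimate: for any ball $B$ and any function $h$ with $\mathrm{supp}\,h\subset B$, $\|h\|_{\Psi,\varphi,B}\sim\|h\|_{L^{\Psi}(\mathcal{X})}$ with equivalence constants depending only on $C_{*}$ and the positive lower and upper type exponents of $\Psi$. This follows by rewriting the two Luxemburg-type infima as $\int_{B}\Psi(|h|/\lambda)\,d\mu\le\mu(B)\varphi(B)$ and $\int_{\mathcal{X}}\Psi(|h|/\lambda)\,d\mu\le 1$, and then converting between the two right-hand sides by rescaling $\lambda$; the requisite rescaling factor is controlled by the positive upper type of $\Psi$ in the case $\mu(B)\varphi(B)<1$ or by the positive lower type in the case $\mu(B)\varphi(B)>1$, uniformly over $B$ thanks to \eqref{var const}.

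An immediate consequence is that every $(\varphi,\Psi)$-block $b$ with supporting ball $B$ satisfies $\|b\|_{L^{\Psi}(\mathcal{X})}\lesssim 1$ uniformly in $B$, and, by the same localization estimate applied with $\widetilde\Psi$ in place of $\Psi$ and with $B$ exhausting $\mathcal{X}$, the spaces $L^{(\widetilde\Psi,\varphi)}(\mathcal{X})$ and $L^{\widetilde\Psi}(\mathcal{X})$ coincide with equivalent norms. The embedding $B_{(\varphi,\Psi)}(\mathcal{X})\hookrightarrow L^{\Psi}(\mathcal{X})$ then follows directly: for any block representation $f=\sum_{j\in\mathbb{N}}\lambda_{j}b_{j}$ in $[L^{(\widetilde\Psi,\varphi)}(\mathcal{X})]^{*}$ with $\sum_{j}|\lambda_{j}|<\infty$, the series converges absolutely in $L^{\Psi}(\mathcal{X})$. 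Since the type conditions on $\Psi$ force both $\Psi$ and $\widetilde\Psi$ to satisfy the $\Delta_{2}$-condition, the natural embedding $L^{\Psi}\hookrightarrow[L^{\widetilde\Psi}]^{*}=[L^{(\widetilde\Psi,\varphi)}]^{*}$ forces the $L^{\Psi}$-limit to agree with $f$, yielding $\|f\|_{L^{\Psi}(\mathcal{X})}\lesssim\|f\|_{B_{(\varphi,\Psi)}(\mathcal{X})}$ after infimizing over decompositions.

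For the reverse embedding, I would first verify it for functions with bounded support and then extend by density. If $g\in L^{\Psi}(\mathcal{X})$ has support in a ball $B$, then $b:=g/\lambda$ with $\lambda:=C_{0}\|g\|_{L^{\Psi}(\mathcal{X})}$ is a single $(\varphi,\Psi)$-block, provided $C_{0}$ is chosen large enough depending only on the localization constant and on $C_{*}$ so that $\|b\|_{\Psi,\varphi,B}\le 1/[\mu(B)\varphi(B)]$; this produces $\|g\|_{B_{(\varphi,\Psi)}(\mathcal{X})}\le C_{0}\|g\|_{L^{\Psi}(\mathcal{X})}$. For arbitrary $f\in L^{\Psi}(\mathcal{X})$, density of bounded functions with bounded support in $L^{\Psi}$ (which holds because $\Psi\in\Delta_{2}$) allows the choice of a sequence $\{g_{n}\}$ of such functions with $g_{n}\to f$ in $L^{\Psi}$ and, after passing to a subsequence, $\|g_{n+1}-g_{n}\|_{L^{\Psi}}\le 2^{-n}\|f\|_{L^{\Psi}}$. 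Applying the single-block bound to $g_{1}$ and to each difference $g_{n+1}-g_{n}$, the telescoping sum $g_{1}+\sum_{n}(g_{n+1}-g_{n})$ is absolutely summable in $B_{(\varphi,\Psi)}(\mathcal{X})$ and, by completeness of that space, converges to some $\widetilde f\in B_{(\varphi,\Psi)}(\mathcal{X})$ with $\|\widetilde f\|_{B_{(\varphi,\Psi)}(\mathcal{X})}\lesssim\|f\|_{L^{\Psi}(\mathcal{X})}$; the embedding from the previous paragraph identifies $\widetilde f$ with $f$.

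The main obstacle is the localization estimate in the first paragraph, which requires carefully splitting into the cases $\mu(B)\varphi(B)<1$ and $\mu(B)\varphi(B)\geq 1$ and invoking the appropriate type condition of $\Psi$ to rescale the Luxemburg integral while tracking the dependence of the constant only on $C_{*}$. A secondary subtlety is confirming that convergence in $L^{\Psi}(\mathcal{X})$ is compatible with convergence in $[L^{(\widetilde\Psi,\varphi)}(\mathcal{X})]^{*}$, which will be handled by first pairing against bounded functions with bounded support, then using the coincidence $L^{(\widetilde\Psi,\varphi)}=L^{\widetilde\Psi}$ together with the standard $L^{\Psi}$--$L^{\widetilde\Psi}$ duality afforded by the $\Delta_{2}$-condition.
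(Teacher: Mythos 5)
Your argument is correct, and the two halves compare differently with the paper's proof. For the embedding $B_{(\varphi,\Psi)}(\mathcal{X})\subset L^{\Psi}(\mathcal{X})$ you and the paper do essentially the same thing: condition \eqref{var const} makes every $(\varphi,\Psi)$-block uniformly bounded in $L^{\Psi}(\mathcal{X})$ and identifies $L^{(\widetilde\Psi,\varphi)}(\mathcal{X})$ with $L^{\widetilde\Psi}(\mathcal{X})$, after which the block series is summed (the paper phrases this through the pairing estimate \eqref{bg} and duality, you through absolute convergence in $L^{\Psi}(\mathcal{X})$ plus the embedding $L^{\Psi}(\mathcal{X})\hookrightarrow[L^{\widetilde\Psi}(\mathcal{X})]^{*}$ to identify the limit; both are fine). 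For the reverse embedding the routes genuinely diverge. The paper is constructive: for $f\in L^{\Psi}(\mathcal{X})$ it chooses an exhausting sequence of balls $\{B_j\}_{j\in\mathbb{N}}$ along which the tail modulars $\int_{B_j^{\complement}}\Psi(|f|/\lambda)\,d\mu$ decay like $(2^{-j})^{r_{\Psi}^{+}}$, and checks directly that the annular pieces $f\mathbf{1}_{B_j\setminus B_{j-1}}$ are blocks with geometrically summable coefficients. You instead isolate a localization lemma ($\|h\|_{\Psi,\varphi,B}\sim\|h\|_{L^{\Psi}(\mathcal{X})}$ for $h$ supported in $B$), deduce that every boundedly supported $L^{\Psi}$-function is a single normalized block, and then reach general $f$ by density of such functions (via the $\Delta_2$-condition), a fast-converging telescoping sequence, and completeness of $B_{(\varphi,\Psi)}(\mathcal{X})$. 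Your version is conceptually cleaner and reduces everything to the one-block case, at the price of importing two auxiliary facts the paper's explicit decomposition avoids: density of boundedly supported functions in $L^{\Psi}(\mathcal{X})$ and the compatibility of $L^{\Psi}$-limits with limits in $[L^{(\widetilde\Psi,\varphi)}(\mathcal{X})]^{*}$. One small imprecision: in the localization lemma the rescaling of the Luxemburg parameter always amounts to shrinking a modular from a value bounded by $\max\{1,C_*\}$ down to a value bounded below by $\min\{1,C_*^{-1}\}$, and this is handled in both directions by the positive lower type of $\Psi$ (with $s=c^{-1}\in(0,1)$); your case split attributing one direction to the upper type is not needed, though harmless since both types are assumed.
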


\begin{proof}
From the assumption that $\varphi$ satisfies \eqref{var const}, we deduce that
$L^{(\widetilde\Psi,\varphi)}(\mathcal{X})=L^{\widetilde\Psi}(\mathcal{X})$
with equivalent norms.
Let $f\in B_{(\varphi,\Psi)}(\mathcal{X})$.
Then there exist a sequence $\{b_j\}_{j\in\mathbb{N}}$ in $B(\varphi,\Psi)$
and a sequence $\{\lambda_j\}_{j\in\mathbb{N}}$ in $[0,\infty)$
satisfying $\sum_{j\in\mathbb{N}}\lambda_j<\infty$
such that $f=\sum_{j\in\mathbb{N}}\lambda_jb_j$
in $[L^{(\widetilde\Psi,\varphi)}(\mathcal{X})]^*=[L^{\widetilde\Psi}(\mathcal{X})]^*
=L^{\Psi}(\mathcal{X})$ and $2\|f\|_{B_{(\varphi,\Psi)}(\mathcal{X})}\ge
\sum_{j\in\mathbb{N}}\lambda_j$.
By this and \eqref{bg}, we conclude that,
for any $g\in L^{\widetilde\Psi}(\mathcal{X})$,
\begin{equation*}
\left|\int_\mathcal{X} gf\,d\mu\right|
\le
\sum_{j=1}^\infty\lambda_j\int_{B_j} |b_jg|\,d\mu
\le
2\sum_{j=1}^\infty\lambda_j \|g\|_{L^{(\widetilde\Psi,\varphi)}(\mathcal{X})}
\sim
\|f\|_{B_{(\varphi,\Psi)}(\mathcal{X})}\|g\|_{L^{\widetilde\Psi}(\mathcal{X})},
\end{equation*}
which proves $f\in L^{\Psi}(\mathcal{X})$ and
$\|f\|_{L^{\Psi}}\lesssim\|f\|_{B_{(\varphi,\Psi)}(\mathcal{X})}$.

Conversely, let $f\in L^{\Psi}(\mathcal{X})$ and
$\lambda\in(\|f\|_{L^{\Psi}(\mathcal{X})},\infty)$. Then
\begin{align}\label{840}
\int_\mathcal{X}\Psi\left(\frac{|f|}{\lambda}\right)\,d\mu \le 1.
\end{align}
Choose
a sequence of balls $\{B_j\}_{j\in\mathbb{N}}$ such that
$B_j\subset B_{j+1}$ for any $j\in\mathbb{N}$,
$\bigcup_{j\in\mathbb{N}}B_j=\mathcal{X}$, and
\begin{equation*}
\int_{B_j^{\complement}} \Psi\left(\frac{|f|}{\lambda}\right)\,d\mu
\le
\left(2^{-j}\right)^{r_{\Psi}^+}
\end{equation*}
for any $j\in\mathbb{N}$,
which, together with \eqref{840} and \eqref{var const},
further implies that
\begin{align*}
\int_{B_1} \Psi\left(\frac{|f|}{\lambda}\right)\,d\mu
\le
1\le C_*\mu(B_1)\varphi(B_1)
\end{align*}
and, for any $j\in\mathbb{N}\cap[2,\infty)$,
\begin{align*}
\int_{B_{j-1}^{\complement}} \Psi\left(\frac{|f|}{\lambda}\right)\,d\mu
\le
\left(2^{-j+1}\right)^{r_{\Psi}^+} C_*\mu(B_j)\varphi(B_j).
\end{align*}
From this, we infer that there exists $C\in(0,\infty)$,
depending only on $\Psi$ and $C_*$, such that
\begin{align}\label{845}
\left\|f\mathbf{1}_{B_1}\right\|_{\Psi,\varphi,B_1}
\le C\|f\|_{L^{\Psi}(\mathcal{X})}
\end{align}
and, for any $j\in\mathbb{N}\cap[2,\infty)$,
\begin{align}\label{846}
\left\|f\mathbf{1}_{B_j\setminus B_{j-1}}\right\|_{\Psi,\varphi,B_j}
\le 2^{-j+1}C\|f\|_{L^{\Psi}(\mathcal{X})}.
\end{align}
Let $\lambda_1:=\varphi(B_1)\mu(B_1)
\|f\mathbf{1}_{B_1}\|_{\Psi,\varphi,B_1}$
and $b_1:=\lambda_1^{-1}f\mathbf{1}_{B_1}$
and, for any $j\in\mathbb{N}\cap[2,\infty)$,
let
\begin{align*}
\lambda_j:=\varphi(B_j)\mu(B_j)
\left\|f\mathbf{1}_{B_j\setminus B_{j-1}}\right\|_{\Psi,\varphi,B_j}
\end{align*}
and
$
b_j:=\lambda_j^{-1}f\mathbf{1}_{B_j\setminus B_{j-1}}.
$
Then, by \eqref{845}, \eqref{846}, \eqref{var const},
and Definition~\ref{defn:pP block}, we find that,
for any $j\in\mathbb{N}$,
$
\lambda_j\le 2^{-j+1}CC_*\|f\|_{L^{\Psi}(\mathcal{X})}
$
and $b_j\in B(\varphi,\Psi)$,
which implies that
\begin{align*}
f=
f\mathbf{1}_{B_1} + \sum_{j=2}^\infty
f\mathbf{1}_{B_j\setminus B_{j-1}}
=\sum_{j=1}^\infty\lambda_jb_j
\end{align*}
and
\begin{equation*}
\sum_{j=1}^\infty\lambda_j\le
CC_*\sum_{j=1}^\infty2^{-j+1}\|f\|_{L^{\Psi}(\mathcal{X})}
\le
2CC_*\|f\|_{L^{\Psi}(\mathcal{X})}.
\end{equation*}
That is,  $f\in B_{(\varphi,\Psi)}(\mathcal{X})$ and
$\|f\|_{B_{(\varphi,\Psi)}(\mathcal{X})}\le 2CC_*\|f\|_{L^{\Psi}(\mathcal{X})}$.
This finishes the proof of Lemma~\ref{lem:LPs}.
\end{proof}

In the proof of the next lemma, we need the following inequality:
Let $\varphi\in\mathcal{G}^{\rm dec}_1$ satisfy \eqref{NC}.
Then there exists $C\in[1,\infty)$ such that, for any balls $B_1$ and $B_2$,
\begin{equation}\label{B1B2}
\frac{\mu(B_1\cap B_2)}{\mu(B_2)\varphi(B_2)}
\le
\frac{C}{\varphi(B_1)};
\end{equation}
see \cite[Lemma~3.18]{tnyyz24}.

\begin{lemma}\label{lem:OB L1 loc}
Let $\varphi\in\mathcal{G}^{\rm dec}_1$ satisfy \eqref{NC} and
$\Psi$ be an Orlicz function with $1<r_{\Psi}^-\le r_{\Psi}^+<\infty$.
Then $B_{(\varphi,\Psi)}(\mathcal{X})\subset L^1_{\mathrm{loc}}(\mathcal{X})$
and the expression \eqref{OB expression} converges $\mu$-a.e. on $\mathcal{X}$.
\end{lemma}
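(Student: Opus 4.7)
The plan is to exhibit, for every $f\in B_{(\varphi,\Psi)}(\mathcal{X})$ and every decomposition $f=\sum_{j\in\mathbb{N}}\lambda_jb_j$ as in \eqref{OB expression}, an $L^1_{\mathrm{loc}}(\mathcal{X})$-representative of $f$ arising as the $\mu$-a.e.\ pointwise sum of the series. The key reduction is to show that, for every ball $B\subset\mathcal{X}$, the scalar series $\sum_{j\in\mathbb{N}}\lambda_j\int_B|b_j|\,d\mu$ converges; absolute convergence in $L^1(B)$ then delivers $\mu$-a.e.\ convergence on $B$, and a countable exhaustion of $\mathcal{X}$ by balls extends this to $\mu$-a.e.\ convergence on all of $\mathcal{X}$.

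First I would fix a ball $B$ and a block $b_j$ supported in $B_j$. Applying the Orlicz H\"older-type estimate \eqref{int fg} on $B_j$ to the pair $(b_j,\mathbf{1}_B)$, together with the normalization $\|b_j\|_{\Psi,\varphi,B_j}\le[\mu(B_j)\varphi(B_j)]^{-1}$ from Definition~\ref{defn:pP block}(ii), yields
\[
\int_B|b_j|\,d\mu=\int_{B_j}|b_j|\mathbf{1}_B\,d\mu\le 2\|\mathbf{1}_B\|_{\widetilde{\Psi},\varphi,B_j}.
\]
A direct computation in the spirit of \eqref{chiB} gives
\[
\|\mathbf{1}_B\|_{\widetilde{\Psi},\varphi,B_j}=\frac{1}{\widetilde{\Psi}^{-1}(\mu(B_j)\varphi(B_j)/\mu(B\cap B_j))}
\]
(vanishing when $\mu(B\cap B_j)=0$). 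Then \eqref{B1B2} provides $\mu(B_j)\varphi(B_j)/\mu(B\cap B_j)\ge\varphi(B)/C$, and monotonicity of $\widetilde{\Psi}^{-1}$ produces the uniform-in-$j$ bound $\int_B|b_j|\,d\mu\le 2/\widetilde{\Psi}^{-1}(\varphi(B)/C)$. Summing against $\lambda_j$ and using $\sum_{j\in\mathbb{N}}\lambda_j<\infty$ yields absolute convergence of the series in $L^1(B)$, whence pointwise $\mu$-a.e.\ convergence on $B$. Letting $B$ range over a countable exhaustion of $\mathcal{X}$ produces a function $\widetilde{f}\in L^1_{\mathrm{loc}}(\mathcal{X})$ such that $\sum_{j\in\mathbb{N}}\lambda_j b_j\to\widetilde{f}$ $\mu$-a.e.\ on $\mathcal{X}$.

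The final step is to identify $\widetilde{f}$ with the abstract dual-space element $f$. For every $g\in L^{(\widetilde{\Psi},\varphi)}(\mathcal{X})$, the inequality \eqref{bg} furnishes $\lambda_j|\int_{\mathcal{X}}b_jg\,d\mu|\le 2\lambda_j\|g\|_{L^{(\widetilde{\Psi},\varphi)}(\mathcal{X})}$, so Fubini's theorem justifies $\int_{\mathcal{X}}\widetilde{f}g\,d\mu=\sum_{j\in\mathbb{N}}\lambda_j\int_{\mathcal{X}}b_jg\,d\mu=\langle f,g\rangle$, which identifies $f$ with $\widetilde{f}$. The main technical obstacle is the uniform control of $\|\mathbf{1}_B\|_{\widetilde{\Psi},\varphi,B_j}$: one must handle simultaneously the regimes $B_j\subsetneq B$, $B\subsetneq B_j$, and the intermediate geometry, which is precisely where \eqref{B1B2} and the assumptions $\varphi\in\mathcal{G}^{\rm dec}_1$ and \eqref{NC} enter in an essential way.
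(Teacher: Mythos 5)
Your proposal is correct and follows essentially the same route as the paper: the pointwise series is controlled on each ball $B$ via the Orlicz H\"older inequality \eqref{int fg}, the explicit computation of $\|\mathbf{1}_{B\cap B_j}\|_{\widetilde\Psi,\varphi,B_j}$, and the uniform lower bound on $\mu(B_j)\varphi(B_j)/\mu(B\cap B_j)$ from \eqref{B1B2}, yielding absolute convergence in $L^1(B)$ and hence $\mu$-a.e.\ convergence. The only difference is that you make explicit the final identification of the a.e.\ limit with the dual-space element $f$ by testing against $g\in L^{(\widetilde\Psi,\varphi)}(\mathcal{X})$, a step the paper leaves implicit.
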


\begin{proof}
Let $f\in B_{(\varphi,\Psi)}$.
Then there exist a sequence $\{b_j\}_{j\in\mathbb{N}}$ in $B(\varphi,\Psi)$
and a sequence $\{\lambda_j\}_{j\in\mathbb{N}}$ in $[0,\infty)$
satisfying $\sum_{j\in\mathbb{N}}\lambda_j<\infty$
such that $f=\sum_{j\in\mathbb{N}}\lambda_jb_j$
in $[L^{(\widetilde\Psi,\varphi)}(\mathcal{X})]^*$.
For any $j\in\mathbb{N}$,
let $B_j$ be the supporting ball of $b_j$.
An elementary calculation shows that, for any ball $B$,
\begin{equation*}
\left\|\mathbf{1}_{B\cap B_j}\right\|_{\widetilde\Psi,\varphi,B_j}
=\frac{1}{\widetilde\Psi^{-1}(\frac{\mu(B_j)\varphi(B_j)}{\mu(B\cap B_j)})}.
\end{equation*}
From this, \eqref{int fg}, and \eqref{B1B2}, we deduce that,
for any ball $B$,
\begin{align*}
\int_B\sum_{j=1}^\infty\lambda_j|b_j|\,d\mu
&\le\sum_{j=1}^\infty\lambda_j
\int_{B_j}|b_j|\mathbf{1}_{B\cap B_j}\,d\mu
\le
\sum_{j=1}^\infty\lambda_j\, \mu(B_j)\varphi(B_j)
\left\|b_j\right\|_{\Psi,\varphi,B_j}
\left\|\mathbf{1}_{B\cap B_j}\right\|_{\widetilde\Psi,\varphi,B_j}  \\
&\le\sum_{j=1}^\infty\lambda_j
\frac{1}{\widetilde\Psi^{-1}(\frac{\mu(B_j)\varphi(B_j)}{\mu(B\cap B_j)})}
\lesssim\sum_{j=1}^\infty\lambda_j
\frac{1}{\widetilde\Psi^{-1}(\varphi(B))}<\infty.
\end{align*}
Thus, $\sum_{j\in\mathbb{N}}\lambda_j b_j$ converges $\mu$-a.e. to
some function in $L^1_{\mathrm{loc}}(\mathcal{X})$
and hence $f\in L^1_{\mathrm{loc}}(\mathcal{X})$.
This finishes the proof of Lemma~\ref{lem:OB L1 loc}.
\end{proof}

By an argument similar to that used in the proof of
\cite[Lemmas~3.22 and~3.25]{tnyyz24},
we can prove the following two lemmas; we omit the details.

\begin{lemma}\label{lem:OB lattice}
Let $\varphi\in\mathcal{G}^{\rm dec}_1$ satisfy \eqref{NC} and
$\Psi$ be an Orlicz function with $1<r_{\Psi}^-\le r_{\Psi}^+<\infty$.
If $g,f\in\mathscr{M}(\mathcal{X})$ are such that
$|g|\le|f|$ $\mu$-a.e. on $\mathcal{X}$,
then $\|g\|_{B_{(\varphi,\Psi)}(\mathcal{X})}
\le\|f\|_{B_{(\varphi,\Psi)}(\mathcal{X})}$.
\end{lemma}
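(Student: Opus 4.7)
The plan is to show Lemma~\ref{lem:OB lattice} by transferring a near-optimal block decomposition of $f$ into a block decomposition of $g$ by pointwise multiplication, and then verifying convergence in the relevant dual space. First, I would reduce to the pointwise-a.e. setting: writing $h(x) := g(x)/f(x)$ where $f(x) \neq 0$ and $h(x) := 0$ otherwise, the hypothesis $|g| \le |f|$ gives $|h| \le 1$ and $g = h f$ $\mu$-a.e.

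Second, fix $\varepsilon \in (0,\infty)$ and choose, by Definition~\ref{defn:OB}, a decomposition $f = \sum_{j \in \mathbb{N}} \lambda_j b_j$ in $[L^{(\widetilde\Psi,\varphi)}(\mathcal{X})]^*$ with $b_j \in B(\varphi,\Psi)$ (supporting ball $B_j$) and $\sum_{j \in \mathbb{N}} \lambda_j \le \|f\|_{B_{(\varphi,\Psi)}(\mathcal{X})} + \varepsilon$. By Lemma~\ref{lem:OB L1 loc}, this series also converges $\mu$-a.e. on $\mathcal{X}$, so after passing to partial sums I obtain $\sum_{j=1}^N \lambda_j b_j(x) \to f(x)$ for $\mu$-a.e. $x \in \mathcal{X}$. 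Set $b'_j := h \cdot b_j$. Since $\mathrm{supp\,}b'_j \subset \mathrm{supp\,}b_j \subset B_j$ and, by the lattice property of the Luxemburg-type norm $\|\cdot\|_{\Psi,\varphi,B_j}$ (a direct consequence of $\Psi$ being non-decreasing on $[0,\infty)$),
\begin{align*}
\|b'_j\|_{\Psi,\varphi,B_j}
\le \|b_j\|_{\Psi,\varphi,B_j}
\le \frac{1}{\mu(B_j)\varphi(B_j)},
\end{align*}
each $b'_j$ is again a $(\varphi,\Psi)$-block, so $\|b'_j\|_{B_{(\varphi,\Psi)}(\mathcal{X})} \le 1$.

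Third, I would show that $g = \sum_{j \in \mathbb{N}} \lambda_j b'_j$ in $[L^{(\widetilde\Psi,\varphi)}(\mathcal{X})]^*$ and hence $g \in B_{(\varphi,\Psi)}(\mathcal{X})$ with $\|g\|_{B_{(\varphi,\Psi)}(\mathcal{X})} \le \sum_{j \in \mathbb{N}} \lambda_j$. Since $B_{(\varphi,\Psi)}(\mathcal{X})$ is a Banach space and $\sum_{j \in \mathbb{N}} \lambda_j \|b'_j\|_{B_{(\varphi,\Psi)}(\mathcal{X})} \le \sum_{j \in \mathbb{N}} \lambda_j < \infty$, the series converges absolutely to some element $G \in B_{(\varphi,\Psi)}(\mathcal{X})$, and the inclusion $B_{(\varphi,\Psi)}(\mathcal{X}) \hookrightarrow [L^{(\widetilde\Psi,\varphi)}(\mathcal{X})]^*$ makes the partial sums converge to $G$ in the dual space as well. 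Applying Lemma~\ref{lem:OB L1 loc} to this new decomposition also gives $\mu$-a.e.~convergence of the partial sums to $G$. But the partial sums equal $h \sum_{j=1}^N \lambda_j b_j$, which converges $\mu$-a.e.~to $h f = g$. Thus $G = g$ $\mu$-a.e., yielding $\|g\|_{B_{(\varphi,\Psi)}(\mathcal{X})} \le \sum_{j \in \mathbb{N}} \lambda_j \le \|f\|_{B_{(\varphi,\Psi)}(\mathcal{X})} + \varepsilon$. Letting $\varepsilon \to 0^+$ finishes the proof.

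The main obstacle I anticipate is the convergence of the truncated series $\sum_{j=1}^N \lambda_j b'_j$ to $g$ in $[L^{(\widetilde\Psi,\varphi)}(\mathcal{X})]^*$ rather than merely pointwise; here the key point is that the absolute summability $\sum_{j \in \mathbb{N}} \lambda_j < \infty$ together with the unit bound $\|b'_j\|_{B_{(\varphi,\Psi)}(\mathcal{X})} \le 1$ (immediate from the one-term decomposition of a block) upgrades pointwise convergence to norm convergence in $B_{(\varphi,\Psi)}(\mathcal{X})$, and hence to weak-$\ast$ convergence in the predual duality with $L^{(\widetilde\Psi,\varphi)}(\mathcal{X})$. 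The almost-everywhere identification of this limit with $g$, already available from the multiplicative structure $b'_j = h b_j$ and the $\mu$-a.e.~convergence provided by Lemma~\ref{lem:OB L1 loc}, then closes the argument.
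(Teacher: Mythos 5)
Your proof is correct. The paper itself omits the argument (deferring to the analogous Lemma~3.22 of the cited reference \cite{tnyyz24}), but your construction --- writing $g=hf$ with $|h|\le1$, multiplying a near-optimal block decomposition of $f$ term by term to obtain blocks $b_j'=hb_j$, and identifying the resulting limit with $g$ via the $\mu$-a.e.\ convergence supplied by Lemma~\ref{lem:OB L1 loc} --- is exactly the standard route intended there.
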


\begin{lemma}\label{lem:OB Fatou}
Let $\varphi\in\mathcal{G}^{\rm dec}_1$ satisfy \eqref{NC} and
$\Psi$ be an Orlicz function with $1<r_{\Psi}^-\le r_{\Psi}^+<\infty$.
Assume that $\varphi$ satisfies \eqref{var const}
or both
\begin{equation}\label{var r 0}
\lim_{r\to0^+}\inf_{x\in B}\varphi(x,r)=\infty
\end{equation}
for any ball $B\subset\mathcal{X}$ and
\begin{equation}\label{var r infty}
\lim_{r\to\infty}\varphi(x,r)\mu(B(x,r))=\infty
\end{equation}
for any $x\in \mathcal{X}$.
Let $\{f_m\}_{m\in\mathbb{N}}$ be a sequence in
$\mathscr{M}(\mathcal{X})$ and $f\in\mathscr{M}(\mathcal{X})$.
If $0\le f_m \uparrow f$ $\mu$-almost everywhere on $\mathcal{X}$ as $m\to\infty$,
then $\|f_m\|_{B_{(\varphi,\Psi)}(\mathcal{X})}\uparrow
\|f\|_{B_{(\varphi,\Psi)}(\mathcal{X})}$ as $m\to\infty$.
\end{lemma}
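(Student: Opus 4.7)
The strategy has two layers: a straightforward monotonicity obtained from the lattice property, and a matching reverse bound obtained by passing to the limit in atomic decompositions. The harder part is the final matching of constants, for which the two alternative hypotheses on $\varphi$ enter in distinct ways.

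First I would apply Lemma~\ref{lem:OB lattice} to the chain $0 \le f_m \le f_{m+1} \le f$ to conclude that the sequence $\{\|f_m\|_{B_{(\varphi,\Psi)}(\mathcal{X})}\}_{m\in\mathbb{N}}$ is non-decreasing and bounded above by $\|f\|_{B_{(\varphi,\Psi)}(\mathcal{X})}$. Set $L := \lim_{m\to\infty}\|f_m\|_{B_{(\varphi,\Psi)}(\mathcal{X})}$; then $L \le \|f\|_{B_{(\varphi,\Psi)}(\mathcal{X})}$, and the case $L = \infty$ is immediate, so assume $L < \infty$. For each $m\in\mathbb{N}$ I would fix a near-optimal atomic decomposition $f_m = \sum_{j\in\mathbb{N}} \lambda_j^{(m)} b_j^{(m)}$ in $[L^{(\widetilde{\Psi},\varphi)}(\mathcal{X})]^*$ with $b_j^{(m)} \in B(\varphi,\Psi)$ and $\sum_{j\in\mathbb{N}} \lambda_j^{(m)} \le L + 1/m$. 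Combining the pairing estimate \eqref{bg} atom-by-atom with the monotone convergence theorem applied to $f_m|g| \uparrow f|g|$ would yield, for every $g \in L^{(\widetilde{\Psi},\varphi)}(\mathcal{X})$,
\[
\int_\mathcal{X} f\,|g|\,d\mu \le 2L\,\|g\|_{L^{(\widetilde{\Psi},\varphi)}(\mathcal{X})},
\]
so that $f$ lies in $[L^{(\widetilde{\Psi},\varphi)}(\mathcal{X})]^*$ with functional norm at most $2L$.

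The remaining and substantive step is to promote this functional bound to a bound on the atomic norm $\|f\|_{B_{(\varphi,\Psi)}(\mathcal{X})}$ with the sharp constant $L$, in direct parallel to \cite[Lemma~3.25]{tnyyz24}. Under \eqref{var const} I would invoke Lemma~\ref{lem:LPs} to identify $B_{(\varphi,\Psi)}(\mathcal{X})$ with $L^{\Psi}(\mathcal{X})$ and transport the classical Fatou property of the Orlicz modular through this identification while carefully tracking the norm constants. Under \eqref{var r 0} and \eqref{var r infty} I would exhaust $\mathcal{X}$ by an increasing sequence $\widetilde{B}_k$ of balls chosen so that $\Phi^{-1}(\varphi(\widetilde{B}_k)) \to 0$ using \eqref{var r 0} and $\mu(\widetilde{B}_k)\varphi(\widetilde{B}_k) \to \infty$ using \eqref{var r infty}, decompose $f$ telescopically as $\sum_k f\mathbf{1}_{\widetilde{B}_k\setminus\widetilde{B}_{k-1}}$, convert each truncated piece into $B(\varphi,\Psi)$-blocks via the local Orlicz structure on a ball combined with Lemma~\ref{lem:chi norm}, and control the total coefficient sum using \eqref{B1B2} and the geometric decay of $\varphi$. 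The main obstacle is precisely this matching-constant upgrade: a naive pairing-and-invert step would forfeit the factor of $2$ in \eqref{bg} and yield only $\|f\|_{B_{(\varphi,\Psi)}(\mathcal{X})} \le 2L$, which is inadequate for the strict monotone convergence $\|f_m\|\uparrow\|f\|$ required by Definition~\ref{Debqfs}(iii); recovering the sharp constant will require exploiting the infimum structure of the atomic norm together with the specific geometric behavior of $\varphi$ encoded in each of the two alternative hypotheses.
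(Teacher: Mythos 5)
Your first layer is fine: Lemma~\ref{lem:OB lattice} gives monotonicity and $L:=\lim_m\|f_m\|_{B_{(\varphi,\Psi)}(\mathcal{X})}\le\|f\|_{B_{(\varphi,\Psi)}(\mathcal{X})}$, and the reduction to $L<\infty$ is correct. The gap is in the converse inequality $\|f\|_{B_{(\varphi,\Psi)}(\mathcal{X})}\le L$, which is the whole content of the lemma, and neither of the two routes you sketch can deliver it. Your duality step only shows that $f$ acts on $L^{(\widetilde\Psi,\varphi)}(\mathcal{X})$ with functional norm at most $2L$, and there is no way back from the functional norm to the atomic norm with constant $1$: the factor $2$ in \eqref{bg} is intrinsic to Orlicz--H\"older, and Theorem~\ref{thm:K dual} is only a norm equivalence. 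Under \eqref{var const}, Lemma~\ref{lem:LPs} likewise gives only $C^{-1}\|\cdot\|_{L^{\Psi}(\mathcal{X})}\le\|\cdot\|_{B_{(\varphi,\Psi)}(\mathcal{X})}\le C\|\cdot\|_{L^{\Psi}(\mathcal{X})}$, so ``transporting'' the Fatou property of $L^{\Psi}(\mathcal{X})$ yields at best $\|f\|_{B_{(\varphi,\Psi)}(\mathcal{X})}\le C^2L$; no amount of ``tracking constants'' repairs this, because the identification is not isometric. The telescoping construction you propose under \eqref{var r 0} and \eqref{var r infty} has the same defect: it manufactures a decomposition of $f$ from scratch and can only bound $\|f\|_{B_{(\varphi,\Psi)}(\mathcal{X})}$ by a constant multiple of some norm of $f$, not by $L$. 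Since Definition~\ref{Debqfs}(iii) demands the exact convergence $\|f_m\|\uparrow\|f\|$ (this lemma is an axiom check for Theorem~\ref{thm:OB bBfs}), any loss of a multiplicative constant is fatal. You name this obstacle yourself, but naming it is not overcoming it.

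The missing idea, which is what the argument referred to in \cite{tnyyz24} (following \cite{st15}) actually uses, is a limiting procedure performed on the near-optimal atomic decompositions of the $f_m$ themselves: one fixes decompositions $f_m=\sum_j\lambda_j^{(m)}b_j^{(m)}$ with $\sum_j\lambda_j^{(m)}\le\|f_m\|_{B_{(\varphi,\Psi)}(\mathcal{X})}+\epsilon_m$, organizes the blocks according to the location and scale of their supporting balls, extracts (weakly or weak-$*$) convergent subsequences of the normalized blocks --- bounded sets in $L^{\Psi}$ on a fixed ball are relatively weakly compact since $1<r_{\Psi}^-\le r_{\Psi}^+<\infty$ --- and diagonalizes to produce a decomposition of $f$ whose coefficient sum is at most $\liminf_m\sum_j\lambda_j^{(m)}\le L$. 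The alternative hypotheses \eqref{var const}, respectively \eqref{var r 0} together with \eqref{var r infty}, enter precisely at this stage, to rule out mass escaping through blocks whose supporting balls shrink to a point or expand to all of $\mathcal{X}$. Your proposal contains no compactness or diagonalization step on the decompositions, so the proof as planned does not close.
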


Using Definition~\ref{defn:OB}, Lemmas~\ref{lem:OB lattice} and \ref{lem:OB Fatou},
and the properties of the Lebesgue integral,
we immediately obtain the following conclusion; we omit the details.

\begin{theorem}\label{thm:OB bBfs}
Let $\varphi\in\mathcal{G}^{\rm dec}_1$ satisfy \eqref{NC} and
$\Psi$ be an Orlicz function with $1<r_{\Psi}^-\le r_{\Psi}^+<\infty$.
Assume that $\varphi$ satisfies \eqref{var const}
or both \eqref{var r 0} and \eqref{var r infty}.
Then $B_{(\varphi,\Psi)}(\mathcal{X})$ is a ball Banach function space.
\end{theorem}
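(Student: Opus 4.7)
The plan is to verify each of the six axioms (i)--(vi) in Definition~\ref{Debqfs} for $Y(\mathcal{X}):=B_{(\varphi,\Psi)}(\mathcal{X})$. Two of these, namely the lattice property (ii) and the Fatou property (iii), have already been recorded as Lemmas~\ref{lem:OB lattice} and~\ref{lem:OB Fatou}, which is precisely where the hypothesis ``either $\varphi$ satisfies \eqref{var const} or $\varphi$ satisfies both \eqref{var r 0} and \eqref{var r infty}'' enters. So the remaining work concerns the quasi-triangle inequality (v), the integrability of the characteristic function of a ball (iv), the local integrability bound (vi), and the non-degeneracy (i).

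I would begin with (v), which is essentially bookkeeping: given $f,g\in B_{(\varphi,\Psi)}(\mathcal{X})$ and $\varepsilon\in(0,\infty)$, choose decompositions $f=\sum_{j}\lambda_j b_j$ and $g=\sum_j\mu_j c_j$ with block coefficients summing to at most $\|f\|_{B_{(\varphi,\Psi)}(\mathcal{X})}+\varepsilon$ and $\|g\|_{B_{(\varphi,\Psi)}(\mathcal{X})}+\varepsilon$, respectively, concatenate them to produce a block representation of $f+g$ in $[L^{(\widetilde\Psi,\varphi)}(\mathcal{X})]^*$, and let $\varepsilon\to 0^+$. Next for (iv): fixing a ball $B$, the computation \eqref{chiB} shows $\|\mathbf{1}_B\|_{\Psi,\varphi,B}=[\Psi^{-1}(\varphi(B))]^{-1}<\infty$, so setting $\lambda_B:=\mu(B)\varphi(B)\|\mathbf{1}_B\|_{\Psi,\varphi,B}$ and $b_B:=\lambda_B^{-1}\mathbf{1}_B$ exhibits $b_B$ as a genuine $(\varphi,\Psi)$-block in the sense of Definition~\ref{defn:pP block}, whence $\mathbf{1}_B=\lambda_Bb_B\in B_{(\varphi,\Psi)}(\mathcal{X})$ with $\|\mathbf{1}_B\|_{B_{(\varphi,\Psi)}(\mathcal{X})}\leq\lambda_B$.

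For (vi), I would recycle the estimate in the proof of Lemma~\ref{lem:OB L1 loc}. Given any decomposition $f=\sum_j\lambda_jb_j$ with supporting balls $\{B_j\}_{j\in\mathbb{N}}$ and any ball $B$, H\"older's inequality \eqref{int fg}, the block normalization, the identity $\|\mathbf{1}_{B\cap B_j}\|_{\widetilde\Psi,\varphi,B_j}=[\widetilde\Psi^{-1}(\frac{\mu(B_j)\varphi(B_j)}{\mu(B\cap B_j)})]^{-1}$, and the pointwise bound \eqref{B1B2} combine to give
\begin{equation*}
\int_B|f|\,d\mu\leq\sum_{j=1}^{\infty}\lambda_j\int_B|b_j|\,d\mu
\lesssim\frac{1}{\widetilde\Psi^{-1}(\varphi(B))}\sum_{j=1}^{\infty}\lambda_j;
\end{equation*}
taking the infimum over all admissible decompositions yields $\int_B|f|\,d\mu\leq C_{(B)}\|f\|_{B_{(\varphi,\Psi)}(\mathcal{X})}$ with $C_{(B)}$ finite and independent of $f$. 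Finally, (i) follows at once: if $\|f\|_{B_{(\varphi,\Psi)}(\mathcal{X})}=0$, then (vi) forces $\int_B|f|\,d\mu=0$ for every ball $B$, hence $f=0$ $\mu$-a.e. on $\mathcal{X}$ since balls cover $\mathcal{X}$.

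The only subtle point, and the step where I expect the argument to require the most care, is (vi)---specifically, verifying that the termwise estimate $\sum_j\lambda_j\int_B|b_j|\,d\mu$ is legitimate. This requires knowing that the series $\sum_j\lambda_jb_j$ defining $f$ in the dual $[L^{(\widetilde\Psi,\varphi)}(\mathcal{X})]^*$ actually coincides $\mu$-almost everywhere with the pointwise limit of its partial sums, which is exactly the content of Lemma~\ref{lem:OB L1 loc}. With that identification in hand, the axioms (i)--(vi) all follow, and hence $B_{(\varphi,\Psi)}(\mathcal{X})$ is a ball Banach function space.
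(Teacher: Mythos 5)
Your proposal is correct and follows the same route as the paper, which simply invokes Definition~\ref{defn:OB}, Lemmas~\ref{lem:OB lattice} and~\ref{lem:OB Fatou}, and ``properties of the Lebesgue integral'' and omits all details. Your verification of axioms (i), (iv), (v), and (vi) --- in particular using the estimate from Lemma~\ref{lem:OB L1 loc} to justify the termwise bound $\int_B|f|\,d\mu\le\sum_j\lambda_j\int_B|b_j|\,d\mu\lesssim[\widetilde\Psi^{-1}(\varphi(B))]^{-1}\sum_j\lambda_j$ and to identify $f$ with the $\mu$-a.e.\ limit of its block decomposition --- supplies exactly the details the paper leaves out, and correctly locates the only place where the hypothesis \eqref{var const} or \eqref{var r 0}--\eqref{var r infty} is needed, namely the Fatou property.
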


By an argument similar to that used in
the proof of \cite[Theorem~3.28]{tnyyz24},
we can also show that the generalized Orlicz--Morrey space and the generalized Orlicz-block space
are associate spaces of each other;
we omit the details.

\begin{theorem}\label{thm:K dual}
Let $\varphi\in\mathcal{G}^{\rm dec}_1$ satisfy \eqref{NC} and
$(\Phi, \Psi)$ be a complementary pair of Orlicz
functions with $1<r_{\Phi}^-\le r_{\Phi}^+<\infty$.
Then $[B_{(\varphi,\Psi)}(\mathcal{X})]'= L^{(\Phi,\varphi)}(\mathcal{X})$
and, for any $g\in[B_{(\varphi,\Psi)}(\mathcal{X})]'$,
it holds $\|g\|_{[B_{(\varphi,\Psi)}(\mathcal{X})]'}
\sim\|g\|_{L^{(\Phi,\varphi)}(\mathcal{X})}$
with the positive equivalence constants independent of $g$.
In addition, assume that \eqref{var const} holds or
assume that both \eqref{var r 0} and \eqref{var r infty} hold.
Then $[L^{(\Phi,\varphi)}(\mathcal{X})]' = B_{(\varphi,\Psi)}(\mathcal{X})$
and, for any $f\in[L^{(\Phi,\varphi)}(\mathcal{X})]'$,
$\|f\|_{[L^{(\Phi,\varphi)}(\mathcal{X})]'} \sim \|f\|_{B_{(\varphi,\Psi)}(\mathcal{X})}$
with the positive equivalence constants independent of $f$.
\end{theorem}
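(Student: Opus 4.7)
The plan is to prove the two equalities separately, following the template of \cite[Theorem~3.28]{tnyyz24} but replacing the $L^{p}$-H\"older/duality step by Orlicz-space arguments on each ball $B$ equipped with the renormalized probability-type measure $d\mu_{B}:=\frac{d\mu}{\mu(B)\varphi(B)}$. The two key ingredients will be the H\"older-type inequality \eqref{int fg} (with its specialization \eqref{bg}) and the Orlicz duality $[L^{\Psi}(B,d\mu_{B})]^{*}=L^{\Phi}(B,d\mu_{B})$ valid since $(\Phi,\Psi)$ is complementary. Throughout, I use that $\widetilde{\Psi}=\Phi$, so that $L^{(\widetilde{\Psi},\varphi)}(\mathcal{X})=L^{(\Phi,\varphi)}(\mathcal{X})$.

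For the first equality $[B_{(\varphi,\Psi)}(\mathcal{X})]'=L^{(\Phi,\varphi)}(\mathcal{X})$, the inclusion ``$\supset$'' will follow directly from \eqref{bg}: given $g\in L^{(\Phi,\varphi)}(\mathcal{X})$ and $f=\sum_{j}\lambda_{j}b_{j}\in B_{(\varphi,\Psi)}(\mathcal{X})$, I estimate $|\int_{\mathcal{X}}b_{j}g\,d\mu|\le 2\|g\|_{L^{(\Phi,\varphi)}(\mathcal{X})}$ block-by-block, sum over $j$, and invoke Lemma~\ref{lem:OB L1 loc} to justify that the integral representation of the functional matches the series in $[L^{(\Phi,\varphi)}(\mathcal{X})]^{*}$. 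Taking infimum over decompositions then yields $\|g\|_{[B_{(\varphi,\Psi)}]'}\le 2\|g\|_{L^{(\Phi,\varphi)}}$. For the reverse inclusion ``$\subset$'', I fix $g\in[B_{(\varphi,\Psi)}(\mathcal{X})]'$ and a ball $B$; by Orlicz duality on $(B,d\mu_{B})$, $\|g\mathbf{1}_{B}\|_{\Phi,\varphi,B}$ equals, up to a universal constant, the supremum of $\int_{B}g\eta\,d\mu_{B}$ over $\eta$ supported in $B$ with $\|\eta\|_{\Psi,\varphi,B}\le 1$. Rescaling $\widetilde{\eta}:=\eta/[\mu(B)\varphi(B)]$ produces a $(\varphi,\Psi)$-block (extended by zero) of $B_{(\varphi,\Psi)}$-norm at most $1$, and the identity $\int_{B}g\eta\,d\mu_{B}=\int_{\mathcal{X}}g\widetilde{\eta}\,d\mu$ combined with the testing bound $|\int_{\mathcal{X}}g\widetilde{\eta}\,d\mu|\le\|g\|_{[B_{(\varphi,\Psi)}]'}$ yields $\|g\mathbf{1}_{B}\|_{\Phi,\varphi,B}\lesssim\|g\|_{[B_{(\varphi,\Psi)}]'}$ uniformly in $B$. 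Passing to the supremum over all balls $B$ gives $\|g\|_{L^{(\Phi,\varphi)}}\lesssim\|g\|_{[B_{(\varphi,\Psi)}]'}$.

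For the second equality $[L^{(\Phi,\varphi)}(\mathcal{X})]'=B_{(\varphi,\Psi)}(\mathcal{X})$, the added hypothesis \eqref{var const} or \eqref{var r 0}+\eqref{var r infty} ensures via Theorem~\ref{thm:OB bBfs} that $B_{(\varphi,\Psi)}(\mathcal{X})$ is a ball Banach function space and hence has the Fatou property from Definition~\ref{Debqfs}(iii). Combining the first equality with the Lorentz--Luxemburg-type double-associate identity $Y''=Y$ valid for ball Banach function spaces enjoying the Fatou property (see, for example, \cite[Proposition~2.3]{shyy17}), I conclude $[L^{(\Phi,\varphi)}]'=[[B_{(\varphi,\Psi)}]']'=B_{(\varphi,\Psi)}$ with equivalent norms.

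The main obstacle is the ``$\subset$'' direction of the first equality: one must carefully set up the rescaling that turns an Orlicz-normalized test function on $(B,d\mu_{B})$ into a bona fide $(\varphi,\Psi)$-block of unit norm, and then check that the equivalence constants in the local Orlicz duality on $(B,d\mu_{B})$ depend only on $(\Phi,\Psi)$, being independent of $B$, $\varphi(B)$, and $\mu(B)$; the latter is automatic because $d\mu_{B}$ is a probability-type scaling. A secondary technical point, in the ``$\supset$'' direction of the first equality, is to pass from the block-wise estimates to the integral pairing for the full series $\sum_{j}\lambda_{j}b_{j}g$; this relies on Lemma~\ref{lem:OB L1 loc} to guarantee local integrability of the decomposition and thereby the compatibility of the functional action of $f$ with the integral $\int_{\mathcal{X}}fg\,d\mu$.
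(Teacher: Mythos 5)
Your proposal is correct and follows essentially the route the paper intends: the paper omits the details and refers to the analogous Morrey/block duality in \cite[Theorem~3.28]{tnyyz24}, and your argument is precisely that template with the local $L^q$--$L^{q'}$ duality replaced by the Orlicz duality on $(B,\mu_B)$ via \eqref{int fg}/\eqref{bg} and the block rescaling, plus the Lorentz--Luxemburg identity $Y''=Y$ for the second identity (which is exactly where Theorem~\ref{thm:OB bBfs} and hence the extra hypotheses on $\varphi$ enter). The only quibble is bibliographic: \cite[Proposition~2.3]{shyy17} asserts that the associate space of a ball Banach function space is again a ball Banach function space, so you should cite the Lorentz--Luxemburg theorem for ball Banach function spaces itself (e.g.\ \cite{zwyy20}) for $Y''=Y$.
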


Next, we prove the boundedness of the Hardy--Littlewood
maximal operator $\mathcal{M}$
on generalized Orlicz--Morrey spaces and generalized Orlicz-block spaces.
It is known that, if $1<r_{\Phi}^-\le r_{\Phi}^+<\infty$,
then there exists a positive constant $c$ such that,
for any $f\in L^1_{\mathrm{loc}}(\mathcal{X})$,
the modular inequality
\begin{equation}\label{modular}
\int_{\mathcal{X}}\Phi(\mathcal{M}f)\,d\mu
\le
c\int_{\mathcal{X}}\Phi(c|f|)\,d\mu
\end{equation}
holds, where the positive constant $c$ is independent of $f$;
see Kokilashvili and Krbec~\cite[Theorem~1.2.1]{kk91}
whose proof is available to spaces of homogeneous type.

\begin{theorem}\label{thm:Mf OM}
Let $\varphi\in\mathcal{G}^{\rm dec}_1$ satisfy \eqref{NC}
and $\Phi$ be an Orlicz function with $1<r_{\Phi}^-\le r_{\Phi}^+<\infty$.
Then the Hardy--Littlewood maximal operator
$\mathcal{M}$ is bounded on $L^{(\Phi,\varphi)}(\mathcal{X})$.
\end{theorem}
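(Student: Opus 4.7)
The plan is to fix an arbitrary ball $B=B(x_0,r)\subset\mathcal{X}$ and prove the uniform estimate $\|\mathcal{M}f\|_{\Phi,\varphi,B}\lesssim\|f\|_{L^{(\Phi,\varphi)}(\mathcal{X})}$ with implicit constant independent of $B$; taking the supremum over $B$ then yields the theorem. I would fix $\kappa\in(2K_0^2,\infty)$, set $B^*:=\kappa B$, and split $f=f_1+f_2$ with $f_1:=f\mathbf{1}_{B^*}$ and $f_2:=f\mathbf{1}_{(B^*)^\complement}$. Because $\Phi$ is convex in the range $1<r_\Phi^-\le r_\Phi^+<\infty$, the functional $\|\cdot\|_{\Phi,\varphi,B}$ is a genuine norm, so the sublinearity $\mathcal{M}f\le\mathcal{M}f_1+\mathcal{M}f_2$ reduces the task to treating the two pieces separately.

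For the local part $\mathcal{M}f_1$, the key ingredient is the Kokilashvili--Krbec modular inequality \eqref{modular}. With $\lambda:=\|f\|_{L^{(\Phi,\varphi)}(\mathcal{X})}$ and $\|f_1\|_{\Phi,\varphi,B^*}\le\lambda$, the definition of $\|\cdot\|_{\Phi,\varphi,B^*}$ gives $\int_{B^*}\Phi(|f|/\lambda)\,d\mu\le\mu(B^*)\varphi(B^*)$. Applying \eqref{modular} to $f_1/\lambda$ and then restricting the outer integral to $B$, I would derive
\[
\frac{1}{\mu(B)}\int_B\Phi\!\left(\frac{\mathcal{M}f_1}{\lambda}\right)d\mu
\lesssim\frac{1}{\mu(B)}\int_{B^*}\Phi\!\left(\frac{c|f|}{\lambda}\right)d\mu
\lesssim\frac{\mu(B^*)\varphi(B^*)}{\mu(B)},
\]
where the second inequality uses the upper type of $\Phi$ to absorb the constant $c$. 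The doubling condition \eqref{dc} yields $\mu(B^*)\lesssim\mu(B)$, while the almost-decreasing property of $\varphi$ coming from $\varphi\in\mathcal{G}^{\rm dec}_1$ gives $\varphi(B^*)\lesssim\varphi(B)$, so the right-hand side is bounded by $C\varphi(B)$ for some $C\in[1,\infty)$. A further fixed enlargement of $\lambda$, via the lower type of $\Phi$ with $s\in(0,1)$ chosen so that $C_{(r_\Phi^-)}s^{r_\Phi^-}C\le 1$, converts $C\varphi(B)$ into $\varphi(B)$ and delivers $\|\mathcal{M}f_1\|_{\Phi,\varphi,B}\lesssim\lambda$.

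For the non-local part, the central observation is a radius lower bound: if $x\in B$ and $B'=B(z,\rho)\ni x$ meets $(B^*)^\complement$, then Definition~\ref{Deqms}(iii) forces $\rho\ge(\kappa-K_0)r/(2K_0^2)\gtrsim r$. For such $B'$, I would apply \eqref{int fg} with $g\equiv 1$, together with the formula $\|\mathbf{1}_{B'}\|_{\widetilde\Phi,\varphi,B'}=1/\widetilde\Phi^{-1}(\varphi(B'))$ (proved exactly as \eqref{chiB} with $\Phi$ replaced by $\widetilde\Phi$) and the identity $\Phi^{-1}(t)\widetilde\Phi^{-1}(t)\sim t$, to obtain
\[
\frac{1}{\mu(B')}\int_{B'}|f_2|\,d\mu\le\frac{1}{\mu(B')}\int_{B'}|f|\,d\mu
\lesssim\Phi^{-1}(\varphi(B'))\,\|f\|_{L^{(\Phi,\varphi)}(\mathcal{X})}.
\]
Since $\rho\gtrsim r$, two applications of \eqref{NC} (first to compare $\varphi(z,\rho)$ with $\varphi(x,\rho)$, then $\varphi(x,r)$ with $\varphi(x_0,r)$) together with the almost-decreasing property of $\varphi$ give $\varphi(B')\lesssim\varphi(B)$; the standard dilation estimate $\Phi^{-1}(Ct)\le C'\Phi^{-1}(t)$ (valid for Orlicz functions of positive type) then produces $\Phi^{-1}(\varphi(B'))\lesssim\Phi^{-1}(\varphi(B))$. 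Taking the supremum over admissible $B'\ni x$ yields the pointwise bound $\mathcal{M}f_2(x)\lesssim\Phi^{-1}(\varphi(B))\,\|f\|_{L^{(\Phi,\varphi)}(\mathcal{X})}$ for $\mu$-a.e.\ $x\in B$, and combining this with \eqref{chiB} in the form $\|\mathbf{1}_B\|_{\Phi,\varphi,B}=1/\Phi^{-1}(\varphi(B))$ immediately gives $\|\mathcal{M}f_2\|_{\Phi,\varphi,B}\lesssim\|f\|_{L^{(\Phi,\varphi)}(\mathcal{X})}$.

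The main technical obstacle I anticipate is the careful constant-bookkeeping in the local estimate, where multiplicative factors must be absorbed into the argument of $\Phi$ through iterated use of its lower and upper type inequalities, together with ensuring that the comparison $\varphi(B')\lesssim\varphi(B)$ in the non-local estimate simultaneously accommodates the change of center (controlled by \eqref{NC}) and the change of radius (controlled by the almost-decreasing property in $\mathcal{G}^{\rm dec}_1$), uniformly in $x\in B$ and in admissible $B'$.
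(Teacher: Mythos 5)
Your proposal is correct and follows essentially the same route as the paper's proof: the same decomposition $f=f_1+f_2$ relative to a fixed enlargement of $B$, the Kokilashvili--Krbec modular inequality \eqref{modular} for the local part, and the radius lower bound $\rho\gtrsim r$ combined with the average estimate $\fint_{B'}|f|\,d\mu\lesssim\Phi^{-1}(\varphi(B'))\|f\|_{L^{(\Phi,\varphi)}(\mathcal{X})}$ (which you re-derive from \eqref{int fg} but which is exactly Lemma~\ref{lem:fintB}) for the non-local part. The only cosmetic difference is that you convert the modular bounds into norm bounds via the lower/upper type of $\Phi$, whereas the paper stays at the modular level throughout.
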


Theorem~\ref{thm:Mf OM} with $\mathcal{X}=\mathbb{R}^n$
and $\varphi:(0,\infty)\to(0,\infty)$
was shown in \cite{n04,n08studia}.
To prove Theorem~\ref{thm:Mf OM}, we need the following lemma,
which is exactly \cite[Lemma~4.3]{san21}.

\begin{lemma}\label{lem:fintB}
Let $\varphi:\mathcal{X}\times(0,\infty)\to(0,\infty)$
and $\Phi$ be an Orlicz function with
$1<r_{\Phi}^-\le r_{\Phi}^+<\infty$.
Then, for any $f\in L^{(\Phi,\varphi)}(\mathcal{X})$ and any ball $B$,
\begin{equation*}
\frac{1}{\mu(B)}\int_B|f|\,d\mu
\le
2\Phi^{-1}(\varphi(B))\,\|f\|_{\Phi,\varphi,B}.
\end{equation*}
\end{lemma}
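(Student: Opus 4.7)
The plan is to recognize that $\|f\|_{\Phi,\varphi,B}$ is, by construction, the Luxemburg norm of $f$ on the ball $B$ with respect to the probability-scaled measure $d\mu_B := \frac{d\mu}{\mu(B)\varphi(B)}$, namely $\|f\|_{\Phi,\varphi,B}=\|f\mathbf{1}_B\|_{L^\Phi(B,\,d\mu_B)}$. Once this identification is made, the inequality should follow from the classical Hölder inequality for Orlicz spaces applied to the pair $(f,\mathbf{1}_B)$ in $(B,\mu_B)$, combined with the elementary estimate $t\le\Phi^{-1}(t)\widetilde{\Phi}^{-1}(t)$ that is already recorded in Section~\ref{s6.8} just below \eqref{1539}.

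Concretely, I would first note that, since $\mu_B(B)=\frac{1}{\varphi(B)}$, the Luxemburg norm of the constant function $1$ with respect to $\widetilde{\Phi}$ and $\mu_B$ is easy to compute: $\|1\|_{L^{\widetilde{\Phi}}(B,\,d\mu_B)}$ is the infimum of those $\lambda\in(0,\infty)$ with $\frac{\widetilde{\Phi}(1/\lambda)}{\varphi(B)}\le1$, which equals $\frac{1}{\widetilde{\Phi}^{-1}(\varphi(B))}$. Next, I would invoke Hölder's inequality for Orlicz spaces in the form
\begin{equation*}
\int_B|f|\,d\mu_B\le2\|f\|_{L^\Phi(B,\,d\mu_B)}\|1\|_{L^{\widetilde{\Phi}}(B,\,d\mu_B)}
\end{equation*}
(this is the standard statement for complementary Young functions, valid on any measure space, and it is exactly what is used implicitly to derive \eqref{int fg} earlier in the excerpt). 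Substituting the two computations and the definition of $\mu_B$, the left-hand side becomes $\frac{1}{\mu(B)\varphi(B)}\int_B|f|\,d\mu$, so after multiplying through by $\mu(B)\varphi(B)$ one arrives at
\begin{equation*}
\frac{1}{\mu(B)}\int_B|f|\,d\mu\le\frac{2\varphi(B)}{\widetilde{\Phi}^{-1}(\varphi(B))}\,\|f\|_{\Phi,\varphi,B}.
\end{equation*}

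Finally, applying $\varphi(B)\le\Phi^{-1}(\varphi(B))\widetilde{\Phi}^{-1}(\varphi(B))$ at $t=\varphi(B)$ yields $\frac{\varphi(B)}{\widetilde{\Phi}^{-1}(\varphi(B))}\le\Phi^{-1}(\varphi(B))$, and the claimed inequality with constant $2$ drops out. There is no substantive obstacle: the assumption $1<r_\Phi^-\le r_\Phi^+<\infty$ only enters implicitly, ensuring that $\Phi$ is (equivalent to) a Young function so that $\widetilde{\Phi}$ is a genuine complementary Orlicz function with $1<r_{\widetilde{\Phi}}^-\le r_{\widetilde{\Phi}}^+<\infty$ and the Hölder inequality above is available with constant $2$. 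The only mildly delicate point is being careful with the case $\|f\|_{\Phi,\varphi,B}=0$ or $\infty$, but in the former case $f=0$ $\mu$-almost everywhere on $B$ (so both sides vanish) and in the latter case the inequality is trivial, so the estimate holds unconditionally for $f\in L^{(\Phi,\varphi)}(\mathcal{X})$ and every ball $B$.
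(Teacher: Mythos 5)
Your proof is correct. Note that the paper itself does not prove this lemma at all: it is imported verbatim from \cite[Lemma~4.3]{san21}, so there is no internal argument to compare with, and your self-contained derivation is a genuine addition. Moreover, every ingredient you use is already on record in the paper: the identification $\|\cdot\|_{\Phi,\varphi,B}=\|\cdot\|_{L^{\Phi}(B,\,\mu_B)}$ with $\mu_B=\frac{\mu}{\mu(B)\varphi(B)}$ appears right after Definition~\ref{defn:OM}; the Orlicz--H\"older inequality with constant $2$ is exactly \eqref{int fg}; the computation $\|\mathbf{1}_B\|_{\widetilde\Phi,\varphi,B}=\frac{1}{\widetilde\Phi^{-1}(\varphi(B))}$ is the analogue of \eqref{chiB} (and is used explicitly in the proof of Theorem~\ref{thm:Mf OB}); and the inequality $t\le\Phi^{-1}(t)\widetilde\Phi^{-1}(t)$ is stated just below \eqref{1539}. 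Indeed, the chain
\begin{align*}
\int_B|f|\,d\mu
\le 2\,\mu(B)\varphi(B)\,\|f\|_{\Phi,\varphi,B}\,
\frac{1}{\widetilde\Phi^{-1}(\varphi(B))}
\le 2\,\mu(B)\,\Phi^{-1}(\varphi(B))\,\|f\|_{\Phi,\varphi,B}
\end{align*}
is essentially the same computation the authors perform inside the proof of Theorem~\ref{thm:Mf OB} (there with $\Psi$ and a block $b$, and with $\sim$ in place of the exact two-sided bound), so your argument fits the paper's toolkit exactly. Your treatment of the degenerate cases is also fine, since $f\in L^{(\Phi,\varphi)}(\mathcal{X})$ forces $\|f\|_{\Phi,\varphi,B}<\infty$ and a vanishing norm forces $f=0$ $\mu$-a.e.\ on $B$. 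As a minor remark, under the standing convexity assumption on $\Phi$ one can also get the estimate in one line from Jensen's inequality applied to $\frac{1}{\mu(B)}\int_B\Phi(|f|/\lambda)\,d\mu\le\varphi(B)$, which even yields the constant $1$; the H\"older route you chose matches the constant $2$ of the cited lemma and avoids touching the modular directly.
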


\begin{proof}[Proof of Theorem~\ref{thm:Mf OM}]
Let $f\in L^{(\Phi,\varphi)}(\mathcal{X})$
and $\|f\|_{L^{(\Phi,\varphi)}(\mathcal{X})}=1$.
It suffices to prove that, for any ball $B(x,r)$ with $x\in\mathcal{X}$
and $r\in(0,\infty)$,
\begin{equation}\label{MfP}
\frac 1{\mu(B(x,r))}
\int_{B(x,r)}\Phi(\mathcal{M}f)\,d\mu
\lesssim\varphi(x,r).
\end{equation}
Let $k:=K_0(1+2K_0)$ and write $f=f_1+f_2$
with $f_1:=f\mathbf{1}_{B(x,kr)}$
and $f_2:=f\mathbf{1}_{[B(x,kr)]^\complement}$.
By \eqref{modular}, we find that
\begin{align}\label{1030}
\int_{B(x,r)} \Phi({\mathcal M}f_1)\,d\mu
&\nonumber\leq
c\int_{\mathcal{X}} \Phi(c|f_1|)\,d\mu
=c\int_{B(x,kr)} \Phi(c|f|)\,d\mu\\
&\le c^2
\mu(B(x,kr))\varphi(x,kr)
\sim
\mu(B(x,r))\varphi(x,r).
\end{align}
Now, notice that, for any given $z\in B(x,r)$,
\begin{align}\label{1022}
{\mathcal M}f_2(z)
=\sup_{B(y,s)\ni z}\fint_{B(y,s)} |f_2|\,d\mu
=\sup_{\genfrac{}{}{0pt}{}{B(y,s)\ni z}
{B(y,s)\cap[B(x,kr)]^{\complement}\ne\emptyset}}
\fint_{B(y,s)} |f|\,d\mu.
\end{align}
If the ball $B(y,s)$ with $y\in\mathcal{X}$
and $s\in(0,\infty)$ satisfies both $B(y,s)\ni z$
and $B(y,s)\cap [B(x,kr)]^{\complement}\ne\emptyset$,
then $r\le s$.
Actually,
if $r>s$, then,
by Definition~\ref{Deqms}(iii), we conclude that,
for any $w\in B(y,s)$,
\begin{align*}
\rho(x,w)
\le
K_0[\rho(x,z)+\rho(z,w)]
\le
K_0\rho(x,z)+K_0^2[\rho(z,y)+\rho(y,w)]
\le
K_0(r+2K_0s)<kr,
\end{align*}
that is, $w\in B(x,kr)$ and hence
$B(y,s)\subset B(x,kr)$, which contradicts
$B(y,s)\cap[B(x,kr)]^\complement\neq\emptyset$.
Therefore, from \eqref{1022} and
Lemma~\ref{lem:fintB}, it follows that, for any $z\in B(x,r)$,
\begin{align*}
{\mathcal M}f_2(z)
&\le\sup_{\genfrac{}{}{0pt}{}{B(y,s)\ni z}{r\le s}}
\fint_{B(y,s)}|f|\,d\mu
\lesssim
\sup_{\genfrac{}{}{0pt}{}{B(y,s)\ni z}{r\le s}}
\Phi^{-1}(\varphi(y,s)).
\end{align*}
Notice that, if $B(x,r)\cap B(y,s)\ne\emptyset$ and $r\le s$,
then $\varphi(y,s)\sim\varphi(x,s)\lesssim\varphi(x,r)$,
which further implies that
$
\Phi({\mathcal M}f_2(z))\lesssim\varphi(x,r).
$
By this and \eqref{1030},
we find that \eqref{MfP} holds.
This finishes the proof of Lemma~\ref{lem:fintB}.
\end{proof}

We turn to consider the generalized Orlicz-block space.

\begin{theorem}\label{thm:Mf OB}
Let $\varphi\in\mathcal{G}^{\rm dec}_1$ satisfy \eqref{NC}
and $\Psi$ be an Orlicz function with
$1<r_{\Psi}^-\le r_{\Psi}^+<\infty$.
Assume that there exists a positive constant $C$ such that
\begin{equation}\label{int vp}
\int_{r}^{\infty}\frac{\varphi(x,t)}{t}\,dt
\le C\varphi(x,r).
\end{equation}
Then the Hardy--Littlewood maximal operator
$\mathcal{M}$ is bounded on $B_{(\varphi,\Psi)}(\mathcal{X})$.
\end{theorem}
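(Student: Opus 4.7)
The plan is to show that $\mathcal{M}$ maps each $(\varphi,\Psi)$-block into a controllable infinite sum of $(\varphi,\Psi)$-blocks, and then conclude by sublinearity of $\mathcal{M}$ and the defining infimum of $\|\cdot\|_{B_{(\varphi,\Psi)}(\mathcal{X})}$. Fix a $(\varphi,\Psi)$-block $b$ with supporting ball $B_{0}:=B(x_{0},r_{0})$; it suffices to produce a constant $C\in(0,\infty)$, independent of $b$, together with a decomposition
\begin{equation*}
\mathcal{M}b=\sum_{k=0}^{\infty}\mu_{k}b_{k}\quad\mu\text{-a.e. on }\mathcal{X},
\end{equation*}
where each $b_{k}$ is a $(\varphi,\Psi)$-block and $\sum_{k=0}^{\infty}\mu_{k}\leq C$. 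Given this, for any $f\in B_{(\varphi,\Psi)}(\mathcal{X})$ and any expansion $f=\sum_{j}\lambda_{j}\beta_{j}$ in $B(\varphi,\Psi)$ with $\sum_{j}\lambda_{j}\leq 2\|f\|_{B_{(\varphi,\Psi)}(\mathcal{X})}$, sublinearity of $\mathcal{M}$ gives $\mathcal{M}f\leq\sum_{j}\lambda_{j}\mathcal{M}\beta_{j}$; expanding each $\mathcal{M}\beta_{j}$ into blocks and taking the infimum yields $\|\mathcal{M}f\|_{B_{(\varphi,\Psi)}(\mathcal{X})}\lesssim C\|f\|_{B_{(\varphi,\Psi)}(\mathcal{X})}$.

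First, I would set $\lambda:=2K_{0}$ and split
\begin{equation*}
\mathcal{M}b=(\mathcal{M}b)\mathbf{1}_{\lambda B_{0}}+\sum_{k=1}^{\infty}(\mathcal{M}b)\mathbf{1}_{\lambda^{k+1}B_{0}\setminus\lambda^{k}B_{0}}.
\end{equation*}
For the local piece, the modular inequality \eqref{modular} together with $\mathrm{supp}\,b\subset B_{0}$ translates into the estimate $\|(\mathcal{M}b)\mathbf{1}_{\lambda B_{0}}\|_{\Psi,\varphi,\lambda B_{0}}\lesssim\|b\|_{\Psi,\varphi,B_{0}}$, once one uses the doubling of $\varphi$ (from \eqref{NC} and $\varphi\in\mathcal{G}^{\rm dec}_{1}$) to pass from $\varphi(B_{0})$ to $\varphi(\lambda B_{0})$. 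Hence $(\mathcal{M}b)\mathbf{1}_{\lambda B_{0}}=\mu_{0}b_{0}$ for some $(\varphi,\Psi)$-block $b_{0}$ with supporting ball $\lambda B_{0}$ and $\mu_{0}\lesssim 1$.

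For the tail pieces, fix $k\in\mathbb{N}$ and $y\in\lambda^{k+1}B_{0}\setminus\lambda^{k}B_{0}$. Any ball $B(z,s)$ containing $y$ and meeting $B_{0}$ satisfies $s\gtrsim\lambda^{k}r_{0}$, so a straightforward geometric argument gives $(\mathcal{M}b)(y)\lesssim[\mu(\lambda^{k+1}B_{0})]^{-1}\int_{B_{0}}|b|\,d\mu$. Combining H\"older's inequality \eqref{int fg} with $\|\mathbf{1}_{B_{0}}\|_{\widetilde{\Psi},\varphi,B_{0}}=[\widetilde{\Psi}^{-1}(\varphi(B_{0}))]^{-1}$ and $\|b\|_{\Psi,\varphi,B_{0}}\leq[\mu(B_{0})\varphi(B_{0})]^{-1}$ yields $\int_{B_{0}}|b|\,d\mu\lesssim[\widetilde{\Psi}^{-1}(\varphi(B_{0}))]^{-1}$. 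Using the elementary inequality $t\leq\Psi^{-1}(t)\widetilde{\Psi}^{-1}(t)\leq 2t$ and the formula $\|\mathbf{1}_{\lambda^{k+1}B_{0}}\|_{\Psi,\varphi,\lambda^{k+1}B_{0}}=[\Psi^{-1}(\varphi(\lambda^{k+1}B_{0}))]^{-1}$ analogous to \eqref{chiB}, one checks that $(\mathcal{M}b)\mathbf{1}_{\lambda^{k+1}B_{0}\setminus\lambda^{k}B_{0}}=\mu_{k}b_{k}$ where $b_{k}$ is a $(\varphi,\Psi)$-block supported in $\lambda^{k+1}B_{0}$ and $\mu_{k}\lesssim\varphi(\lambda^{k+1}B_{0})/\varphi(B_{0})$.

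The main obstacle is the convergence $\sum_{k\geq 1}\varphi(\lambda^{k+1}B_{0})/\varphi(B_{0})\lesssim 1$. This is precisely where the hypothesis \eqref{int vp} enters. Because $\varphi$ satisfies the doubling-type \eqref{DC} (a consequence of $\varphi\in\mathcal{G}^{\rm dec}_{1}$ together with \eqref{NC}), one has for each $k\in\mathbb{N}$
\begin{equation*}
\varphi(\lambda^{k+1}B_{0})\sim\frac{1}{\log\lambda}\int_{\lambda^{k}r_{0}}^{\lambda^{k+1}r_{0}}\frac{\varphi(x_{0},t)}{t}\,dt,
\end{equation*}
so that $\sum_{k\geq 1}\varphi(\lambda^{k+1}B_{0})\lesssim\int_{r_{0}}^{\infty}\varphi(x_{0},t)/t\,dt\leq C\varphi(x_{0},r_{0})\sim\varphi(B_{0})$ by \eqref{int vp}. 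This gives $\sum_{k\geq 0}\mu_{k}\lesssim 1$ and completes the plan. The delicate point in the execution will be to verify all the block estimates with the sharp numerical constants so that the final series does not pick up a hidden $k$-dependent factor; this is done by systematically invoking \eqref{NC} and the fact that $r\mapsto\mu(B(x,r))\varphi(x,r)$ is almost increasing whenever volume ratios of balls enter.
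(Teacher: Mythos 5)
Your overall architecture coincides with the paper's: decompose $\mathcal{M}b$ for a single block $b$ into a local piece on a dilate of the supporting ball plus dyadic annular pieces, control the local piece by the modular inequality \eqref{modular}, control the annular pieces by the pointwise decay $\mathcal{M}b\lesssim[\mu(\lambda^{k}B_0)]^{-1}\int_{B_0}|b|\,d\mu$, and then sum the resulting coefficients. The assembly step (sublinearity plus the infimum in the block norm) is also the paper's.

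However, there is a genuine computational gap in the tail coefficients, and it hides the one nontrivial analytic step of the proof. From the sup-bound $\mathcal{M}b\lesssim[\mu(\lambda^{k+1}B_0)\,\widetilde{\Psi}^{-1}(\varphi(B_0))]^{-1}$ on the $k$-th annulus and the identity $\|c\,\mathbf{1}_{\lambda^{k+1}B_0}\|_{\Psi,\varphi,\lambda^{k+1}B_0}=c/\Psi^{-1}(\varphi(\lambda^{k+1}B_0))$, the smallest admissible coefficient is
\begin{equation*}
\mu_k\sim\frac{\varphi(\lambda^{k+1}B_0)}{\widetilde{\Psi}^{-1}(\varphi(B_0))\,\Psi^{-1}(\varphi(\lambda^{k+1}B_0))}
\sim\frac{\widetilde{\Psi}^{-1}(\varphi(\lambda^{k+1}B_0))}{\widetilde{\Psi}^{-1}(\varphi(B_0))},
\end{equation*}
not $\varphi(\lambda^{k+1}B_0)/\varphi(B_0)$ as you claim. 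Since $\widetilde{\Psi}^{-1}$ is concave (equivalently, $\widetilde{\Psi}^{-1}(u\tau)\ge u\,\widetilde{\Psi}^{-1}(\tau)$ for $u\in(0,1]$), the correct coefficient dominates your ratio, typically strictly: for $\widetilde{\Psi}(t)=t^{p'}$ it equals $[\varphi(\lambda^{k+1}B_0)/\varphi(B_0)]^{1/p'}$. Consequently the series you must sum is $\sum_k\widetilde{\Psi}^{-1}(\varphi(x_0,\lambda^{k}r_0))/\widetilde{\Psi}^{-1}(\varphi(x_0,r_0))$, and its convergence does \emph{not} follow from applying your telescoping-integral argument to \eqref{int vp} as stated; one needs the transferred Dini condition $\int_r^{\infty}\widetilde{\Psi}^{-1}(\varphi(x,t))\,\frac{dt}{t}\lesssim\widetilde{\Psi}^{-1}(\varphi(x,r))$, which requires combining \eqref{int vp} with the positive lower type of $\widetilde{\Psi}$ (equivalently the finite upper type of $\Psi$); the paper invokes \cite[Lemma~4.4]{san21} for exactly this. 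Once you replace your coefficients by the correct ones and insert this lemma (or reprove it along the lines of \cite[Lemma~7.1]{n08}), your argument closes and is essentially the paper's proof.
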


\begin{proof}
We first claim that there exists a positive
constant $C_0$ such that,
for any $b\in B(\varphi,\Psi)$,
\begin{equation}\label{Mb}
\|{\mathcal M}b\|_{B_{(\varphi,\Psi)}(\mathcal{X})}\le C_0.
\end{equation}
Indeed,
let $b\in B(\varphi,\Psi)$ and $B(x,r)$ be its supporting ball,
where $x\in \mathcal{X}$ and $r\in(0,\infty)$.
Let $c$ be the same positive constant as in \eqref{modular}
and let $k:=K_0(1+2K_0)$.
From \eqref{modular}, we infer that,
for any $\lambda\in(c\|b\|_{\Psi,\varphi,B(x,r)},\infty)$,
\begin{align*}
&\frac1{\mu(B(x,kr))\varphi(x,kr)}
\int_{B(x,kr)} \Psi\left(\frac{({\mathcal M}b)
\mathbf{1}_{B(x,kr)}}{\lambda}\right)\,d\mu\\
&\quad\le
\frac1{\mu(B(x,kr))\varphi(x,kr)}
\int_{\mathcal{X}} \Psi\left(\frac{{\mathcal M}b}{\lambda}\right)\,d\mu \\
&\quad\le
\frac{c}{\mu(B(x,kr))\varphi(x,kr)}
\int_{\mathcal{X}} \Psi\left(\frac{c|b|}{\lambda}\right)\,d\mu
\le
\frac{c\mu(B(x,r))\varphi(x,r)}{\mu(B(x,kr))\varphi(x,kr)}
\le
C_1,
\end{align*}
which, combined with Definition~\ref{defn:pP block}(ii)
and the assumption on $\varphi$,
further implies that
\begin{equation}\label{Mb chi}
\|\mathcal{M}b\|_{\Psi,\varphi,B(x,kr)}
\le
C_2\|b\|_{\Psi,\varphi,B(x,r)}
\le
\frac{C_2}{\mu(B(x,r))\varphi(x,r)}
\le
\frac{C_3}{\mu(B(x,kr))\varphi(x,kr)},
\end{equation}
where the positive constants $C_1$, $C_2$, and $C_3$
depend only on $c$, $K_0$, $L_{(\mu)}$, $\Psi$, and $\varphi$.

Next,
let $j\in{\mathbb Z}_+$.
For any given $z\notin B(x,2^jkr)$, we have
\begin{equation*}
{\mathcal M}b(z)
=\sup_{B(y,s)\ni z}\fint_{B(y,s)}|b|\,d\mu
=\sup_{B(y,s)\ni z,\,B(x,r)\cap B(y,s)\ne\emptyset}
\fint_{B(y,s)}|b|\,d\mu.
\end{equation*}
If $z\notin B(x,2^jkr)$, $z\in B(y,s)$,
and $B(x,r)\cap B(y,s)\ne\emptyset$, then $2^jr<s$.
Indeed, if $s\le2^jr$, then
Definition~\ref{Deqms}(iii) implies that,
for any $w\in B(x,r)\cap B(y,s)$,
\begin{align}\label{2jr<s}
\rho(x,z)
&\notag\le
K_0[\rho(x,y)+\rho(y,z)]\\
&\le
K_0\left\{K_0[\rho(x,w)+\rho(w,y)]+\rho(y,z)\right\}\notag\\
&<
K_0[K_0(r+s)+s]
\le
K_0(2K_0+1)2^jr
=
2^jkr,
\end{align}
which contradicts $z\notin B(x,2^jkr)$.
Thus, $2^jr<s$.
By this, $\mathrm{supp\,}b\subset B(x,r)$,
\eqref{dc}, and \eqref{int fg}, we conclude that,
for any $z\notin B(x,2^jkr)$,
\begin{align*}
{\mathcal M}b(z)
&=\sup_{\genfrac{}{}{0pt}{}{B(y,s)\ni z}{s>2^jr}}
\frac{1}{\mu(B(y,s))}\int_{B(y,s)}|b|\,d\mu \\
&\le\sup_{\genfrac{}{}{0pt}{}{B(y,s)\ni z}{s>2^jr}}
\frac{1}{\mu(B(y,s))}\int_{B(x,r)}|b|\,d\mu \le
\frac{C_4}{\mu(B(x,2^jr))}\int_{B(x,r)}|b|\,d\mu \\
&\le
\frac{2C_4\mu(B(x,r))\varphi(x,r) \|b\|_{\Psi,\varphi,B(x,r)}
\|1\|_{\widetilde\Psi, \varphi,B(x,r)}}
{\mu(B(x,2^jr))} \\
&\le
\frac{2C_4\|1\|_{\widetilde\Psi, \varphi,B(x,r)}}
{\mu(B(x,2^jr))}
=
\frac{2C_4}{\mu(B(x,2^jr))\widetilde\Psi^{-1}(\varphi(x,r))},
\end{align*}
where $C_4$ is a positive constant depending only on $K_0$ and $L_{(\mu)}$,
which,
together with \eqref{chiB} and the relations
$\Psi^{-1}(t)\widetilde\Psi^{-1}(t)\sim t$ and
$\mu(B(x,2^{j-1}r))\sim\mu(B(x,2^jr))$,
further implies that, for any $j\in\mathbb{N}$,
\begin{align}\label{Mb chi 2}
&\notag\left\|({\mathcal M}b)\mathbf{1}_{B(x,2^{j}kr)\setminus B(x,2^{j-1}kr)}\right\|
_{\Psi,\varphi, B(x,2^jkr)}\\
&\quad\le
\frac{2C_4}{\mu(B(x,2^{j-1}r))\widetilde\Psi^{-1}(\varphi(x,r))}
\|1\|_{\Psi,\varphi, B(x,2^jkr)}\nonumber\\
&\quad=
\frac{2C_4}{\mu(B(x,2^{j-1}r))\widetilde\Psi^{-1}(\varphi(x,r))
\Psi^{-1}(\varphi(x,2^jkr))} \notag \\
&\quad\le
\frac{C_5\widetilde\Psi^{-1}(\varphi(x,2^jkr))}
{\mu(B(x,2^jr))\varphi(x,2^jkr)\widetilde\Psi^{-1}(\varphi(x,r))},
\end{align}
where the positive constant $C_5$ depends only on $K_0$, $L_{(\mu)}$, and $\Psi$.

Now, let $\widetilde\lambda_0:=C_3$
and, for any $j\in\mathbb{N}$,
$
\widetilde\lambda_j
:=
\frac{C_5\widetilde\Psi^{-1}(\varphi(x,2^jkr))}{\widetilde\Psi^{-1}(\varphi(x,r))}.
$
Let
$
\widetilde{b}_0:=
\frac{({\mathcal M}b)\mathbf{1}_{B(x,kr)}}{\widetilde\lambda_0}
$
and, for any $j\in\mathbb{N}$,
$
\widetilde{b}_j:=
\frac{({\mathcal M}b)\mathbf{1}_{B(x,2^{j}kr)\setminus B(x,2^{j-1}kr)}}
{\widetilde\lambda_j}.
$
Then, from \eqref{Mb chi} and \eqref{Mb chi 2}, we deduce that
$\widetilde{b}_j\in B(\varphi,\Psi)$ for any $j\in\mathbb{N}\cup\{0\}$,
$
{\mathcal M}b=
\sum_{j=0}^\infty\widetilde\lambda_j \widetilde{b}_j,
$
and
\begin{align}\label{1531}
\left\|{\mathcal M}b\right\|_{B_{(\varphi,\Psi)}}
&\notag\le
\widetilde\lambda_0+\sum_{j=1}^\infty\widetilde\lambda_j
\lesssim
1 + \sum_{j=1}^\infty \frac{\widetilde\Psi^{-1}
(\varphi(x,2^jkr))}{\widetilde\Psi^{-1}(\varphi(x,r))} \\
&\sim
1 +\sum_{j=1}^\infty\frac{1}{\widetilde\Psi^{-1}(\varphi(x,r))}
\int_{2^{j-1}kr}^{2^{j}kr}
\frac{\widetilde\Psi^{-1}(\varphi(x,t))}{t}\,dt\nonumber\\
&\le
1 + \frac{1}{\widetilde\Psi^{-1}(\varphi(x,r))}
\int_r^{\infty}\frac{\widetilde\Psi^{-1}(\varphi(x,t))}{t}\,dt.
\end{align}
By \cite[Lemma~4.4]{san21} and \eqref{int vp},
we find that, for any $x\in \mathcal{X}$ and $r\in(0,\infty)$,
\begin{equation*}
\int_r^{\infty}\frac{\widetilde\Psi^{-1}(\varphi(x,t))}{t}\,dt
\lesssim
\widetilde\Psi^{-1}(\varphi(x,r)),
\end{equation*}
which, together with \eqref{1531},
completes the proof of \eqref{Mb} and hence the aforementioned claim.

Let $f\in B_{(\varphi,\Psi)}(\mathcal{X})$.
Then there exist a sequence $\{b_j\}_{j\in\mathbb{N}}$ in $B(\varphi,\Psi)$
and a sequence $\{\lambda_j\}_{j\in\mathbb{N}}$ in $[0,\infty)$ satisfying
$\sum_{j\in\mathbb{N}}\lambda_j\le2\|f\|_{B_{(\varphi,\Psi)}(\mathcal{X})}$
such that $f=\sum_{j\in\mathbb{N}}\lambda_jb_j$ $\mu$-almost everywhere
on $\mathcal{X}$,
which, combined with \eqref{Mb}, further implies that
\begin{equation*}
\left\|{\mathcal M}f\right\|_{B_{(\varphi,\Psi)}(\mathcal{X})}
\le
\sum_{j=1}^\infty\lambda_j
\left\|{\mathcal M}b_j\right\|_{B_{(\varphi,\Psi)}(\mathcal{X})}
\le
C_0\sum_{j=1}^\infty\lambda_j
\le
2C_0\|f\|_{B_{(\varphi,\Psi)}(\mathcal{X})}.
\end{equation*}
This finishes the proof of Theorem~\ref{thm:Mf OB}.
\end{proof}

The following is a characterization of
Orlicz--Morrey spaces in terms of $A_1$-weights,
which is a generalization of \cite[Proposition~285]{sfh20a}
from Morrey spaces on $\mathbb{R}^n$ to
Orlicz--Morrey spaces on spaces of homogeneous type.

\begin{proposition}\label{prop:OM Lw}
Let $\varphi$ satisfy both \eqref{DC} and \eqref{int vp},
and let $\Phi$ be an Orlicz function
with positive lower type $r_{\Phi}^-$
and positive upper type $r_{\Phi}^+$.
Then there exist constants $\theta\in(0,1)$ and $C\in[1,\infty)$,
depending only on $K_0,L_{\mu},\Phi$, and $\varphi$,
such that,
for any $f\in L^{(\Phi,\varphi)}(\mathcal{X},\mu)$,
\begin{equation}\label{OM Lw}
\|f\|_{L^{(\Phi,\varphi)}(\mathcal{X},\mu)}
\le\sup_{B}\|f\|_{L^{\Phi}_{({\mathcal M}
\mathbf{1}_B)^{1-\theta}}(\mathcal{X},\mu_B)}
\le C\|f\|_{L^{(\Phi,\varphi)}(\mathcal{X},\mu)},
\end{equation}
where the supremum is taken over all balls $B\subset\mathcal{X}$
and $\mu_B:=\frac{\mu}{\varphi(B)\mu(B)}$.
\end{proposition}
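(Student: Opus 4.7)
The plan is to prove the two inequalities in \eqref{OM Lw} separately, following the strategy of \cite[Proposition~285]{sfh20a} for classical Morrey spaces, adapted to the Orlicz functional and the quasi-metric setting.

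For the first (lower) inequality, I would begin by noting that, for every ball $B\subset\mathcal{X}$ and every $x\in B$, testing the defining supremum of $\mathcal{M}\mathbf{1}_B(x)$ against $B$ itself yields $\mathcal{M}\mathbf{1}_B(x)\ge 1$; combined with the trivial bound $\mathcal{M}\mathbf{1}_B\le 1$, this gives $(\mathcal{M}\mathbf{1}_B)^{1-\theta}\equiv 1$ on $B$ for every $\theta\in(0,1)$. Therefore, for any $\lambda\in(0,\infty)$,
\begin{equation*}
\int_{\mathcal{X}}\Phi(|f|/\lambda)(\mathcal{M}\mathbf{1}_B)^{1-\theta}\,d\mu_B
\ge
\int_{B}\Phi(|f|/\lambda)\,d\mu_B,
\end{equation*}
which by Definition~\ref{defn:OM} will translate into $\|f\|_{L^{\Phi}_{(\mathcal{M}\mathbf{1}_B)^{1-\theta}}(\mathcal{X},\mu_B)}\ge\|f\|_{\Phi,\varphi,B}$; taking the supremum over $B$ then yields the first inequality for any $\theta\in(0,1)$.

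For the second inequality I would fix a ball $B=B(x_B,r_B)$ and set $\lambda:=\|f\|_{L^{(\Phi,\varphi)}(\mathcal{X},\mu)}$, so that $\int_{B'}\Phi(|f|/\lambda)\,d\mu\le\mu(B')\varphi(B')$ for every ball $B'$. Letting $A_j:=2^jB\setminus 2^{j-1}B$ for $j\in\mathbb{N}$, a standard argument using Definition~\ref{Deqms}(iii) and \eqref{dc} produces the pointwise bound $\mathcal{M}\mathbf{1}_B(x)\lesssim\mu(B)/\mu(2^jB)$ on each $A_j$. Splitting the integral into its contribution over $B$ and its contributions over the annuli, and using the lower type $r_\Phi^-$ of $\Phi$ to pull out a factor $C^{-r_\Phi^-}$, I expect to arrive at
\begin{equation*}
\int_{\mathcal{X}}\Phi\left(\frac{|f|}{C\lambda}\right)(\mathcal{M}\mathbf{1}_B)^{1-\theta}\,d\mu_B
\lesssim C^{-r_\Phi^-}\left[1+\sum_{j=1}^{\infty}\left(\frac{\mu(2^jB)}{\mu(B)}\right)^{\theta}\frac{\varphi(2^jB)}{\varphi(B)}\right].
\end{equation*}
Thus the whole argument reduces to proving that, for an appropriate $\theta\in(0,1)$, the series on the right-hand side is bounded by a constant independent of $B$; the factor $C^{-r_\Phi^-}$ will then dominate the total by $1$ once $C$ is chosen sufficiently large.

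The main obstacle is precisely this summability estimate. My plan is first to show that \eqref{DC} and \eqref{int vp} together force $\varphi$ to be almost decreasing: for $r_1<r_2$, the $\varphi$-doubling gives $\int_{r_2/2}^{r_2}\varphi(x,t)/t\,dt\gtrsim\varphi(x,r_2)$, which together with \eqref{int vp} at $r_1$ yields $\varphi(x,r_2)\lesssim\varphi(x,r_1)$. Feeding this back into \eqref{int vp} produces, for any $K\in(1,\infty)$, the estimate $\varphi(x,Kr)\lesssim(\log K)^{-1}\varphi(x,r)$; choosing $K$ so large that the resulting implicit constant $\gamma$ lies in $(0,1)$ and iterating yield the geometric decay $\varphi(x,2^jr_B)/\varphi(x,r_B)\lesssim\gamma^{j/\log_2 K}$. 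Combining this with $\mu(2^jB)/\mu(B)\le L_{(\mu)}^j$ from \eqref{ud} and choosing $\theta\in(0,1)$ small enough that $L_{(\mu)}^\theta\gamma^{1/\log_2 K}<1$ would turn the series into a convergent geometric one with a uniform bound. The delicate step is extracting this uniform geometric decay of $\varphi$ purely from the doubling and Dini conditions and then balancing it against the at-most-exponential growth of $\mu(2^jB)/\mu(B)$, so that a single exponent $\theta$ works simultaneously for every ball $B$.
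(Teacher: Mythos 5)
Your proposal is correct and follows the same overall route as the paper: the first inequality from $\mathbf{1}_B\le(\mathcal{M}\mathbf{1}_B)^{1-\theta}$, and the second from the annular decomposition of $(\mathcal{M}\mathbf{1}_B)^{1-\theta}$ with the decay $\mathcal{M}\mathbf{1}_B\lesssim\mu(B)/\mu(2^jB)$ off $2^jB$, reducing everything to the uniform boundedness of $\sum_{j}[\mu(2^jB)/\mu(B)]^{\theta}\,\varphi(2^jB)/\varphi(B)$. The only genuine divergence is in how that series is controlled. The paper invokes \cite[Lemma~7.1]{n08} to upgrade the Dini condition \eqref{int vp} to a power-weighted version $\int_r^{\infty}\varphi(x,t)t^{\epsilon-1}\,dt\le C_\epsilon\varphi(x,r)r^{\epsilon}$ for some $\epsilon>0$, then picks $\theta$ with $\theta\log_2L_{(\mu)}\le\epsilon$ so that $[\mu(2^jB)/\mu(B)]^{\theta}\lesssim 2^{j\epsilon}$ and the sum is dominated by that integral. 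You instead reprove the underlying self-improvement from scratch: \eqref{DC} plus \eqref{int vp} give almost-decrease of $\varphi$, hence $\varphi(x,Kr)\lesssim(\log K)^{-1}\varphi(x,r)$, and iteration yields the geometric decay $\varphi(x,2^jr)\lesssim\eta^{j}\varphi(x,r)$ with $\eta\in(0,1)$, after which any $\theta$ with $L_{(\mu)}^{\theta}\eta<1$ works. Both arguments extract the same power decay of $\varphi$ from the Dini condition; yours is self-contained, the paper's is shorter by citation. Two cosmetic points: take $\lambda$ slightly larger than $\|f\|_{L^{(\Phi,\varphi)}(\mathcal{X},\mu)}$ (or appeal to Fatou) when asserting the modular bound on every ball, and note that in the quasi-metric setting the implicit constants in your pointwise maximal-function bound on $2^jB\setminus2^{j-1}B$ absorb a dilation by $k=K_0(1+2K_0)$, which is why the paper works with the $k$-dilated annuli; neither affects the validity of the argument.
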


\begin{proof}
Since $\mathbf{1}_B\le({\mathcal M}\mathbf{1}_B)^{1-\theta}$
for any $\theta\in(0,1)$,
it follows that the first inequality in \eqref{OM Lw} holds.
Next, we prove the second inequality in \eqref{OM Lw}.
Let $f\in L^{(\Phi,\varphi)}(\mathcal{X},\mu)$.
Using \eqref{int vp} and \cite[Lemma~7.1]{n08},
we conclude that there exists a positive constant $\epsilon$ such that,
for any $x\in \mathcal{X}$ and $r\in(0,\infty)$,
\begin{equation}\label{int vp e}
\int_r^{\infty}\frac{\varphi(x,t)t^{\epsilon}}{t}\,dt
\le C_{\epsilon}\varphi(x,r)r^{\epsilon}.
\end{equation}
Take $\theta\in(0,1)$ such that $\theta\log_2L_{(\mu)}\le\epsilon$.
Let $B:=B(x,r)$, $k:=K_0(1+2K_0)$, and $j\in\mathbb{N}$.
Notice that, if $z\notin 2^jkB$, $z\in B(y,s)$,
and $B(x,r)\cap B(y,s)\ne\emptyset$, then $2^jr<s$;
see \eqref{2jr<s}.
In this case, $\mu(B(y,s))\sim\mu(B(x,s))\ge\mu(B(x,2^jr))=\mu(2^jB)$.
Then, for any given $z\notin 2^jkB$,
\begin{equation*}
{\mathcal M}\mathbf{1}_{B}(z)
=\sup_{B(y,s)\ni z}\fint_{B(y,s)}\mathbf{1}_{B(x,r)}
\le\sup_{\genfrac{}{}{0pt}{}{B(y,s)\ni z}{B(x,r)\cap B(y,s)\ne\emptyset}}
\frac{\mu(B(x,r))}{\mu(B(y,s))}
\lesssim
\frac{\mu(B)}{\mu(2^jB)},
\end{equation*}
which further implies that
\begin{equation*}
\left({\mathcal M}\mathbf{1}_{B}\right)^{1-\theta}
=
\left({\mathcal M}\mathbf{1}_{B}\right)^{1-\theta}
\left(\mathbf{1}_{2kB}
+\sum_{j=1}^{\infty}\mathbf{1}_{2^{j+1}kB\setminus 2^{j}kB}\right)
\lesssim
\mathbf{1}_{2kB}
+\sum_{j=1}^{\infty} \left[\frac{\mu(B)}{\mu(2^jB)}\right]^{1-\theta}
\mathbf{1}_{2^{j+1}kB\setminus 2^{j}kB}
\end{equation*}
and, for any $\lambda\in(\|f\|_{L^{(\Phi,\varphi)}(\mathcal{X})},\infty)$,
\begin{equation*}
\Phi\left(\frac{|f|}{\lambda}\right)
\left({\mathcal M}\mathbf{1}_{B}\right)^{1-\theta}
\lesssim
\sum_{j=0}^{\infty}
\left[\frac{\mu(B)}{\mu(2^jB)}\right]^{1-\theta}
\Phi\left(\frac{|f|\mathbf{1}_{2^{j+1}kB}}{\lambda}\right).
\end{equation*}
From this, \eqref{dc}, $\theta\log_2 L_{(\mu)}\leq\epsilon$,
and \eqref{int vp e},
we infer that
\begin{align*}
&\int_\mathcal{X}
\Phi\left(\frac{|f|}{\lambda}\right)
\left({\mathcal M}\mathbf{1}_{B}\right)^{1-\theta}\,d\mu_B\\
&\quad\lesssim
\sum_{j=0}^{\infty}
\left[\frac{\mu(B)}{\mu(2^jB)}\right]^{1-\theta}
\int_\mathcal{X}\Phi\left(\frac{|f|\mathbf{1}_{2^{j+1}kB}}{\lambda}\right)
\frac{d\mu}{\mu(B)\varphi(B)} \\
&\quad=\frac1{\varphi(B)}
\sum_{j=0}^{\infty} \varphi(2^jB)
\left[\frac{\mu(2^jB)}{\mu(B)}\right]^{\theta}
\int_\mathcal{X}\Phi\left(\frac{|f|\mathbf{1}_{2^{j+1}kB}}{\lambda}\right)
\frac{d\mu}{\mu(2^jB)\varphi(2^jB)} \\
&\quad\lesssim
\frac1{\varphi(B)}
\sum_{j=0}^{\infty} \varphi(2^jB)
\left[\frac{\mu(2^jB)}{\mu(B)}\right]^{\theta}
\int_{2^{j+1}kB} \Phi\left(\frac{|f|}{\lambda}\right)
\frac{d\mu}{\mu(2^{j+1}kB)\varphi(2^{j+1}kB)} \\
&\quad\le\frac1{\varphi(B)}
\sum_{j=0}^{\infty} \varphi(2^jB)
\left[\frac{\mu(2^jB)}{\mu(B)}\right]^{\theta}
\lesssim
\frac1{\varphi(x,r)}
\sum_{j=0}^{\infty} \varphi(x,2^jr)
\frac{(2^jr)^{\epsilon}}{r^{\epsilon}}\\
&\quad\sim\frac1{\varphi(x,r)r^{\epsilon}}
\sum_{j=0}^{\infty} \int_{2^jr}^{2^{j+1}r}
\frac{\varphi(x,t)t^{\epsilon}}{t}\,dt
=\frac1{\varphi(x,r)r^{\epsilon}}
\int_{r}^{\infty}\frac{\varphi(x,t)t^{\epsilon}}{t}\,dt
\le C_{\epsilon}.
\end{align*}
This shows
\begin{equation*}
\|f\|_{L^{\Phi}_{({\mathcal M}\mathbf{1}_B)^{1-\theta}}(\mathcal{X},\mu_B)}
\lesssim\|f\|_{L^{(\Phi,\varphi)}(\mathcal{X})},
\end{equation*}
where the implicit positive constant is independent of both $B$ and $f$.
This finishes the proof of the second inequality in \eqref{OM Lw}
and hence Proposition~\ref{prop:OM Lw}.
\end{proof}

Applying Theorems~\ref{MS2} and~\ref{01011841}
and Proposition~\ref{prop:OM Lw},
we obtain the following conclusion.

\begin{theorem}\label{thm:OM/R Gag}
Let $\Phi$ be an Orlicz function  with positive lower
type $r_{\Phi}^-$ and positive upper type $r_{\Phi}^+$,
and let $\varphi\in\mathcal{G}^{\rm dec}_1$
satisfy both \eqref{NC} and \eqref{int vp}.
Assume that $\varphi$ satisfies \eqref{var const}
or both
\eqref{var r 0} and \eqref{var r infty}.
Let $0<q<r_{\Phi}^-$.
Then there exist $C,\widetilde{C}\in(0,\infty)$ such that,
for any $f\in [L^{(\Phi,\varphi)}(\Omega)/{\mathbb R}]\cap
\bigcup_{s\in(0,1)}\dot{W}^{s,q}_{L^{(\Phi,\varphi)}}(\Omega)$,
\begin{align*}
C\|f\|_{L^{(\Phi,\varphi)}(\Omega)/{\mathbb R}}
&\leq
\varliminf_{s \to 0^+}
s^{\frac{1}{q}}\left\|\left\{\int_\Omega
\frac{|f(\cdot)-f(y)|^q}{U(\cdot,y)[\rho(\cdot,y)]^{sq}}
\, d\mu(y) \right\}^{\frac{1}{q}}\right\|_{L^{(\Phi,\varphi)}(\Omega)} \\
&\leq
\varlimsup_{s \to 0^+}
s^{\frac{1}{q}}\left\|\left\{\int_\Omega
\frac{|f(\cdot)-f(y)|^q}{U(\cdot,y)[\rho(\cdot,y)]^{sq}}
\, d\mu(y) \right\}^{\frac{1}{q}}\right\|_{L^{(\Phi,\varphi)}(\Omega)}
\leq
\widetilde{C}\|f\|_{L^{(\Phi,\varphi)}(\Omega)/{\mathbb R}}.\notag
\end{align*}
\end{theorem}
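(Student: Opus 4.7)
The plan is to deduce Theorem~\ref{thm:OM/R Gag} by combining two parallel observations. For the upper estimate, I would identify $L^{(\Phi,\varphi)}(\mathcal{X})$ as a ball quasi-Banach function space whose $(1/p)$-convexification, for a judicious choice of $p$, satisfies Assumption~\ref{ma}, so that the upper estimate for the quotient norm follows by adapting the proof of Theorem~\ref{MS2} (its argument via Theorem~\ref{04021646} does not require absolute continuity of the quasi-norm). For the lower estimate, I would use the characterization in Proposition~\ref{prop:OM Lw} to realize the Orlicz--Morrey norm as a supremum of weighted Orlicz norms with uniformly $A_1$-bounded weights, and then apply the weighted Orlicz version of the Maz'ya--Shaposhnikova formula (Theorem~\ref{01011841}) uniformly across these weights.

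For the upper estimate concretely, I would fix $p\in(q,r_{\Phi}^-)$ so that $r_{\Phi^{(1/p)}}^-=r_{\Phi}^-/p>1$. A direct computation using the Luxembourg norm gives the isometric identification
\[ \left[L^{(\Phi,\varphi)}(\mathcal{X})\right]^{1/p}=L^{(\Phi^{(1/p)},\varphi)}(\mathcal{X}), \]
which is a ball Banach function space by Theorem~\ref{thm:OM bBfs}. By Theorem~\ref{thm:K dual} its associate space equals the generalized Orlicz-block space $B_{(\varphi,\widetilde{\Phi^{(1/p)}})}(\mathcal{X})$, and since $\widetilde{\Phi^{(1/p)}}$ has lower and upper types strictly between $1$ and $\infty$ and $\varphi$ satisfies \eqref{int vp}, Theorem~\ref{thm:Mf OB} ensures that $\mathcal{M}$ is bounded on this associate space. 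Thus $Y^{1/p}$ satisfies Assumption~\ref{ma}, and Proposition~\ref{2357} yields uniqueness of the representative $a^*\in\mathbb{R}$ with $f+a^*\in L^{(\Phi,\varphi)}(\Omega)$. Invoking (the proof of) Theorem~\ref{MS2} for the corresponding quotient norm then delivers the last inequality of the theorem.

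For the lower estimate, Proposition~\ref{prop:OM Lw} furnishes $\theta\in(0,1)$ and $C\in[1,\infty)$ such that, for any $f\in L^{(\Phi,\varphi)}(\Omega)$,
\[ \|f\|_{L^{(\Phi,\varphi)}(\Omega)}\le\sup_{B}\|f\|_{L^{\Phi}_{(\mathcal{M}\mathbf{1}_B)^{1-\theta}}(\Omega,\mu_B)}\le C\|f\|_{L^{(\Phi,\varphi)}(\Omega)}, \]
with $\mu_B:=\mu/[\mu(B)\varphi(B)]$. Combining this with Proposition~\ref{2357} (which applies by the upper-estimate framework) transfers the characterization to the quotient space. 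Since $(\mathcal{M}\mathbf{1}_B)^{1-\theta}$ lies in $A_1(\mathcal{X})$ with Muckenhoupt constant bounded uniformly in $B$, I can apply Theorem~\ref{01011841} to each weighted Orlicz seminorm, absorbing the measure normalization $\mu_B$ into the equivalent Orlicz function $\Phi_B(t):=\Phi(t)/[\mu(B)\varphi(B)]$; crucially, $\Phi_B$ has the same lower and upper type constants as $\Phi$. Passing the supremum over $B$ inside the $\varliminf_{s\to0^+}$ (using $\sup\varliminf\le\varliminf\sup$) and then bounding back by Proposition~\ref{prop:OM Lw} yields the first inequality.

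The main obstacle I anticipate is the uniform control of constants when transporting Theorem~\ref{01011841} across the family $\{(\Phi_B,(\mathcal{M}\mathbf{1}_B)^{1-\theta})\}_{B}$: although both the type constants of $\Phi_B$ and the $A_1$-constants of $(\mathcal{M}\mathbf{1}_B)^{1-\theta}$ are classically uniform in $B$, one must verify that the implicit constants in Theorem~\ref{01011841} depend only on these quantities (together with $r_\Phi^{\pm}$, $p$, $q$, and the geometric constants of $\mathcal{X}$) and not on any $B$-dependent normalization. Once this uniformity is established, the remaining steps---the exchange of $\sup_B$ with $\varliminf_{s\to 0^+}$ and the final appeal to Proposition~\ref{prop:OM Lw}---are routine.
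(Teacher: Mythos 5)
Your proposal is correct and follows essentially the same route as the paper: the upper estimate via the convexification $[L^{(\Phi,\varphi)}]^{1/p}=L^{(\Phi^{(1/p)},\varphi)}$ with $p\in(q,r_\Phi^-)$, Theorems~\ref{thm:K dual} and~\ref{thm:Mf OB} to verify Assumption~\ref{ma}, and the proof of Theorem~\ref{MS2}; the lower estimate via Proposition~\ref{prop:OM Lw}, the uniform $A_1$-bound for $(\mathcal{M}\mathbf{1}_B)^{1-\theta}$, Theorem~\ref{01011841}, and the interchange $\sup_B\varliminf\le\varliminf\sup_B$. The uniformity issue you flag is handled the same way in the paper (the constants in Theorem~\ref{01011841} depend only on the type constants and the $A_1$ bound, and the normalization $\mu_B$ is a constant rescaling of $\mu$ leaving the ratio $d\mu/U$ invariant), so no genuinely new verification is needed.
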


\begin{proof}
We first prove the lower estimate.
By Remark~\ref{2048}(i) and Proposition~\ref{prop:OM Lw}, we find that
there exists $\theta\in(0,1)$ such that,
for any $f\in L^{(\Phi,\varphi)}(\Omega)$,
\begin{equation*}
\|f\|_{L^{(\Phi,\varphi)}(\Omega)}
\le
\sup_{B} \|f\|_{L^{\Phi}_{({\mathcal M}\mathbf{1}_B)^{1-\theta}}(\Omega,\mu_B)}
\le
C\|f\|_{L^{(\Phi,\varphi)}(\Omega)}
\end{equation*}
with $\mu_B:=\frac{\mu}{\mu(B)\varphi(B)}$,
where $C\in(0,\infty)$ is independent of $f$,
which, together with Proposition~\ref{2357},
further implies that, for any
$f\in L^{(\Phi,\varphi)}(\Omega)/{\mathbb R}$,
\begin{equation*}
\|f\|_{L^{(\Phi,\varphi)}(\Omega)/{\mathbb R}}
\le\sup_{B}\|f\|_{L^{\Phi}_{({\mathcal M}\mathbf{1}_B)^{1-\theta}}
(\Omega,\mu_B)/{\mathbb R}}
\le C\|f\|_{L^{(\Phi,\varphi)}(\Omega)/{\mathbb R}}.
\end{equation*}
From this,
the fact that, for any ball $B\subset\mathcal{X}$,
$[(\mathcal{M}\mathbf{1}_{B})^{1-\theta}]_{A_1(\mu_B)}\lesssim1$
with the implicit positive constant independent of $B$,
and Theorem~\ref{01011841}, we deduce that,
for any $f\in [L^{(\Phi,\varphi)}(\Omega)/{\mathbb R}]\cap
\bigcup_{s\in(0,1)}\dot{W}^{s,q}_{L^{(\Phi,\varphi)}}(\Omega)$,
\begin{align*}
\|f\|_{L^{(\Phi,\varphi)}(\Omega)/{\mathbb R}}
&\leq
\sup_{B} \|f\|_{L^{\Phi}_{({\mathcal M}\mathbf{1}_B)^{1-\theta}}
(\Omega,\mu_B)/{\mathbb R}}\\
&\lesssim
\sup_{B}
\varliminf_{s \to 0^+}
s^{\frac{1}{q}}\left\|\left\{\int_\Omega
\frac{|f(\cdot)-f(y)|^q}{U(\cdot,y)[\rho(\cdot,y)]^{sq}}
\, d\mu(y) \right\}^{\frac{1}{q}}\right\|_{L^{\Phi}_{({\mathcal M}
\mathbf{1}_B)^{1-\theta}}(\Omega,\mu_B)}\\
&\leq\varliminf_{s \to 0^+}\sup_{B}
s^{\frac{1}{q}}\left\|\left\{\int_\Omega
\frac{|f(\cdot)-f(y)|^q}{U(\cdot,y)[\rho(\cdot,y)]^{sq}}
\, d\mu(y) \right\}^{\frac{1}{q}}\right\|_{L^{\Phi}_{({\mathcal M}
\mathbf{1}_B)^{1-\theta}}(\Omega,\mu_B)}\\
&\lesssim\varliminf_{s \to 0^+}
s^{\frac{1}{q}}\left\|\left\{\int_\Omega
\frac{|f(\cdot)-f(y)|^q}{U(\cdot,y)[\rho(\cdot,y)]^{sq}}
\, d\mu(y) \right\}^{\frac{1}{q}}\right\|_{L^{(\Phi,\varphi)}(\Omega)},
\end{align*}
which completes the proof of the lower estimate.

Then we show the upper estimate.
By Theorem~\ref{thm:OM bBfs},
we conclude that the Orlicz--Morrey space $L^{(\Phi,\varphi)}(\mathcal{X})$
is a ball quasi-Banach function space.
Take $p\in[q,r_{\Phi}^-)$ and let $\Phi^{(\frac{1}{p})}(t)
=\Phi(t^{\frac{1}{p}})$.
Then $[L^{(\Phi,\varphi)}(\mathcal{X})]^\frac{1}{p}
=L^{(\Phi^{(\frac{1}{p})},\varphi)}(\mathcal{X})$
is a ball Banach function space.
From Theorems~\ref{thm:K dual} and \ref{thm:Mf OB}
and $1< r_{\widetilde{\Phi^{(\frac{1}{p})}}}^-\le
r_{\widetilde{\Phi^{(\frac{1}{p})}}}^+<\infty$,
it follows that the Hardy--Littlewood maximal
operator $\mathcal{M}$ is bounded on
$[L^{(\Phi^{(\frac{1}{p})},\varphi)}(\mathcal{X})]'
=B_{(\varphi,\widetilde{\Phi^{(\frac{1}{p})})}}(\mathcal{X})$.
Then we find that Assumption I is satisfied for $Y=L^{(\Phi,\varphi)}$.
By this and the proof of Theorem~\ref{MS2}, we obtain
\begin{align*}
\varlimsup_{s \to 0^+}
s^{\frac{1}{q}}\left\|\left\{\int_{\Omega}
\frac{|f(\cdot)-f(y)|^q}{U(\cdot,y)[\rho(\cdot,y)]^{sq}}
\, d\mu(y) \right\}^{\frac{1}{q}}
\right\|_{L^{(\Phi,\varphi)}(\Omega)}
\lesssim
\|f\|_{L^{(\Phi,\varphi)}(\Omega)/{\mathbb R}}.
\end{align*}
This finishes the proof of Theorem~\ref{thm:OM/R Gag}.
\end{proof}

\begin{remark}
To the best of our knowledge,
Theorem~\ref{thm:OM/R Gag} is completely new.
\end{remark}

The following theorem is an application of Theorem~\ref{MS2}.

\begin{theorem}\label{thm:OB Gag}
Let $\Psi$ be an Orlicz function with $1<r_{\Psi}^-\le r_{\Psi}^+<\infty$
and let $\varphi\in\mathcal{G}^{\rm dec}_1$
satisfy both \eqref{NC} and \eqref{int vp}.
Assume that $\phi$ satisfies \eqref{var const} or
both \eqref{var r 0} and \eqref{var r infty}.
If $\beta\in(0,\infty)$,
then there exist $C,\widetilde{C}\in(0,\infty)$ such that,
for any $f\in C_{\mathrm{b}}^\beta(\Omega)\cap
\bigcup_{s\in(0,1)}\dot{W}^{s,q}_{B_{(\varphi,\Psi)}}(\Omega)$,
\begin{align*}
C\|f\|_{B_{(\varphi,\Psi)}(\Omega)}
&\leq
\varliminf_{s \to 0^+}
s^{\frac{1}{q}}\left\|\left\{\int_\Omega
\frac{|f(\cdot)-f(y)|^q}{U(\cdot,y)[\rho(\cdot,y)]^{sq}}
\, d\mu(y) \right\}^{\frac{1}{q}}
\right\|_{B_{(\varphi,\Psi)}(\Omega)}\\
&\leq
\varlimsup_{s \to 0^+}
s^{\frac{1}{q}}\left\|\left\{\int_\Omega
\frac{|f(\cdot)-f(y)|^q}{U(\cdot,y)[\rho(\cdot,y)]^{sq}}
\, d\mu(y) \right\}^{\frac{1}{q}}
\right\|_{B_{(\varphi,\Psi)}(\Omega)}
\leq
\widetilde{C}\|f\|_{B_{(\varphi,\Psi)}(\Omega)}.
\end{align*}
\end{theorem}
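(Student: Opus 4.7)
The plan is to deduce Theorem~\ref{thm:OB Gag} as a direct application of Theorem~\ref{MS2}(i) with the choice $Y(\mathcal{X}) := B_{(\varphi,\Psi)}(\mathcal{X})$ and $p := 1$, which forces the implicit restriction $q\in(0,1]$ coming from the constraint $0<q\le p<\infty$ in Theorem~\ref{MS2}. Since the WRD and WMD hypotheses on $(\mathcal{X},\rho,\mu)$ and $\Omega$ are standing assumptions of Section~\ref{sec4}, what remains is to verify that $Y^{1/p}(\mathcal{X}) = B_{(\varphi,\Psi)}(\mathcal{X})$ satisfies Assumption~\ref{ma}.

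First I would check the ball Banach structure: by Theorem~\ref{thm:OB bBfs}, under the hypothesis that $\varphi\in\mathcal{G}^{\rm dec}_1$ satisfies \eqref{NC} together with either \eqref{var const} or both \eqref{var r 0} and \eqref{var r infty}, the space $B_{(\varphi,\Psi)}(\mathcal{X})$ is a ball Banach function space, so the same is trivially true of its $1$-convexification. Next, set $\Phi := \widetilde{\Psi}$; since $1<r_\Psi^-\le r_\Psi^+<\infty$, the complementary Orlicz function satisfies $1<r_\Phi^-\le r_\Phi^+<\infty$. By Theorem~\ref{thm:K dual} (applicable because \eqref{var const} or \eqref{var r 0}--\eqref{var r infty} is assumed),
\[
\left[B_{(\varphi,\Psi)}(\mathcal{X})\right]' = L^{(\Phi,\varphi)}(\mathcal{X})
\]
with equivalent norms. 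Finally, Theorem~\ref{thm:Mf OM}, which requires only $\varphi\in\mathcal{G}^{\rm dec}_1$ with \eqref{NC}, gives that $\mathcal{M}$ is bounded on $L^{(\Phi,\varphi)}(\mathcal{X})$. Combining these three facts shows that Assumption~\ref{ma} holds for $Y^{1/p}(\mathcal{X})$.

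With Assumption~\ref{ma} verified, Theorem~\ref{MS2}(i) applies verbatim, producing positive constants $C$ and $\widetilde{C}$, independent of $f$, such that the desired two-sided Maz'ya--Shaposhnikova estimate holds for every $f\in C_{\mathrm{b}}^\beta(\Omega)\cap\bigcup_{s\in(0,1)}\dot{W}^{s,q}_{B_{(\varphi,\Psi)}}(\Omega)$. This parallels the route taken for Theorem~\ref{thm:B Gag} in the $L^p$-block setting, the role of \cite[Theorem~3.28]{tnyyz24} now being played by Theorem~\ref{thm:K dual} and that of \cite[Theorem~3.30]{tnyyz24} by Theorem~\ref{thm:Mf OM}.

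I do not anticipate a serious obstacle here: the proof is essentially a bookkeeping of invariants (ball Banach function space property, explicit associate space, boundedness of $\mathcal{M}$ on that associate space) that have been built up in Subsection~6.7. The only mildly delicate point is ensuring that the same dichotomy on $\varphi$ (either \eqref{var const} alone, or the pair \eqref{var r 0}--\eqref{var r infty}) simultaneously supplies Theorem~\ref{thm:OB bBfs} and the Köthe duality in Theorem~\ref{thm:K dual}; inspection of those two statements shows both are covered by exactly the same dichotomy, so no case-splitting is needed. If one wished to remove the restriction $q\in(0,1]$ and allow $q>1$, the obstacle would be identifying $[B_{(\varphi,\Psi)}(\mathcal{X})]^{1/p}$ for $p>1$ as a concrete ball Banach function space with a tractable associate space, a question that lies beyond the present framework.
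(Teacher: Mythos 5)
Your proposal is correct and follows essentially the same route as the paper's own proof: verify via Theorem~\ref{thm:OB bBfs} that $B_{(\varphi,\Psi)}(\mathcal{X})$ is a ball Banach function space, identify its associate space as $L^{(\widetilde{\Psi},\varphi)}(\mathcal{X})$ by Theorem~\ref{thm:K dual}, invoke Theorem~\ref{thm:Mf OM} for the boundedness of $\mathcal{M}$ there, and then apply Theorem~\ref{MS2}(i) with $p=1$. Your explicit remark that this forces $q\in(0,1]$ is a useful clarification the paper leaves implicit; the only minor inaccuracy is that the identification $[B_{(\varphi,\Psi)}(\mathcal{X})]'=L^{(\widetilde{\Psi},\varphi)}(\mathcal{X})$ in Theorem~\ref{thm:K dual} does not actually require the dichotomy \eqref{var const} versus \eqref{var r 0}--\eqref{var r infty} (only the reverse duality does), which is harmless here since the dichotomy is assumed anyway.
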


\begin{proof}
From Theorem~\ref{thm:OB bBfs},
we infer that $B_{(\varphi,\Psi)}(\mathcal{X})$
is a ball Banach function space.
By Theorems~\ref{thm:K dual} and \ref{thm:Mf OM},
we conclude that the Hardy--Littlewood maximal
operator $\mathcal{M}$ is bounded on
$[B_{(\varphi,\Psi)}(\mathcal{X})]'=L^{(\Phi,\varphi)}(\mathcal{X})$,
where $\Phi$ is the complementary Orlicz
function of $\Psi$ [see \eqref{1539}].
Using these and applying Theorem~\ref{MS2}(i),
we then obtain the desired estimates,
which completes the proof of Theorem~\ref{thm:OB Gag}.
\end{proof}

\begin{remark}
To the best of our knowledge,
Theorem~\ref{thm:OB Gag} is completely new.
\end{remark}

\section*{Declaration of competing interest}

The author declares that there is no competing interests.

\section*{Acknowledgements}

The authors would like to thank Professor Durvudkhan Suragan
for his insightful suggestion to consider the
original results of this article in the setting of domains,
which led to Theorems~\ref{MS2} and~\ref{MS3}.

\section*{Data availability}

No data was used for the research described in the article.

\bigskip

\noindent Eiichi Nakai

\medskip

\noindent  	Department of Mathematics, Ibaraki University, Mito, Ibaraki 310-8512, Japan

\smallskip

\noindent{\it E-mail:} \texttt{eiichi.nakai.math@vc.ibaraki.ac.jp}

\bigskip
\noindent Menghao Tang, Dachun Yang (Corresponding author)
and Wen Yuan

\medskip

\noindent  	Laboratory of Mathematics and Complex Systems
(Ministry of Education of China),
School of Mathematical Sciences, Beijing Normal University,
Beijing 100875, The People's Republic of China

\smallskip

\noindent {\it E-mails}: \texttt{mhtang@mail.bnu.edu.cn} (M. Tang)

\noindent\phantom{{\it E-mails:}} \texttt{dcyang@bnu.edu.cn} (D. Yang)

\noindent\phantom{{\it E-mails:}} \texttt{wenyuan@bnu.edu.cn} (W. Yuan)

\bigskip

\noindent Chenfeng Zhu

\medskip

\noindent  	School of Mathematical Sciences,
Zhejiang University of Technology, Hangzhou 310023,
The People's Republic of China

\smallskip

\noindent{\it E-mail:} \texttt{chenfengzhu@zjut.edu.cn}

\end{document}